\title{The Fourth Geometry I:\\
Difference--Angle Geometry Beyond Euclid, Hyperbolic, and Elliptic}
\author{%
Masanori Nakazato\\
Mita International School of Science, Tokyo, Japan\\
\texttt{masa727axio.math@gmail.com}\\[6pt]
\small Formerly at Graduate School of Science, Tohoku University\\
\small (Master's Program Completed)
}
\date{November 3, 2025}
\begin{document}

\maketitle

\begin{center}
\small
\textit{Preprint (arXiv:2511.01024 [math.MG]). Part~I of the series ``The Fourth Geometry''.\\
Version 2: December~2025.}
\end{center}


         \begin{abstract}
In this work, we introduce a new geometry based on the difference angle,
an angle defined as the difference of slopes of two lines, together with an
axiomatic system for angles.  
This framework provides a constructive approach to the fundamental
question ``What is an angle?'', and shows that an angle can be defined
independently of circles or rotations, as a primary geometric notion.

Within this geometry, one can define difference-angle triangles, norms,
bisectors, perpendiculars, and inner products.  
Several characteristic properties not seen in existing geometries emerge,
together with behaviors analogous to those in Euclidean geometry:
the triangle inequality always holds with equality, the sum of the interior
angles of any triangle is $0$, and a Miquel point exists even for parabolas.
In particular, the concurrency of the parabolic Miquel configuration was
suggested by Weiss and Odehnal, and our main theorem
provides the first explicit and rigorous confirmation of this assertion.
We also point out that many classical Euclidean configurations
(including Brocard-type configurations) naturally reappear in the setting
of difference-angle geometry.

These results indicate that difference-angle geometry is a promising
candidate for a ``Fourth Geometry'' following the Euclidean, hyperbolic,
and elliptic geometries.
\end{abstract}

\begin{notation}
We refer to the Euclidean concepts translated into the framework of difference--angle geometry as
“difference--angle \textcircled{ }\textcircled{ }.” When no confusion arises, we simply write “\textcircled{ }\textcircled{ }” omitting the qualifier
“difference--angle.”

Throughout this paper we use the following notation:

\begin{itemize}
  \item $\R$: the set of real numbers.
  \item $\PiC$: the projection of a curve $\mathcal{C}$ (the exact definition will be given at the relevant point).
  \item $\Slp{XY}$: the slope of the line through points $X$ and $Y$. It is also used for the slope of a line $L$, written as $\Slp{L}$.
  \item $\triangle_{\mathcal{P}} ABC$: a \emph{difference--angle triangle} with vertices $A,B,C$ on the parabola $\mathcal{P}$, whose edges are not parallel to the projective direction.
  \item $\mathrm{Int}(\triangle_{\mathcal{P}} ABC)$, $\mathrm{Ext}(\triangle_{\mathcal{P}} ABC)$: the \emph{interior} and \emph{exterior} of the triangle $\triangle_{\mathcal{P}} ABC$, defined in \cref{int-ext}.
  \item $\measuredangle_{\mathcal{P}} XYZ$: the difference angle at vertex $Y$ of the triangle $\triangle_{\mathcal{P}} XYZ$.
  \item $\measuredangle_{\mathcal{P}} A$, $\measuredangle_{\mathcal{P}} B$, $\measuredangle_{\mathcal{P}} C$: abbreviations of $\measuredangle_{\mathcal{P}} BAC$, $\measuredangle_{\mathcal{P}} CBA$, and $\measuredangle_{\mathcal{P}} ACB$, respectively.
 \item $\|AB\|_{\mathcal P}$ : the \emph{difference–angle norm} of the segment $AB$ (see \Cref{def:diff-angle-norm}).
  \item $\theta_A$, $\theta_B$, $\theta_C$: variables used to denote the above three interior angles concisely in calculations and proofs.
\end{itemize}

\end{notation}


\section{Introduction}
\subsection{Motivation}

In this paper, we reconsider the notion of an angle in geometry.  
While modern mathematics has rigorously axiomatized quantities such as distance and area,  
no explicit axiomatic foundation has ever been formulated for the concept of an angle itself.  

The author's initial motivation for confronting this question dates back roughly thirty years,  
to a well-known property of parabolas learned in school:  
for the parabola $y = \kappa x^2$ and a line $\ell$ passing through a point $P(p, q)$  
that intersects the parabola at $x = a$ and $x = b$,  
the product $(p - a)(p - b)$ remains constant regardless of the choice of $\ell$.  

This elegant trick is familiar to many Japanese students as a standard technique in high-school entrance examinations.  
Because the statement closely resembles the power-of-a-point theorem for circles,  
we shall refer to it here as the parabolic power.  

At first, this fact was merely a source of problems for mathematical exercises.  
However, the realization that the construction on a parabola can be expressed entirely  
in terms of differences of $x$-coordinates became a turning point.  
While studying two tangents to the parabola $y = \kappa x^2$  
and the line passing through their points of tangency,  
the author noticed an intriguing 2:1 ratio  
between the difference of the $x$-coordinates and the difference of the tangent slopes.  
This relation seemed analogous to that between a central angle and its corresponding inscribed angle in a circle.  
It is also well known that when the two tangents are perpendicular,  
their intersection point lies on the directrix.  
Yet the author was left with a simple question:  
where does this orthogonality reside as the tangent points recede infinitely far away?  

Around the same period, the author became deeply interested in the parallelogram theorem.  
Although it appears unrelated to circles at first glance,  
its derivation from Ptolemy's theorem reveals a surprising kinship between parallelograms and circles.  
Moreover, the structure connecting the inner product  
$\vec{OA}\cdot\vec{OB}$ of vectors from a common origin $O$ to points $A,B$  
with the power of $O$ with respect to a circle suggested a trinity  
linking the notions of inner product, power, and the parallelogram theorem.  

In 2013, the author conjectured that a similar power structure  
might also exist for other conic sections, particularly for hyperbolas.  
This led to the definition of the hyperbolic power and the discovery of its fundamental properties.  
By that time, the author's focus had shifted to a broader question:  
within a geometry where a power theorem holds,  
might it be possible to define a new kind of inner product?  
Although the idea was difficult to communicate to others,  
the author published a short article about the hyperbolic power in 2014  
in a magazine for Japanese middle-school students, but the work did not advance further.  

Since the author's background was in number theory rather than geometry,  
and after many years away from academic mathematics,  
it became clear that addressing such foundational questions would be difficult.  
The author continued asking professional geometers questions such as:  
What is an angle?  
What would an axiomatic system of angles look like?  
Is the notion of a right angle essential to the concept of angle itself?  
The only answers received were “we do not know” or “there is none.”  

This question remained dormant for years, persisting like an unresolved echo.  

In July 2025, through a renewed comparative study of various geometric frameworks,  
the author once again confronted the problem—  
discovering that in Hilbert's axiomatic system,  
angles are defined only as figures formed by two segments,  
yet no axioms governing angles themselves are postulated.  

Revisiting an observation first noticed in 2014,  
the author explored whether angles could instead be constructed  
from differences of slopes:

\begin{observation}
Let $A(a, 0)$ and $B(b, 0)$ be two points on the $x$-axis,  
and let $P(x, y)$ with $x\neq a,b$.  
If the difference angle $\measuredangle_{\mathcal P} APB$—defined as the difference of slopes—remains constant,  
then the point $P$ lies on a parabola.
\end{observation}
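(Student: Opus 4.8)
The plan is to compute the difference angle $\measuredangle_{\mathcal P} APB$ directly from its definition as a difference of slopes, and then to recognize the resulting level set as the graph of a quadratic polynomial in $x$. Since $A=(a,0)$ and $B=(b,0)$ lie on the $x$-axis and $P=(x,y)$ satisfies $x\neq a,b$, both lines $PA$ and $PB$ are non-vertical, with $\Slp{PA}=\tfrac{y}{x-a}$ and $\Slp{PB}=\tfrac{y}{x-b}$. Hence the difference angle at $P$ is
\[
\measuredangle_{\mathcal P} APB \;=\; \Slp{PA}-\Slp{PB} \;=\; \frac{y}{x-a}-\frac{y}{x-b} \;=\; \frac{(a-b)\,y}{(x-a)(x-b)},
\]
the sign convention being immaterial for what follows.

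Next I would impose the hypothesis that this quantity equals a fixed constant $k\in\R$. Because $x\neq a,b$, clearing denominators is a reversible operation, and it yields the equivalent equation
\[
(a-b)\,y \;=\; k\,(x-a)(x-b),
\]
that is, since $a\neq b$,
\[
y \;=\; \frac{k}{a-b}\,(x-a)(x-b) \;=\; \frac{k}{a-b}\,\bigl(x^{2}-(a+b)x+ab\bigr).
\]
For $k\neq 0$ the leading coefficient $k/(a-b)$ is a nonzero real number, so this is the equation of a parabola with vertical axis, and it passes through $A$ and $B$. The locus of admissible points $P$ is therefore precisely this parabola with the two points $(a,0)$ and $(b,0)$ deleted.

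The computation is essentially one line, so there is no analytic obstacle here; the only points demanding care are bookkeeping ones. First, the clearing of denominators is an equivalence only on the domain $x\neq a,b$, which is exactly the hypothesis, so nothing is lost. Second, the degenerate value $k=0$ must be set aside: it gives the line $y=0$ rather than a parabola, consistent with regarding the $x$-axis itself as the limiting ``zero difference angle'' configuration. Finally, I would note that running the equivalence backwards proves the converse as well: every point of such a parabola other than $A$ and $B$ realizes the same difference angle $k$, so the parabola is exactly the locus — the direct analogue, in difference--angle geometry, of the inscribed-angle arc-locus for circles.
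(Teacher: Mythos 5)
Your proposal is correct and follows essentially the same route as the paper's own treatment (its proof of the iso-angle locus theorem): compute $\Slp{PA}=\tfrac{y}{x-a}$, $\Slp{PB}=\tfrac{y}{x-b}$, form the difference to get $\tfrac{(b-a)y}{(x-a)(x-b)}$ up to sign, set it equal to the constant, and read off the quadratic $y=\tfrac{\theta}{b-a}(x-a)(x-b)$. Your extra remarks on the reversibility of clearing denominators, the degenerate constant $0$ (the paper sidesteps this by assuming $\theta>0$), and the converse are fine but not needed beyond the paper's argument.
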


Starting from this realization, we construct a new geometry  
whose fundamental quantity is an angle defined by the difference of slopes,  
and from this foundation we seek to axiomatize the notion of angle itself.  
We call this geometry the Difference–Angle Geometry (abbreviated as DA geometry),  
and aim to establish its constructive principles and theoretical framework  
encompassing angles, powers, norms, and inner products.

\subsection{Structure of the Main Theorems}

The main theorems developed in this paper proceed in three major stages:

\begin{enumerate}
  \item By \Cref{mthm:diff-angle-bisector}, we obtain a theorem on angle bisectors  
        as one of the first consequences of the difference angle  
        and of the generally non-symmetric difference–angle norm.
  \item Building on this, we derive the \Cref{mthm:DABCT}  
        (Difference–Angle Bisector Collinearity Theorem),  
        from which the concept of isogonality is naturally reconstructed within DA geometry.
  \item Introducing a hierarchy of similarity and congruence,  
        we show that the strongest form of congruence implies collinearity,  
        which leads to the final statement \Cref{mthm:parabolic-final-theorem}.
\end{enumerate}

Among these developments, two achievements are particularly noteworthy.

\begin{itemize}
  \item Theoretical progress within DA geometry.  
    The progression from \Cref{thm:ceva_diff_angle} (DA version of Ceva's theorem)  
    to \Cref{mthm:DABCT} (Difference–Angle Bisector Collinearity Theorem)  
    constitutes the central development grounded in the axioms of DA geometry.
  \item External consequences and exportability.  
    Some theorems in DA geometry, such as \cref{lem:bis-midpoint-safe},  
    have no Euclidean counterpart,  
    while others can be exported as new Euclidean theorems—  
    for instance, the path from \Cref{mthm:DABCT}  
    to \Cref{thm:Angle-Bisector-Collinearity-Theorem}.  
    This demonstrates that DA geometry is not a mere reformulation of Euclidean geometry  
    but an independent and self-contained geometric system.
\end{itemize}

\subsection{Philosophical Foundations of DA Geometry}

The purpose of this subsection is to clarify the conceptual foundations that characterize DA geometry.  
Its framework rests on two fundamental pillars.

\medskip
The first pillar originates from Hilbert's Foundations of Geometry\cite{Hilbert1899}, particularly Axiom~III$_5$,  
which states that if two sides and the included angle of one triangle are congruent respectively to those of another triangle,  
then the remaining corresponding angles are also congruent.  
Together with Axiom~III$_4$ (the transfer of angles),  
Hilbert derived the SAS congruence theorem.  
The underlying philosophy of this construction is that congruence should not be regarded merely as a metric equivalence,  
but as a relation determined through constructive operations.  
Hilbert emphasized that congruence must be uniquely fixed by such constructive processes (“superposition”).  
Within this framework, an angle can always be transferred in coordination with its adjacent sides,  
and as long as its external form coincides, it is regarded as identical.  
This strong requirement, however, renders the angle a derived quantity—  
one dependent on other geometric entities rather than primitive in itself.

\medskip
DA geometry relaxes this restriction under the parabolic limit (in the sense of Cayley--Klein degeneration),  
introducing a stratified hierarchy of congruence according to geometric conditions,  
and thereby reconstructing the angle as a primary geometric quantity.

From this standpoint, a new structure emerges that subdivides Euclidean geometry itself.  
One of its most symbolic manifestations is the following phenomenon of collinearity.  
(Here $x_T$ denotes the $x$-coordinate of a point $T$.)

\begin{observation}
Let $C\colon y=\kappa x^2\ (\kappa>0)$ be a parabola, and let $D$ be another parabola obtained by translating $C$.  
Take three points $A,B,C$ on $C$ with $x_A<x_B<x_C$, and three points $C',B',A'$ on $D$ with $x_{C'}<x_{B'}<x_{A'}$, satisfying
\[
x_B-x_A = x_{A'}-x_{B'}, \qquad x_C-x_B = x_{B'}-x_{C'}.
\]
Let $H_A$ be the point on line $BC$ whose $x$-coordinate equals $x_{A'}$,  
$H_B$ the point on line $CA$ whose $x$-coordinate equals $x_{B'}$, and  
$H_C$ the point on line $AB$ whose $x$-coordinate equals $x_{C'}$.  
Then the three points $H_A,H_B,H_C$ are collinear.
\end{observation}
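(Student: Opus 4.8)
The plan is to convert the collinearity claim into the vanishing of a single $3\times 3$ determinant and to let the two hypotheses do all the work inside a short row reduction. First I would fix notation: write $a:=x_A$, $b:=x_B$, $c:=x_C$ and $a':=x_{A'}$, $b':=x_{B'}$, $c':=x_{C'}$, so that the hypotheses read $b-a=a'-b'=:u>0$ and $c-b=b'-c'=:v>0$. Since the base parabola is $y=\kappa x^2$ we have $A=(a,\kappa a^2)$, $B=(b,\kappa b^2)$, $C=(c,\kappa c^2)$; I would note in passing that the translated parabola $D$ never actually enters the argument — only the abscissas $a',b',c'$ matter, via the two difference relations — so its role is merely to guarantee that such a configuration exists.

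Second, I would record the one elementary ingredient: the chord of $y=\kappa x^2$ joining the points with abscissas $p$ and $q$ is the line $y=\kappa\big((p+q)x-pq\big)$ (equivalently, the vertical gap from the parabola point at abscissa $x$ to this chord is $\kappa(x-p)(x-q)$). Such a chord is never vertical, so the three auxiliary points are well defined, and evaluating gives
\[
H_A=\big(a',\ \kappa((b+c)a'-bc)\big),\qquad
H_B=\big(b',\ \kappa((c+a)b'-ca)\big),\qquad
H_C=\big(c',\ \kappa((a+b)c'-ab)\big).
\]

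Third, collinearity is the vanishing of the determinant whose rows are $(x_{H},y_{H},1)$ for $H\in\{H_A,H_B,H_C\}$. I would pull the common factor $\kappa$ out of the middle column and then apply the row operations $R_1\mapsto R_1-R_2$ and $R_2\mapsto R_2-R_3$. This is the crux: substituting $a-b=-(a'-b')$ collapses the new middle entry of the first row to $ba'-ab'=u(b+b')$, and substituting $b-c=-(b'-c')$ collapses that of the second row to $cb'-bc'=v(c+c')$; the new first entries are $u$ and $v$ and the new last entries are $0$. Expanding along the last column, the determinant equals $\kappa\,uv\big[(c+c')-(b+b')\big]$, and $(c+c')-(b+b')=(c-b)-(b'-c')=v-v=0$. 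Hence it vanishes and $H_A,H_B,H_C$ are collinear.

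I do not expect a real obstacle: the statement is essentially a clean computation, and the only thing to be careful about is to feed the two difference hypotheses into the row reduction at the right moment rather than expanding the determinant blindly (which produces a bulkier — though still identically zero — polynomial in $a,b,c,c',\kappa$). If one prefers to avoid determinants, the same bookkeeping shows that the two divided differences $\frac{y_{H_B}-y_{H_A}}{b'-a'}$ and $\frac{y_{H_C}-y_{H_B}}{c'-b'}$ both equal $\kappa(b+b')=\kappa(c+c')$, which again gives collinearity.
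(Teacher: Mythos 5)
Your proof is correct: the chord formula $y=\kappa\bigl((p+q)x-pq\bigr)$, the resulting coordinates of $H_A,H_B,H_C$, and the row-reduced determinant $\kappa\,uv\bigl[(c+c')-(b+b')\bigr]$ all check out, and the final bracket vanishes precisely because $c-b=b'-c'$. However, your route is genuinely different from the paper's. The paper never verifies this observation by coordinates; it announces in the introduction that a ``geometric proof without any algebraic computation'' will be given, and the statement is realized in the final section as Main Theorem~\ref{mthm:parabolic-final-theorem}: the two difference conditions, together with $D$ being a translate of $C$ (equal coefficient $\kappa$), say exactly that $\triangle_{\mathcal P}ABC$ and $\triangle_{\mathcal P}C'B'A'$ are orientation-reversing $\mathcal P$-congruent; the points $H_A,H_B,H_C$ are then the feet of DA-perpendiculars (singular, i.e.\ vertical, lines), and collinearity is deduced from a product of directed DA-norm ratios equal to $-1$ via the DA Menelaus theorem (\Cref{thm:menelaus_diff_angle}). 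What each approach buys: your determinant argument is short, fully self-contained, and exposes a strengthening the paper's phrasing hides --- the translated parabola $D$ and the ordering/positivity assumptions are irrelevant, since only the two abscissa-difference identities enter, and your divided-difference variant ($\kappa(b+b')=\kappa(c+c')$ as the common slope) even identifies the line $L$ explicitly; the paper's route, by contrast, is longer to set up but explains \emph{why} the collinearity holds inside DA geometry --- it is the Menelaus/Simson-type shadow of an orientation-reversing DA congruence --- which is the conceptual payoff the observation is meant to advertise.
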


This collinearity follows naturally from the projective invariance inherent in DA geometry  
and represents one of the most fundamental phenomena revealed under the parabolic perspective developed in this paper.  
In the final section, we shall provide a geometric proof of this result without any algebraic computation.

\medskip
The second pillar concerns the treatment of the line at infinity latent in affine geometry.  
Whereas classical geometry has regarded the line at infinity as a homogeneous, monolithic entity,  
DA geometry divides it into distinct components.  
Specifically, by distinguishing the point at infinity corresponding to the projective reference direction ($\infty_G$)  
from that corresponding to the singular direction ($\infty_S$),  
singular behavior emerges even within the finite region.  
This bifurcation gives rise to phenomena absent in conventional geometries,  
such as the constant equality in the triangle inequality and the multiplicity of zero vectors.  
This viewpoint of a \textit{fracture at infinity}\footnote{That is, a conceptual splitting of the ideal line in the affine–projective sense, which will be further developed in the forthcoming paper on Hilbert's Fourth Problem(H$_4$)}  
distinguishes DA geometry as an autonomous framework,  
rather than a mere deformation of Euclidean or hyperbolic geometry.

\medskip
These two pillars—the hierarchical stratification of congruence and the fracture of the line at infinity—  
may seem independent at first glance, yet both embody the same underlying principle:  
to subdivide geometry itself.  
By making explicit the latent, unresolved structures within Euclidean and affine geometry,  
DA geometry opens a new branch of geometric thought.

\medskip
Moreover, DA geometry represents an attempt to axiomatize the angle as a primary geometric quantity,  
positioning itself as an instance of ``geometry based on invariants'' in the sense of Klein's Erlangen Program.

The new structure presented in this study not only encompasses many familiar properties of Euclidean geometry  
but also reveals novel phenomena that do not appear within it.  
This suggests the possibility of reexamining the definition of angle across other geometric systems.

Finally, by focusing on the structure in which straight lines can be regarded as geodesics arising from the norm secondarily defined from the DA,  
DA geometry can be connected with Hilbert's Fourth Problem—  
the classification of geometries in which straight lines are geodesics—  
from the viewpoint of a pre-Finsler structure, that is,  
a Finsler-type geometry not necessarily symmetric nor homogeneous.  
While this problem has been solved analytically under certain regularity assumptions,  
the present study proposes a new connection through the axiomatization of angle.  
Further discussion of this aspect is deferred to a separate paper (the H$_4$ manuscript).

\subsection{Projective Geometry and the Problem of Angle}

In pure projective geometry, angles are not defined,  
since the framework preserves only incidence relations and not metric quantities such as distance or angle.  
However, as demonstrated in the Cayley--Klein construction,  
both distance and angle can be introduced by incorporating an external reference.

Although DA geometry is not a metric geometry in the Cayley--Klein sense,  
it belongs to the same lineage:  
by introducing a projective reference line and a projective direction,  
it defines an angle as the difference of the slopes of two half-lines.  
Thus, DA geometry inherits from the Cayley--Klein framework  
the essential idea of constructing metric notions by adding an external reference to projective geometry.

Moreover, in DA geometry,  
the ideal point at infinity bifurcates into two distinct ideal points—  
one corresponding to the projective reference direction and the other to the singular direction.  
This bifurcation gives rise to a new kind of structural fracture  
absent in classical geometries, emphasizing the distinctiveness of the present framework.  
A more detailed comparison will be provided in a separate paper.  

In this way, DA geometry can be regarded as a parabolic extension  
within the lineage of projective geometry,  
exhibiting a structure in clear contrast to that of the Cayley--Klein geometries.


\section{Undefined Terms and Primitive Notions}

In this paper, we work on the basic Euclidean plane $\mathbb{R}^2$,  
fixing a projective direction $d$ and a projective reference line $\ell$.  
These are referred to as the projective direction and the projective reference line, respectively.

\begin{itemize}
  \item The pair $(\ell, d)$ is called the projective reference structure.
  \item In this paper, the underlying space of our geometry is the real two-dimensional affine plane $\mathbb{R}^2$. All geometric objects, such as points, lines, and angle quantities,
are defined within this plane, and the projective reference structure
$(\ell, d)$ is fixed inside $\mathbb{R}^2$.

  \item Using finitely many points $A,B,C,\dots$ on the plane, we shall construct line segments and difference angles.
  \item For a point $P$, let $P'$ denote the intersection of the line through $P$ along $d$ with $\ell$;
        the point $P'$ is called the projection of $P$ onto $\ell$.
  \item At this stage, we only prepare the notion of ``difference of slopes,''  
        while the concept of an ``angle'' itself is not yet defined.
\end{itemize}

\begin{figure}[H]\label{fig:proj-structure}
\centering
  \includegraphics[scale=0.8]{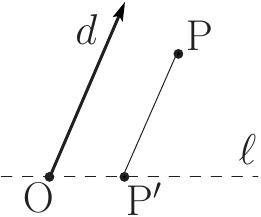}
  \caption{Projective gauge $(\ell,d)$ with projection.}
\end{figure}
\medskip

Within the projective reference structure, the following conventions are implicitly assumed:

\begin{enumerate}
  \item The projective direction $d$ does not coincide with the direction of the reference line $\ell$;  
        that is, $d \neq \mathrm{dir}(\ell)$.  
        The direction $\mathrm{dir}(\ell)$ is called the \textbf{singular direction},  
        and it is excluded from the domain of definition.
  \item Accordingly, when $d = \mathrm{dir}(\ell)$,  
        neither the line ``along $d$'' nor the projective slope $\pslp{OX}{XY}$ is defined.  
        The treatment of this situation will later be specified in the boundary policy,  
        either as an absorptive or divergent type.
\end{enumerate}

Under these assumptions, the notions introduced later—  
the projective slope \\
$\pslp{OX}{XY}$, the angular quantity $\angle(A,P,B)$,  
and the difference angle $\measuredangle_{\mathcal{P}}$—  
will all be uniquely determined within the domain $d \neq \mathrm{dir}(\ell)$.


\section{Axiomatic System}
\bigskip

\begin{quote}
First, as one of the geometric axiom systems, we construct and propose an axiomatic framework that treats the angle as a primary quantity,
based on a comparative study of the properties of angles across various geometries including the Euclidean one.
In particular, Hilbert adopted a strong form of the congruence axiom (III$_5$: the SAS axiom).%
Instead of assuming it directly, we separate the congruence of sides and angles and formulate them independently.
As a result, the accompanying axiom III$_4$ (the transfer of an angle) is eliminated in the present system.

By first introducing an axiom system for primary quantities,
then defining a secondary quantity in the form of a norm,
and finally presenting the axiom of congruence,
the logical flow of this framework formally parallels
Hilbert's construction in the \textit{Foundations of Geometry}.
Within this standpoint, we define the difference angle
$\measuredangle_{\mathcal{P}}$,
and reconstruct the Euclidean congruence conditions
as angle–primary theorems in the parabolic limit.

\end{quote}
\begin{axiom}[I1]\label{ax:I1}
For any two distinct points, there exists at least one line passing through them.
\end{axiom}

\begin{axiom}[I2]\label{ax:I2}
Through any two distinct points, there passes exactly one line.
\end{axiom}

\begin{axiom}[I3]\label{ax:I3}
Every line contains at least two distinct points.
\end{axiom}

\begin{axiom}[I4]\label{ax:I4}
If three points are not on the same line, there exists a plane containing them.
\end{axiom}

\begin{axiom}[I5]\label{ax:I5}
Every plane contains at least one line.
\end{axiom}

\begin{axiom}[I6]\label{ax:I6}
If two lines share a point and lie in the same plane, then they intersect.
\end{axiom}

\begin{axiom}[I7]\label{ax:I7}
There exist at least four points in space that do not lie in the same plane.
\end{axiom}

\begin{axiom}[O1]\label{ax:O1}
If $A$–$B$–$C$, then the three points are collinear, and $B$ is uniquely determined
as the point lying between $A$ and $C$.
\end{axiom}

\begin{axiom}[O2]\label{ax:O2}
For any two points $A$ and $C$, there exists at least one point $B$
lying between them.
\end{axiom}

\begin{axiom}[O3]\label{ax:O3}
If one of three points lies between the other two,
then the three points are collinear.
\end{axiom}

\begin{axiom}[O4]\label{ax:O4}
Given three non-collinear points $A,B,C$ forming the ordered triple $(A,B,C)$,
let $P$ be a point between $A$ and $B$, and let $\ell$ be a line through $P$
not passing through $C$.  
Then $\ell$ meets either the segment $AC$ or the segment $BC$.
\end{axiom}

\begin{axiom}[P1]\label{ax:P1}
On every plane, at least one point $O$ can be chosen as a reference point.
\end{axiom}

\begin{axiom}[P2]\label{ax:P2}
Every line on the plane possesses a notion of direction,
defined by an ordered pair of distinct points $(A,B)$ on it,
interpreted as “the direction from $A$ to $B$.”
\end{axiom}

\begin{axiom}[P3]\label{ax:P3}
Through the reference point $O$,
there exist at least two distinct directions on the plane.
\end{axiom}

\begin{axiom}[P4]\label{ax:P4}
Among all lines through the point $O$,
one may be chosen and designated as the \emph{projective base line} $\ell$.
\end{axiom}

\begin{axiom}[P5]\label{ax:P5}
For any point $X$ on the plane and any direction $d$
different from that of the base line $\ell$,
there exists exactly one line through $X$ parallel to direction $d$.
\end{axiom}

\begin{axiom}[P6]\label{ax:P6}
For any point $X$ not lying on the projective reference line $\ell$, 
and for any direction $d$ distinct from $\ell$, 
let $Y$ be the intersection of $\ell$ with the line through $X$ in direction $d$.
Then, the ratio determined by the pair $(OX;\ XY)$, formed by the segments $OX$ and $XY$, 
defines a quantity called the \emph{projective slope}.
\end{axiom}

\begin{axiom}[PAR1]\label{ax:PAR1}
For any point $P$ and any line $\ell$ not passing through $P$,
there exists exactly one line through $P$ parallel to $\ell$
—that is, having the same or opposite direction as $\ell$.
\end{axiom}


\begin{figure}[H]
  \centering
  \includegraphics[scale=0.8]{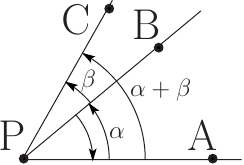}
  \caption{Axiom A2 (additivity) and A1 (antisymmetry).}
  \label{fig:A2-and-A1}
\end{figure}

\begin{axiom}[A1: Opposite Angles (Order Reversal)]\label{ax:A1}
For any points $A,P,B$,
\[
\angle(A,P,B)=-\,\angle(B,P,A)\,.
\]
\end{axiom}

\begin{axiom}[A2: Additivity]\label{ax:A2}
Let $A,B,C$ be collinear with $B$ between $A$ and $C$,
and let $P$ be a point not on this line such that
the angles can be continuously formed.
Then
\[
\angle(A,P,B)+\angle(B,P,C)=\angle(A,P,C)\,.
\]
\end{axiom}

\begin{axiom}[A3: Vanishing (Definition of a Straight Angle)]\label{ax:A3}
If $P,A,B$ are collinear in the order $P,A,B$ or $A,B,P$,
then $\angle APB=0$; conversely, if $\angle APB=0$,
then $P,A,B$ are collinear.
\end{axiom}

\begin{axiom}[A4: Scaling Invariance]\label{ax:A4}
If $A'\in \overrightarrow{PA}$ and $B'\in \overrightarrow{PB}$,
then $\angle APB=\angle A'PB'$.
\end{axiom}

\begin{axiom}[A5: Continuous Divisibility]\label{ax:A5}
Fix a point $P$.
Let $\mathcal R_P$ denote the topological space of all oriented rays from $P$,
and let $S_P\subset\mathcal R_P$ be a singular subset.
Define $D_P:=\mathcal R_P\setminus S_P$.
For each connected component $C$ of $D_P$,
there exists an angular mapping $\angle:C\times C\to\mathbb{R}$ satisfying:
\begin{enumerate}
\item[(i)] \textbf{Bisection.}
For any $r,s\in C$, there exists $t\in C$ such that
\[
\angle(r,t)=\angle(t,s)=\tfrac12\,\angle(r,s).
\]
\item[(ii)] \textbf{Continuity.}
For each fixed $r\in C$, the map $s\mapsto\angle(r,s)$
is continuous on $C$.
\end{enumerate}
\end{axiom}

\begin{remark}[Reason for adopting A5(ii)]
The condition A5(i) alone (bisection) does not in general imply continuity
(because Jensen–type pathological examples exist).
Therefore, A5(ii) is explicitly included as an independent axiom.
In continuous models (including those defined in Calib later),
A5(ii) is automatically satisfied by construction,
but declaring it at the axiomatic level excludes pathological models
and clarifies logical independence.
\end{remark}

\begin{convention}[Boundary Policy with Covering Group]\label{conv:boundary}
The axioms are defined on $D_P=\mathcal R_P\setminus S_P$;
the behavior on $S_P$ itself is not axiomatized.
For each geometry, one may specify a value space $V$
and a covering map $p:\mathbb{R}\to V$
with a deck transformation group (periodic lattice)
$\Lambda\le(\mathbb{R},+)$ (discrete),
and adopt one or more of the following conventions:
\begin{itemize}
  \item \textbf{Lift type:}
  For each connected component $C\subset D_P$,
  take a continuous lift $\widetilde{\Angle}:C\times C\to\mathbb{R}$,
  and define $\Angle = p\circ \widetilde{\Angle}$.
  The lift is $\Lambda$-periodic
  ($\widetilde{\Angle}\sim \widetilde{\Angle}+\lambda,\ \lambda\in\Lambda$).
  When $\Lambda=\{0\}$, non-periodic real-valued angles are included.
  \item \textbf{Absorbing type:}
  As $s\to S_P$, the angle satisfies $\Angle(r,s)\to 0$;
  singular directions are absorbed (collapsed).
  The range may be $V=\mathbb{R}$ or $\overline{\mathbb{R}}$.
  \item \textbf{Divergent type:}
  As $s\to S_P$, $\Angle(r,s)\to \pm\infty$,
  and no continuous extension is made.
  The range is $V=\overline{\mathbb{R}}$.
\end{itemize}
Note: depending on the connected components or singularities,
these policies may be applied separately or jointly.
\end{convention}

\begin{remark}[Choice of Boundary Policy]
When extending an angle continuously,
three general boundary policies may be considered:
absorbing, lift, and divergent.
In the DA geometry treated here,
the divergent type is incompatible with the angle axioms
and does not yield a well-defined angle quantity;
thus only the absorbing and lift types are admissible.
\end{remark}

In the present framework, the angle is treated as a primary quantity,
while length is introduced as a secondary one.
Accordingly, Hilbert's system of congruence axioms
is modified as follows.
Hilbert's CONG4 served as an existence axiom
for defining angles as derived quantities;
here it becomes unnecessary,
since the angle is already established independently by A1–A5.
Therefore, the system of congruence is closed
by CONG1–3 together with the distributive axiom CONG5.

\begin{axiom}[CONG1: Transfer of Segments (Existence)]\label{ax:CONG1}
For any two points $A,B$ and any point $A'$ on a line,
there exists a point $B'$ such that
the segment $\overline{AB}$ is congruent to $\overline{A'B'}$.
\end{axiom}

\begin{axiom}[CONG2: Transitivity of Segment Congruence]\label{ax:CONG2}
If $\overline{AB}\cong\overline{A'B'}$
and $\overline{AB}\cong\overline{A''B''}$,
then $\overline{A'B'}\cong\overline{A''B''}$.
\end{axiom}

\begin{axiom}[CONG3: Identity]\label{ax:CONG3}
Every segment is congruent to itself.
\end{axiom}

\begin{definition}[Secondary Length Structure]\label{def:length-structure}
For each line $L$, assign to every segment on $L$
a positive real number through a map $|\cdot|_L$
satisfying the following:
\begin{enumerate}
\item[(L1)] \textbf{Homogeneity:}
For a similarity $h_\lambda:A\mapsto P+\lambda(A-P)$
with center $P$ and ratio $\lambda>0$,
\[
|h_\lambda(A)h_\lambda(B)|_{h_\lambda(L)}=\lambda\,|AB|_L.
\]
\item[(L2)] \textbf{Compatibility with the Angular Structure:}
$|XY|_L$ depends continuously on the direction $\overrightarrow{XY}$,
and remains invariant under automorphisms preserving
the angle axioms A1–A5.
\item[(L3)] \textbf{Symmetry:}
$|XY|_L=|YX|_L$.
\end{enumerate}
\end{definition}

\begin{axiom}[CONG5: Distributive Axiom of ASA Type]\label{ax:CONG5}
Let two triangles $\triangle ABC$ and $\triangle A'B'C'$ satisfy
\[
\angle ABC=\angle A'B'C',\quad
\angle ACB=\angle A'C'B',\quad
|BC|_{L}=|B'C'|_{L'}\,.
\]
Then
\[
|AB|_{L}=|A'B'|_{L'},\quad
|AC|_{L}=|A'C'|_{L'}\,.
\]
In other words, the length structure is distributively compatible
with the angular structure (ASA congruence).
Here $L$ and $L'$ are length structures identified
via an automorphism preserving the angular axioms A1–A5.
\end{axiom}

\begin{remark}[Relation to Hilbert's System]
Hilbert's III$_4$ (transfer of angles)
was necessary in his framework
because angles were defined as derived quantities
from the distance structure.
In the present framework, since the angle is a primary quantity,
its transfer follows automatically from A1–A5.
Hence CONG4 is not required as an independent axiom,
and CONG5 serves as the first distributive relation
linking the angular and length structures.
(The number is reserved for consistency across versions.)
\end{remark}

\begin{proposition}[Independence of (P1--P6)]\label{prop:p1-p6-indep}
On the Euclidean plane $\mathbb{R}^2$, if segment congruence is taken to be the usual one
induced by the Euclidean distance and angle congruence is defined as “equality of
Euclidean angle measure,” then
\Cref{ax:CONG1,ax:CONG2,ax:CONG3} hold, whereas
\Cref{ax:CONG5} does not hold in general (in particular, for angles with
$0<\theta<\pi$ there are two symmetric transfers).
\end{proposition}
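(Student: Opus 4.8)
The plan is to dispatch \Cref{ax:CONG1,ax:CONG2,ax:CONG3} by direct inspection and then to locate the obstruction to \Cref{ax:CONG5} in the two-fold reflective ambiguity of angle transfer. For \Cref{ax:CONG1}: given points $A,B$ and a point $A'$ on a line $m$, I would mark off along $m$ from $A'$ the Euclidean length $|AB|$, obtaining a point $B'\in m$ with $|A'B'|=|AB|$, hence $\overline{AB}\cong\overline{A'B'}$ in the stipulated sense; there are in fact two such $B'$, but \Cref{ax:CONG1} asks only for existence. Unwinding segment congruence as numerical equality of Euclidean distances, \Cref{ax:CONG2} is transitivity of $=$ on $\mathbb{R}$ and \Cref{ax:CONG3} is reflexivity, so neither needs a genuine argument.

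The substantive part is \Cref{ax:CONG5}, and here I would first isolate the mechanism. For a ray $r$ from a point $P$ and a value $\theta$ with $0<\theta<\pi$, exactly two rays from $P$ make (unsigned) Euclidean angle $\theta$ with $r$, namely the rotations of $r$ about $P$ by $+\theta$ and by $-\theta$, which are mirror images across $r$. Fixing then a segment $BC$ of Euclidean length $a$ and values $\beta,\gamma>0$ with $\beta+\gamma<\pi$, the data $(\beta,\gamma,a)$ are realized by two triangles $\triangle ABC$ and $\triangle A^{*}BC$, where $A^{*}$ is the reflection of $A$ in the line $BC$: both satisfy $\angle ABC=\angle A^{*}BC=\beta$, $\angle ACB=\angle A^{*}CB=\gamma$, and both have the same included side. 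Taking these as the two triangles in \Cref{ax:CONG5} — with $L'=L$ and the identifying automorphism the identity, which preserves A1--A5 trivially — the hypotheses are satisfied while the triangles are genuinely distinct as soon as $A\notin BC$. This is exactly the phenomenon of the ``two symmetric transfers'': the ASA data do not pin down the triangle, so the task CONG5 is meant to perform as an axiom, namely to fix the transfer of two adjacent angles together with the interposed side \emph{uniquely}, is not discharged. This is the gap Hilbert closes with III$_4$ and that DA geometry closes instead through the signed, directional character of the difference angle, which distinguishes $+\theta$ from $-\theta$.

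The delicate point — and what I expect to be the main obstacle — is to state precisely in what sense \Cref{ax:CONG5} ``does not hold'': read as a bare numerical implication among Euclidean side lengths it is just ASA congruence and is therefore true, since both mirror triangles share the same $|AB|$ and $|AC|$. The real content is the failure of uniqueness, i.e.\ of the well-definedness of the transfer that CONG5 encodes, and the write-up has to make that reading explicit — exhibiting the two symmetric transfers, verifying that each meets the hypotheses verbatim, and noting that no automorphism preserving A1--A5 in the orientation-blind Euclidean model can separate them — without overclaiming a contradiction that is not present in the side-length conclusion itself.
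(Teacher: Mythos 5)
Your handling of \Cref{ax:CONG1}, \Cref{ax:CONG2}, \Cref{ax:CONG3} is fine, and your diagnosis of the CONG5 part is the right one: as literally written, \Cref{ax:CONG5} is an implication about side lengths only, which in the Euclidean model is just the ASA theorem and is therefore \emph{true}; what fails is only the uniqueness of the transfer --- the two mirror-image triangles $\triangle ABC$ and $\triangle A^{*}BC$ across the line $BC$ --- which is exactly what the parenthetical ``two symmetric transfers'' refers to and what Hilbert's III$_4$ is designed to pin down. Your construction exhibits that ambiguity, and you are right not to overclaim a contradiction with the stated side-length conclusion; a proof of the proposition can only succeed under the uniqueness-of-transfer reading you make explicit.

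Be aware, though, that there is essentially nothing to compare with on the paper's side: the proof printed under this proposition does not address the statement you were given. It matches the proposition's \emph{title} (``Independence of (P1--P6)'') and proceeds by fixing a base model $\mathcal M_0$ and then, for each of P1--P6, constructing a countermodel $\mathcal M_{\neg P k}$ in which only that axiom fails (no reference point $O$ for P1, directions not given by ordered point pairs for P2, a single admissible direction at $O$ for P3, two baselines for P4, duplicated lines in one direction for P5, a multivalued projective slope for P6). The claim about CONG1--3 holding and CONG5 failing on the Euclidean plane is never argued there; the nearest relative in the paper is \Cref{prop:cong5-indep}, which establishes independence of CONG5 by a restricted model carrying no length structure at all --- again a different assertion from yours. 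So the tension you sensed is real: the statement body and the paper's proof are mismatched, your proposal is an argument for the body rather than for what the paper actually proves, and on its own terms it is sound precisely to the extent that the reinterpretation you flag (failure of uniqueness of the ASA transfer, not falsity of the side-length implication) is written into the claim.
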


\begin{proof}
We assume I1--I5 as background.\footnote{%
The I- and P-families presuppose distinct structures; neither family
is derivable from the other. Hence no additional verification of “mutual
independence” between them is required.}
Define a base model $\mathcal M_0$ as follows:
the plane $\mathbb R^2$, reference point $O=(0,0)$, reference line
$l=\{(x,0)\}$ (the $x$-axis); the set of directions consists of all
orientations (the unit circle). For any $X$ and any direction $d\neq l$,
there is a unique line through $X$ in direction $d$.
Fixing a plane with $O$ and the baseline $\ell$, for $X\notin l$ and
$d\neq l$ let $Y=l\cap \ell(X,d)$. We call the ratio-type quantity
\[
  \pslp{OX}{XY}
\]
the \emph{projective slope} of $OX$ with respect to $XY$.

We regard $\pslp(OX;XY)$ as well-defined by the usual $x$-coordinate ratio.
Below, in each case we keep the structure of $\mathcal M_0$ except for the single
axiom we intend to violate.

\begin{enumerate}[label=\text{Case P\arabic*:}, leftmargin=*, widest=6]

\item (P1: existence of a reference point $O$) \\
Model $\mathcal M_{\neg P1}$:
remove the constant symbol $O$ from the language; everything else as in $\mathcal M_0$.
Directions (P2), choice of $\ell$ (P4), uniqueness of the line through $X$ with
direction $d\neq l$ (P5), and the definition of $\PSLP$ (P6) still hold, but P1 is false
because $O$ does not exist. Hence P1 is independent.

\item (P2: line direction given by “from $A$ to $B$”) \\
Model $\mathcal M_{\neg P2}$:
give a global direction set $D$ (e.g.\ the unit circle) and do not define
the direction of a line via an ordered pair of points $(A,B)$.
Keep $O,l$ as in $\mathcal M_0$; satisfy P4 and P5 using $D$.
For $X\notin l$ and $d\neq l$, the intersection $Y$ exists and
$\pslp{OX}{XY}$ is well-defined with respect to $D$.
Thus P1 and P3–P6 are true while only P2 is false. Hence P2 is independent.

\item (P3: at least two directions from $O$) \\
Model $\mathcal M_{\neg P3}$:
restrict the set of admissible directions from $O$ to $D_O=\{d_0\}$
(a single direction). Directions from other points (or the global $D$) remain
as in $\mathcal M_0$.
Then P1, P2, P4, P5, P6 remain valid, while “at least two directions from $O$”
fails. Hence P3 is independent.

\item (P4: specification of the baseline $\ell$) \\
Model $\mathcal M_{\neg P4}$:
treat two distinct lines $\ell_1\ne \ell_2$ through $O$ both as “baselines,”
thus breaking uniqueness. With all else as in $\mathcal M_0$, P1, P2, P3, P5, P6 hold and only P4 fails.
Hence P4 is independent.
(\emph{Remark:} If P4 were stated as a pure “existence” axiom, one could also make it false by not specifying any baseline at all.)

\item (P5: uniqueness of the line through $X$ in direction $d\neq l$) \\
Model $\mathcal M_{\neg P5}$:
fix one direction $d^\ast\neq \mathrm{dir}(l)$ and provide, for each point $X$,
\emph{two} distinct lines through $X$ in direction $d^\ast$ (duplicate the same orientation).
Keep uniqueness for all other directions.
Then P1–P4 and P6 continue to hold, while uniqueness in P5 fails.
Hence P5 is independent.

\item (P6: well-defined projective slope $\PSLP$) \\
Model $\mathcal M_{\neg P6}$:
for $X\notin l$, $d\neq l$, the intersection $Y=l\cap \ell(X,d)$ exists, but define
$\pslp{OX}{XY}$ to be \emph{multi-valued and orientation-dependent}:
for instance, assign different values when $XY$ is oriented $X\to Y$ versus $Y\to X$
(or depending on which side along $\ell$ the point $Y$ is approached).
Then P1–P5 hold, but $\PSLP$ is not uniquely defined; P6 is false.
Hence P6 is independent.
\end{enumerate}
\qedhere
\end{proof}

\begin{proposition}[Independence of PAR and A]\label{prop:par-ang-indep}
\Cref{ax:PAR1} and \cref{ax:A1,ax:A2,ax:A3,ax:A4,ax:A5} are independent.
\end{proposition}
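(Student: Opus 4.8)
The plan is to argue by independence models, in the style of \Cref{prop:p1-p6-indep}: for each axiom $X$ among \Cref{ax:PAR1,ax:A1,ax:A2,ax:A3,ax:A4,ax:A5} I exhibit a model $\mathcal M_{\neg X}$ satisfying every \emph{other} axiom of the family while $X$ itself fails. Fix once and for all a base model $\mathcal A_0$: the plane $\R^2$ with the projective reference structure $(\ell,d)$, taking $\angle(A,P,B):=s_{PA}-s_{PB}$, the difference of the slopes of the lines $PA,PB$ relative to $(\ell,d)$. A routine check --- oddness, telescoping additivity along lines (using that the ``continuously formed'' proviso of \Cref{ax:A2} keeps the three slopes in one connected component of $D_P$, on which slope is a monotone coordinate), injectivity and ray-invariance of the slope, realisability of bisections, and uniqueness of the parallel through a point --- shows $\mathcal A_0$ satisfies all six axioms. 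Each $\mathcal M_{\neg X}$ will then change exactly one ingredient of $\mathcal A_0$.

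For $\neg$PAR1 keep the angular quantity of $\mathcal A_0$ and alter only the incidence/parallel data --- pass to a Klein-type carrier (a bounded convex region with chords as lines), or merely coarsen the direction relation --- so that through some point off a given line there pass two or more ``parallels'' (or none). Since \Cref{ax:A1,ax:A2,ax:A3,ax:A4,ax:A5} mention only betweenness, oriented rays, and the slope function, all five persist, while the uniqueness clause of \Cref{ax:PAR1} is destroyed.

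For the remaining five models keep the carrier $\R^2$ (so \Cref{ax:PAR1} holds automatically) and deform only the angle. For $\neg$A1 take the \emph{symmetric} $\angle(A,P,B):=\lvert s_{PA}-s_{PB}\rvert$: it is ray-invariant, vanishes exactly on collinear triples, and is additive along lines with bisections available --- because the ``continuously formed'' clause confines us to a component where slope is monotone --- so \Cref{ax:A2,ax:A3,ax:A4,ax:A5} hold while only the antisymmetry \Cref{ax:A1} fails. For $\neg$A2 take $\angle(A,P,B):=(s_{PA}-s_{PB})^{3}$: still odd, ray-invariant, and zero precisely when $P,A,B$ are collinear, so \Cref{ax:A1,ax:A3,ax:A4} hold, and \Cref{ax:A5} holds because it only asserts the \emph{existence} of a bisecting continuous ray-mapping, witnessed by the unchanged $s_r-s_s$; but the cube is not additive, so \Cref{ax:A2} fails. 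For $\neg$A3 take $\angle(A,P,B):=s_{PA}^{2}-s_{PB}^{2}$: it telescopes (so \Cref{ax:A2} holds), is odd and ray-invariant with bisections, but its kernel contains opposite-slope, non-collinear configurations, so \Cref{ax:A3} fails. For $\neg$A4 let the angle depend on \emph{position}: fix on each line $m$ a coordinate $\gamma_m$ and set $\angle(A,P,B):=\gamma_m(A)-\gamma_m(B)$ when $P$ is off the line $m=AB$, and $\angle(A,P,B):=0$ when $P\in m$; this is odd, additive along each line, and zero exactly on collinear triples, so \Cref{ax:A1,ax:A2,ax:A3,ax:A5} hold, yet sliding $A$ along $\overrightarrow{PA}$ changes the line $AB$ and hence the value, so \Cref{ax:A4} fails. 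Finally, for $\neg$A5 arrange a connected component of $D_P$ to carry the topology of a circle (e.g.\ by dropping the exclusion of the singular direction, so $D_P\cong S^1$, in the spirit of the covering-group/lift option of \Cref{conv:boundary}) and equip it with a necessarily discontinuous branch of the slope-difference angle; then \Cref{ax:A1,ax:A2,ax:A3,ax:A4} still hold, while \Cref{ax:A5} fails, since no $\R$-valued mapping on a circular component can be at once bisecting and continuous.

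The routine part is checking that each deformation leaves the other five axioms of the family intact. The delicate points are: in the $\neg$A1 and $\neg$A4 models, confirming that the ``continuously formed'' clause of \Cref{ax:A2} genuinely localises the configuration to a component on which additivity is recovered, so that A2 is not broken as collateral damage; and, above all, the $\neg$A5 model, where the main obstacle is to prove that a circular (covering-group-twisted) component admits \emph{no} real-valued angular mapping that is simultaneously bisecting and continuous --- this non-existence is where the real content of the A5 independence lies. Once it is in hand, the six models together establish that \Cref{ax:PAR1} and \Cref{ax:A1,ax:A2,ax:A3,ax:A4,ax:A5} are independent.
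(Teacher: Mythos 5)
Your overall strategy---per--axiom countermodels obtained by deforming the slope--difference angle on $\mathbb{R}^2$, in the style of \Cref{prop:p1-p6-indep}---is exactly the paper's, and your models for \cref{ax:A1,ax:A3,ax:A4} are sound (your position--dependent $\neg$A4 model even addresses the literal ray--invariance statement more directly than the paper's ``length--scale'' version). The genuine gap lies in the pair \cref{ax:A2,ax:A5}. Your $\neg$A2 model $\angle(A,P,B)=(s_{PA}-s_{PB})^{3}$ does break additivity, but it also breaks A5(i) \emph{for the model's own angle}: a bisecting ray $t$ would require $(s_r-s_t)^3=(s_t-s_s)^3=\tfrac12(s_r-s_s)^3$, hence $s_r-s_t=s_t-s_s=2^{-1/3}(s_r-s_s)$ and so $2\cdot 2^{-1/3}=1$, impossible whenever $s_r\neq s_s$. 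You escape this only by reading \cref{ax:A5} as asserting the existence of \emph{some} continuous bisecting mapping unrelated to the angle of the model; but that reading makes A5 essentially vacuous (on an interval--like component the plain slope difference, or even the zero map, is such a mapping), and it simultaneously destroys your own $\neg$A5 argument: on a circular component the constant--zero mapping is continuous and trivially bisecting, so your key claim that no $\mathbb{R}$--valued mapping on such a component can be at once bisecting and continuous is false as stated---and you yourself flag it as the unproven crux. The paper's proof is internally consistent precisely because it ties A5 to the model's angle and chooses countermodels accordingly: for $\neg$A2 a two--regime piecewise--linear angle (factor $\alpha$ when both slopes lie below a threshold, $\beta$ above, $\alpha\neq\beta$), which is locally linear, so bisection and continuity survive while additivity fails across the threshold; for $\neg$A5 an integer--valued angle $\lfloor\theta(s)\rfloor-\lfloor\theta(r)\rfloor$, for which A1--A4 and A5(ii) hold but no half--value exists, so A5(i) fails. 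If you want to stay within your ``function of the slope difference'' ansatz for $\neg$A2, you must pick a deformation compatible with halving (e.g.\ an odd continuous strictly monotone $g$ with $g(2w)=2g(w)$ but not additive); the cube specifically cannot serve.

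A smaller point: your first device for $\neg$\cref{ax:PAR1} (a Klein--type carrier with chords as lines) does not falsify PAR1 as this paper states it, because there ``parallel'' means \emph{same or opposite direction}, and through an interior point there is exactly one chord with a prescribed direction. You need your alternative suggestion---several distinct lines sharing one direction class through a point, which is the paper's duplication model---so that the uniqueness clause fails while the A--axioms, which see only slopes, are untouched.
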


\begin{proof}
Following Hilbert, we show that none of these axioms can be derived from the others  
by constructing counterexample models\footnote{Geometric structures in which only the target axiom fails while all others hold.}.

\medskip
As the standard reference model $\mathcal{M}_0$,  
we take the Euclidean plane $\mathbb{R}^2$ with the circumparabola $y=x^2$,  
the projection direction along the $x$-axis, and define
\[
\measuredangle_{\mathcal P}(\ell,m)=\mathrm{slp}(\ell)-\mathrm{slp}(m),
\qquad
|XY|_{\mathcal P}=|x_Y-x_X|
\]
(the singular lines being those parallel to the $x$-axis).  
In the following, we modify only the angular measure as needed,  
keeping all other structures (points, lines, order, etc.) fixed as in $\mathcal{M}_0$.

\begin{enumerate}[label=(\roman*), align=left, leftmargin=3em]

\item[PAR1 (Uniqueness of parallels)]  
Starting from $\mathcal{M}_0$, allow \emph{two} distinct “lines in direction $d$”  
through each point and each given direction $d$.  
All other I- and P-axioms remain valid, but PAR1 fails.  
Hence PAR1 is independent.

\item[A1 (Antisymmetry)]  
Define the angular measure by the absolute value
\[
\angle^{\mathrm{abs}}(\ell,m)=\big|\mathrm{slp}(\ell)-\mathrm{slp}(m)\big|.
\]
Additivity (for unoriented angles), degeneracy, and scaling invariance are preserved,  
but $\angle(\ell,m)=-\angle(m,\ell)$ does not hold; thus A1 fails.  
Therefore A1 is independent.

\item[A2 (Additivity)]  
Fix a threshold $s_0$ and define the piecewise-linear measure
\[
\angle^{\mathrm{pc}}(\ell,m)=
\begin{cases}
\alpha\big(\mathrm{slp}(\ell)-\mathrm{slp}(m)\big) & \text{if } \mathrm{slp}(\ell),\mathrm{slp}(m)\le s_0,\\[2pt]
\beta \big(\mathrm{slp}(\ell)-\mathrm{slp}(m)\big) & \text{if } \mathrm{slp}(\ell),\mathrm{slp}(m)> s_0,
\end{cases}
\qquad (\alpha\neq\beta).
\]
Antisymmetry (A1), degeneracy (A3), and scaling invariance (A4) remain valid,  
but for configurations where $B$’s slope crosses the threshold,  
$\angle(\ell,m)+\angle(m,n)\neq\angle(\ell,n)$,  
so A2 fails. Hence A2 is independent.

\item[A3 (Vanishing)]  
(Assume the domain of A1 is restricted to $\ell\neq m$.)  
Define
\[
\angle^{(\varepsilon)}(\ell,\ell)=\varepsilon,\qquad
\angle^{(\varepsilon)}(\ell,m)=\mathrm{slp}(\ell)-\mathrm{slp}(m)\ \ (\ell\neq m),
\]
where $\varepsilon>0$ is a fixed constant.  
Then antisymmetry (for $\ell\neq m$), additivity (for distinct $\ell,m,n$),  
and scaling invariance hold,  
but A3 fails. Thus A3 is independent.

\item[A4 (Scaling invariance)]  
Let the angular measure depend on a \emph{length scale} $\lambda>0$ as
\[
\angle^{\lambda}(A,P,B)=\lambda\cdot\big(\mathrm{slp}(PA)-\mathrm{slp}(PB)\big).
\]
Under geometric scaling transformations,  
the angle measure is no longer invariant, so A4 fails,  
while A1–A3 remain valid. Hence A4 is independent.

\item[A5 (Continuous divisibility)]  
Let $\theta$ be a continuous angular coordinate on $D_P$,  
and set the singular set $S_P:=\theta^{-1}(\mathbb{Z})$.  
Define $\Angle(r,s):=\lfloor\theta(s)\rfloor-\lfloor\theta(r)\rfloor$.  
Then A1–A4 hold, but since the value set of $\Angle$ is $\mathbb{Z}$,  
when $\Angle(r,s)=1$ there exists no midpoint $t$ satisfying $\Angle(r,t)=\tfrac12$,  
contradicting A5(i).  
Moreover, $\Angle(r,\cdot)$ is constant on each connected component,  
hence continuous; thus A5(ii) holds.
\end{enumerate}
\qedhere
\end{proof}

\begin{proposition}[Independence of CONG5]\label{prop:cong5-indep}
The axiom \textup{CONG5} (the distributive axiom of ASA type) is not derivable
from all the other axiom groups (I, O, P, A, and CONG1--3).
\end{proposition}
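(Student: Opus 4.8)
The plan is to establish independence by exhibiting a single model in which every axiom of the groups I, O, P, A together with CONG1--3 holds, but CONG5 fails. The natural starting point is the reference model $\mathcal{M}_0$ from the proof of \Cref{prop:par-ang-indep}: the plane $\mathbb{R}^2$ with the circumparabola $y=x^2$, projection along the $x$-axis, difference angle $\measuredangle_{\mathcal P}(\ell,m)=\mathrm{slp}(\ell)-\mathrm{slp}(m)$, singular direction that of the $x$-axis, and standard length $|XY|_{\mathcal P}=|x_Y-x_X|$. A direct computation with the incidence data of $\mathcal{M}_0$ shows that a DA-triangle $\triangle ABC$ with angles $\theta_B,\theta_C$ at $B,C$ (and $\theta_B\neq\theta_C$) has $x_A=(\theta_C x_C-\theta_B x_B)/(\theta_C-\theta_B)$, so $|AB|_{\mathcal P}/|BC|_{\mathcal P}=|\theta_C|/|\theta_C-\theta_B|$ depends only on the angles; hence CONG5 \emph{does} hold in $\mathcal{M}_0$. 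The construction therefore keeps the incidence, order, projective, parallel and angular structures of $\mathcal{M}_0$ entirely untouched---so that I, O, P, PAR1, A1--A5 hold verbatim---and perturbs \emph{only} the length, to a direction-weighted one
\[
|XY|'\;:=\;w\bigl(\mathrm{slp}(XY)\bigr)\cdot|x_Y-x_X|,
\]
where $w\colon\mathbb{R}\to(0,\infty)$ is a fixed continuous positive weight that is \emph{not} of exponential form $e^{\alpha s}$; the concrete choice $w(s)=1+s^2$ will do.

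First I would check that $|\cdot|'$ still obeys CONG1--3. Reflexivity (CONG3) and transitivity (CONG2) are automatic for any numerical length. For the segment-transfer axiom CONG1, given $\overline{AB}$ and a point $A'$ on a non-singular line $m$, the value $|A'B'|'=w(\mathrm{slp}(m))\,|x_{B'}-x_{A'}|$ sweeps out all of $(0,\infty)$ as $B'$ moves along $m$, so a witness $B'$ with $|A'B'|'=|AB|'$ exists; the singular lines are handled by the same boundary policy as in $\mathcal{M}_0$ (on horizontal lines $w=1$, so nothing new occurs). Since the statement of the proposition invokes only I, O, P, A and CONG1--3, no further properties are required; I would nonetheless note that homogeneity (L1) and symmetry (L3) of \Cref{def:length-structure} still hold, because the maps $h_\lambda$ preserve slopes and scale $x$-differences by $\lambda$.

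Next I would exhibit the failure of CONG5. For a DA-triangle $\triangle ABC$ with $\measuredangle_{\mathcal P}ABC=\theta_B$, $\measuredangle_{\mathcal P}ACB=\theta_C$ and base line of slope $m:=\mathrm{slp}(BC)$, one has $\mathrm{slp}(BA)=m-\theta_B$ (up to the sign convention for $\measuredangle_{\mathcal P}$), so
\[
\frac{|AB|'}{|BC|'}\;=\;\frac{w(m-\theta_B)}{w(m)}\cdot\frac{|\theta_C|}{|\theta_C-\theta_B|}.
\]
Because $w$ is non-exponential, $s\mapsto w(s-\theta_B)/w(s)$ is non-constant, so one can pick two base slopes $m\neq m'$---with all of $m,\,m-\theta_B,\,m-\theta_C,\,m',\,m'-\theta_B,\,m'-\theta_C$ non-singular---for which the two ratios disagree. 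Building $\triangle ABC$ with base slope $m$ and $\triangle A'B'C'$ with base slope $m'$, both carrying the same angles $\theta_B,\theta_C$ and, after a rescaling along the base lines, satisfying $|BC|'=|B'C'|'$, yields $\measuredangle_{\mathcal P}ABC=\measuredangle_{\mathcal P}A'B'C'$, $\measuredangle_{\mathcal P}ACB=\measuredangle_{\mathcal P}A'C'B'$, $|BC|'=|B'C'|'$, yet $|AB|'\neq|A'B'|'$ (taking $L=L'$ with the identifying automorphism the identity, which is the strongest form of the failure). Hence all of I, O, P, A and CONG1--3 hold in this model while CONG5 does not.

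The step I expect to be the main obstacle is ruling out the possibility that the perturbed length \emph{accidentally still satisfies} CONG5: the exponential weights $w(s)=e^{\alpha s}$ are exactly those for which $w(m-\theta_B)/w(m)=e^{-\alpha\theta_B}$ is independent of $m$, so CONG5 survives them; this is why the non-exponential hypothesis on $w$ is indispensable and must be verified carefully, together with the bookkeeping that every slope appearing in the two configurations avoids the singular direction. A secondary, more delicate point is how much of \Cref{def:length-structure} one insists the perturbed length obey---imposing the full automorphism-invariance half of (L2) relative to all angle-preserving affine maps (e.g.\ the shears $(x,y)\mapsto(x,y+cx)$) would rigidify the length to a constant multiple of $|\cdot|_{\mathcal P}$ and revive CONG5---but this lies outside the hypotheses (I, O, P, A, CONG1--3) of the present statement, so it does not affect the argument.
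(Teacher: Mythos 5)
Your proof is correct, but it takes a genuinely different route from the paper's. The paper's own argument is abstract and very short: since the angular structure (A1--A5) and the length structure of \Cref{def:length-structure} are built independently, it invokes a \emph{restricted} model that keeps only the angular structure and introduces no length assignment $|\cdot|_L$ at all, asserting that this satisfies A1--A5 and CONG1--3 but not CONG5. You instead keep a length structure and deform it: the direction-weighted length $|XY|'=w(\mathrm{slp}(XY))\,|x_Y-x_X|$ with the non-exponential weight $w(s)=1+s^2$ leaves the incidence, order, projective and angular data of $\mathcal M_0$ untouched and still satisfies CONG1--3 (and even (L1), (L3)), while the ASA ratio becomes $\tfrac{w(m-\theta_B)}{w(m)}\cdot\tfrac{|\theta_C|}{|\theta_C-\theta_B|}$ and hence depends on the base slope $m$, so \Cref{ax:CONG5} fails on an explicit pair of triangles; your preliminary computation that the unweighted ratio depends only on $\theta_B,\theta_C$ is also correct. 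What your construction buys is a non-vacuous refutation in the Hilbert countermodel style used elsewhere in the paper (\Cref{prop:p1-p6-indep}, \Cref{prop:par-ang-indep}): in the paper's length-free model one could object that CONG5, whose hypothesis requires $|BC|_L=|B'C'|_{L'}$, is vacuously true rather than false, whereas your model interprets all of CONG1--3 and CONG5 on an actual length assignment and exhibits a concrete violation. What the paper's argument buys is brevity and the conceptual point that the length layer is logically separate from the angular one. Your two closing caveats are exactly the right ones: exponential weights are precisely the CONG5-preserving ones, and imposing the full (L2) invariance (e.g.\ under the angle-preserving shears) would force $w$ to be constant, but \Cref{def:length-structure} is not among the hypotheses of the proposition, so this does not affect your model. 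The only blemish, shared with the paper's own base model $\mathcal M_0$, is that a purely planar model cannot literally satisfy the spatial axiom I7 and that segment transfer onto singular lines needs the same boundary convention as in $\mathcal M_0$; both points are inherited from the paper's setup rather than introduced by your argument.
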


\begin{proof}
As in Hilbert's system where III$_5$ (the SAS axiom) is independent, in the present
framework the angular structure (A1--A5) and the length structure
(\Cref{def:length-structure}) are constructed independently.
Hence removing \textup{CONG5} entails no contradiction among the remaining axioms.
Indeed, a \emph{restricted model} that retains only the angular structure and does not
introduce any length structure $|\cdot|_L$ (corresponding to the foundational
DA geometry) satisfies A1--A5 and CONG1--3 but not CONG5.
Therefore \textup{CONG5} is an independent axiom.
\end{proof}

\begin{maintheorem}[Independence of the Axiom System]\label{mthm:axiom-independence}
The axiom system presented above is independent.
\end{maintheorem}

\begin{proof}
\Cref{ax:I1}--\Cref{ax:I7}, \Cref{ax:O1}--\Cref{ax:O4}, and
\Cref{ax:CONG1}--\Cref{ax:CONG3} are unaffected by the projective baseline
structure; hence Hilbert's independence arguments---which guarantee that the
groups of distance, order, and incidence axioms do not depend on each other---
extend to our setting without contradiction.
Moreover, \Cref{prop:p1-p6-indep}, \Cref{prop:cong5-indep}, and
\Cref{prop:par-ang-indep} establish the remaining cases.
\end{proof}


\section{Definitions}

In this chapter, we define the difference angle, the central notion of DA geometry.  
Unlike the Euclidean or Hilbertian angle, it is not derived from distance or rotation,  
but is a purely projective quantity of first order, defined independently of any metric.

Given a projective reference structure,  
let the direction along the projective reference line be denoted by $\ell$,  
and another direction not parallel to $\ell$ be called the projective direction $d$.  

Starting from this chapter, we regard the underlying space as the real
two-dimensional affine plane~$\mathbb{R}^2$, and we again employ the
projective reference structure~$(\ell,d)$ introduced earlier.
Within this setting, we define the difference angle.
Consequently, the difference angle becomes a concrete model satisfying
the axiomatic system of angles established in Chapter~3.

Through this pair $(\ell,d)$, the projective reference structure carries  
a vector space structure isomorphic to $\mathbb{R}^2$,  
so that all discussions can be made on the $xy$-plane.  
However, the difference angle and the difference norm (defined later)  
are independent of the choice of basis in this vector space.

\begin{figure}[htbp]
  \centering
  \includegraphics[scale=0.8]{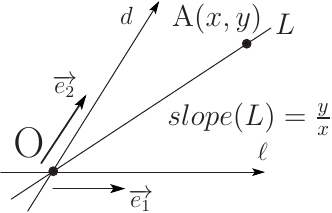}
  \caption{Slope of a line and the definition of the difference angle.}
  \label{fig:slope-of-line-L}
\end{figure}

\subsection{Preparation: The nature of angle — difference of slopes (coordinate-free definition)}

Fix a projective reference structure $(\ell,d)$.  
Let $\varphi,\psi\in(\mathbb{R}^2)^{\ast}$ be covectors satisfying
\[
\varphi(d)=0,\quad \varphi|_{\ell}\neq 0,\qquad
\psi|_{\ell}=0,\quad \psi(d)\neq0.
\]
For a direction vector $v$ of a line $\ell$, define its slope by
\[
\Slp{\ell}:=\frac{\psi(v)}{\varphi(v)}.
\]
Then, for two lines $\ell_1,\ell_2$ passing through a vertex $B$,  
the difference angle between them is defined by
\[
\measuredangle_{\mathcal P}(\ell_1,B,\ell_2):=\lambda\bigl(\Slp{\ell_2}-\Slp{\ell_1}\bigr),
\]
where the sign is determined by the direction of rotation not crossing $d$.  
Here $\lambda>0$ is a constant that fixes the unit of angular measure.  
For convenience one may set $\lambda=1$, which corresponds to taking  
the difference of slopes itself as the primitive angular quantity.

\paragraph{Invariance and coordinate representation.}
Under a coordinate transformation preserving $(\ell,d)$,
\[
(x,y)\mapsto(\alpha x+\beta,\ \gamma y+\delta), \qquad (\alpha,\gamma>0),
\]
the slope transforms as $s\mapsto(\gamma/\alpha)s$,  
so the difference angle is uniquely defined once $\lambda$ is fixed.  
Hence, for convenience, we may take $\ell$ as the $x$-axis and $d$ as the $y$-direction,  
in which case for a line $\ell_1$ we have
\[
\Slp{\ell_1}=\frac{\Delta y}{\Delta x},
\]
so that the slope coincides with the usual one in Cartesian coordinates.

\begin{remark}
Let the basis vectors be $\vec e_1=(1,0)$ and $\vec e_2=(0,1)$.  
Then the line $\ell_1=x\vec e_1+y\vec e_2$ ($x>0$) has slope $y/x$.  
Moreover, for two points $A=(a_1,a_2)$ and $B=(b_1,b_2)$ with $a_1<b_1$, one has
\[
\Slp{AB} = \frac{b_2-a_2}{b_1-a_1}.
\]
\end{remark}

\begin{remark}
The difference angle is determined linearly by the ratio of $(\varphi,\psi)$,
and therefore it does not depend on any units of length or area.  
Secondary structures, such as inner products and areas, will be discussed in Base~2 and later papers.
\end{remark}

\begin{remark}[Assumption on coordinate systems]
Throughout this paper, for the reader's convenience,  
all constructions, theorems, and proofs in DA geometry  
are presented within the orthogonal Cartesian coordinate system $(x,y)$.  
Nevertheless, the constructions themselves are defined for any projective reference structure  
and can be extended to general affine or projective geometries.  
When changing to another coordinate system (for example, an oblique coordinate frame),  
the corresponding transformation-matrix correction  
(such as the determinant factor for area) can be applied to obtain equivalent results.  
Hence, all statements remain invariant under affine transformations that preserve the reference structure $(\ell,d)$.
\end{remark}

\subsection{System of Definitions for the Difference Angle}

\begin{definition}[Segment and Ray]\label{def:segment-ray}
Let $A,B$ be two distinct points on a line.
\begin{itemize}
  \item The set consisting of $A,B$ and all points between them is called a \textbf{segment}, denoted by $AB$.  
        Unless otherwise stated, both endpoints $A,B$ are included.
  \item When a line is divided at a point $A$,  
        the set consisting of $A$ and all points on one side containing a given point $B$  
        is called a \textbf{ray}, denoted by $AB$.  
        Unless explicitly stated, the endpoint $A$ is included.
\end{itemize}
\end{definition}

\begin{figure}[htbp]
  \centering
  \begin{subfigure}{0.45\textwidth}
    \centering
    \includegraphics[scale=0.8]{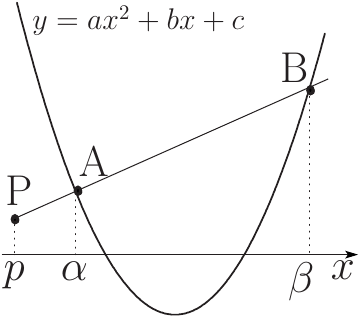}
    \caption{Parabolic power theorem.}
    \label{fig:Parabolic-power-theorem}
  \end{subfigure}
  \hfill
  \begin{subfigure}{0.45\textwidth}
    \centering
    \includegraphics[scale=0.8]{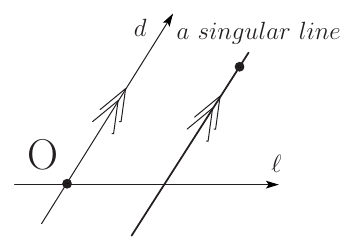}
    \caption{Singular line parallel to $d$.}
    \label{fig:singular-line}
  \end{subfigure}
  \caption{Fundamental notions related to the parabolic power.}
\end{figure}

\begin{theorem}[Parabolic Power Theorem]\label{thm:Parabolic-power-theorem}
Let $C: y=ax^2+bx+c$ be a parabola in the $xy$-plane,  
and let $P=(p,q)$ be a point in the plane.  
Let $\ell$ be any line passing through $P$ and intersecting $C$ at two points  
with $x$-coordinates $\alpha$ and $\beta$.  
Then the product $(\alpha-p)(\beta-p)$ is constant,  
independent of the choice of $\ell$.
\end{theorem}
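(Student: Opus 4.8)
The plan is to substitute the line directly into the parabola and read off the product of the roots from the resulting quadratic. Concretely, a line $\ell$ through $P=(p,q)$ that is not parallel to the $y$-axis can be written as $y = q + m(x-p)$ for some slope $m$; the case of a vertical line $x = p$ will be handled separately (or noted to follow by continuity). Substituting into $y = ax^2 + bx + c$ gives
\[
ax^2 + bx + c = q + m(x-p),
\]
i.e.
\[
ax^2 + (b-m)x + (c - q + mp) = 0 .
\]
Since $\ell$ meets $C$ at the points with $x$-coordinates $\alpha$ and $\beta$, these are exactly the two roots of this quadratic, so by Vieta's formulas
\[
\alpha\beta = \frac{c - q + mp}{a}, \qquad \alpha + \beta = \frac{m-b}{a}.
\]

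Next I would compute $(\alpha - p)(\beta - p) = \alpha\beta - p(\alpha+\beta) + p^2$ and substitute the two expressions above:
\[
(\alpha - p)(\beta - p)
= \frac{c - q + mp}{a} - p\cdot\frac{m - b}{a} + p^2
= \frac{c - q + mp - mp + bp + ap^2}{a}
= \frac{ap^2 + bp + c - q}{a}.
\]
The key observation is that the $m$-dependent terms $mp$ and $-mp$ cancel, leaving
\[
(\alpha - p)(\beta - p) = \frac{ap^2 + bp + c - q}{a},
\]
which depends only on $P$ and on the coefficients of $C$, not on $\ell$. This is the asserted constant; note it equals $\bigl(f(p) - q\bigr)/a$ where $f(x) = ax^2+bx+c$, so geometrically it is (up to the factor $1/a$) the signed vertical gap between $P$ and the parabola.

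There is no serious obstacle here — the computation is short and the cancellation is the whole content. The only points requiring a word of care are bookkeeping ones: first, one must assume $\ell$ genuinely meets $C$ in two points (counted with multiplicity), so that $\alpha,\beta$ are well-defined real roots and Vieta applies; second, the vertical line $x=p$ must be excluded or treated as a limiting case, since then $\alpha = \beta = p$ and $(\alpha-p)(\beta-p)=0$, which is consistent with the formula only when $q = f(p)$, i.e.\ when $P$ lies on $C$ — in general a vertical secant through $P$ meets $C$ at a single point with $x=p$, so the hypothesis "intersecting $C$ at two points" already rules it out. With those conventions fixed, the displayed cancellation completes the proof.
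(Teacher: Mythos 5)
Your proposal is correct and follows essentially the same route as the paper: substitute the line into the parabola and read the constant off the resulting quadratic, arriving at the same value $(ap^2+bp+c-q)/a$. The only cosmetic difference is that the paper evaluates the factorization $a(x-\alpha)(x-\beta)$ at $x=p$ directly, whereas you expand via Vieta's formulas; the content is identical.
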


\begin{proof}
Let $\ell$ be given by $y=mx+n$.  
Since the intersection points satisfy $ax^2+bx+c=mx+n$,  
the two roots $\alpha,\beta$ yield
\begin{align*}
	ax^2+bx+c-mx-n &= a(x-\alpha)(x-\beta), \\[3pt]
	q &= mp+n, \\[3pt]
	a(\alpha-p)(\beta-p) &= ap^2+bp+c-mp-n \\[3pt]
	&= ap^2+bp+c-q \quad \text{(constant)}.
\end{align*}
\end{proof}

\begin{definition}[Singular Line]\label{def:singular-line}
A line parallel to the projective direction $d$ is called a \textbf{singular line}.
\end{definition}

A singular line plays a special role in the computation of difference angles.  
Since its properties differ from those of ordinary lines,  
unless explicitly stated otherwise, singular lines are excluded from the notion of “line” in what follows.

\begin{figure}[htbp]
  \centering
  \includegraphics[scale=1.0]{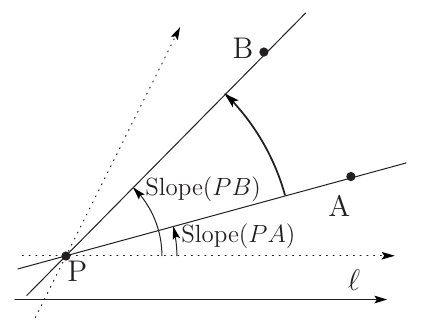}
  \caption{Definition of the difference angle.}
  \label{fig:def-diff-angle}
\end{figure}

\begin{definition}[Difference Angle]\label{def:differece_angle}
The \textbf{difference angle} $\measuredangle_{\mathcal{P}} APB$ is defined by
\[
\measuredangle_{\mathcal{P}} APB := \Slp{PB} - \Slp{PA}.
\]
It is an oriented (signed) quantity,  
and may, when appropriate, be considered modulo $\pi/2$.  
The following boundary policy is adopted:\footnote{
Although it also admits a lift-type interpretation with $\Lambda=\frac{\pi}{2}\mathbb{Z}$.}
\[
\textbf{Absorptive type:} \quad s\to S_P \ \text{implies}\ \Angle(r,s)\to 0.
\]
\end{definition}

\subsection{Well-definedness of the Difference Angle and Isoptic Curve}

For any three points $A,B,C$,  
the difference angle (difference of slopes) centered at $B$ is defined as
\[
\measuredangle_{\mathcal{P}} ABC
= \Slp{BA} - \Slp{BC}
= \frac{y_A - y_B}{x_A - x_B} - \frac{y_C - y_B}{x_C - x_B}.
\]
This definition is independent of the projective direction $d$,  
although the sign and orientation of the angle depend on the chosen reference structure $(\ell,d)$,  
which becomes important in subsequent geometric constructions.

The well-definedness of the difference angle as an angular measure follows from the following properties:

\begin{enumerate}
  \item \textbf{Order Reversal (A1):}
  \(\measuredangle_{\mathcal P}CBA = -\measuredangle_{\mathcal P}ABC.\)
  \item \textbf{Additivity (A2):}
  For any point $D$ on the segment $BC$,
  \[
  \measuredangle_{\mathcal P}ABC
  =\measuredangle_{\mathcal P}ABD+\measuredangle_{\mathcal P}DBC.
  \]
  \item \textbf{Collinearity (A3):}
  If $A,P,B$ are collinear, then \(\measuredangle_{\mathcal P}APB=0.\)
  \item \textbf{Scaling Invariance (A4):}
  Under isotropic scaling $(x,y)\mapsto(kx,ky)$,  
  the slopes are invariant, hence so is the difference angle.
\end{enumerate}

\begin{proposition}[Subtractive Additivity]\label{prop:subtractive_additivity}
If $C$ lies on the segment $AB$ and the corresponding angles are defined continuously, then
\[
\measuredangle_{\mathcal{P}}APC
=\measuredangle_{\mathcal{P}}APB-\measuredangle_{\mathcal{P}}CPB.
\]
\end{proposition}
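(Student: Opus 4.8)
The plan is to derive the identity directly from the additivity axiom \Cref{ax:A2}, which was verified for the difference angle as property (2) in the discussion preceding the statement. First I would note the hypothesis: $C$ lies on the segment $AB$, so $A$, $C$, $B$ are collinear with $C$ between $A$ and $B$. This is precisely the configuration to which A2 applies, with $P$ playing the role of the external vertex and with the collinear triple taken in the order $A$, $C$, $B$.

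Next I would invoke A2 (in the form stated as property (2), with $D$ replaced by $C$ and the collinear segment being $AB$ with $C$ between): this gives
\[
\measuredangle_{\mathcal P}APB=\measuredangle_{\mathcal P}APC+\measuredangle_{\mathcal P}CPB.
\]
Then I would simply rearrange, subtracting $\measuredangle_{\mathcal P}CPB$ from both sides, to obtain
\[
\measuredangle_{\mathcal P}APC=\measuredangle_{\mathcal P}APB-\measuredangle_{\mathcal P}CPB,
\]
which is the claimed identity. For completeness one could also give the one-line computational verification: writing each difference angle as a difference of slopes, $\measuredangle_{\mathcal P}APC=\Slp{PC}-\Slp{PA}$ and $\measuredangle_{\mathcal P}CPB=\Slp{PB}-\Slp{PC}$ and $\measuredangle_{\mathcal P}APB=\Slp{PB}-\Slp{PA}$, so the identity reduces to the telescoping cancellation $(\Slp{PB}-\Slp{PA})-(\Slp{PB}-\Slp{PC})=\Slp{PC}-\Slp{PA}$, which holds tautologically.

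The only subtlety — and it is minor — is the continuity hypothesis ("the corresponding angles are defined continuously"): this is needed to ensure that the rays $PA$, $PC$, $PB$ all lie in a single connected component of $D_P$, so that the lifted angular values add consistently and no singular ray is crossed between $A$ and $B$ as seen from $P$. I would mention that under this hypothesis the three rays avoid $S_P$ and A2 applies verbatim; the rearrangement is then purely algebraic. I do not anticipate any real obstacle here, since the statement is essentially a restatement of A2 with one term moved across the equality.
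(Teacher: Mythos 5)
Your proposal is correct and in substance the same as the paper's proof: the paper establishes the identity by exactly the telescoping slope computation you give at the end, writing each difference angle as a difference of slopes and cancelling. Your primary framing via the additivity property (2)/Axiom A2 followed by rearrangement is just a repackaging of that same content (that property is itself the same slope identity), and your remark on the continuity hypothesis is a reasonable gloss, so there is no gap.
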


\begin{proof}
By definition,
\begin{align}
\measuredangle_{\mathcal P}APB &= \Slp{PB}-\Slp{PA},\\
\measuredangle_{\mathcal P}APC &= \Slp{PC}-\Slp{PA},\\
\measuredangle_{\mathcal P}CPB &= \Slp{PB}-\Slp{PC}.
\end{align}
Hence,
\begin{align}
\measuredangle_{\mathcal P}APB-\measuredangle_{\mathcal P}CPB
&= \bigl(\Slp{PB}-\Slp{PA}\bigr)-\bigl(\Slp{PB}-\Slp{PC}\bigr)\\
&= \Slp{PC}-\Slp{PA}
= \measuredangle_{\mathcal P}APC.\qedhere
\end{align}
\end{proof}

\begin{remark}[Choice of Coordinate System]
The definition of the difference angle is coordinate-free  
and valid under any projective reference structure $(\ell,d)$.  
However, to maintain consistency with the figures and later computations,  
we shall, unless otherwise stated, fix $\ell$ as the $x$-axis  
and $d$ as the positive $y$-direction.  
This causes no loss of generality,  
since any other case can be transformed into this one by a suitable linear transformation.
\end{remark}

\begin{figure}[htbp]
  \centering
  \begin{subfigure}{0.45\textwidth}
    \centering
    \includegraphics[scale=0.8]{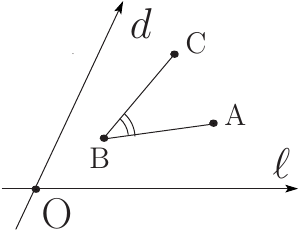}
    \label{fig:gauge-projective}
  \end{subfigure}
  \hfill
  \begin{subfigure}{0.45\textwidth}
    \centering
    \includegraphics[scale=0.8]{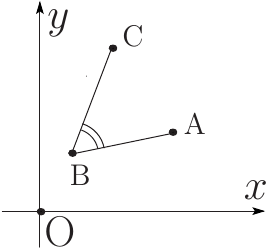}
    \label{fig:gauge-normalization}
  \end{subfigure}
  \caption{Normalization of the projective gauge.}
\end{figure}

\begin{figure}[htbp]
  \centering
  \begin{subfigure}{0.45\textwidth}
    \centering
    \includegraphics[scale=0.8]{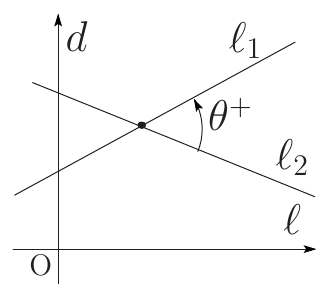}
    \caption{Positive difference angle.}
    \label{fig:positive-diff-angle}
  \end{subfigure}
  \hfill
  \begin{subfigure}{0.45\textwidth}
    \centering
    \includegraphics[scale=0.8]{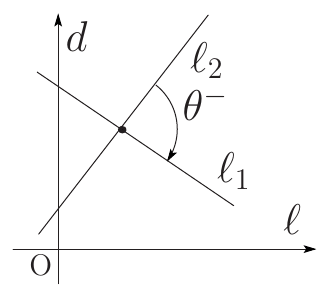}
    \caption{Negative difference angle.}
    \label{fig:negative-diff-angle}
  \end{subfigure}
  \caption{Positive and negative difference angles.}
\end{figure}

\begin{definition}[Positive and Negative Difference Angles]\label{def:positive-and-negative-diff-angle}
Fix the projective direction $d$. For two lines $\ell,m$ with slopes $\Slp{\ell}$ and $\Slp{m}$, set:
\begin{itemize}
\item If $\Slp{\ell} > \Slp{m}$, then the difference $\Slp{\ell} - \Slp{m}$ is called a \textbf{positive difference angle}, denoted by $\theta^{+}$.
\item If $\Slp{m} > \Slp{\ell}$, then the difference $\Slp{\ell} - \Slp{m}$ is called a \textbf{negative difference angle}, denoted by $\theta^{-}$.
\end{itemize}
A difference angle takes two values of opposite sign; the sign is determined by the rotation that does not cross the projective direction $d$.
\end{definition}

\begin{proposition}\label{prop:positive-diff-angle-equivalence}
Let $\ell,m$ be two lines with $\Slp{\ell}\Slp{m}\neq 0$ and $\Slp{\ell}\neq\Slp{m}$. Then the following are equivalent:
\begin{enumerate}[label=\alph*)]
\item The opening for measuring the difference angle of $(\ell,m)$ is a positive difference angle.
\item The opening for measuring the difference angle of $(\ell,m)$ lies entirely on one side with respect to the projective direction $d$.
\end{enumerate}
\end{proposition}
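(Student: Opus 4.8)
The plan is to reduce everything to the standard gauge and then read the statement off the monotonicity of the slope function on the pencil of lines through the common vertex. First, using the coordinate-invariance of the difference angle (the Remark on invariance and coordinate representation, and \cref{def:positive-and-negative-diff-angle}), I would take the projective reference line to be the $x$-axis and $d$ the positive $y$-direction, and translate the intersection point $B=\ell\cap m$ (which exists because $\Slp{\ell}\neq\Slp{m}$) to the origin. Lines through $B$ are then parametrized by their direction angle $\phi$ in the circle $\mathbb{R}/\pi\mathbb{Z}$ with $\Slp{}=\tan\phi$; the \emph{singular} line (the one in direction $d$) is the unique value $\phi=\pi/2$ at which the slope is undefined. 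The two ``openings'' between $\ell$ and $m$ correspond to the two arcs of this circle joining $[\ell]$ and $[m]$, and since $[\ell],[m]$ are both finite (non-vertical) and distinct from $[d]$, exactly one of those two arcs contains $[d]$. Under this dictionary, ``the opening lies entirely on one side of $d$'' means precisely that the opening contains no ray parallel to $d$, i.e. that its arc of line-directions avoids the singular point $[d]$.

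For the implication (b) $\Rightarrow$ (a): if the opening avoids $[d]$, then its arc lifts to a bounded interval $(\phi_{1},\phi_{2})\subset(-\pi/2,\pi/2)$ (together with its vertical-angle copy shifted by $\pi$), on which $\tan$ is continuous and strictly increasing. By \cref{def:positive-and-negative-diff-angle} the sign of the difference angle of $(\ell,m)$ is the one fixed by the rotation that does not cross $d$, and that rotation is exactly the sweep through this opening; sweeping from the $\phi_{1}$-line to the $\phi_{2}$-line the slope increases from $\tan\phi_{1}$ to $\tan\phi_{2}$, so the measured difference angle equals $\tan\phi_{2}-\tan\phi_{1}>0$, a positive difference angle $\theta^{+}$. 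For (a) $\Rightarrow$ (b) I would run this backwards: if the difference angle of $(\ell,m)$ is positive, then the opening used to measure it is, again by the convention in \cref{def:positive-and-negative-diff-angle}, the one not crossing $d$, and along it the slope moves monotonically between the two finite values $\Slp{\ell},\Slp{m}$; in particular the sweep never reaches the vertical direction, so the opening contains no ray parallel to $d$ and hence lies entirely on one side of $d$.

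I expect the analytic input — that $\tan$ is a monotone bijection $(-\pi/2,\pi/2)\to\mathbb{R}$ whose only pole is at $\phi=\pi/2=[d]$, and that any connected arc of the pencil avoiding $[d]$ lies inside one such branch — to be immediate; the real work, and the main obstacle, is the \emph{bookkeeping of orientations}: making the informal notions ``opening'', ``the rotation that does not cross $d$'', and ``one side with respect to $d$'' precise and mutually consistent, checking that the two vertical-angle sectors making up a single opening are classified the same way, and verifying that the sign convention of \cref{def:positive-and-negative-diff-angle} (namely $\Slp{\ell}>\Slp{m}\Rightarrow\theta^{+}$) is compatible with the chosen rotation. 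The nondegeneracy hypotheses enter exactly here: $\Slp{\ell}\neq\Slp{m}$ guarantees a genuine vertex and two genuine, distinct openings, while $\Slp{\ell}\Slp{m}\neq0$ rules out the boundary configuration in which a bounding ray becomes parallel to the reference line, where the ``side of $d$'' reading would degenerate.
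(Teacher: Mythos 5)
Your argument is correct and in essence coincides with the paper's: the paper disposes of this proposition in one line, declaring it immediate from \cref{def:positive-and-negative-diff-angle}, i.e.\ from the convention that the sign of a difference angle is fixed by the rotation that does not cross $d$. Your tangent-parametrization of the pencil through the vertex simply makes that definitional bookkeeping explicit, so it is the same approach, only spelled out in detail.
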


\begin{proof}
This follows immediately from \cref{def:positive-and-negative-diff-angle}.
\end{proof}

\begin{definition}[Absolute Difference-Angle Value]\label{def:angle_value}
Given a difference angle $\measuredangle_{\mathcal{P}}APB$, its nonnegative magnitude
is called the \textbf{difference-angle value} of $\measuredangle_{\mathcal{P}}APB$ and is denoted by $|\measuredangle_{\mathcal{P}}APB|$.
\end{definition}

With these definitions, the isoptic curve in DA geometry for a segment $AB$ parallel to the projective reference line is a parabola. This corresponds to the classical Euclidean fact that the isoptic of a segment $AB$ is an arc of a circle (or two circles).

\begin{figure}[htbp]
  \centering
  \includegraphics[scale=0.8]{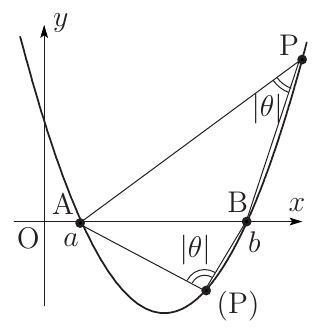}
  \caption{Proof of the parabola locus theorem.}
  \label{fig:isoangle-result}
\end{figure}

\begin{maintheorem}[Iso-angle Locus]\label{mthm:parabola-locus}
Let $A=(a,0)$ and $B=(b,0)$ with $a<b$. If a point $P$ satisfies the condition that the difference angle $\measuredangle_{\mathcal{P}}APB=\theta>0$ is constant, then its locus (including the endpoints $A,B$) is the parabola
\[
y \;=\; \frac{\theta}{\,b-a\,}\,(x-a)(x-b).
\]
\end{maintheorem}

\begin{remark}[Isoptic Curve]
The locus of points from which the chord $AB$ is seen under a fixed angle is classically called an \textbf{isoptic curve}. The parabola in \cref{mthm:parabola-locus} is the isoptic in the setting of DA geometry.
\end{remark}

\begin{figure}[htbp]
  \centering
  \begin{subfigure}{0.45\textwidth}
    \centering
    \includegraphics[scale=0.8]{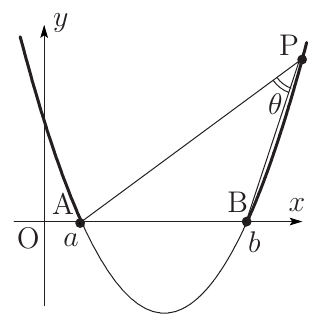}
    \caption{Iso-angle set above the $x$-axis.}
    \label{fig:isoangle-pos}
  \end{subfigure}
  \hfill
  \begin{subfigure}{0.45\textwidth}
    \centering
    \includegraphics[scale=0.8]{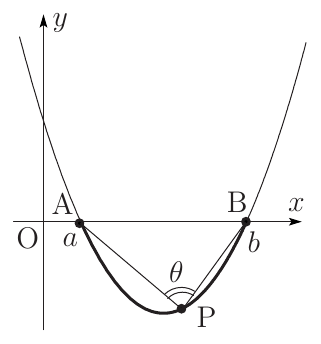}
    \caption{Iso-angle set below the $x$-axis.}
    \label{fig:isoangle-neg}
  \end{subfigure}
  \caption{Further proof of the parabola locus theorem.}
\end{figure}

\begin{proof}
Let $P=(x,y)$. Then
\[
\Slp{AP}=\frac{y}{x-a},\qquad
\Slp{BP}=\frac{y}{x-b},
\]
and hence
\[
\measuredangle_{\mathcal{P}}APB
=\Slp{PB}-\Slp{PA}
=\frac{(b-a)y}{(x-a)(x-b)}.
\]
Imposing $\measuredangle_{\mathcal{P}}APB=\theta$ gives
\[
y=\frac{\theta}{\,b-a\,}(x-a)(x-b),
\]
which is the required parabola.
\end{proof}

\begin{remark}
For the case $\measuredangle_{\mathcal{P}}APB=-\theta$, one obtains the corresponding parabola analogously.
\end{remark}

\paragraph{Difference-Angle Norm.}
\begin{definition}[Difference-Angle Norm]\label{def:diff-angle-norm}
Fix a projective reference structure $(\ell,d)$ and take a covector $\varphi\in(\mathbb{R}^2)^*$ such that
\[
\varphi|_\ell \neq 0,\qquad \varphi(d)=0.
\]
For two points $A,B$, define
\[
|AB|_{\mathcal P} := |\varphi(B-A)|
\]
and call this the difference-angle norm.
\end{definition}


\section{DA Triangles}

In this section, we define another fundamental figure in the DA geometry,
namely a triangle inscribed in a parabola,
and investigate its basic properties.

\medskip
Unlike in Euclidean geometry, an arbitrary triple of points
does not necessarily determine a triangle in this geometry.
This is due to the existence of singular lines,
that is, lines parallel to the projective direction $d$.
Any configuration including a singular line as one of its sides
degenerates along the projective direction
because of the property of the DA norm.
Therefore, such configurations are excluded at the first stage.

\medskip
Two features of the DA geometry are worth emphasizing:
(1) the triangle inequality for the DA norm always holds
with equality, and
(2) an equilateral triangle cannot be defined.
We begin by introducing the definition of a DA triangle
and the notion of interior angles,
which together will characterize this geometry.

\begin{figure}[htbp]
  \centering
  \begin{minipage}{0.32\textwidth}
    \centering
    \includegraphics[width=\linewidth]{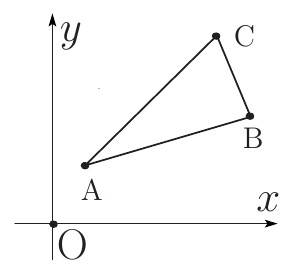}
    \subcaption{A DA triangle.}
    \label{fig:diff-ang-triangle}
  \end{minipage}
  \begin{minipage}{0.32\textwidth}
    \centering
    \includegraphics[width=\linewidth]{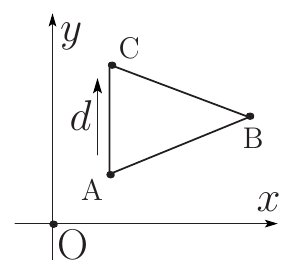}
    \subcaption{A degenerate configuration (containing a singular line).}
    \label{fig:not-diff-ang-triangle}
  \end{minipage}
  \begin{minipage}{0.32\textwidth}
    \centering
    \includegraphics[width=\linewidth]{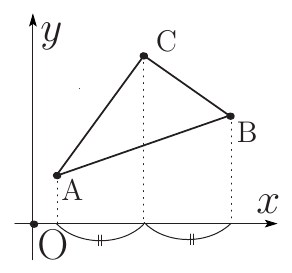}
    \subcaption{A DA isosceles triangle.}
    \label{fig:diff-ang-isotriangle}
  \end{minipage}
  \caption{DA triangles and excluded cases.}
\end{figure}

\begin{definition}[DA triangle]\label{def:diff-angle-triangle}
Let $A,B,C$ be three distinct noncollinear points.
If none of the lines passing through any two of them is singular,
the figure determined by these points is called a
\textbf{DA triangle},
denoted by $\triangle_{\mathcal P}ABC$.
In particular, if two of the three sides have equal
DA norms, the triangle is called
a \textbf{DA isosceles triangle}.
\end{definition}

\begin{figure}[htbp]
  \centering
  \begin{minipage}{0.32\textwidth}
    \centering
    \includegraphics[width=\linewidth]{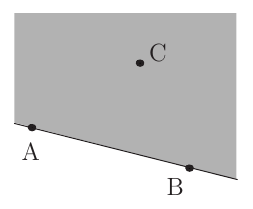}
    \subcaption{Half--plane determined by $AB$ containing $C$.}
    \label{fig:area-C-side-AB}
  \end{minipage}
  \begin{minipage}{0.32\textwidth}
    \centering
    \includegraphics[width=\linewidth]{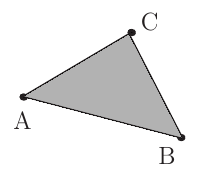}
    \subcaption{Interior region $\mathrm{Int}(\triangle ABC)$.}
    \label{fig:d-a-axi-fig-int-and-ext}
  \end{minipage}
  \begin{minipage}{0.32\textwidth}
    \centering
    \includegraphics[width=\linewidth]{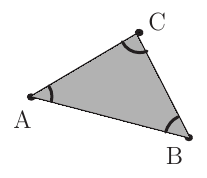}
    \subcaption{Interior angles of $\triangle ABC$.}
    \label{fig:interior-angle}
  \end{minipage}
  \caption{Interior of a DA triangle and its angles.}
\end{figure}

\begin{definition}[Interior and exterior regions]\label{int-ext}
For each line $AB$, $BC$, and $CA$,
take the open half--plane on the same side
as the opposite vertex, respectively denoted by
\[
H_C(AB),\quad H_A(BC),\quad H_B(CA).
\]
Then the \textbf{interior} of $\triangle_{\mathcal P}ABC$ is defined as
\[
\mathrm{Int}(\triangle_{\mathcal P}ABC)
 := H_C(AB)\cap H_A(BC)\cap H_B(CA),
\]
and the \textbf{exterior} as
\[
\mathrm{Ext}(\triangle_{\mathcal P}ABC)
 := \mathbb R^2\setminus \overline{\mathrm{Int}(\triangle_{\mathcal P}ABC)}.
\]
These notions are defined analogously
under any projective gauge structure.
\end{definition}

\begin{definition}[Interior angle]\label{interior-angle}
For a DA triangle $\triangle_{\mathcal P}ABC$,
the interior angle at vertex $A$,
denoted by $\measuredangle_{\mathcal P}A$,
is the DA between the rays $AB$ and $AC$
that lies within $\mathrm{Int}(\triangle_{\mathcal P}ABC)$.
The angles $\measuredangle_{\mathcal P}B$ and
$\measuredangle_{\mathcal P}C$ are defined similarly.
\end{definition}

Henceforth, the angles
$\measuredangle_{\mathcal P}BAC$, $\measuredangle_{\mathcal P}CBA$,
and $\measuredangle_{\mathcal P}ACB$
will be abbreviated as
$\measuredangle_{\mathcal P}A$,
$\measuredangle_{\mathcal P}B$,
and $\measuredangle_{\mathcal P}C$, respectively.

\begin{figure}[htbp]
  \centering
  \begin{minipage}{0.32\textwidth}
    \centering
    \includegraphics[width=\linewidth]{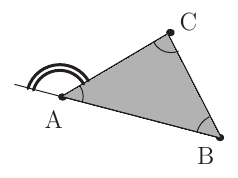}
    \subcaption{Exterior angle.}
    \label{fig:exterior-angle}
  \end{minipage}
  \hspace{0.5cm}
  \begin{minipage}{0.32\textwidth}
    \centering
    \includegraphics[width=\linewidth]{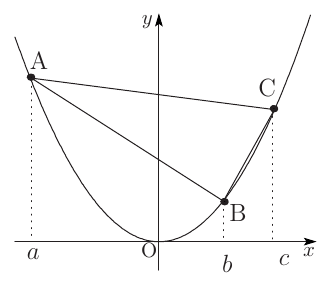}
    \subcaption{A DA triangle $ABC$ and its circumparabola.}
    \label{fig:unique-circum-parabola}
  \end{minipage}
  \caption{Two fundamental notions:
  (a) exterior angle, (b) circumparabola.}
\end{figure}

\begin{definition}[DA exterior angle]\label{def:exterior-angle}
The \textbf{exterior angle} at vertex $A$
is defined as the DA
formed in $\mathrm{Ext}(\triangle_{\mathcal P}ABC)$
between one of the rays obtained by extending either $AB$ or $AC$
beyond $A$, and the other ray.
The exterior angles at vertices $B$ and $C$ are defined in the same way.
\end{definition}

\begin{theorem}[Uniqueness of the Circumparabola]\label{thm:unique-circum-parabola}
For a DA triangle $\triangle_{\mathcal{P}}ABC$, there exists a unique parabola passing through the three vertices $A,B,C$ whose axis is parallel to the projective direction $d$.
\end{theorem}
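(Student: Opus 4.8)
\section*{Proof proposal}

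The plan is to reduce to the normalized coordinates in which $\ell$ is the $x$-axis and $d$ is the positive $y$-direction; by the invariance remarks of the previous section this entails no loss of generality, and in these coordinates a parabola whose axis is parallel to $d$ is exactly a curve of the form $y=ax^2+bx+c$ with $a\neq 0$. Write $A=(x_A,y_A)$, $B=(x_B,y_B)$, $C=(x_C,y_C)$. Since $\triangle_{\mathcal P}ABC$ is a DA triangle, none of the sides $AB$, $BC$, $CA$ is singular, i.e.\ none is parallel to $d$; hence $x_A,x_B,x_C$ are pairwise distinct.

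Next I would set up the interpolation system: requiring $y=ax^2+bx+c$ to pass through the three vertices is the linear system
\[
\begin{pmatrix} x_A^2 & x_A & 1 \\ x_B^2 & x_B & 1 \\ x_C^2 & x_C & 1 \end{pmatrix}
\begin{pmatrix} a \\ b \\ c \end{pmatrix}
=
\begin{pmatrix} y_A \\ y_B \\ y_C \end{pmatrix}.
\]
The coefficient matrix is Vandermonde with determinant $(x_B-x_A)(x_C-x_A)(x_C-x_B)\neq 0$ by the previous step, so the system has a unique solution $(a,b,c)$. I would then verify $a\neq 0$: if $a=0$, the three points would lie on the line $y=bx+c$, contradicting the non-collinearity built into the definition of a DA triangle. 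Hence the conic produced is a genuine parabola, and its axis — the line $x=-b/(2a)$ — is parallel to $d$ by construction.

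For uniqueness I would observe that any parabola with axis parallel to $d$ is, in these coordinates, of the form $y=a'x^2+b'x+c'$; passing through $A,B,C$ forces $(a',b',c')$ to solve the same linear system, whence $(a',b',c')=(a,b,c)$, proving the parabola is unique. Transporting the conclusion back through the coordinate change (which sends parabolas with axis along $d$ to parabolas with axis along $d$) gives the statement for an arbitrary projective reference structure.

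I do not expect any real obstacle here: the statement is essentially quadratic Lagrange interpolation, with the two hypotheses of the definition of a DA triangle playing precise roles — ``no singular side'' guarantees distinct $x$-coordinates (hence an invertible Vandermonde matrix), and ``non-collinear'' guarantees $a\neq 0$ (hence a genuine parabola rather than a line). The only point demanding a line of care is the coordinate-free formulation: one must confirm that ``axis parallel to $d$'' is preserved by the admissible coordinate changes $(x,y)\mapsto(\alpha x+\beta,\gamma y+\delta)$, so that the normalized-coordinate argument legitimately establishes the general claim.
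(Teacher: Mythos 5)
Your proposal is correct and follows essentially the same route as the paper: both arguments reduce to the fact that three points with pairwise distinct $x$-coordinates determine a unique quadratic $y=ax^2+bx+c$, the paper writing this via the explicit Lagrange interpolation formula while you phrase it as an invertible Vandermonde system. Your version is in fact slightly more careful on two points the paper leaves implicit — that non-singular sides give distinct abscissae, and that non-collinearity forces $a\neq 0$ so the interpolant is a genuine parabola rather than a line.
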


\begin{proof}
This lemma underpins the reduction that moves any DA triangle onto the standard parabola $y=x^2$, thereby simplifying all subsequent arguments. In essence, it suffices to confirm the uniqueness of the associated coordinate normalization.

Let $A(a,d)$, $B(b,e)$, $C(c,f)$ be three points. A parabola with axis parallel to the $y$–axis passing through $A,B,C$ is given, by Lagrange interpolation, by
\[
y = d \frac{(x - b)(x - c)}{(a - b)(a - c)}
+ e \frac{(x - a)(x - c)}{(b - a)(b - c)}
+ f \frac{(x - a)(x - b)}{(c - a)(c - b)} 
\]
\begin{align*}
&\iff
y = \frac{
d(c - b)(x - b)(x - c)
- e(c - a)(x - a)(x - c)
+ f(b - a)(x - a)(x - b)
}{
(b - a)(c - b)(c - a)
} \\[2pt]
&\iff
y = \frac{
\{d(c - b) + e(a - c) + f(b - a)\}x^2
- \{d(c^2 - b^2) + e(a^2 - c^2) + f(b^2 - a^2)\}x
}{
(b - a)(c - b)(c - a)
} \\[2pt]
&\quad\ 
+ \frac{
bcd(c - b) + ace(a - c) + abf(b - a)
}{
(b - a)(c - b)(c - a)
}.
\end{align*}
Hence the parabola is uniquely determined.
\end{proof}

\begin{figure}[htbp]
  \centering
  \begin{minipage}{0.32\textwidth}
    \centering
    \includegraphics[width=\linewidth]{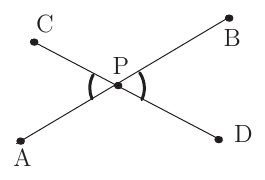}
    \subcaption{Vertical angles formed by two intersecting lines.}
    \label{fig:vertical-angles}
  \end{minipage}
  \begin{minipage}{0.32\textwidth}
    \centering
    \includegraphics[width=\linewidth]{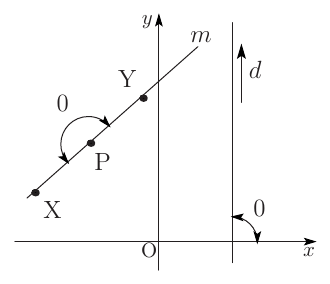}
    \subcaption{Convention: a straight angle is $0$.}
    \label{fig:straight-angle-zero}
  \end{minipage}
  \begin{minipage}{0.28\textwidth}
    \centering
    \includegraphics[width=\linewidth]{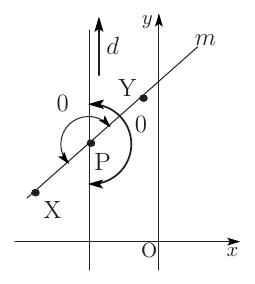}
    \subcaption{Proof of the convention (straight angle $=0$).}
    \label{fig:proof-straight-angle-zero}
  \end{minipage}
  \caption{Vertical angles and the straight-angle convention.}
\end{figure}

\begin{proposition}[Equality of DA Vertical Angles]\label{prop:vertical-angle}
If lines $AB$ and $CD$ intersect at a point $P$, then
\[
\measuredangle_{\mathcal P} APC = \measuredangle_{\mathcal P} DPB.
\]
\end{proposition}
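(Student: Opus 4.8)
The plan is to reduce both difference angles to the slopes of the two lines $AB$ and $CD$ and then to reconcile signs. By \cref{def:differece_angle},
\[
\measuredangle_{\mathcal P}APC=\Slp{PC}-\Slp{PA},\qquad
\measuredangle_{\mathcal P}DPB=\Slp{PB}-\Slp{PD}.
\]
The key observation is that the slope of a ray issuing from $P$ depends only on the line carrying that ray, not on the chosen second point and not on the orientation of the ray: writing $\Slp{m}=\frac{\psi(v)}{\varphi(v)}$ for an arbitrary direction vector $v$ of $m$, linearity of $\psi$ and $\varphi$ makes the quotient invariant under rescaling $v$ by any nonzero real, in particular by $-1$. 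Since $A,P,B$ lie on the line $AB$ this gives $\Slp{PA}=\Slp{PB}=\Slp{AB}$, and since $C,P,D$ lie on $CD$ it gives $\Slp{PC}=\Slp{PD}=\Slp{CD}$.

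Substituting these four equalities into the two displayed formulas, each difference angle collapses to the single slope difference of the lines $CD$ and $AB$, up to sign. What remains is to match the two signs, and the clean way to do this is the central symmetry $\sigma\colon X\mapsto 2P-X$ about $P$: its linear part is $v\mapsto -v$, so $\sigma$ fixes every line through $P$ setwise, preserves all slopes, and fixes the projective direction $d$ (hence sends non-singular lines to non-singular lines); it also carries ray $PA$ to ray $PB$ and ray $PC$ to ray $PD$, and therefore maps the angular opening between $PA$ and $PC$ onto the vertically opposite opening between $PB$ and $PD$. Since $\sigma$ preserves slopes it preserves the difference angle, and by scaling invariance (\cref{ax:A4}) one obtains $\measuredangle_{\mathcal P}APC=\measuredangle_{\mathcal P}BPD$; feeding this through order reversal (\cref{ax:A1}) together with the orientation rule of \cref{def:positive-and-negative-diff-angle} applied to the two vertically opposite openings yields the asserted equality of the vertical angles.

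The step I expect to be the main obstacle is precisely the sign reconciliation: showing that the central symmetry about $P$ carries the admissible opening used to measure $\measuredangle_{\mathcal P}APC$ onto the admissible opening for the vertically opposite angle \emph{with the matching sign}, i.e.\ that ``the opening not crossing $d$'' is stable under $\sigma$. Everything else---reducing ray-slopes to line-slopes and substituting---is a one-line computation, so the whole content of the proposition lies in this orientation argument, which is the DA-geometry counterpart of the Euclidean fact that opposite sectors at a crossing are congruent.
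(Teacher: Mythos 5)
Your opening step---reducing ray slopes to line slopes via the invariance of $\psi(v)/\varphi(v)$ under $v\mapsto-v$, so that $\Slp{PA}=\Slp{PB}=\Slp{AB}$ and $\Slp{PC}=\Slp{PD}=\Slp{CD}$---is exactly the paper's computation. But your proof is not finished, and the part you do commit to points the wrong way. After substitution you have $\measuredangle_{\mathcal P}APC=\Slp{CD}-\Slp{AB}$ and $\measuredangle_{\mathcal P}DPB=\Slp{AB}-\Slp{CD}$, and you rightly note these agree only ``up to sign.'' Your proposed fix, the central symmetry $\sigma$ about $P$, gives (as you state it) $\measuredangle_{\mathcal P}APC=\measuredangle_{\mathcal P}BPD$; but combined with \cref{ax:A1} this reads $\measuredangle_{\mathcal P}APC=-\measuredangle_{\mathcal P}DPB$, i.e.\ the negation of the proposition. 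To recover the claimed equality you must show that the orientation rule (``the opening not crossing $d$'') flips the sign once more when passing to the vertically opposite opening---and that is precisely the step you yourself flag as ``the main obstacle'' and leave unproven. As written, the proposal therefore reduces the proposition to an unresolved orientation claim rather than establishing it. There is also a hidden hypothesis in the symmetry step: $\sigma$ carries the ray $PA$ to the ray $PB$ only when $P$ separates $A$ and $B$ (and likewise $C$ and $D$), which the statement does not assume.

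The paper does not take a symmetry route at all: it treats $\Slp{\cdot}$ purely as a function of the carrying line and reads both vertical angles, each with its defining orientation, as the same single difference of line slopes, so the finite case is a four-line identity closed by convention rather than by a congruence argument. The only point the paper regards as genuinely delicate is the case where $AB$ or $CD$ is a singular line, which it settles by \cref{ax:A3} together with the absorptive boundary policy, deferring the formal treatment to \cref{app:vertical-angle-boundary}; your proposal never addresses this case. If you wish to keep your approach, the missing content is a precise proof that the sector used to measure $\measuredangle_{\mathcal P}APC$ and its $\sigma$-image are assigned values by the same sign rule relative to the singular direction through $P$; with that in hand the finite case follows, but you would still need the boundary-policy argument for singular $AB$ or $CD$.
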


\begin{proof}
At first glance this seems immediate from the sign-reversal rule for DA, but under a boundary policy it is not entirely trivial. A detailed discussion is deferred to Appendix~\Cref{app:vertical-angle-boundary}; here we compute directly from the definition.

By the slope definition,
\[
\measuredangle_{\mathcal P} UVW := \Slp{VW}-\Slp{VU},
\]
where $\Slp{\cdot}$ denotes the slope of a line (the same value is used for opposite rays on that line). At the intersection $P$, the rays $PA,PB$ lie on the same line $AB$, so
\(\Slp{PA}=\Slp{PB}=\Slp{AB}\); similarly $PC,PD$ lie on $CD$ so
\(\Slp{PC}=\Slp{PD}=\Slp{CD}\). Therefore,
\begin{align*}
\measuredangle_{\mathcal P} APC
  &= \Slp{PC}-\Slp{PA} \\
  &= \Slp{CD}-\Slp{AB} \\
  &= \Slp{PD}-\Slp{PB} \\
  &= \measuredangle_{\mathcal P} DPB.
\end{align*}
If $AB$ or $CD$ coincides with a singular line, then by Axiom~A3 (straight angle $=0$) together with the absorptive boundary policy, both sides are absorbed to $0$, and the equality still holds.
\end{proof}

\begin{remark}[Role of the Boundary Policy]
Although \Cref{prop:vertical-angle} is presented as a straightforward slope computation, the definition of the DA can become ambiguous when the rotation at $P$ crosses the singular direction. Because we adopt the absorptive policy, the boundary tags are identified, and this ambiguity disappears. Thus the equality of vertical angles is well-defined even in the presence of boundary effects.
\end{remark}

\begin{lemma}[Straight Angle Equals $0$]\label{lem:straight-angle-zero}
Let $P$ be a point and $\ell$ a line. For two opposite rays $PX,PY$ on $\ell$, the DA they form is $0$.
\end{lemma}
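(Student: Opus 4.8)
The plan is to reduce the statement to a direct slope computation, exactly in the spirit of \Cref{prop:vertical-angle}. Let $PX$ and $PY$ be opposite rays on the line $\ell$. Both rays lie on $\ell$, so by the convention recorded with the slope definition the slope of a line is shared by its opposite rays: $\Slp{PX}=\Slp{PY}=\Slp{\ell}$. Then by \Cref{def:differece_angle} the difference angle they form at $P$ is
\[
\measuredangle_{\mathcal P} XPY = \Slp{PY} - \Slp{PX} = \Slp{\ell} - \Slp{\ell} = 0,
\]
and symmetrically $\measuredangle_{\mathcal P} YPX = 0$ by the same computation (or by Axiom~A1). This is nothing more than an instance of Axiom~A3 (Vanishing): $P,X,Y$ are collinear, hence the angle vanishes. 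So the first step is simply to invoke \Cref{def:differece_angle} and observe that collinearity forces the two slopes to coincide.

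The one genuine subtlety — and the step I expect to be the main obstacle — is the boundary case where $\ell$ is a singular line, i.e.\ parallel to the projective direction $d$. In that case $\Slp{\ell}$ is not a finite real number, so the naive subtraction $\Slp{\ell}-\Slp{\ell}$ is the indeterminate form $\infty-\infty$ rather than a legitimate $0$. The plan is to handle this exactly as in \Cref{prop:vertical-angle}: appeal to the absorptive boundary policy declared in \Cref{def:differece_angle}, under which $s\to S_P$ forces $\Angle(r,s)\to 0$, so that any angular quantity involving a singular ray is absorbed to $0$. Thus whether $\ell$ is ordinary or singular, the conclusion $\measuredangle_{\mathcal P}XPY=0$ holds, with the ordinary case settled by the slope identity and the singular case by the boundary convention.

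To keep the argument self-contained I would also note that this lemma is essentially the ``easy direction'' of Axiom~A3 made explicit for two opposite rays, so no further machinery is needed; the content is purely definitional. A short remark pointing the reader to \Cref{app:vertical-angle-boundary} for the careful treatment of the singular-direction tags (just as was done for the vertical-angle proposition) would round out the write-up, since the two results share precisely the same boundary-policy justification.
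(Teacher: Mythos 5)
Your core computation---``both rays lie on $\ell$, so $\Slp{PX}=\Slp{PY}$ and the difference is $0$''---is not the argument the paper can make, and the discrepancy is substantive, not stylistic. The opening swept from $PX$ to its opposite ray $PY$ necessarily contains one of the singular directions $\pm d$, so the straight angle is exactly the kind of quantity that cannot be evaluated inside a single connected component of $\mathcal R_P\setminus S_P$; its value is adjudicated by the boundary policy, not by the raw slope subtraction. This is why the remark immediately following the lemma states that the conclusion \emph{depends} on the absorptive choice: under a lift-type policy the straight angle is a nontrivial element of the period lattice, and under a divergent policy it is $\pm\infty$, even though the two opposite rays still have numerically equal slopes. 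A proof that is policy-independent (as yours is, apart from the separate case where $\ell$ itself is singular) would ``prove'' too much and contradicts that remark. The paper's proof instead inserts the singular line $D$ through $P$, uses \Cref{prop:vertical-angle} to match the angle from the $d$-direction to $PX$ with the angle from the $-d$-direction to $PY$, and then invokes the absorptive identification of the $d$ and $-d$ contributions to collapse the sum to $0$; the boundary policy does real work for \emph{every} line $\ell$, not only for singular ones.

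A second, smaller gap: your appeal to Axiom~A3 is not legitimate here. A3 asserts vanishing only when the collinear order is $P,A,B$ or $A,B,P$, i.e.\ when the two rays from $P$ coincide; it deliberately excludes the configuration $X,P,Y$ with $P$ between the endpoints, which is precisely the straight-angle case this lemma addresses (cf.\ the equivalence discussion in \Cref{app:vertical-flat-equiv}, where the flat-angle statement is \emph{derived} from the vertical-angle law under absorption rather than postulated). So the lemma is not ``purely definitional''; it is the point at which the absorptive convention first manifests geometrically, and the proof must route through the singular direction as the paper does.
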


\begin{proof}
Let $D$ be the singular line through $P$. By \Cref{prop:vertical-angle}, the DA between the $d$–direction of $D$ and $PX$ equals the DA between the $-d$–direction and $PY$. Under the absorptive boundary policy, the sum of the DAs of $d$ and $-d$ is absorbed to $0$. Hence the DA between $PX$ and $PY$ is $0$.
\end{proof}

\begin{remark}
In Euclidean geometry, one shows that straight angles are constant (conventionally $\pi$) via the uniqueness of supplements, and then deduces the vertical-angle property. In our framework, the boundary policy yields the vertical-angle property directly, from which the constancy of supplements follows. Through this lemma, the convention “straight angle $=0$” arises naturally; the logical order is thus reversed compared to the Euclidean case.
\end{remark}

\begin{remark}[Dependence on the Boundary Policy]
The conclusion of \Cref{lem:straight-angle-zero} depends on choosing the absorptive policy. Under a lift-type policy, a straight angle is not $0$ but a nontrivial element of the period lattice; under a divergent policy, it extends to $\pm\infty$. Thus “straight angle $=0$” is not intrinsic to DA geometry itself, but a consistent outcome of the absorptive choice within the calibration framework.
\end{remark}

\begin{lemma}[Parallel Lines and Corresponding Angles]
Let $\ell, m$ be parallel lines and let a transversal $t$ intersect them. Then the corresponding angles formed by $t$ with $\ell$ and $m$ are equal in the DA sense.
\end{lemma}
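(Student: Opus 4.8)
The plan is to reduce the claim to the defining formula for the difference angle together with the characterisation of parallelism by equality of slopes, exactly as in the proof of \Cref{prop:vertical-angle}. First I would fix a common orientation for the parallel lines $\ell$ and $m$ (possible precisely because $\ell\parallel m$) and an orientation for the transversal $t$. Writing $P=t\cap\ell$ and $Q=t\cap m$, choose auxiliary points $R$ on $t$ on the far side of $Q$ from $P$, $X$ on $\ell$ in the chosen direction from $P$, and $Y$ on $m$ in the chosen direction from $Q$; then the two ``corresponding'' angles in the usual $F$-configuration sense are $\measuredangle_{\mathcal P}QPX$ at $P$ and $\measuredangle_{\mathcal P}RQY$ at $Q$, the cyclic arrangement of rays being the same at both vertices.

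The key step is then the observation already used in \Cref{prop:vertical-angle}: $\Slp{\cdot}$ is an attribute of a line and is insensitive to which of the two opposite rays on it is taken. Hence the $t$-rays at $P$ and at $Q$ both have slope $\Slp{t}$, the $\ell$-ray has slope $\Slp{\ell}$, and the $m$-ray has slope $\Slp{m}$. Substituting into $\measuredangle_{\mathcal P}UVW=\Slp{VW}-\Slp{VU}$ gives
\[
\measuredangle_{\mathcal P}QPX=\Slp{PX}-\Slp{PQ}=\Slp{\ell}-\Slp{t},
\qquad
\measuredangle_{\mathcal P}RQY=\Slp{QY}-\Slp{QR}=\Slp{m}-\Slp{t}.
\]
Since $\ell\parallel m$ forces $\Slp{\ell}=\Slp{m}$, the two values coincide, which is the asserted equality.

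Finally I would dispose of the degenerate case exactly as in the remark following \Cref{prop:vertical-angle}: should $t$, $\ell$, or $m$ be a singular line, Axiom~A3 together with the absorptive boundary policy forces the relevant difference angles to $0$ on both sides, so the equality survives; under the standing convention that singular lines are not counted among ``lines'' this case does not arise at all. The only point requiring care -- and it is purely organisational, not analytic -- is to check that the two angles being compared really are ``corresponding'' (the same cyclic arrangement of rays at both vertices), so that no spurious sign reversal is introduced; once the orientations of $\ell$, $m$, $t$ are chosen compatibly, the identity falls straight out of the definition with no computation. There is no genuine obstacle here.
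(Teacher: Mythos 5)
Your proof is correct and follows essentially the same route as the paper's: both reduce the claim to the definition $\measuredangle_{\mathcal P}UVW=\Slp{VW}-\Slp{VU}$, the fact that $\Slp{\cdot}$ is an attribute of a line rather than of a ray, and the equality $\Slp{\ell}=\Slp{m}$ forced by parallelism. Your extra care in ordering the rays so that both angles take the form $\Slp{\ell}-\Slp{t}$ and $\Slp{m}-\Slp{t}$, and your explicit dismissal of the singular case via the absorptive policy, are if anything tidier than the paper's own brief computation.
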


\begin{proof}
Let $A,B$ be the intersections of $t$ with $\ell$ and $m$, respectively. Take $C$ on $\ell$ and $D$ on $m$ on the side opposite to $t$. By definition,
\[
\measuredangle_{\mathcal P}CAB
= \mathrm{slope}(AB)-\mathrm{slope}(AC),\qquad
\measuredangle_{\mathcal P}ABD
= \mathrm{slope}(BD)-\mathrm{slope}(AB).
\]
Since $\ell\parallel m$, we have $\mathrm{slope}(AC)=\mathrm{slope}(BD)$, hence
\[
\measuredangle_{\mathcal P}CAB=\measuredangle_{\mathcal P}ABD.
\]
\end{proof}

\begin{lemma}[Parallel Lines and Alternate Interior Angles]
Let $\ell, m$ be parallel lines and let a transversal $t$ intersect them. Then the alternate interior angles formed by $t$ with $\ell$ and $m$ are equal in the DA sense.
\end{lemma}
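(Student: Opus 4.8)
The plan is to reduce the statement to two facts already in hand: that corresponding angles under a transversal of two parallels are DA-equal (the immediately preceding lemma on corresponding angles), and that vertical angles are DA-equal (\Cref{prop:vertical-angle}). In the DA setting, exactly as in the Euclidean one, an alternate interior angle at one of the parallel lines is precisely the vertical angle of a corresponding angle at that line; chaining the corresponding-angles equality with \Cref{prop:vertical-angle} then yields the claim with no further computation. This is the conceptually cleanest route and I would mention it, but for the written proof I would instead argue directly from the slope definition, since that makes the sign bookkeeping fully transparent.

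For the direct argument, let $A = t\cap\ell$ and $B = t\cap m$, and choose points $C\in\ell$ and $D\in m$ on \emph{opposite} sides of $t$, so that $\measuredangle_{\mathcal P}CAB$ and $\measuredangle_{\mathcal P}DBA$ are the two alternate interior angles. Using $\measuredangle_{\mathcal P}UVW = \Slp{VW} - \Slp{VU}$ and the fact that the slope (hence the DA) depends only on the underlying lines, not on which of two opposite rays is taken, these openings equal $\Slp{AB} - \Slp{AC} = \Slp{t} - \Slp{\ell}$ and $\Slp{BA} - \Slp{BD} = \Slp{t} - \Slp{m}$, respectively. Since $\ell\parallel m$ forces $\Slp{\ell} = \Slp{m}$, the two values coincide. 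The order-reversal axiom \cref{ax:A1} is what guarantees that passing from $\overrightarrow{AB}$ to $\overrightarrow{BA}$ (equivalently, from one alternate interior opening to its counterpart) introduces no uncontrolled sign, so the bookkeeping closes.

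The one place I would slow down — and the only genuine subtlety — is the boundary behavior: if $t$, $\ell$, or $m$ is a singular line (parallel to $d$), the slopes above are undefined and the openings must be read through the absorptive boundary policy. This is dispatched exactly as in \Cref{prop:vertical-angle}: under the absorptive convention both alternate interior angles are absorbed to $0$, so the equality persists. I do not expect a real obstacle here; the substance of the lemma is simply that, like corresponding angles, alternate interior angles are insensitive to everything except the common slope of the parallels and the slope of the transversal.
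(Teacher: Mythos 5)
Your direct argument is exactly the paper's proof: compute $\measuredangle_{\mathcal P}CAB=\Slp{AB}-\Slp{AC}$ and $\measuredangle_{\mathcal P}DBA=\Slp{AB}-\Slp{BD}$ and use that $\ell\parallel m$ forces the slopes of $AC$ and $BD$ to coincide. The extra remarks (reduction via corresponding plus vertical angles, and the absorptive treatment of singular lines) are sound but not needed beyond what the paper does.
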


\begin{proof}
Let $A,B$ be the intersections of $t$ with $\ell$ and $m$, respectively, and take $C$ on $\ell$ and $D$ on $m$. We must show
\[
\measuredangle_{\mathcal P}CAB
=\measuredangle_{\mathcal P}DBA.
\]
Indeed,
\[
\measuredangle_{\mathcal P}CAB
=\mathrm{slope}(AB)-\mathrm{slope}(AC),\qquad
\measuredangle_{\mathcal P}DBA
=\mathrm{slope}(AB)-\mathrm{slope}(DB).
\]
Since $\ell\parallel m$, we have $\mathrm{slope}(AC)=\mathrm{slope}(DB)$, proving the claim.
\end{proof}

\begin{figure}[htbp]
  \centering
  \begin{minipage}{0.32\textwidth}
    \centering
    \includegraphics[width=\linewidth]{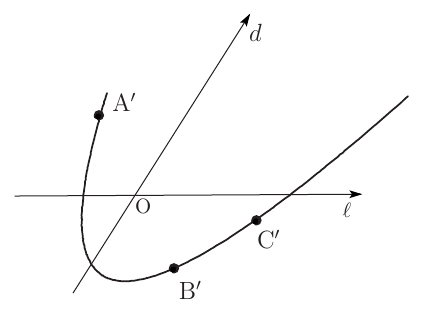}
    \subcaption{Before affine transformation.}
    \label{fig:normalize-affine-before}
  \end{minipage}
  \begin{minipage}{0.28\textwidth}
    \centering
    \includegraphics[width=\linewidth]{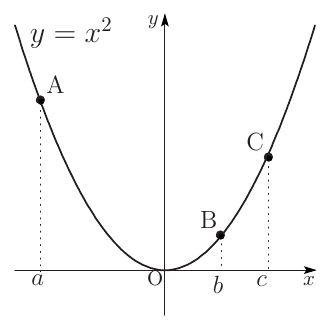}
    \subcaption{After affine transformation.}
    \label{fig:normalize-affine-after}
  \end{minipage}
  \begin{minipage}{0.32\textwidth}
    \centering
    \includegraphics[width=\linewidth]{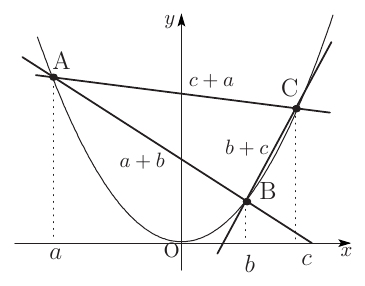}
    \subcaption{Normalized form on $y=x^2$.}
    \label{fig:normalize-slope}
  \end{minipage}
  \caption{Illustration for \Cref{lem:normalize} (Normalization Lemma).}
  \label{fig:normalize-triangle}
\end{figure}

\begin{lemma}[Normalization Lemma]\label{lem:normalize}
For any DA triangle $\triangle_{\mathcal P}ABC$, the circumparabola of \cref{thm:unique-circum-parabola} exists uniquely. Moreover, by an affine transformation preserving the reference structure $(\ell,d)$ together with a vertical scaling, it can be sent to the standard parabola $y=x^2$. Consequently, one may assume
\[
A=(a,a^2),\quad B=(b,b^2),\quad C=(c,c^2)\qquad (a<b<c).
\]
\end{lemma}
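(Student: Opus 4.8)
The existence and uniqueness of the circumparabola is precisely \Cref{thm:unique-circum-parabola}, so the real content of the lemma is to exhibit the normalizing map. The plan is to write the circumparabola explicitly in the fixed Cartesian frame (with $\ell$ the $x$-axis and $d$ the $y$-direction) and to realize the normalization as a composition of a horizontal translation, a vertical translation, and a vertical scaling; each of these either is of the admissible form $(x,y)\mapsto(\alpha x+\beta,\gamma y+\delta)$ with $\alpha,\gamma>0$ preserving $(\ell,d)$, or differs from it only by the orientation of $d$, so that its effect on slopes, difference angles, and the DA norm is completely controlled.

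First I would invoke \Cref{thm:unique-circum-parabola} to write the circumparabola as a graph $y=px^2+qx+r$ with $p\neq0$, the nonvanishing of the leading coefficient being exactly the noncollinearity of the three vertices (so that the interpolant is a genuine parabola and not a line). Each vertex then has the form $(t,pt^2+qt+r)$, and since the three vertices are distinct points of a graph over $x$, their $x$-coordinates are pairwise distinct. Completing the square, the horizontal translation $(x,y)\mapsto\bigl(x+\tfrac{q}{2p},\,y\bigr)$ removes the linear term and the vertical translation $(x,y)\mapsto(x,\,y-r')$ with $r'=r-\tfrac{q^2}{4p}$ removes the constant term, leaving $y=px^2$; both maps are admissible and fix every slope, hence every difference angle. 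The vertical scaling $(x,y)\mapsto(x,\,y/p)$ then sends $y=px^2$ to $y=x^2$. After this composition the three vertices have become $(a,a^2),(b,b^2),(c,c^2)$ with $a,b,c$ pairwise distinct, and since a DA triangle is an unordered triple we may relabel the vertices so that $a<b<c$; as the points now lie on the graph $y=x^2$ no edge is parallel to $d$, so the image is again a DA triangle and all hypotheses of \Cref{def:diff-angle-triangle} survive.

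The one genuinely delicate point — and where I expect the main friction — is the sign of the leading coefficient $p$: when $p<0$ the ratio $1/p$ of the final vertical scaling is negative, so that map is not of the form with $\gamma>0$ and in fact reverses the orientation of the direction $d$. Since the reference structure consists of a line together with a direction taken only up to sign, this is still legitimate as a change of projective gauge, but it multiplies every difference angle by $-1$; the normalization must therefore be accompanied by the bookkeeping of this sign, and it is exactly that bookkeeping which justifies the repeated use of the phrase ``WLOG $y=x^2$ with $a<b<c$'' in the later sections. Everything else is routine: the remaining maps are translations and a positive scaling, under which slopes are unchanged (translations) or uniformly rescaled (scaling), so the entire DA structure transports without loss of information.
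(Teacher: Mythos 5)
Your proposal is correct and follows essentially the same route as the paper: invoke \Cref{thm:unique-circum-parabola}, pass to the Cartesian frame with $\ell$ the $x$-axis and $d$ the $y$-axis, then remove the linear and constant terms by horizontal and vertical translations and normalize the leading coefficient by a vertical scaling. Your extra care about the sign of the leading coefficient (a negative vertical scaling reversing the orientation of $d$ and hence the sign of every difference angle) is a refinement the paper's own proof silently skips, and it is a legitimate point, though it does not change the substance of the argument.
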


\begin{proof}
By \cref{thm:unique-circum-parabola}, there is a unique parabola $\Gamma$ through $A,B,C$ whose axis is parallel to $d$. First apply an affine transformation sending $\ell$ to the $x$–axis and $d$ to the $y$–axis. Then $\Gamma$ has the form
\[
y = \alpha x^2 + \beta x + \gamma.
\]
A horizontal translation removes the linear term $\beta x$, and a vertical translation removes the constant term $\gamma$. Finally, a scaling in the $y$–direction normalizes $\alpha$ to $1$. Hence $\Gamma$ is sent to $y=x^2$, and we may take
\[
A=(a,a^2),\quad B=(b,b^2),\quad C=(c,c^2),\quad (a<b<c).
\]
\end{proof}

In many arguments below, explicit algebraic calculations are clearer. We therefore record basic facts about the circumparabola. Most of the time we work on the normalized plane with the standard parabola $y=x^2$; when dependence on the quadratic coefficient matters, we also consider $y=\kappa x^2$.

\begin{remark}[Concrete Form on the Standard Parabola]
By the normalization lemma, a DA triangle can be written as
$A=(a,a^2)$, $B=(b,b^2)$, $C=(c,c^2)$ with $a<b<c$.
Then the slopes of the sides are
\[
\Slp{AB}=a+b,\qquad \Slp{BC}=b+c,\qquad \Slp{CA}=c+a.
\]
Subsequent computations in propositions and theorems will use this normalized form.
\end{remark}

\begin{theorem}[Sum of Interior Angles is Zero]\label{thm:sum-of-angles-zero}
Let the interior angles of $\triangle_{\mathcal{P}} ABC$ be
\[
\theta_A := \measuredangle_{\mathcal{P}} BAC,\quad
\theta_B := \measuredangle_{\mathcal{P}} CBA,\quad
\theta_C := \measuredangle_{\mathcal{P}} ACB.
\]
Then
\[
\theta_A+\theta_B+\theta_C = 0.
\]
\end{theorem}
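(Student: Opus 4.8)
The plan is to reduce the statement to the standard parabola and let the algebra collapse by telescoping. First I would invoke the Normalization Lemma (\cref{lem:normalize}): since the difference angle is invariant under affine transformations preserving $(\ell,d)$ together with a vertical scaling, there is no loss of generality in taking the circumparabola to be $y=x^2$ and writing $A=(a,a^2)$, $B=(b,b^2)$, $C=(c,c^2)$ with $a<b<c$. In this normalized picture the three side slopes are $\Slp{AB}=a+b$, $\Slp{BC}=b+c$, $\Slp{CA}=c+a$, and the vertex $B$ — whose $x$-coordinate lies between those of $A$ and $C$ — sits strictly below the chord $AC$ by convexity, which pins down the shape of $\triangle_{\mathcal P}ABC$ once and for all.

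Next I would compute the three interior angles directly from \cref{def:differece_angle} and the interior-angle convention of \cref{interior-angle}. Reading the abbreviations $\theta_A=\measuredangle_{\mathcal P}BAC$, $\theta_B=\measuredangle_{\mathcal P}CBA$, $\theta_C=\measuredangle_{\mathcal P}ACB$ and using $\measuredangle_{\mathcal P}XYZ=\Slp{YZ}-\Slp{YX}$ gives
\[
\theta_A=\Slp{AC}-\Slp{AB}=(c+a)-(a+b)=c-b,\qquad
\theta_B=\Slp{BA}-\Slp{BC}=(a+b)-(b+c)=a-c,
\]
\[
\theta_C=\Slp{CB}-\Slp{CA}=(b+c)-(c+a)=b-a.
\]
The one genuine checkpoint is orientation. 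At the extreme vertices $A$ and $C$ the interior wedge does not meet the vertical (singular) direction, so the sign produced by the slope-difference formula coincides with the positive/negative convention of \cref{def:positive-and-negative-diff-angle} and \cref{prop:positive-diff-angle-equivalence}. At the middle vertex $B$, however, the interior wedge straddles the singular direction; here one must check, using the absorptive boundary policy fixed in \cref{def:differece_angle}, that the value attached to that wedge is exactly $\Slp{BA}-\Slp{BC}=a-c$, i.e.\ the sign read off from the complementary rotation that does not cross $d$. This bookkeeping at $B$ is the step I expect to require the most care in the write-up.

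Finally, summing the three expressions yields
\[
\theta_A+\theta_B+\theta_C=(c-b)+(a-c)+(b-a)=0,
\]
which is the assertion. Note that the cancellation is precisely the telescoping of the side slopes $a+b$, $b+c$, $c+a$ around the triangle, so the answer is independent of the chosen values $a<b<c$ and insensitive to the vertical-scaling factor used in the normalization. One could alternatively argue coordinate-free by chaining the additivity axiom (\cref{ax:A2}) with the vertical-angle identity (\cref{prop:vertical-angle}) around the three vertices, but the normalized computation above is the cleanest route and makes the role of the singular direction at the middle vertex fully explicit.
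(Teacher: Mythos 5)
Your proposal is correct and follows essentially the same route as the paper's own proof: normalize to $y=x^2$ via \cref{lem:normalize}, read off the side slopes $a+b$, $b+c$, $c+a$, handle the sign at the middle vertex $B$ (where the interior wedge contains the singular direction) via the orientation/boundary-policy convention to get $\theta_B=a-c$, and let the sum telescope to $0$. The only quibble is wording: the difference angle is not \emph{invariant} under the vertical scaling used in the normalization but is rescaled by a positive constant, which of course leaves the vanishing of $\theta_A+\theta_B+\theta_C$ unaffected.
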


\begin{proof}
In Euclidean geometry the sum of the interior angles is $\pi$, whereas in DA geometry,
as a consequence of the boundary policy together with the vertical–angle property,
a straight angle is $0$. Accordingly, the interior–angle sum becomes $0$ in DA geometry.

Using \Cref{lem:normalize}, write
$A=(a,a^2),\ B=(b,b^2),\ C=(c,c^2)$ with $a<b<c$.
Since a DA is the difference of slopes, we have
\begin{align*}
  \theta_A &= \Slp{AC} - \Slp{AB} = (a+c)-(a+b) = c-b,\\
  \theta_C &= \Slp{CB} - \Slp{CA} = (c+b)-(c+a) = b-a.
\end{align*}
At $B$, the interior angle is taken with the exterior orientation, so by
\Cref{lem:straight-angle-zero} the sign flips:
\[
\theta_B = -(\Slp{BC} - \Slp{BA}) = -(c+b)+(a+b)=a-c.
\]
Hence
\[
\theta_A+\theta_B+\theta_C=(c-b)+(a-c)+(b-a)=0.
\]
\end{proof}

\begin{figure}[htbp]
  \centering
  \begin{minipage}{0.32\textwidth}
    \centering
    \includegraphics[width=\linewidth]{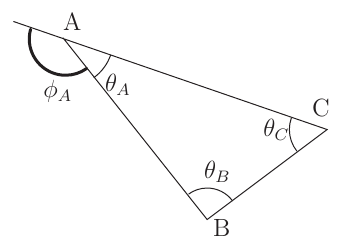}
    \subcaption{Exterior angle theorem.}
    \label{fig:exterior-theorem}
  \end{minipage}
  \begin{minipage}{0.32\textwidth}
    \centering
    \includegraphics[width=\linewidth]{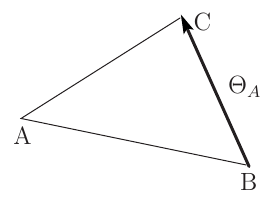}
    \subcaption{Angle vector at a vertex.}
    \label{fig:angle-vector}
  \end{minipage}
  \begin{minipage}{0.32\textwidth}
    \centering
    \includegraphics[width=\linewidth]{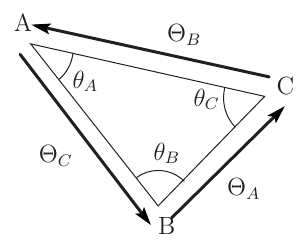}
    \subcaption{}
    \label{fig:triangle-identity}
  \end{minipage}
 \caption{Illustrations for \Cref{prop:exterior-theorem}, \Cref{def:angle-vector}, and \Cref{thm:triangle-identity}.}
\end{figure}

\begin{proposition}[Exterior Angle Theorem]\label{prop:exterior-theorem}
In $\triangle_{\mathcal P}ABC$, the exterior angle at any vertex equals the sum of the other two interior angles.
\end{proposition}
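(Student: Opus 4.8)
The plan is to reduce to the normalized configuration $A=(a,a^2)$, $B=(b,b^2)$, $C=(c,c^2)$ with $a<b<c$ via \Cref{lem:normalize}, so that every slope is linear in the parameters and the interior angles are the explicit quantities recorded in the proof of \Cref{thm:sum-of-angles-zero}, namely $\theta_A=c-b$, $\theta_B=a-c$, $\theta_C=b-a$, together with the identity $\theta_A+\theta_B+\theta_C=0$. After this reduction the statement becomes a one-line slope computation combined with the straight-angle convention.

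First I would fix a vertex, say $A$, and unwind \Cref{def:exterior-angle}: the exterior angle at $A$ is the DA formed in $\mathrm{Ext}(\triangle_{\mathcal P}ABC)$ between the ray $AC$ and the ray obtained by extending $BA$ beyond $A$. Since a DA depends only on the slopes of the two underlying lines — opposite rays on a line share the same slope, exactly as in the proof of \Cref{prop:vertical-angle} — this quantity is $\Slp{AC}-\Slp{AB}$ read with the opening that lies on the far side of $A$. By \Cref{lem:straight-angle-zero} (a straight angle is $0$), the interior opening at $A$ and this exterior opening together make up a straight angle, hence their DAs sum to $0$; therefore the exterior angle at $A$ equals $-\theta_A$.

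Then I would simply invoke \Cref{thm:sum-of-angles-zero}: $-\theta_A=\theta_B+\theta_C$, which is precisely the claim at vertex $A$. The same argument applies verbatim at $B$ and $C$, the only adjustment being the sign bookkeeping at $B$ already made explicit in the proof of \Cref{thm:sum-of-angles-zero} (the interior angle at $B$ being taken with the exterior orientation). As a direct cross-check on $y=x^2$: the exterior angle at $A$ coming from extending $BA$ beyond $A$ is $-(c-b)=(a-c)+(b-a)=\theta_B+\theta_C$, and the analogous computations hold at the other two vertices.

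The main obstacle is purely one of bookkeeping: one must check that the two admissible descriptions of the exterior angle at a vertex (extending $AB$ beyond $A$ versus extending $CA$ beyond $A$) give the same value, and that the orientation rule "not crossing $d$" stays consistent with the absorptive boundary policy when a side has large slope. Both points are settled exactly as in \Cref{prop:vertical-angle} and \Cref{lem:straight-angle-zero}: reversing a ray does not change its slope, and under the absorptive policy the boundary tags are identified, so the exterior angle is well defined and its supplementary relation to the interior angle holds without ambiguity.
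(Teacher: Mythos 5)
Your proposal is correct and follows essentially the same route as the paper: establish that the exterior angle at a vertex equals the negative of the interior angle there (the paper takes $\phi_A=-\theta_A$ directly from the definition, while you justify it via \Cref{lem:straight-angle-zero} and ray-reversal of slopes), and then conclude from \Cref{thm:sum-of-angles-zero} that $\phi_A=\theta_B+\theta_C$. The extra normalization and boundary-policy bookkeeping you include only elaborate steps the paper leaves implicit.
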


\begin{proof}
For the exterior angle $\phi_A$ at $A$, by definition $\phi_A=-\theta_A$.
From \Cref{thm:sum-of-angles-zero}, $\theta_A+\theta_B+\theta_C=0$, hence
$\phi_A=\theta_B+\theta_C$. The other vertices are analogous.
\end{proof}

\begin{definition}[Vector Notation for a DA Triangle]\label{def:angle-vector}
Let $\triangle_{\mathcal P}ABC$ be oriented counterclockwise.
Assign to each vertex the vector
\[
\Theta_A := \vec{BC}, \qquad
\Theta_B := \vec{CA}, \qquad
\Theta_C := \vec{AB}.
\]
\end{definition}

\begin{theorem}[Fundamental Identity for DA Triangles]\label{thm:triangle-identity}
With the above notation, one always has
\[
\Theta_A + \Theta_B + \Theta_C = 0.
\]
\end{theorem}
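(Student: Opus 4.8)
The plan is to recognize that $\Theta_A+\Theta_B+\Theta_C$ is simply the sum of the three edge vectors of the triangle traversed head‑to‑tail, which telescopes. Writing each vertex as a position vector in $\mathbb{R}^2$, we have $\Theta_A=\vec{BC}=C-B$, $\Theta_B=\vec{CA}=A-C$, and $\Theta_C=\vec{AB}=B-A$, so that
\[
\Theta_A+\Theta_B+\Theta_C=(C-B)+(A-C)+(B-A)=0,
\]
every position vector occurring exactly once with each sign. This uses nothing about the parabola, the projective reference structure, or the orientation: it is just the closure relation for any triple of points in an affine plane, and I would present it in exactly this one line.

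What earns the statement its status as a \emph{fundamental identity} is its relation to \Cref{thm:sum-of-angles-zero}. After the one‑line computation I would apply the covector $\varphi$ of \Cref{def:diff-angle-norm} (with $\varphi(d)=0$, $\varphi|_\ell\neq 0$) to both sides. In the normalized coordinates of \Cref{lem:normalize}, $\varphi$ is a nonzero multiple of the $x$‑coordinate functional, so $\varphi(\Theta_A)=x_C-x_B=c-b$, which was already identified with $\theta_A$ in the proof of \Cref{thm:sum-of-angles-zero} (up to the sign carried by the interior orientation at each vertex), and likewise $\varphi(\Theta_B)=a-c=\theta_B$, $\varphi(\Theta_C)=b-a=\theta_C$. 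Hence applying $\varphi$ to the identity recovers $\theta_A+\theta_B+\theta_C=0$, exhibiting the vector identity as a refinement of the interior‑angle‑sum theorem. Applying instead the complementary covector $\psi$ (with $\psi|_\ell=0$, $\psi(d)\neq 0$) yields the dual scalar relation $\sum_{\mathrm{cyc}}(y_C-y_B)=0$, which on $y=x^2$ is the algebraic identity $(c^2-b^2)+(a^2-c^2)+(b^2-a^2)=0$.

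Since the core computation is a single cancellation, there is no genuine obstacle; the only point deserving a word of care is that \Cref{def:angle-vector} assigns to each vertex the \emph{opposite} side under the counterclockwise labeling, so one should note explicitly that $\vec{BC},\vec{CA},\vec{AB}$ are taken in a consistent cyclic order — which is precisely what makes the telescoping work. Reversing the orientation replaces each $\Theta$ by its negative and leaves the sum (still $0$) unchanged, so the identity is in fact orientation‑independent, and this remark closes the argument.
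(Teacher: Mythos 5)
Your one-line telescoping argument $(C-B)+(A-C)+(B-A)=0$ is correct and is exactly the (implicit) argument: the paper states \Cref{thm:triangle-identity} without proof, treating it as the immediate closure relation for the cyclically ordered edge vectors of \Cref{def:angle-vector}. Your additional step of applying the covectors $\varphi,\psi$ to recover $\theta_A+\theta_B+\theta_C=0$ is a correct elaboration of what the paper only mentions in the surrounding remark, but it is not needed for the statement itself.
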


\begin{remark}[Role as a Basic Equation]
In Euclidean geometry,
\[
\vec{AB} + \vec{BC} + \vec{CA} = \vec{0},
\]
from which the cosine and sine laws follow by applying the inner and outer products.
In DA geometry, the identity
\[
\Theta_A + \Theta_B + \Theta_C = 0
\]
plays an analogous structural role: beyond yielding “sum of interior angles $=0$,”
it serves as the starting point for DA analogues of the cosine and sine laws.
\end{remark}

\begin{figure}[htbp]
  \centering
  \begin{minipage}{0.32\textwidth}
    \centering
    \includegraphics[width=\linewidth]{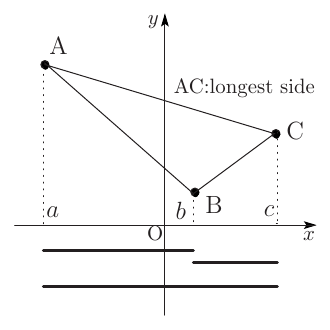}
    \subcaption{Triangle equation.}
    \label{fig:triangle-equation}
  \end{minipage}
  \begin{minipage}{0.32\textwidth}
    \centering
    \includegraphics[width=\linewidth]{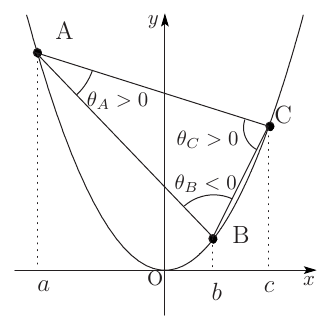}
    \subcaption{Concave down case ($a<b<c$): $\theta_B<0$.}
    \label{fig:concave-down}
  \end{minipage}
  \begin{minipage}{0.32\textwidth}
    \centering
    \includegraphics[width=\linewidth]{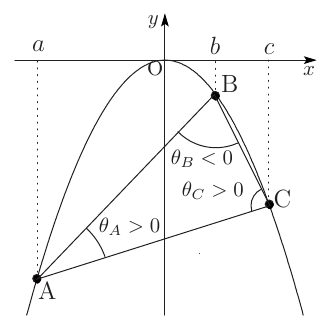}
    \subcaption{Concave up case ($a<b<c$): still $\theta_B<0$.}
    \label{fig:concave-up}
  \end{minipage}
  \caption{DA-triangle illustrations:
  (a) visualizing the triangle equation;
  (b) if the circumparabola is concave down, then $\theta_B<0$;
  (c) if it is concave up, the same conclusion holds.}
  \label{fig:triangle-equation-and-sign}
\end{figure}

\begin{maintheorem}[Triangle Equation]\label{mthm:triangle-equation}
For any DA triangle $\triangle_{\mathcal P}ABC$, the largest DA norm of the sides equals the sum of the other two.
\end{maintheorem}

\begin{proof}
Let the $x$–coordinates of the vertices satisfy $a<b<c$.
Then the longest side is $|AC|_{\mathcal P}$, and by definition of the DA norm,
\[
  |AC|_{\mathcal P}=c-a,\quad
  |AB|_{\mathcal P}=b-a,\quad
  |BC|_{\mathcal P}=c-b.
\]
Hence
\[
  |AC|_{\mathcal P}=c-a=(c-b)+(b-a)
  =|AB|_{\mathcal P}+|BC|_{\mathcal P}.
\]
\end{proof}

\begin{corollary}[Nonexistence of Equilateral DA Triangles]\label{cor:no-equilateral}
There is no DA triangle whose three side DA norms are all equal.
\end{corollary}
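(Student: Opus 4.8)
The plan is to derive this immediately from the Triangle Equation (\Cref{mthm:triangle-equation}) together with the observation that the three side DA norms of a genuine DA triangle are strictly positive. First I would establish positivity: in a DA triangle $\triangle_{\mathcal P}ABC$ the three vertices are distinct, and by \Cref{def:diff-angle-triangle} none of the three connecting lines is singular; since a singular line is precisely one parallel to the projective direction $d$ (vertical in the normalized frame), no two of the vertices share the same $x$-coordinate. Hence, writing the DA norm as in \Cref{def:diff-angle-norm}—equivalently, in the normalized picture of \Cref{lem:normalize}, as $|XY|_{\mathcal P}=|x_X-x_Y|$—each of $|AB|_{\mathcal P}$, $|BC|_{\mathcal P}$, $|CA|_{\mathcal P}$ is strictly positive.

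Next I would invoke the Triangle Equation: the largest of the three side DA norms equals the sum of the other two. Assume, for contradiction, that all three norms are equal to a common value $s$. Then $s$ is simultaneously the largest norm and the sum of the other two, so $s=s+s$, forcing $s=0$, which contradicts the strict positivity just established. Therefore no DA triangle can have all three side DA norms equal, and in particular no equilateral DA triangle exists.

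I expect essentially no obstacle here; the corollary is a one-line consequence of \Cref{mthm:triangle-equation} once the side norms are known to be positive, and that single supporting point rests entirely on the exclusion of singular sides built into \Cref{def:diff-angle-triangle}. The only care needed is to phrase the argument so that it is clear the hypothesis ``three DA norms all equal'' is incompatible with the ordered form $|AC|_{\mathcal P}=|AB|_{\mathcal P}+|BC|_{\mathcal P}$ for \emph{any} labeling of the vertices, which follows because the Triangle Equation identifies the largest norm intrinsically.
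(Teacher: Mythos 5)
Your proposal is correct and follows essentially the same route as the paper: the paper's proof is exactly the one-line observation that equal side norms would contradict the Triangle Equation (\Cref{mthm:triangle-equation}). Your additional verification that the side norms are strictly positive (via the exclusion of singular sides in \Cref{def:diff-angle-triangle}) is a sensible fleshing-out of the detail the paper leaves implicit, but it is not a different argument.
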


\begin{proof}
Such an equality would contradict the triangle equation \Cref{mthm:triangle-equation}.
\end{proof}

\begin{theorem}[Uniqueness of the Signs of Interior Angles]\label{thm:angle-sign-uniqueness}
In a DA triangle $\triangle_{\mathcal P}ABC$, exactly one interior angle is negative.
\end{theorem}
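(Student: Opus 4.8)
The plan is to pass at once to the normalized picture and then simply read off the three signs. By \Cref{lem:normalize} the circumparabola of $\triangle_{\mathcal P}ABC$ can be carried to $y=x^2$, so after relabelling we may take $A=(a,a^2)$, $B=(b,b^2)$, $C=(c,c^2)$ with $a<b<c$; every DA-triangle hypothesis survives this reduction, since three distinct points on the graph $y=x^2$ automatically have distinct abscissae, are noncollinear (a line meets the parabola in at most two points), and span no vertical chord, so no side is singular. Then I would simply reuse the slope identities $\Slp{AB}=a+b$, $\Slp{BC}=b+c$, $\Slp{CA}=c+a$ together with the computation already carried out in the proof of \Cref{thm:sum-of-angles-zero}.

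Concretely: at the extreme vertices both incident rays leave $A$ toward larger $x$ and both leave $C$ toward smaller $x$, so at those two vertices the interior opening avoids the singular direction and the interior angle is measured directly,
\[
\theta_A=\Slp{AC}-\Slp{AB}=c-b,\qquad \theta_C=\Slp{CB}-\Slp{CA}=b-a ,
\]
whereas at the middle vertex $B$ the interior opening straddles $d$, so by the straight-angle relation \Cref{lem:straight-angle-zero} — exactly as in the proof of \Cref{thm:sum-of-angles-zero} — it equals the sign-reversal of the directly measured value,
\[
\theta_B=-(\Slp{BC}-\Slp{BA})=a-c .
\]
Since $a<b<c$ strictly, $\theta_A=c-b>0$, $\theta_C=b-a>0$ and $\theta_B=a-c<0$, and none of the three vanishes; hence exactly one interior angle — the one at the vertex of intermediate abscissa — is negative, which is the claim. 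As a consistency check, $\theta_A+\theta_B+\theta_C=0$ recovers \Cref{thm:sum-of-angles-zero}, and since the three are nonzero the only competing sign pattern (two negative, one positive) is ruled out by the explicit values.

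The step that needs genuine care — bookkeeping rather than new mathematics, once \Cref{lem:straight-angle-zero} and \Cref{thm:sum-of-angles-zero} are in hand — is the \emph{location} of the sign reversal: one must justify that among $A,B,C$ it is precisely the vertex of intermediate abscissa whose interior opening contains the singular direction $d$ (there the two edges leave the vertex on opposite sides in the $x$-direction, whereas at each extreme vertex they leave on the same side), so that exactly one of the three directly measured openings is replaced by its negative. I would also make explicit that the reduction to $y=x^2$ is harmless even when the given circumparabola opens downward: the vertical reflection implicit in \Cref{lem:normalize} reverses every slope but simultaneously interchanges which opening at each vertex counts as interior, and the two effects cancel, so the count ``exactly one negative'' is preserved — this is the phenomenon exhibited, for both concavities, in \Cref{fig:triangle-equation-and-sign}.
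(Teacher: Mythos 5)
Your proof is correct, but it takes a different route from the paper. The paper's own proof is purely structural and coordinate-free at this point: it invokes \Cref{thm:sum-of-angles-zero} to get that at least one interior angle is negative (the sum is zero and, the triangle being nondegenerate, no angle vanishes), and then rules out two or more negative angles by combining the triangle equation \Cref{mthm:triangle-equation} with the sign characterization \Cref{prop:positive-diff-angle-equivalence}. You instead redo the explicit computation from the proof of \Cref{thm:sum-of-angles-zero} on the normalized parabola via \Cref{lem:normalize}, obtaining $\theta_A=c-b>0$, $\theta_C=b-a>0$, $\theta_B=a-c<0$ and reading off the conclusion directly. What your approach buys is concreteness and a sharper statement --- it identifies \emph{which} vertex carries the negative angle, namely the one of intermediate abscissa, and it makes the ``at most one negative'' half explicit rather than leaving it to the rather terse appeal to \Cref{mthm:triangle-equation} and \Cref{prop:positive-diff-angle-equivalence} in the paper. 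What the paper's route buys is independence from the normalization and from the bookkeeping you rightly flag as the delicate step: locating the sign reversal (via \Cref{lem:straight-angle-zero}) at the middle vertex and checking that a possible orientation-reversing normalization does not disturb the count. Your handling of both points is at the same level of rigor as the paper's own use of the normalization in \Cref{thm:sum-of-angles-zero} and is consistent with \Cref{fig:triangle-equation-and-sign}, so I see no gap --- only a trade of abstraction for explicitness.
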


\begin{proof}
By \Cref{thm:sum-of-angles-zero}, at least one angle is negative.
Using \Cref{mthm:triangle-equation} together with
\Cref{prop:positive-diff-angle-equivalence}, two or more cannot be negative.
Hence exactly one is negative.
\end{proof}

\begin{corollary}
If $\triangle_{\mathcal{P}}ABC$ is isosceles in the DA sense, then the vertex angle is negative.
\end{corollary}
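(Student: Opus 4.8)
The plan is to reduce to the standard parabola via the Normalization Lemma and then pin down which vertex must be the apex. First, by \Cref{lem:normalize} I may assume $A=(a,a^2)$, $B=(b,b^2)$, $C=(c,c^2)$ with $a<b<c$, so the three side norms are $|AB|_{\mathcal P}=b-a$, $|BC|_{\mathcal P}=c-b$, and $|AC|_{\mathcal P}=c-a$. By the Triangle Equation \Cref{mthm:triangle-equation}, $|AC|_{\mathcal P}=|AB|_{\mathcal P}+|BC|_{\mathcal P}$, and since the three $x$-coordinates are distinct, both summands are strictly positive; hence $|AC|_{\mathcal P}$ is strictly larger than each of $|AB|_{\mathcal P}$ and $|BC|_{\mathcal P}$. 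Therefore the equality of two side norms asserted in the isosceles hypothesis cannot involve $AC$, and must be $|AB|_{\mathcal P}=|BC|_{\mathcal P}$, i.e. $b-a=c-b$.

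Next I would note that the two equal sides $AB$ and $BC$ meet at $B$, so $B$ is the apex and $\measuredangle_{\mathcal P}B=\measuredangle_{\mathcal P}CBA$ is, by definition, the vertex angle. It then remains only to show this angle is negative, which follows from the sign computation already carried out in the proof of \Cref{thm:sum-of-angles-zero}: there one finds $\theta_B=a-c$, and since $a<c$ we conclude $\theta_B<0$. Alternatively one may invoke \Cref{thm:angle-sign-uniqueness}: exactly one interior angle of a DA triangle is negative, and since $\theta_A=c-b>0$ and $\theta_C=b-a>0$ under the normalization, the negative one is forced to be $\theta_B$.

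The argument presents essentially no obstacle; the only point that requires a little care is the bookkeeping of which vertex serves as the apex of the isosceles triangle. The substantive observation—and the reason the corollary is not vacuous—is that the Triangle Equation makes the longest side (here $AC$) incapable of being one of the two equal sides, so the apex is forced to be the middle vertex in the $x$-ordering, which is precisely the vertex that \Cref{thm:angle-sign-uniqueness} identifies as the one carrying the unique negative interior angle. No computation beyond that already in \Cref{thm:sum-of-angles-zero} is needed.
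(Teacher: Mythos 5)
Your proof is correct. It reaches the same underlying geometric fact as the paper --- namely that the isosceles condition forces the apex to be the vertex lying (in the $x$-ordering, i.e.\ under projection along $d$) strictly between the other two, and that this middle vertex is precisely the one carrying the negative interior angle --- but the execution differs. The paper argues coordinate-free: it projects the triangle along $d$ onto $\ell$, notes that the image is the segment spanned by the projections of the two base vertices (so the apex projects into its interior), and concludes ``by definition'' that the vertex angle is negative. You instead normalize via \Cref{lem:normalize}, use \Cref{mthm:triangle-equation} to rule out the longest side $AC$ from the equal pair, identify $B$ as the apex, and read off the sign from the computation $\theta_B=a-c$ in \Cref{thm:sum-of-angles-zero} (or from \Cref{thm:angle-sign-uniqueness}). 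Your route is slightly longer but makes explicit the step the paper leaves implicit --- why the apex must be the middle vertex --- and anchors the sign claim in previously proved results rather than in the definition of the interior angle; the paper's projection argument is shorter and avoids the normalization machinery. (A marginally more direct version of your first step: $|AB|_{\mathcal P}=|BC|_{\mathcal P}$ with $AC$ nonsingular already forces $x_B=\tfrac{x_A+x_C}{2}$, but the Triangle Equation argument is equally valid.)
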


\begin{proof}
Project $\triangle_{\mathcal{P}}ABC$ along the projective direction $d$ onto the reference line $\ell$.
The image is the segment whose endpoints are the projections of the two base vertices.
Exactly one vertex projects onto the interior of the opposite side's projection. Let $D'$ be that interior projected point on the base; take $D$ on the original base with $d$–projection $D'$.
By definition, the corresponding vertex angle is a negative DA.
\end{proof}


\section{DA Bisectors, the Incenter, and Surrounding Results}

In this chapter we introduce a projective slope chart so as to derive the
angle–bisector theorem in the DA setting.

\begin{figure}[htbp]
  \centering
  \begin{minipage}{0.32\textwidth}
    \centering
    \includegraphics[width=\linewidth]{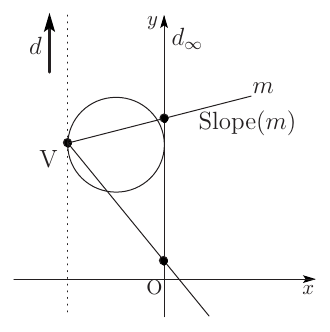}
    \subcaption{Cyclic order model on the slope circle.}
    \label{fig:proj-slope-with-dinf}
  \end{minipage}
  \begin{minipage}{0.32\textwidth}
    \centering
    \includegraphics[width=\linewidth]{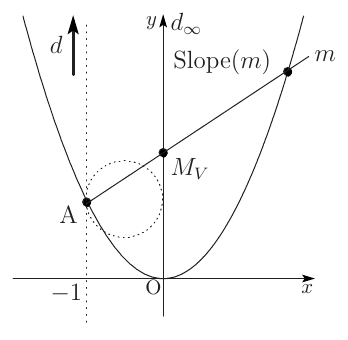}
    \subcaption{Example at $A=(-1,1)$ (projective slope chart).}
    \label{fig:proj-slope-with-dinf-example}
  \end{minipage}
  \begin{minipage}{0.32\textwidth}
    \centering
    \includegraphics[width=\linewidth]{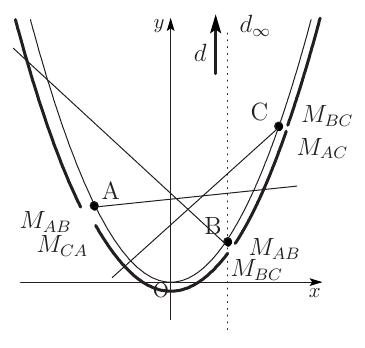}
    \subcaption{Inner arcs at $A$, $C$, and $B$ (the one at $B$ includes $d_\infty$).}
    \label{fig:inner-arcs}
  \end{minipage}
 \caption{(a) Cyclic order on the compactified slope line.  
  (b) Example of the projective slope chart.  
  (c) Inner arcs at the vertices; at $B$ the inner arc is defined as the one containing $d_\infty$.}
  \label{fig:slope-circle}
\end{figure}

\begin{definition}[Projective slope chart and singular slope]\label{def:proj-slope-with-dinf}
Fix a projective reference structure $(\ell,d)$. Consider the one–point compactification of the slope set
\[
\overline{\mathbb{R}}_{d}:=\mathbb{R}\cup\{\dinf\},
\]
where $\pm\infty$ are identified and endowed with the circular (oriented) order.
For each point $V$, let $\Dir(V)$ denote the set of oriented rays from $V$.
Define the map
\[
\mslope_V:\ \Dir(V)\longrightarrow \overline{\mathbb{R}}_{d}
\]
by
\[
\mslope_V(\ell)=
\begin{cases}
\slope(\ell) & (\ell \not\parallel d),\\[2pt]
\dinf & (\ell \parallel d).
\end{cases}
\]
Here $\Cut\in\Dir(V)$ denotes the singular ray parallel to $d$, and we set $\mslope_V(\Cut)=\dinf$.
On each connected component of $\Dir(V)\setminus\{\Cut\}$ the chart $\mslope_V$ is monotone and continuous;
the two ends of $\mathbb{R}$ are glued at the ideal point corresponding to $\dinf$ in the circular order.
\end{definition}

\begin{remark}
The map $\mslope_V$ is well defined and compatible with the circular order on $\overline{\mathbb{R}}_{d}$.
Note also that the phrase ``difference angle $=0$'' refers to the absorptive boundary rule for the angle measure and
is unrelated to the numeric slope value $0$. The symbol $\dinf$ denotes the singular slope corresponding to $d$
and should not be confused with the ordinary notion of $\infty$.
\end{remark}

\begin{definition}[Inner arcs in the compactified slope chart]\label{def:inner-arc-slope}
For each vertex $V\in\{A,B,C\}$, write by $M_V\in\overline{\mathbb R}_d$ the slope of a line through $V$,
and let $m_{AB},m_{BC},m_{CA}$ denote the slopes of the three sides.
Define the inner arc (endpoints excluded) by
\[
\arcintV{A}=\{\,M_A\mid m_{AB}\prec M_A\prec m_{AC}\,\},\qquad
\arcintV{C}=\{\,M_C\mid m_{CA}\prec M_C\prec m_{CB}\,\},
\]
\[
\arcintV{B}=\{\,M_B\mid M_B\prec m_{BA}\,\}\ \cup\ \{\,M_B\mid m_{BC}\prec M_B\,\},
\]
where $\prec$ is the circular order on $\overline{\mathbb R}_d$.
On the circle $\arcintV{B}$ is a single open arc and always contains $d_\infty$.
\end{definition}

\begin{remark}[Behavior at the vertex $B$]
Under the DA conventions (straight angle $=0$, angle to a singular line $=0$),
the inner arc $\arcintV{B}$ contains $d_\infty$, hence its bisecting direction collapses to $d_\infty$.
In the parabola–normalized coordinates $A(a,a^2)$, $B(b,b^2)$, $C(c,c^2)$, the point $d_\infty$ corresponds to the vertical line $x=b$.
\end{remark}

\begin{definition}[Bisector of a negative DA]\label{def:minus-angle-bisector}
If $\triangle_{\mathcal{P}}ABC$ has $\theta_B<0$, then the DA bisector of $\theta_B$ is the singular line through $B$.
\end{definition}

\begin{figure}[htbp]
  \centering
  \begin{minipage}{0.32\textwidth}
    \centering
    \includegraphics[width=\linewidth]{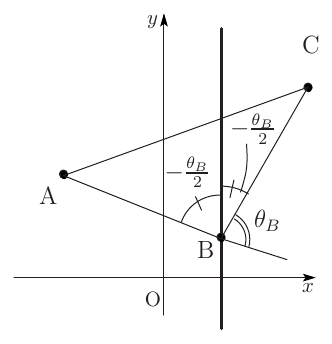}
    \subcaption{Bisector of a negative angle (singular line through $B$).}
    \label{fig:d-a-axi-fig-minus-angle-bisector}
  \end{minipage}
  \begin{minipage}{0.32\textwidth}
    \centering
    \includegraphics[width=\linewidth]{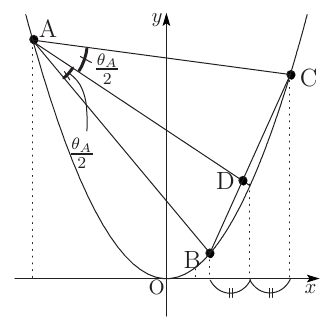}
    \subcaption{DA angle–bisector theorem.}
    \label{fig:proj-slope-with-dinf-example}
  \end{minipage}
  \begin{minipage}{0.32\textwidth}
    \centering
    \includegraphics[width=\linewidth]{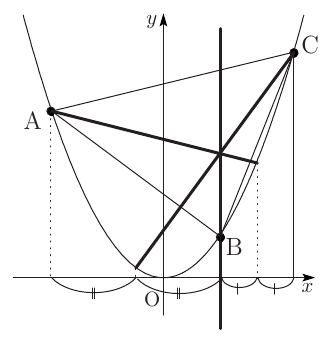}
    \subcaption{Incenter of a DA triangle.}
    \label{fig:inner-point}
  \end{minipage}
\caption{(a) Bisector of a negative DA. 
(b) DA angle–bisector theorem. 
(c) Incenter in the DA setting.}
\end{figure}

\begin{maintheorem}[DA Angle–Bisector Theorem]\label{mthm:diff-angle-bisector}
Let $\triangle_{\mathcal{P}}ABC$ have the positive interior DA at $A$, denoted $\theta_A$.
Let $\ell$ be the DA bisector of $\theta_A$, and let $D=\ell\cap BC$.
Then
\[
|AB|_{\mathcal{P}}\ :\ |AC|_{\mathcal{P}} \;=\; |BD|_{\mathcal{P}}\ :\ |DC|_{\mathcal{P}}.
\]
\end{maintheorem}

\begin{proof}
Ratios of DA norms are invariant under scaling along the projective direction,
so it suffices to treat the case where $\triangle_{\mathcal{P}}ABC$ is inscribed in $y=x^2$.
Let $A(a,a^2)$, $B(b,b^2)$, $C(c,c^2)$ with $a<b<c$. Then $\measuredangle_{\mathcal{P}}A>0$.
On $y=x^2$, take
\[
A'\Bigl(\tfrac{b+c}{2},\bigl(\tfrac{b+c}{2}\bigr)^2\Bigr),
\]
so that
\[
|BA'|_{\mathcal{P}}=|A'C|_{\mathcal{P}}=\tfrac{c-b}{2},
\]
and hence $AA'$ is the DA bisector of $\measuredangle_{\mathcal{P}}A$.
The lines
\[
AA':\ y=\Bigl(a+\tfrac{b+c}{2}\Bigr)x-\tfrac{a(b+c)}{2},\qquad
BC:\ y=(b+c)x-bc
\]
meet at $D$, whose $x$–coordinate is
\[
\Bigl(a+\tfrac{b+c}{2}\Bigr)x_D-\tfrac{a(b+c)}{2}=(b+c)x_D-bc
\quad\iff\quad
x_D=\frac{2bc-ab-ac}{\,b+c-2a\,}.
\]
Therefore
\begin{align*}
|BD|_{\mathcal{P}}\ :\ |DC|_{\mathcal{P}}
&=\Bigl(\tfrac{2bc-ab-ac}{\,b+c-2a\,}-b\Bigr)\ :\ \Bigl(c-\tfrac{2bc-ab-ac}{\,b+c-2a\,}\Bigr)\\
&=(b-a)(c-b)\ :\ (c-a)(c-b)\\
&=(b-a)\ :\ (c-a)
= |AB|_{\mathcal{P}}\ :\ |AC|_{\mathcal{P}}.
\end{align*}
For a negative angle, \cref{def:minus-angle-bisector} shows that the bisector is the singular line,
and one has $|BD|_{\mathcal{P}} = |BA|_{\mathcal{P}}$ and $|DC|_{\mathcal{P}} = |AC|_{\mathcal{P}}$,
so the same ratio holds. The zero–angle case is degenerate and trivial.
\end{proof}

Consequently, as in Euclidean geometry, the incenter is well defined in DA geometry.

\begin{corollary}\label{cor:exist_inner-point}
The three interior DA angle bisectors of $\triangle_{\mathcal{P}}ABC$ concur at a single finite point.
This point is called the incenter of $\triangle_{\mathcal{P}}ABC$.\footnote{%
If one defines ``angle bisectors'' symmetrically, the intersection pattern may at first look scattered.
In fact, the unique finite intersection is the incenter; the other intersections are the excenters
(one of which lies at an ideal point).}
\end{corollary}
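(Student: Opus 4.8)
The plan is to normalize the configuration and then exhibit the common point by an explicit substitution. First I would invoke \Cref{lem:normalize} to assume $A=(a,a^2)$, $B=(b,b^2)$, $C=(c,c^2)$ with $a<b<c$; since the concurrency of three lines is an affine incidence statement, it is untouched by the affine change of coordinates together with the vertical scaling used in that lemma. From the slope computation inside the proof of \Cref{thm:sum-of-angles-zero} one has $\theta_A=c-b>0$, $\theta_C=b-a>0$, and $\theta_B=a-c<0$, so — consistently with \Cref{thm:angle-sign-uniqueness} — the unique negative interior angle is the one at the middle vertex $B$.

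Next I would pin down the three interior bisectors. A DA is a difference of slopes, and, exactly as in the construction used to prove \Cref{mthm:diff-angle-bisector}, the interior bisector of a positive DA at a vertex is the line through that vertex whose slope is the average of the two side–slopes. Hence the interior bisector at $A$ is $y-a^2=\bigl(a+\tfrac{b+c}{2}\bigr)(x-a)$ and the interior bisector at $C$ is $y-c^2=\bigl(c+\tfrac{a+b}{2}\bigr)(x-c)$; both are genuine internal cevians because $b<\tfrac{b+c}{2}<c$ and $a<\tfrac{a+b}{2}<b$. For the negative angle at $B$, \Cref{def:minus-angle-bisector} declares the bisector to be the singular line through $B$, i.e.\ the vertical line $x=b$.

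It then remains only to substitute $x=b$ into the two non-singular bisectors. A short computation gives in both cases the value $y=\tfrac12\bigl(b^2+ab+bc-ac\bigr)$, so all three interior bisectors pass through the single finite point
\[
I=\Bigl(b,\ \tfrac12\bigl(b^2+ab+bc-ac\bigr)\Bigr),
\]
which is by definition the incenter of $\triangle_{\mathcal P}ABC$.

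I do not expect a genuine obstacle here; the only delicate point is conventional bookkeeping. One must use \Cref{def:minus-angle-bisector} to replace the ill-behaved bisector at the negative vertex $B$ by the singular line $x=b$, and one must confirm that the bisectors at $A$ and $C$ are the internal ones (cevians meeting the interior of the opposite side), both of which are immediate from the normalized positions. Finiteness of $I$ is visible from the closed form. The excenters mentioned in the footnote would be produced by the same substitution after flipping one or two of the averaged slopes to the external-bisector slope, one of them escaping to an ideal point; this is not needed for the corollary.
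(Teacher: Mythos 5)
Your proposal is correct and follows essentially the same route as the paper: normalize to $y=x^2$, take the averaged-slope interior bisectors at $A$ and $C$, invoke \Cref{def:minus-angle-bisector} to make the bisector at the negative vertex $B$ the singular line $x=b$, and verify concurrency at a finite point (the paper intersects the two non-singular bisectors and observes $x=b$, while you substitute $x=b$ into both — the same computation in a different order, with the bonus of the explicit incenter $\bigl(b,\tfrac12(b^2+ab+bc-ac)\bigr)$).
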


\begin{figure}[htbp]
  \centering
  \begin{minipage}{0.32\textwidth}
    \centering
    \includegraphics[width=\linewidth]{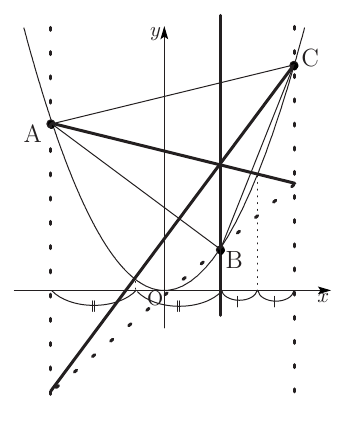}
    \subcaption{}
    \label{fig:excenter}
  \end{minipage}
  \begin{minipage}{0.32\textwidth}
    \centering
    \includegraphics[width=\linewidth]{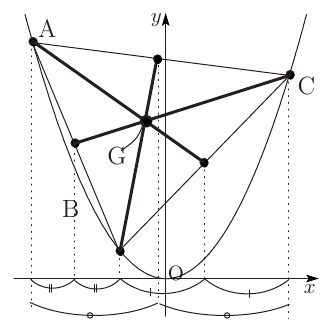}
    \subcaption{}
    \label{fig:centroid}
  \end{minipage}
\begin{minipage}{0.32\textwidth}
    \centering
    \includegraphics[width=\linewidth]{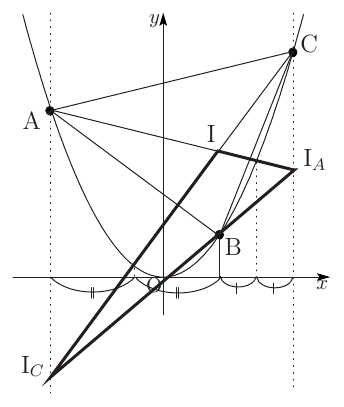}
    \subcaption{}
    \label{fig:bisector-triangle}
  \end{minipage}  
   \caption{(a) Excenter. (b) Centroid. (c) Bisector triangle.}

  \label{fig:excent-centroid-bisec-tri}
\end{figure}

\begin{proof}
In the proof of \Cref{mthm:diff-angle-bisector}, take the point
\(C'\!\left(\tfrac{c+a}{2},\bigl(\tfrac{c+a}{2}\bigr)^2\right)\).
Then the intersection of \(AA'\) and \(CC'\) has \(x\)-coordinate \(b\),
hence lies on the singular line through \(B\).
Therefore the three interior DA bisectors of \(\triangle_{\mathcal P}ABC\)
concur at a single finite point.
\end{proof}

\begin{remark}
The statements in this subsection are most naturally formulated under the
absorptive boundary policy. With the other policies (lift/divergent),
the behavior of straight angles and bisection changes, and the existence
and/or position of the incenter must be adjusted accordingly.
\end{remark}

\begin{corollary}[Existence of excenters]\label{cor:excenter}
For a DA triangle \(\triangle_{\mathcal P}ABC\), three excenters are defined
as the intersections of the interior bisector at one vertex with the
exterior bisectors at the other two vertices.
\end{corollary}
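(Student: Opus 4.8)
The plan is to pass to the normalized model of \cref{lem:normalize}: since an affine normalization preserving $(\ell,d)$ affects neither bisectors nor incidences, we may take $\triangle_{\mathcal P}ABC$ on $y=x^{2}$ with $A=(a,a^{2})$, $B=(b,b^{2})$, $C=(c,c^{2})$ and $a<b<c$. What must be shown is that, for each vertex, the interior bisector there together with the two exterior bisectors at the other vertices all pass through one common point (an ideal point being allowed). By \Cref{thm:sum-of-angles-zero} and \Cref{thm:angle-sign-uniqueness} the interior angles are $\theta_{A}=c-b>0$, $\theta_{C}=b-a>0$ and $\theta_{B}=a-c<0$, so $B$ is the unique vertex with negative interior angle, and by \cref{prop:exterior-theorem} the exterior angles are $\phi_{A}=-\theta_{A}<0$, $\phi_{C}=-\theta_{C}<0$ and $\phi_{B}=-\theta_{B}>0$.

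The first step is to pin down the six bisectors. By \cref{prop:positive-diff-angle-equivalence} a negative DA is precisely one whose opening crosses $d$, so under the absorbing boundary policy its bisecting direction collapses to $\dinf$; extending the convention of \cref{def:minus-angle-bisector} in the evident way, the bisector of a negative DA is the singular line through its vertex. Hence the interior bisector at $B$ and the exterior bisectors at $A$ and $C$ are the three singular lines $x=b$, $x=a$ and $x=c$. The remaining three bisectors --- the interior ones at $A$ and $C$ and the exterior one at $B$ --- bisect positive DAs and are ordinary lines through their vertices whose slopes are the averages of the slopes of the two adjacent sides: with $\Slp{AB}=a+b$, $\Slp{AC}=a+c$ and $\Slp{BC}=b+c$, they have slopes $a+\tfrac{b+c}{2}$ at $A$, $c+\tfrac{a+b}{2}$ at $C$, and $b+\tfrac{a+c}{2}$ at $B$ (exterior). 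A borderline configuration $b=\tfrac{a+c}{2}$ makes the parabola point of this last bisector coincide with $B$, but the line itself --- the tangent to $y=x^{2}$ at $B$, of slope $2b$ --- is still well defined, so no case split is needed.

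The second step is to verify the three concurrences. For the excenter attached to $A$, the exterior bisector at $C$ is the singular line $x=c$, so it suffices to check that the interior bisector at $A$ and the exterior bisector at $B$ take the same value at $x=c$; this reduces to the identity $(b+c)(c-a)-(a+c)(c-b)=2c(b-a)$, which is immediate. Symmetrically, for the excenter attached to $C$ one checks that the interior bisector at $C$ and the exterior bisector at $B$ agree at $x=a$, i.e.\ that $(a+b)(a-c)-(a+c)(a-b)=2a(b-c)$. Finally, the excenter attached to $B$ is the common point of the three singular lines $x=a$, $x=b$ and $x=c$; these are distinct lines in the direction $d$, so they meet only at the ideal point $\dinf$ --- precisely the ideal excenter anticipated in the footnote to \cref{cor:exist_inner-point}.

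The genuine obstacle here is the identification step rather than any computation: one must settle what an ``exterior bisector'' is in DA geometry and recognize that the sign of the relevant DA dictates whether the bisector is an honest line or collapses to the singular line through its vertex --- so that exactly two of the three exterior bisectors are singular. Once that is clear, the concurrences at $A$ and $C$ are the two displayed one-line identities, and the third excenter being ideal is forced by the parallelism of those two singular exterior bisectors with the (also singular) interior bisector at $B$.
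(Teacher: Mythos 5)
Your proof is correct and follows essentially the same route as the paper: normalize to $y=x^{2}$, recognize that the interior bisector at $B$ and the exterior bisectors at $A$ and $C$ are the singular lines $x=b,a,c$ while the remaining three bisectors have the averaged slopes $a+\tfrac{b+c}{2}$, $c+\tfrac{a+b}{2}$, $b+\tfrac{a+c}{2}$, and verify each concurrency by a one-line coordinate check, with the third excenter being the ideal point of the singular direction. The only (immaterial) difference is that the paper intersects the two nonsingular bisectors and observes the $x$-coordinate equals $c$ (resp.\ $a$), whereas you evaluate both at $x=c$ (resp.\ $x=a$) and check equality; your explicit handling of the degenerate case $b=\tfrac{a+c}{2}$ is a small extra care the paper leaves implicit.
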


\begin{proof}
In the proof of \Cref{mthm:diff-angle-bisector}, take
\(B'\bigl(\tfrac{c+a}{2},(\tfrac{c+a}{2})^2\bigr)\).
Then \(BB'\) is the bisector of \(-\theta_B\).
The intersection \(I_A=AA'\cap BB'\) has \(x\)-coordinate \(c\),
so it lies on the finite line \(x=c\), which is the bisector of \(-\theta_C\).
The point \(I_C\) is constructed analogously.
On the other hand, the bisectors of \(-\theta_A\), \(-\theta_C\), and \(+\theta_B\)
are all singular lines, so their intersection is an ideal point at infinity.
\end{proof}

\begin{corollary}[Existence of the centroid]\label{cor:centroid}
In a DA triangle \(\triangle_{\mathcal P}ABC\), let \(D,E,F\) be the midpoints
of the sides \(BC,CA,AB\), respectively. Then the three medians
\(AD,BE,CF\) concur at a single point \(G\), called the DA centroid.
\end{corollary}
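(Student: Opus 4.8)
The plan is to show that, for the circumparabola, the DA midpoint of a side coincides with its ordinary (Euclidean) midpoint, so that the three DA medians are literally the three Euclidean medians of $\triangle ABC$; concurrency is then inherited from Euclidean geometry, and the only work is a short coordinate verification in the normalized model.

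First I would invoke the Normalization Lemma (\Cref{lem:normalize}) to assume $A=(a,a^2)$, $B=(b,b^2)$, $C=(c,c^2)$ with $a<b<c$ on $y=x^2$. The normalization is a composition of affine maps (an affine change carrying $\ell$ to the $x$-axis and $d$ to the $y$-direction, horizontal and vertical translations, and a vertical scaling); affine maps preserve collinearity, concurrency, betweenness, and ratios of DA norms along a line, so a concurrency proof in the normalized picture transfers back to the original configuration.

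Next I would locate the midpoints. By definition $D$ is the point of the side $BC$ with $|BD|_{\mathcal P}=|DC|_{\mathcal P}$; since $|XY|_{\mathcal P}=|x_Y-x_X|$ and the chord $BC$ has equation $y=(b+c)x-bc$, this forces $x_D=\tfrac{b+c}{2}$, hence $D=\bigl(\tfrac{b+c}{2},\,\tfrac{b^2+c^2}{2}\bigr)=\tfrac12(B+C)$, the ordinary midpoint of $B$ and $C$. Likewise $E=\tfrac12(C+A)$ and $F=\tfrac12(A+B)$. Therefore $AD$, $BE$, $CF$ are exactly the classical medians of the Euclidean triangle $ABC$, which meet at the finite point $G=\tfrac13(A+B+C)$; pulling $G$ back through the inverse normalization yields the DA centroid in the original configuration. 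Alternatively, using the DA Ceva theorem (\Cref{thm:ceva_diff_angle}), the three cevian ratios $|BD|_{\mathcal P}:|DC|_{\mathcal P}$, $|CE|_{\mathcal P}:|EA|_{\mathcal P}$, $|AF|_{\mathcal P}:|FB|_{\mathcal P}$ are each $1:1$, so their product is $1$ and the cevians are concurrent.

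I do not expect a genuine obstacle here; the only points needing a word of care are that $D,E,F$ lie in the finite plane and that one of the three medians may happen to be a singular line — this occurs precisely when $b=\tfrac{a+c}{2}$, making $BE$ vertical — but $AD$ and $CF$ are never vertical (since $a<\tfrac{b+c}{2}$ and $c>\tfrac{a+b}{2}$ whenever $a<b<c$), and the classical centroid $G$ is a well-defined finite point regardless, so the conclusion is unaffected. The essential content of the corollary is thus the identification ``DA midpoint $=$ Euclidean midpoint'' for chords of the circumparabola, after which concurrency is automatic.
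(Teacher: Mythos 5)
Your proof is correct and follows essentially the same route as the paper: the key observation in both is that, because the DA norm measures differences of $x$-coordinates, the DA midpoints $D,E,F$ coincide with the ordinary Euclidean midpoints, so the concurrency of $AD,BE,CF$ is inherited directly from Euclidean geometry. Your added coordinate verification and the alternative Ceva-ratio argument are consistent elaborations of the paper's one-line proof.
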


\begin{proof}
By the definition of the DA norm, the points \(D,E,F\) coincide with the
usual midpoints in Euclidean geometry, hence the existence of \(G\) is
well-defined.
\end{proof}

Assume \(\measuredangle_{\mathcal P}B<0\).
Let \(I\) be the incenter and \(I_A,I_C\) the two finite excenters of
\(\triangle_{\mathcal P}ABC\).
The triangle formed by these three points is called the bisector triangle,
and its centroid is denoted by \(G_I\).

\begin{definition}[Bisector triangle]\label{def:bisector-triangle}
For a DA triangle \(\triangle_{\mathcal P}ABC\) with \(\measuredangle_{\mathcal P}B<0\),
let \(I\) be its incenter and \(I_A,I_C\) the two finite excenters.
The triangle \(\triangle_{\mathcal P}II_AI_C\) is called the bisector triangle,
and its centroid is denoted by \(G_I\).
\end{definition}

\begin{remark}
Under the assumption \(\measuredangle_{\mathcal P}B<0\),
the points \(I, I_A, I_C\) are always non-collinear, so the bisector triangle
is indeed a (nondegenerate) triangle.
\end{remark}

\begin{figure}[htbp]
  \centering
  \begin{minipage}{0.40\textwidth}
    \centering
    \includegraphics[width=\linewidth]{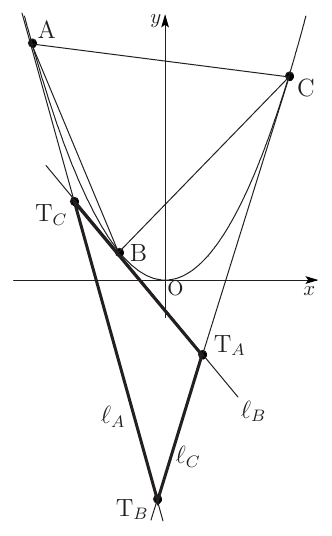}
    \subcaption{}
    \label{fig:tangent-triangle}
  \end{minipage}
  \begin{minipage}{0.40\textwidth}
    \centering
    \includegraphics[width=\linewidth]{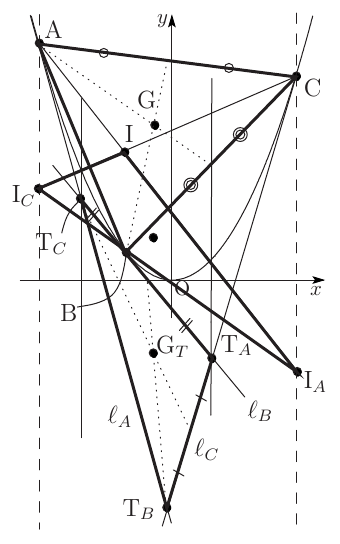}
    \subcaption{}
    \label{fig:bisector-centroid}
  \end{minipage}
  \caption{(a) Tangent triangle. (b) Centroid of the bisector triangle.}
  \label{fig:tangent-and-tree-G}
\end{figure}

\begin{definition}[Tangent triangle]\label{def:tangent-triangle}
Let \(\triangle_{\mathcal P}ABC\) be a DA triangle, and let \(\mathcal P\)
be its circumparabola. Denote by \(\ell_A,\ell_B,\ell_C\) the tangents to
\(\mathcal P\) at \(A,B,C\), respectively. Define the three pairwise intersections by
\[
T_A = \ell_B \cap \ell_C,\quad 
T_B = \ell_C \cap \ell_A,\quad 
T_C = \ell_A \cap \ell_B.
\]
Then \(\triangle_{\mathcal P}T_AT_BT_C\) is called the tangent triangle.
\end{definition}

\begin{proposition}[Centroid of the bisector triangle]\label{prop:bisector-centroid}
Let \(G\) be the centroid of \(\triangle_{\mathcal P}ABC\),
and \(G_T\) the centroid of its tangent triangle \(\triangle_{\mathcal P}T_AT_BT_C\).
Then the centroid \(G_I\) of the bisector triangle \(\triangle_{\mathcal P}II_AI_C\)
satisfies
\[
   G_I = \tfrac{1}{2}(G+G_T).
\]
\end{proposition}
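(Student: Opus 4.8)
The plan is to reduce to normalized coordinates and compare three explicit centroids. Since the bisector triangle $\triangle_{\mathcal P}II_AI_C$ is presupposed, \Cref{def:bisector-triangle} gives $\measuredangle_{\mathcal P}B<0$, so by \Cref{thm:angle-sign-uniqueness} the abscissa of $B$ lies strictly between those of $A$ and $C$; hence, invoking \Cref{lem:normalize}, we may take the circumparabola to be $y=x^2$ with $A=(a,a^2)$, $B=(b,b^2)$, $C=(c,c^2)$ and $a<b<c$. The normalizing transformation is an affine map preserving $(\ell,d)$ followed by a vertical scaling (again affine), and every object entering the statement — tangent lines (tangency being affine), DA-bisectors, incenter, excenters, and centroids — is covariant under such maps; since $G_I=\tfrac12(G+G_T)$ is an affine relation, it suffices to prove it in these coordinates.

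First I would record $G$ and $G_T$. One has $G=\bigl(\tfrac{a+b+c}{3},\tfrac{a^2+b^2+c^2}{3}\bigr)$ directly. For $G_T$, the tangent to $y=x^2$ at $(t,t^2)$ is $y=2tx-t^2$, and two such tangents at parameters $s,t$ meet at $\bigl(\tfrac{s+t}{2},st\bigr)$; hence $T_A=\bigl(\tfrac{b+c}{2},bc\bigr)$, $T_B=\bigl(\tfrac{c+a}{2},ca\bigr)$, $T_C=\bigl(\tfrac{a+b}{2},ab\bigr)$, so $G_T=\bigl(\tfrac{a+b+c}{3},\tfrac{ab+bc+ca}{3}\bigr)$ and $\tfrac12(G+G_T)=\bigl(\tfrac{a+b+c}{3},\tfrac{a^2+b^2+c^2+ab+bc+ca}{6}\bigr)$.

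Next I would locate $I,I_A,I_C$ from the bisector constructions already established. From the proofs of \Cref{mthm:diff-angle-bisector}, \Cref{cor:exist_inner-point}, and \Cref{cor:excenter}: the interior bisector at a vertex with positive DA is the chord from that vertex to the point of $y=x^2$ at the $x$-midpoint of the other two vertices (slope $a+\tfrac{b+c}{2}$ at $A$, slope $c+\tfrac{a+b}{2}$ at $C$); the bisector of the negative DA at $B$ is the singular line $x=b$; the exterior bisector at a vertex is the singular line through it when the corresponding exterior angle is negative ($x=a$ at $A$, $x=c$ at $C$), and at $B$ it is the chord of slope $b+\tfrac{a+c}{2}$. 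Consequently $I$ lies on $x=b$, the excenter $I_A$ opposite $A$ lies on $x=c$, and the excenter $I_C$ opposite $C$ lies on $x=a$ (the third excenter, opposite $B$, being the ideal one). Substituting these abscissae into the respective chord equations, a direct computation yields
\[
I=\Bigl(b,\tfrac{b^2+ab+bc-ac}{2}\Bigr),\quad
I_A=\Bigl(c,\tfrac{c^2+ac+bc-ab}{2}\Bigr),\quad
I_C=\Bigl(a,\tfrac{a^2+ab+ac-bc}{2}\Bigr).
\]
Their $x$-coordinates sum to $a+b+c$ and their $y$-coordinates sum to $\tfrac12(a^2+b^2+c^2+ab+bc+ca)$ (each cross term occurring with net coefficient $+1$); dividing by $3$ gives $G_I=\bigl(\tfrac{a+b+c}{3},\tfrac{a^2+b^2+c^2+ab+bc+ca}{6}\bigr)=\tfrac12(G+G_T)$.

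The arithmetic is routine; the two points deserving care are the justification that the identity descends to the normalized picture (covariance of all constructions under the normalizing map) and the correct bookkeeping of which singular line or chord realizes each interior/exterior bisector — in particular that $I_A$ and $I_C$ are genuinely the two finite excenters and sit on $x=c$ and $x=a$. With the vertices of all three triangles in hand in normalized coordinates, the conclusion is a one-line comparison.
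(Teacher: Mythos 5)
Your proposal is correct and follows essentially the same route as the paper: normalize to $y=x^2$, read off $G$ and $G_T$ from the tangent intersections, locate $I$, $I_A$, $I_C$ from the bisector computations of \Cref{mthm:diff-angle-bisector} and \Cref{cor:excenter}, and average. Your explicit coordinates $I=\bigl(b,\tfrac{b^2+ab+bc-ac}{2}\bigr)$, $I_A=\bigl(c,\tfrac{c^2+ac+bc-ab}{2}\bigr)$, $I_C=\bigl(a,\tfrac{a^2+ab+ac-bc}{2}\bigr)$ check out and simply make explicit what the paper's proof leaves as "rational in $a,b,c$."
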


\begin{proof}
Normalize to \(\mathcal P: y=x^2\).
With vertices \(A(a,a^2), B(b,b^2), C(c,c^2)\) (\(a<b<c\)),
\[
 G=\Bigl(\tfrac{a+b+c}{3}, \tfrac{a^2+b^2+c^2}{3}\Bigr).
\]
The tangents are \(\ell_A: y=2ax-a^2\), \(\ell_B: y=2bx-b^2\), \(\ell_C: y=2cx-c^2\).
Hence
\[
T_A=\Bigl(\tfrac{b+c}{2}, bc\Bigr),\ 
T_B=\Bigl(\tfrac{c+a}{2}, ca\Bigr),\ 
T_C=\Bigl(\tfrac{a+b}{2}, ab\Bigr),
\]
and the centroid of the tangent triangle is
\[
 G_T=\Bigl(\tfrac{a+b+c}{3}, \tfrac{ab+bc+ca}{3}\Bigr).
\]
From the computations in \Cref{mthm:diff-angle-bisector,cor:excenter},
the coordinates of \(I, I_A, I_C\) are rational in \(a,b,c\),
and their average yields
\[
 G_I=\Bigl(\tfrac{a+b+c}{3}, \tfrac{a^2+b^2+c^2+ab+bc+ca}{6}\Bigr).
\]
Therefore \(G_I\) is the midpoint of \(G\) and \(G_T\).
\end{proof}


\section{Applications of DA Geometry I: Correspondence with Classical Theorems and Miquel's Triangle Theorem}

\begin{quote}
In this chapter, we examine in detail the correspondence between classical theorems in Euclidean geometry and those in DA geometry, thereby clarifying the structural breadth of the latter.
Using parabolas inscribed in quadrilaterals and isosceles trapezoids as our main examples,
we derive new relations involving lengths and angles,
culminating in the establishment of the DA version of Miquel's Triangle Theorem.
These results reveal that DA geometry resonates deeply with Euclidean geometry while possessing its own autonomous structure.\\[1ex]
We begin by verifying, as the parabolic counterpart of the Inscribed Angle Theorem,
the constancy of the parabolic inscribed angle.
\end{quote}

\begin{proposition}[Parabolic Inscribed–Angle Constancy]\label{prop:parabolic-inscribed-angle}
Let four points $A,B,C,D$ lie on the parabola $y=x^{2}$, and let $X = AC \cap BD$.
Then
\[
\angle_{\mathcal P} BAX = \angle_{\mathcal P} BDC,
\qquad
\angle_{\mathcal P} ABX = \angle_{\mathcal P} ADC .
\]
\end{proposition}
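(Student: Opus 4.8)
The plan is to base everything on a single invariance property of the difference angle on a parabola. The statement already normalizes to $y=x^2$ (cf.\ \Cref{lem:normalize}); writing $A=(a,a^2)$, $B=(b,b^2)$, $C=(c,c^2)$, $D=(d,d^2)$, recall the chord-slope identity $\Slp{PQ}=x_P+x_Q$. The first step is to isolate the \emph{DA inscribed-angle lemma}: for any apex $Q$ and any two further points $P,R$ on the parabola,
\[
\measuredangle_{\mathcal P} PQR=\Slp{QR}-\Slp{QP}=(x_Q+x_R)-(x_Q+x_P)=x_R-x_P,
\]
so the difference angle subtended by the ordered pair $(P,R)$ is \emph{independent of the apex} $Q$. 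This is the exact DA counterpart of the Euclidean inscribed-angle theorem, and it is the only ingredient the proof needs.

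The first identity then follows immediately. Since $X=AC\cap BD$ lies on line $AC$, the ray $AX$ carries the slope of $AC$, so $\measuredangle_{\mathcal P}BAX=\measuredangle_{\mathcal P}BAC$. By the lemma, $\measuredangle_{\mathcal P}BAC=x_C-x_B$ (apex $A$, pair $(B,C)$) and $\measuredangle_{\mathcal P}BDC=x_C-x_B$ (apex $D$, pair $(B,C)$); both subtend the same pair $(B,C)$, whence $\measuredangle_{\mathcal P}BAX=\measuredangle_{\mathcal P}BDC$. This step is routine once the lemma is in place.

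For the second identity I apply the same reduction, keeping $\measuredangle_{\mathcal P}ADC$ as the stated target. Because $X$ lies on line $BD$, the ray $BX$ carries the slope of $BD$, so $\measuredangle_{\mathcal P}ABX=\measuredangle_{\mathcal P}ABD$, which by the lemma equals $x_D-x_A$ (apex $B$, pair $(A,D)$). Applying the lemma to the right-hand side gives $\measuredangle_{\mathcal P}ADC=x_C-x_A$ (apex $D$, pair $(A,C)$). The main obstacle is exactly this endpoint bookkeeping: the left side is an inscribed angle over the pair $(A,D)$, whereas $\measuredangle_{\mathcal P}ADC$ as written is an inscribed angle over the pair $(A,C)$, so the lemma collapses the asserted equality to the relation $x_D-x_A=x_C-x_A$. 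The delicate point---where I expect the real content to sit---is therefore the reconciliation of these two subtended pairs: to match an apex angle against the pair $(A,D)$ one needs the apex whose subtended pair is $(A,D)$, which among the four vertices is the apex-$C$ angle $\measuredangle_{\mathcal P}ACD=x_D-x_A$. Thus the argument closes precisely when the right-hand apex is read over the pair $(A,D)$, and the entire difficulty of the second identity resides in this apex/endpoint matching, which is the step that decides whether the equality holds with the apex exactly as printed.
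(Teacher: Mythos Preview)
Your proof of the first identity is correct and is, in spirit, exactly the paper's own argument: the paper invokes the isoptic property (the locus of points seeing a fixed chord under a fixed DA is the parabola itself), which is precisely your ``DA inscribed-angle lemma'' $\measuredangle_{\mathcal P}PQR = x_R - x_P$. You have simply made the computation explicit via the chord-slope identity $\Slp{PQ}=x_P+x_Q$, whereas the paper's two-line proof leaves it implicit.

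On the second identity you have not failed to find the argument; you have found a genuine misprint in the statement. Your computation is correct: $\measuredangle_{\mathcal P}ABX=\measuredangle_{\mathcal P}ABD=d-a$, while $\measuredangle_{\mathcal P}ADC=c-a$, and these disagree whenever $c\ne d$. The intended right-hand side is $\measuredangle_{\mathcal P}ACD$ (apex $C$, subtended pair $(A,D)$), which equals $d-a$ and makes the identity true and symmetric with the first one. The paper's own proof is too brief to detect this: it simply asserts that ``the angles coincide'' by the isoptic property without tracking which apex is being used, so the typo slips through. Your endpoint bookkeeping is exactly the right diagnosis, and the resolution is the label swap $D\leftrightarrow C$ on the right-hand side, not any further argument.
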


\begin{proof}
In a parabola, the isoptic curve (the locus of points subtending a fixed angle at two points) is again a parabola.
Hence, the locus of points from which the segment $AB$ is seen under an equal DA–angle is parabolic,
and since both $C$ and $D$ lie on it, the angles coincide.
\end{proof}

\begin{definition}[DA Quadrilateral]
A DA quadrilateral $\square_{\mathcal P}ABCD$ is a quadrilateral none of whose sides or diagonals is parallel to the singular direction.
\end{definition}

\begin{proposition}[Characterization of Parabolically Inscribed Quadrilaterals]\label{prop:parabolic-quadrilateral}
For a DA quadrilateral $\square_{\mathcal P}ABCD$, the following conditions are equivalent:
\begin{enumerate}
\item $\square_{\mathcal P}ABCD$ is inscribed in a parabola whose axis is parallel to the projection direction.
\item The sum of the opposite DA angles is $0$.
\end{enumerate}
\end{proposition}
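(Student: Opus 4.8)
\noindent
The plan is to reduce to the normalized parabola $y=x^{2}$ and then settle each implication by a short slope computation. Among the four vertices, three consecutive ones, say $A,B,C$, are non-collinear and no two of them lie on a singular line, because $\square_{\mathcal P}ABCD$ is a DA quadrilateral; hence by \Cref{thm:unique-circum-parabola} they span a unique parabola with axis parallel to $d$, and by \Cref{lem:normalize} an affine transformation preserving $(\ell,d)$ followed by a vertical scaling carries it to $y=x^{2}$. After this reduction I write $A=(a,a^{2})$, $B=(b,b^{2})$, $C=(c,c^{2})$ and $D=(p,q)$, where $p\notin\{a,b,c\}$ since no side or diagonal of $\square_{\mathcal P}ABCD$ is singular. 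Because the circumparabola through $A,B,C$ is unique, condition (1) is equivalent to $D$ lying on $y=x^{2}$, i.e.\ to $q=p^{2}$; the whole statement thus amounts to showing that $q=p^{2}$ is equivalent to the vanishing of the opposite-angle sum.

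\medskip
\noindent
\emph{Direction (1)$\Rightarrow$(2).} When $q=p^{2}$, every side slope of $\square_{\mathcal P}ABCD$ is the sum of the abscissae of its endpoints, so each interior DA angle is a difference of two such sums. For the opposite pair $A,C$,
\begin{align*}
\measuredangle_{\mathcal{P}}DAB+\measuredangle_{\mathcal{P}}BCD
&=\bigl(\Slp{AB}-\Slp{AD}\bigr)+\bigl(\Slp{CD}-\Slp{CB}\bigr)\\
&=(a+b)-(a+p)+(c+p)-(b+c)=0,
\end{align*}
and the pair $B,D$ is identical after relabelling. Conceptually this is just \Cref{prop:parabolic-inscribed-angle}: opposite vertices subtend a common chord under equal DA angles.

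\medskip
\noindent
\emph{Direction (2)$\Rightarrow$(1).} The hypothesis that one pair of opposite angles cancels, say $\measuredangle_{\mathcal{P}}DAB+\measuredangle_{\mathcal{P}}BCD=0$, says that $A$ and $C$ see the chord $BD$ under the same DA angle, so both lie on a single iso-angle locus of $BD$; by \Cref{mthm:parabola-locus} (equivalently by the constancy in \Cref{prop:parabolic-inscribed-angle}) that locus is a parabola with axis parallel to $d$ through $B$ and $D$, and since it also contains $A$ and $C$, uniqueness of the circumparabola identifies it with $y=x^{2}$, giving $q=p^{2}$. As the robust backbone of an actual proof I would instead expand
\begin{align*}
\bigl(\Slp{AB}-\Slp{AD}\bigr)+\bigl(\Slp{CD}-\Slp{CB}\bigr)
=(a-c)+\frac{q-c^{2}}{p-c}-\frac{q-a^{2}}{p-a}=0
\end{align*}
and clear the nonzero denominators $(p-a)(p-c)$: the numerator factors as $(a-c)(p^{2}-q)$, and since $a\neq c$ this forces $q=p^{2}$.

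\medskip
\noindent
The main obstacle I anticipate is bookkeeping rather than substance: fixing once and for all which rays and which signs realize the phrase ``the sum of the opposite DA angles is $0$'', so that the cancellations above are literal equalities and not merely equalities up to sign; and checking that the iso-angle locus theorem \Cref{mthm:parabola-locus}, stated there for a chord lying on the reference line, is invoked in a form valid for the general chord $BD$. This last point is exactly why I would keep the explicit slope computation as the working argument for (2)$\Rightarrow$(1), since it sidesteps any renormalization of $BD$ onto $\ell$. All the degeneracies — non-collinearity of $A,B,C$ and $p\notin\{a,b,c\}$ — are delivered by the DA-quadrilateral hypothesis, so every slope appearing is defined and every denominator is nonzero.
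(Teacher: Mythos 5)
Your proposal is correct and follows essentially the same route as the paper: normalize to $y=x^{2}$, verify (1)$\Rightarrow$(2) by the slope identity on the parabola, and prove (2)$\Rightarrow$(1) by clearing the denominators $(p-a)(p-c)$ so that the numerator factors as $(a-c)(p^{2}-q)$, forcing $D$ onto the parabola — exactly the paper's computation, up to your choice of the opposite pair at $A,C$ instead of $B,D$. Your decision to keep the explicit slope computation rather than lean on \Cref{mthm:parabola-locus} for a general chord matches the paper's treatment and avoids the renormalization issue you flag.
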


\begin{proof}
(1)$\Rightarrow$(2).
Normalize so that $\mathcal P\colon y=x^{2}$ and let
$A=(a,a^{2}),\ B=(b,b^{2}),\ C=(c,c^{2})$ with $a<b<c$.
For $D=(d,d^{2})$ ($c<d$), direct computation gives
$\theta_{B}=a-c$ in $\triangle_{\mathcal P}ABC$ and $\theta_{D}=c-a$ in $\triangle_{\mathcal P}ACD$;
thus $\theta_{B}+\theta_{D}=0$.

(2)$\Rightarrow$(1).
Let $D=(d,d')$ ($c<d$).
From the assumption,
\[
\theta_{D}
=\frac{c^{2}-d'}{c-d}-\frac{a^{2}-d'}{a-d}=c-a .
\]
Subtracting the right–hand side and simplifying gives
\[
\frac{(a-c)(d^{2}-d')}{(a-d)(c-d)}=0 .
\]
Since $a\neq c$ and $d\notin\{a,c\}$, we obtain $d'=d^{2}$;
hence $D$ lies on $y=x^{2}$.
Therefore $\square_{\mathcal P}ABCD$ is inscribed in the parabola $\mathcal P$.
\end{proof}

\begin{figure}[htbp]
  \centering
  \begin{minipage}{0.32\textwidth}
    \centering
    \includegraphics[width=\linewidth]{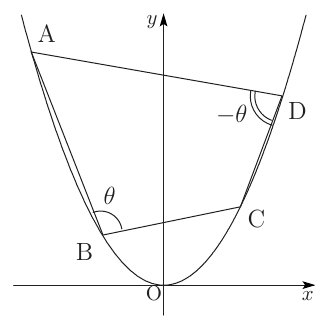}
    \subcaption{Vertical angles formed by two intersecting lines}
    \label{fig:parabolic-quadrilateral-condition}
  \end{minipage}
  \begin{minipage}{0.32\textwidth}
    \centering
    \includegraphics[width=\linewidth]{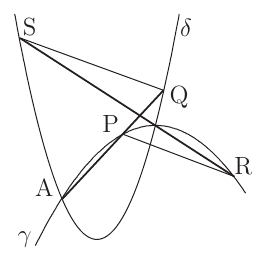}
    \subcaption{Vertical angles formed by two intersecting lines}
    \label{fig:parabolic-intersecting}
  \end{minipage}
  \begin{minipage}{0.32\textwidth}
    \centering
    \includegraphics[width=\linewidth]{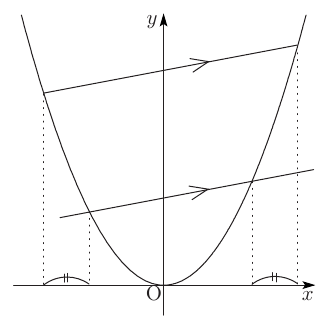}
    \subcaption{Convention: a straight angle is $0$.}
    \label{fig:parabolic-isosceles-trapezoid}
  \end{minipage}

  \caption{Vertical angles and straight–angle convention.}
\end{figure}

\begin{theorem}[Parabolic Analogue of the Intersecting–Circles Theorem]\label{thm:parabolic-intersecting}
Let two parabolas $\gamma,\delta$ intersect at distinct points $A,B$.
Let an arbitrary line through $A$ meet $\gamma,\delta$ again at $P,Q$,
and a line through $B$ meet them again at $R,S$.
Then $PR \parallel QS$.
\end{theorem}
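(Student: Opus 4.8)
The plan is to reduce the whole statement to the additive behaviour of chord slopes on a parabola whose axis is parallel to~$d$. First I would record the elementary key lemma: in coordinates with $\ell$ the $x$–axis and $d$ the $y$–direction, a parabola with vertical axis has the form $y=\mu t^{2}+\beta t+\gamma$, and the chord joining the two points with $x$–coordinates $s,t$ on it has slope $\mu(s+t)+\beta$. Consequently, for any four points $W,X,Y,Z$ on such a parabola and any splitting of $\{W,X,Y,Z\}$ into two pairs, the sum of the two corresponding chord slopes is $\mu(x_{W}+x_{X}+x_{Y}+x_{Z})+2\beta$ — it depends only on the total of the four $x$–coordinates — and in particular
\[
\Slp{WX}+\Slp{YZ}=\Slp{WY}+\Slp{XZ}=\Slp{WZ}+\Slp{XY}.
\]
No normalization is actually needed, but one may invoke \Cref{lem:normalize} to put $\gamma$ in the form $y=x^{2}$ if a concrete picture is wanted; this is the same mechanism already used in \Cref{thm:sum-of-angles-zero} and \Cref{prop:parabolic-quadrilateral}.

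Next I would apply this lemma twice. Since $P$ and $R$ lie on $\gamma$ alongside $A,B$, the lemma on $\gamma$ (with the pairings $\{A,P\},\{B,R\}$ versus $\{A,B\},\{P,R\}$) gives
\[
\Slp{AP}+\Slp{BR}=\Slp{AB}+\Slp{PR}.
\]
Since $Q$ and $S$ lie on $\delta$ alongside $A,B$, the lemma on $\delta$ gives
\[
\Slp{AQ}+\Slp{BS}=\Slp{AB}+\Slp{QS}.
\]
Now $A,P,Q$ are collinear — they all lie on the chosen line through $A$ — so $\Slp{AP}=\Slp{AQ}$; likewise $B,R,S$ are collinear, so $\Slp{BR}=\Slp{BS}$; and $\Slp{AB}$ is the slope of the single line $AB$, common to both identities. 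Subtracting the two displayed equations leaves $\Slp{PR}=\Slp{QS}$, i.e.\ $PR\parallel QS$, which is the assertion.

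The argument is essentially free of obstacles; the points requiring care are organizational rather than substantial. First, the parabolas must be understood to have axes parallel to~$d$, so that the chord–slope formula is affine–linear in $s+t$ and the lemma applies (this is the standing convention for ``parabola'' in the DA setting and should be stated explicitly). Second, the configuration must be non-degenerate: the line through $A$ is neither the line $AB$ nor tangent to $\gamma$ or $\delta$ at $A$, and similarly at $B$, so that $P,Q,R,S$ genuinely exist and are distinct from $A,B$. Third, $PR$ and $QS$ must be non-singular for ``parallel'' to be meaningful, but this is automatic: $P\neq R$ and $Q\neq S$ force both chords to have finite slope. Compared with the classical Euclidean intersecting–circles theorem, where one argues through directed inscribed angles, the DA proof is markedly shorter precisely because slopes add along chords of a parabola.
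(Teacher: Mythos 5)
Your proof is correct, and it reaches the conclusion by a route that is organized differently from the paper's. The paper argues synthetically with DA angles: it invokes the iso-angle locus / inscribed-angle constancy (\Cref{mthm:parabola-locus}, \Cref{prop:parabolic-inscribed-angle}) to transfer the angle $\angle_{\mathcal P} PAB$ from $A$ to $R$ on $\gamma$ (and, implicitly, the angle at $A$ to $S$ on $\delta$), and then concludes $PR\parallel QS$ from equality of alternate angles. You instead bypass the angle machinery entirely and use the affine-linear chord-slope formula $\Slp{}=\mu(s+t)+\beta$ on each parabola, which yields the exchange identity
\[
\Slp{AP}+\Slp{BR}=\Slp{AB}+\Slp{PR},\qquad
\Slp{AQ}+\Slp{BS}=\Slp{AB}+\Slp{QS},
\]
and then a one-line subtraction using the collinearities $A,P,Q$ and $B,R,S$. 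Both arguments rest on the same underlying fact (the chord slope on a vertical-axis parabola depends only on the sum of the endpoint abscissas — which is also what powers the inscribed-angle constancy), but your version has two advantages: it handles the two parabolas symmetrically in one algebraic step, whereas the paper's printed chain $\angle PAB=\angle PRB=\angle SRP$ only traces the angle around $\gamma$ and leaves the transfer through $\delta$ implicit; and you make explicit the standing hypotheses the paper leaves tacit (axes parallel to $d$, non-degeneracy of the secant lines, and the automatic non-singularity of $PR$ and $QS$). The paper's approach, in exchange, stays inside the DA-angle vocabulary and exhibits the statement as a direct analogue of the Euclidean intersecting-circles proof via inscribed angles.
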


\begin{proof}
Since $A,P,Q$ are collinear, $\angle QAB=\angle PAB$.\\
By \cref{mthm:parabola-locus}, we have
$\angle PAB=\angle PRB=\angle SRP$.
Hence the alternate angles are equal, and therefore $PR\parallel QS$.
\end{proof}

\begin{definition}[DA Isosceles Trapezoid]\label{def:parabolic-isosceles-trapezoid}
A DA quadrilateral $\square_{\mathcal P}ABCD$ is called a DA isosceles trapezoid
if exactly one pair of opposite sides is parallel
and the other pair of opposite sides are equal in the DA norm.
\end{definition}

\begin{proposition}[DA Isosceles Trapezoids and Parabolic Inscription]\label{prop:parabolic-isosceles-trapezoid}
Every DA isosceles trapezoid is inscribed in a parabola.
Conversely, two parallel lines intersecting a parabola at two points each determine a DA isosceles trapezoid.
\end{proposition}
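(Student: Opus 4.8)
The plan is to reduce everything to the standard parabola $y=x^2$ and then run a short coordinate computation. By the Normalization Lemma (\Cref{lem:normalize}), an affine transformation preserving $(\ell,d)$ together with a vertical scaling leaves invariant parallelism, equalities and ratios of DA norms, the property ``exactly one pair of opposite sides is parallel'', and the family of parabolas with axis parallel to $d$; so for the forward implication it suffices to treat a DA isosceles trapezoid $\square_{\mathcal P}ABCD$ in which — after a cyclic relabelling making $(AB,CD)$ the parallel pair and $(BC,DA)$ the equal–norm pair — the three (non-collinear) vertices $A,B,C$ are placed on $y=x^2$ via the unique circumparabola of \Cref{thm:unique-circum-parabola}, say $A=(a,a^2)$, $B=(b,b^2)$, $C=(c,c^2)$, while $D=(e,f)$ is to be located. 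The goal is then to show $f=e^2$, i.e. that $D$ lies on the same parabola.

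First I would use $AB\parallel CD$: since $\Slp{AB}=a+b$, this gives $f=c^2+(a+b)(e-c)$. Next I would use $|BC|_{\mathcal P}=|DA|_{\mathcal P}$, i.e. $|c-b|=|a-e|$, which splits into two branches. In the branch $a-e=c-b$ one gets $e=a+b-c$ and, substituting, $f=c^2+(a+b)(a+b-2c)=\bigl((a+b)-c\bigr)^2=e^2$, so $D\in y=x^2$ and we are done. In the other branch $a-e=b-c$ one gets $e=a+c-b$, and a one–line computation gives $\vec{AB}=(b-a,\,b^2-a^2)=\vec{DC}$; hence $ABCD$ is a parallelogram, so $BC\parallel DA$ as well, contradicting ``exactly one pair parallel''. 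I would also observe that the good branch cannot collapse to $D=C$: that would force $a+b=2c$, but then the bad branch still yields only a parallelogram, so in that borderline case no DA isosceles trapezoid exists and there is nothing to prove. Thus only the good branch survives, $D$ lies on the circumparabola of $\triangle_{\mathcal P}ABC$, and $\square_{\mathcal P}ABCD$ is inscribed in a parabola. (Alternatively one could invoke \Cref{prop:parabolic-quadrilateral} after checking $\theta_B+\theta_D=0$, but the direct argument seems cleaner.)

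For the converse I would normalize the given parabola to $y=x^2$ and write the two parallel lines as $y=mx+n_1$ and $y=mx+n_2$ with $n_1\neq n_2$; they meet the parabola at $\{P,Q\}=\{(p,p^2),(q,q^2)\}$ and $\{R,S\}=\{(r,r^2),(s,s^2)\}$, where $p+q=r+s=m$. I would label the quadrilateral $A=P,\ B=Q,\ C=S,\ D=R$, so that $AB$ lies on the first line and $CD$ on the second, giving $AB\parallel CD$; from $p+q=r+s$ one has $p-r=s-q$, hence $|DA|_{\mathcal P}=|p-r|=|s-q|=|BC|_{\mathcal P}$. Finally I would rule out that the legs are also parallel: $\Slp{BC}=q+s$ equals $\Slp{DA}=r+p$ only if $q+s=r+p$, which together with $p+q=r+s$ forces $p=s$ and hence makes the two lines coincide — excluded. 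Since all four sides and both diagonals have finite, well–defined slopes and the two lines are distinct, $\square_{\mathcal P}ABCD$ is a DA quadrilateral with exactly one pair of parallel opposite sides and the other pair of equal DA norm, i.e. a DA isosceles trapezoid.

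The main obstacle is organizational rather than conceptual: the proof hinges on carrying the sign case–analysis of $|c-b|=|a-e|$ through to the end and correctly diagnosing the ``wrong'' branch as the parallelogram that the definition of a DA isosceles trapezoid was designed to exclude, together with a clean dismissal of the degenerate sub–case $a+b=2c$. Once that is in place, each implication reduces to a two–line substitution.
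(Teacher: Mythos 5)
Your proof is correct, and although it uses the same normalization onto $y=x^2$ via the circumparabola of three of the vertices, the forward direction is organized differently from the paper's. The paper argues through the alternate-angle lemma and an auxiliary point $D'$ (the second intersection of line $AD$ with $\mathcal P$), derives $|CD'|_{\mathcal P}=|AB|_{\mathcal P}$, and then passes from $|CD|_{\mathcal P}=|CD'|_{\mathcal P}$ to $x_D=\tilde d$, i.e.\ $D=D'$; you instead substitute the parallelism condition directly into the coordinates of $D$ and split the equal-norm condition $|c-b|=|a-e|$ into its two sign branches, showing that one branch lands on the parabola while the other forces $\vec{AB}=\vec{DC}$, a parallelogram excluded by the ``exactly one pair parallel'' clause of \Cref{def:parabolic-isosceles-trapezoid}. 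This explicit dichotomy is in fact the tighter treatment: the paper's step from $|x_D-c|=|\tilde d-c|$ to $x_D=\tilde d$ silently discards the other sign, and that discarded sign is precisely your parallelogram branch, so your case analysis supplies the justification the paper leaves implicit. In the converse both arguments reduce to the same relation $p+q=r+s$ (the paper's $a+d=b+c$) giving equality of the leg norms; you additionally check that the legs cannot also be parallel and that no side or diagonal is singular, so the figure genuinely satisfies the definitions of DA quadrilateral and DA isosceles trapezoid, checks the paper omits. One cosmetic remark: your worry about the good branch collapsing to $D=C$ is unnecessary, since $f=e^2$ follows in that branch regardless and distinctness of the vertices is already part of the hypothesis; the extra caution does no harm.
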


\begin{proof}
(Forward direction)
Normalize $\mathcal P:y=x^{2}$ and set
$A=(a,a^{2}),\ B=(b,b^{2}),\ C=(c,c^{2})$ with $a<b<c$.
Let $D=(d,d')$ be a general point.
Assume $\square_{\mathcal P}ABCD$ is a DA isosceles trapezoid:
\[
AD\parallel BC,\qquad |AB|_{\mathcal P}=|CD|_{\mathcal P}.
\]
By the alternate–angle lemma for parallel lines,
\[
\angle_{\mathcal P}ACB=\angle_{\mathcal P}CAD .
\]
Let $D'=(\tilde d,\tilde d^{2})$ be the second intersection of line $AD$ with $\mathcal P$.
Equality of corresponding angles yields
\[
\angle_{\mathcal P}CAD'=\angle_{\mathcal P}ACB=b-a .
\]
On the parabola, $|XY|_{\mathcal P}=|x_{Y}-x_{X}|$, hence
$|CD'|_{\mathcal P}=|AB|_{\mathcal P}$.
Together with the hypothesis we have
\[
|CD|_{\mathcal P}=|AB|_{\mathcal P}=|CD'|_{\mathcal P}.
\]
Since $|CX|_{\mathcal P}=|x_{X}-c|$, it follows that $x_{D}=\tilde d$.
Both $D$ and $D'$ lie on $AD$ and share the same $x$–coordinate,
so $D=D'$ and consequently $d'=d^{2}$.
Therefore $\square_{\mathcal P}ABCD$ is inscribed in $\mathcal P$.

(Converse direction)
Let $A=(a,a^{2}),B=(b,b^{2}),C=(c,c^{2}),D=(d,d^{2})$ with $a<b<c<d$ on $\mathcal P:y=x^{2}$,
and assume $AD\parallel BC$.
Equality of slopes gives
\[
\frac{d^{2}-a^{2}}{d-a}=\frac{c^{2}-b^{2}}{c-b}
\Longleftrightarrow a+d=b+c .
\]
Hence $d-c=b-a$, i.e. $|CD|_{\mathcal P}=|AB|_{\mathcal P}$.
By definition, $\square_{\mathcal P}ABCD$ is a DA isosceles trapezoid.
\end{proof}

\begin{figure}[htbp]
  \centering
  \begin{minipage}{0.32\textwidth}
    \centering
    \includegraphics[width=\linewidth]{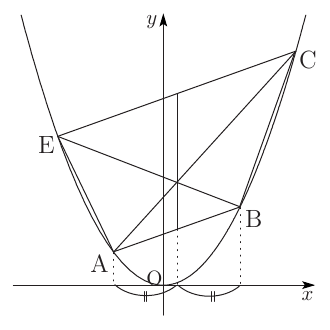}
    \subcaption{Parabolic analogue of Brahmagupta's theorem}
    \label{fig:brahmagupta-parabolic}
  \end{minipage}
 \begin{minipage}{0.32\textwidth}
    \centering
    \includegraphics[width=\linewidth]{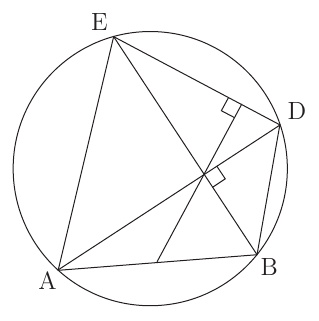}
    \subcaption{Classical Brahmagupta's theorem in Euclidean geometry}
    \label{fig:brahmagupta-euclid}
  \end{minipage}
\begin{minipage}{0.32\textwidth}
    \centering
    \includegraphics[width=\linewidth]{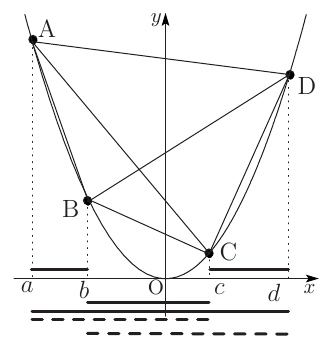}
    \subcaption{Parabolic version of Ptolemy's theorem.}
    \label{fig:parabolic-isosceles-trapezoid}
  \end{minipage}

\caption{Parabolic analogues of classical theorems: Brahmagupta and Ptolemy.}

\end{figure}

\begin{theorem}[DA Version of Brahmagupta's Theorem]\label{thm:brahmagupta-parabolic}
Let $\mathcal P: y=x^2$ be a parabola, and take four points $E,A,B,D$ on $\mathcal P$
in this order with $x$–coordinates satisfying $e<a<b<d$.
Assume $DE \parallel AB$.
Then the singular line passing through $C = AD \cap EB$
bisects the oriented DA–length $|AB|_{\mathcal P}$.
\end{theorem}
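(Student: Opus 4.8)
The plan is to collapse the whole statement onto the single scalar $x_C$, the $x$-coordinate of $C$, and then to read off the conclusion from the fact that on $\mathcal P\colon y=x^{2}$ both the slope of a chord and the DA norm of a segment are elementary functions of the $x$-coordinates of the endpoints. Since the hypothesis already places $E,A,B,D$ on $y=x^{2}$, no preliminary normalization via \Cref{lem:normalize} is required.

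First I would write the two chords meeting at $C$. The chord through $A=(a,a^{2})$ and $D=(d,d^{2})$ is $y=(a+d)x-ad$, and the chord through $E=(e,e^{2})$ and $B=(b,b^{2})$ is $y=(e+b)x-eb$. Solving this linear pair gives
\[
x_C=\frac{ad-eb}{(a+d)-(e+b)}.
\]
Next I would convert the hypothesis $DE\parallel AB$ into slope form: $\Slp{DE}=d+e$ and $\Slp{AB}=a+b$, so $DE\parallel AB$ is equivalent to $d+e=a+b$, i.e.\ $e=a+b-d$. Substituting this expression for $e$, the numerator becomes $ad-(a+b-d)b=(a+b)(d-b)$ and the denominator becomes $(a+d)-(a+b-d)-b=2(d-b)$. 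Since $d>b$, the common factor $d-b$ is nonzero, and cancelling it leaves
\[
x_C=\frac{a+b}{2}.
\]

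Finally I would interpret this geometrically. The singular line through $C$ is the vertical line $x=x_C=\tfrac{a+b}{2}$. By \Cref{def:diff-angle-norm} the DA norm is $|XY|_{\mathcal P}=|x_Y-x_X|$, so the DA-midpoint of $AB$ — the point $M$ on line $AB$ with $|AM|_{\mathcal P}=|MB|_{\mathcal P}$ — is precisely the point whose $x$-coordinate equals $\tfrac{a+b}{2}$. Hence the singular line through $C$ passes through $M$ and bisects the oriented DA-length $|AB|_{\mathcal P}$, as claimed; by the $A\!\leftrightarrow\! D$, $B\!\leftrightarrow\! E$ symmetry of the configuration it also bisects $|DE|_{\mathcal P}$, which is the DA counterpart of the classical Brahmagupta conclusion.

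As for difficulty, there is essentially no obstacle: once the parallel hypothesis is rewritten as $d+e=a+b$, the argument is a two-line computation. The only point deserving a word of care is the phrase \emph{bisects the oriented DA-length}, which must be understood in the projected sense (equality of $x$-coordinate differences along $\ell$), not in any Euclidean metric sense; everything else is routine algebra.
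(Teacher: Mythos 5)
Your computation is correct, and it takes a genuinely different route from the paper. The paper argues synthetically: it uses the parabolic inscribed--angle constancy to deduce $a-e=d-b$ from $DE\parallel AB$, recognizes the configuration as a DA isosceles trapezoid (\Cref{prop:parabolic-isosceles-trapezoid}), and then cites the DA isosceles--triangle property of the diagonal intersection to conclude $x_C=\tfrac{a+b}{2}$ --- in effect reproducing, inside DA geometry, the classical inscribed--angle proof of Brahmagupta. You instead collapse everything to one direct coordinate calculation: intersect the chord equations $y=(a+d)x-ad$ and $y=(e+b)x-eb$, rewrite the parallelism hypothesis as $d+e=a+b$, and simplify to $x_C=\tfrac{a+b}{2}$, with the nonvanishing denominator $2(d-b)\neq 0$ handled explicitly. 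What your route buys is brevity and self-containment (no appeal to the trapezoid or isosceles machinery, and no re-derivation of the inscription in $\mathcal P$, which the paper's proof invokes even though it is already a hypothesis); what the paper's route buys is the structural analogy with the Euclidean theorem, since it exhibits the DA counterparts of the inscribed angle and the isosceles triangle that drive the classical argument. Your closing remark that the same singular line also bisects $|DE|_{\mathcal P}$ by the $A\leftrightarrow D$, $B\leftrightarrow E$ symmetry is a correct bonus not stated in the paper's proof.
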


\begin{proof}
From the computation of DA–angles,
\[
\angle_{\mathcal P}(ADE)=\angle_{\mathcal P}(ABE)=a-e, \qquad
\angle_{\mathcal P}(BED)=\angle_{\mathcal P}(BAD)=d-b.
\]
Hence $a-e=d-b$, i.e., the quadrilateral $\square_{\mathcal P}ABCD$ is a DA isosceles trapezoid.
By \Cref{prop:parabolic-isosceles-trapezoid}, $ABCD$ is inscribed in the parabola $\mathcal P$.
Moreover, by the theorem on DA isosceles triangles, the $x$–coordinate of
$C=AD\cap EB$ satisfies
\[
x_C=\frac{a+b}{2}.
\]
Therefore, the singular line through $C$ bisects the DA–length of $AB$.
\end{proof}

\begin{theorem}[Brahmagupta's Theorem (Euclidean version)]\label{thm:brahmagupta-euclid}
In a cyclic quadrilateral $EABD$ whose diagonals are perpendicular,
the perpendicular dropped from the intersection of the diagonals to the side $DE$
bisects the opposite side $AB$.
\end{theorem}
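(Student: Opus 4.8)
The plan is to prove this by the classical synthetic route, showing that the foot $M$ at which the perpendicular from the diagonal intersection meets the side $AB$ is simultaneously equidistant from $A$, from $B$, and from the diagonal intersection itself; since $M$ lies on the line $AB$, the equality $MA=MB$ then forces $M$ to be the midpoint. Concretely, I would fix the circumscribed circle of $EABD$, write the diagonals as $EB$ and $AD$ meeting at $C$ with $EB\perp AD$, let $F$ be the foot of the perpendicular from $C$ onto the line $DE$, and set $M=CF\cap AB$. Because $AB$ and $DE$ are opposite sides of the quadrilateral, $C$ lies between $F$ and $M$; and because $C$ lies between $A$ and $D$ on one diagonal and between $E$ and $B$ on the other, the vertical-angle relation at $C$ yields $\angle MCA=\angle FCD$ and $\angle MCB=\angle FCE$.

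The core of the argument is then a short chain of angle chases. First, in the right triangle $CFD$ (right angle at $F$) one gets $\angle FCD=90^\circ-\angle CDF=90^\circ-\angle ADE$, so $\angle MCA=90^\circ-\angle ADE$; symmetrically, from the right triangle $CFE$, $\angle MCB=90^\circ-\angle BED$. Next, the inscribed-angle theorem on the chord $BD$ (with $A$ and $E$ on the same arc, which holds for the cyclic order $E,A,B,D$) gives $\angle MAC=\angle BAD=\angle BED$, and on the chord $AE$ it gives $\angle MBC=\angle ABE=\angle ADE$. The one nontrivial identity tying everything together is $\angle ADE+\angle BED=90^\circ$, and this is immediate: the triangle $CDE$ is nondegenerate (if $C$ lay on line $DE$ it would coincide with $D$), and its angle at $C$ is exactly the angle between the two perpendicular diagonals, hence $90^\circ$, so its other two angles sum to $90^\circ$ and they are $\angle CDE=\angle ADE$ and $\angle CED=\angle BED$. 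Combining, $\angle MAC=\angle BED=90^\circ-\angle ADE=\angle MCA$ and $\angle MBC=\angle ADE=90^\circ-\angle BED=\angle MCB$, so the triangles $MAC$ and $MBC$ are both isosceles and $MA=MC=MB$, which is exactly the assertion.

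I expect the main obstacle to be configuration bookkeeping rather than any genuine difficulty: one must check that the foot $F$ lies on the segment $DE$ and not merely on its supporting line (for an interior point of a circle this can fail for a short chord), that $C$ really separates $F$ from $M$, and that the two uses of the inscribed-angle theorem invoke the arc appropriate to the cyclic order $E,A,B,D$. The clean way to absorb all of these cases at once is to carry out the identical computation using directed angles modulo $180^\circ$, in which the vertical-angle step, the right-triangle relations, and the inscribed-angle equalities all hold verbatim with no case split; I would write the final proof in that language. One could also present this result as the Euclidean shadow of \Cref{thm:brahmagupta-parabolic} under the parabola--circle and slope-difference--angle dictionary, but since that is an analogy rather than a logical implication, the self-contained synthetic proof above is the one I would actually supply.
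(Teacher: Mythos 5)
Your synthetic proof is correct: the angle chase $\angle MCA=\angle FCD=90^\circ-\angle ADE=\angle BED=\angle BAD=\angle MAC$ gives $MA=MC$, the symmetric chase gives $MB=MC$, and $MA=MB$ with $M$ on line $AB$ yields the midpoint; the key identity $\angle ADE+\angle BED=90^\circ$ indeed comes from the right angle of the perpendicular diagonals at $C$ in triangle $CDE$, and your inscribed-angle applications respect the cyclic order $E,A,B,D$. Note, however, that the paper itself offers no proof of \Cref{thm:brahmagupta-euclid}: it is stated as a known classical result, included only as motivation for reading the perpendicular as a singular line in the DA setting (see the remark following it), with the paper's actual contribution being the DA analogue \Cref{thm:brahmagupta-parabolic}. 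So there is no in-paper argument to compare against; what you supply is the standard classical proof (the $MA=MC=MB$ isosceles-triangle argument), and your closing caveats are well judged — the configuration bookkeeping is genuinely the only delicate point, the directed-angle formulation absorbs it, and you are right that the DA version is an analogy rather than a logical source for the Euclidean statement, so a self-contained proof like yours is the appropriate thing to give if a proof is wanted at all.
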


\begin{remark}
Although this classical theorem is elementary, it provided the motivation
for interpreting the perpendicular line as a singular line in DA geometry.
The condition $AB\parallel DE$ corresponds to the “orthogonality of diagonals” in the Euclidean theorem,
and its necessity will later be justified through a Simson–type theorem.%
\footnote{Indeed, if perpendicularity is represented by a singular line in DA geometry,
then $\triangle_{\mathcal P}ABC$ becomes a DA isosceles triangle,
from which $AB\parallel DE$ necessarily follows.}
\end{remark}

\begin{theorem}[Ptolemy's Theorem (DA version)\footnote{
This theorem itself is not new; it has already been recognized globally in the context of DA geometry.}]
\label{thm:ptolemy-law}
Let $A,B,C,D$ be four points on a parabola whose axis is parallel to the projection direction.
Then the following holds for oriented DA–lengths:
\[
|AB|_{\mathcal P}|CD|_{\mathcal P}+|AD|_{\mathcal P}|BC|_{\mathcal P}
= |AC|_{\mathcal P}|BD|_{\mathcal P}.
\]
(In particular, for $\mathcal P:y=x^2$, one has $|PQ|_{\mathcal P}=x_Q-x_P$.)
\end{theorem}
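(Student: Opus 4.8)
The plan is to reduce the statement to an elementary polynomial identity in the $x$–coordinates of the four points. First I would invoke \Cref{lem:normalize} to send the common circumparabola to $y=x^{2}$ by an affine transformation preserving the reference structure $(\ell,d)$ together with a vertical scaling. Since the DA norm of \Cref{def:diff-angle-norm} is built from a covector $\varphi$ with $\varphi(d)=0$, it records only the difference of $x$–coordinates, and the \emph{oriented} DA–length is then $|PQ|_{\mathcal P}=x_Q-x_P$, as already noted in the statement. Both sides of the asserted equality are sums of products of two oriented lengths, hence scale by the same factor under a rescaling $\varphi\mapsto\mu\varphi$ ($\mu>0$), so this normalization is no loss of generality. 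Writing $a=x_A$, $b=x_B$, $c=x_C$, $d=x_D$, the claim becomes
\[
(b-a)(d-c)+(d-a)(c-b)=(c-a)(d-b).
\]

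The second step is to verify this identity directly. Expanding,
\[
(b-a)(d-c)+(d-a)(c-b)=\bigl(bd-bc-ad+ac\bigr)+\bigl(cd-bd-ac+ab\bigr)=ab-ad+cd-bc,
\]
and factoring, $ab-ad+cd-bc=a(b-d)-c(b-d)=(a-c)(b-d)=(c-a)(d-b)$, which is exactly the right–hand side. This completes the reduction and the verification.

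The point worth flagging is that there is essentially no analytic obstacle here: once oriented (signed) DA–lengths are used, the statement is an unconditional algebraic identity, valid for \emph{any} four points in the plane, not merely for four points on a parabola. The only care needed is the bookkeeping of signs. The hypothesis that $A,B,C,D$ lie on the circumparabola serves to place the statement inside the DA–quadrilateral framework and, together with the ordering of the $x$–coordinates, to make the labeling of the ``diagonals'' $AC,BD$ against the ``sides'' $AB,BC,CD,DA$ match the classical Ptolemy pattern; if one instead wished to state the theorem with unsigned norms one would have to assume in addition that the points occur in the parabolic–cyclic order $x_A<x_B<x_C<x_D$, so that all six factors above are positive. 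Contrast this with the Euclidean Ptolemy inequality, which is a genuine inequality off the circle and becomes an equality only in the cyclic case: the DA analogue degenerates to a pure identity, which is itself a characteristic feature of difference–angle geometry. One could alternatively give a synthetic derivation from \Cref{prop:parabolic-inscribed-angle} and the similarity arguments underlying the Euclidean proof, but the algebraic route above is shorter and makes the sign structure fully transparent.
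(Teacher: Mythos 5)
Your proof is correct and follows essentially the same route as the paper: normalize to $y=x^{2}$, write the oriented DA--lengths as differences of $x$--coordinates, and verify the resulting polynomial identity $(b-a)(d-c)+(d-a)(c-b)=(c-a)(d-b)$ by direct expansion. Your added remarks on signs and on the identity holding for arbitrary points are accurate but not needed beyond what the paper's one-line computation already establishes.
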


\begin{remark}[Ptolemy and Additivity of Segments]
In a circle, Ptolemy's theorem is equivalent to the one–dimensional identity
$AB=AC+CB$ for any point $C$ on segment $AB$
(which follows by inversion centered at one vertex).
Hence, a Ptolemy–type equality does not depend on the specific nature of a circle or parabola,
but arises universally from one–dimensional projective additivity.
From this viewpoint, an algebraic expansion provides the most natural proof.
\end{remark}

\begin{proof}
Let $A=(a,a^2),B=(b,b^2),C=(c,c^2),D=(d,d^2)$.
Then a direct computation gives
\[
(b-a)(d-c)+(d-a)(c-b)=(c-a)(d-b).
\]
\end{proof}

\begin{remark}
This result has already been mentioned by Seimiya%
in the context of DA geometry.
A complete proof is included here for completeness.
\end{remark}

\begin{figure}[htbp]
  \centering
  \begin{minipage}{0.32\textwidth}
    \centering
    \includegraphics[width=\linewidth]{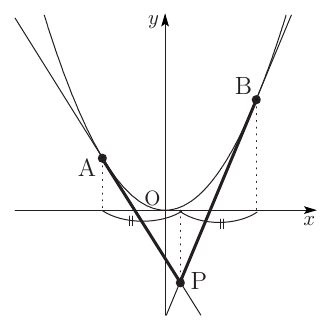}
    \subcaption{}
    \label{fig:tangent-lengths}
  \end{minipage}
  \begin{minipage}{0.32\textwidth}
    \centering
    \includegraphics[width=\linewidth]{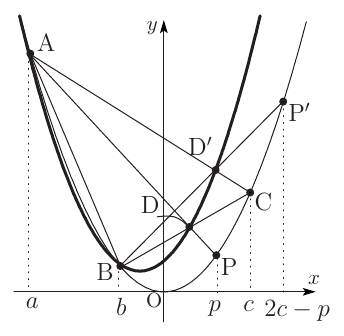}
    \subcaption{}
    \label{fig:arc-symmetric-conparabolic}
  \end{minipage}
  \caption{Equal tangent lengths and arc symmetry.}
\end{figure}

\begin{proposition}[Equality of Tangent Lengths]\label{prop:equal-tangent-lengths}
For two points $A(a,a^2)$ and $B(b,b^2)$ on the parabola $y=x^2$,
the two tangents $PA,PB$ drawn from a common external point $P=(x_P,y_P)$
have equal DA–norm lengths.
\end{proposition}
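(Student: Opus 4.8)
The plan is to reduce to the standard parabola and then compute the intersection of the two tangent lines directly. First I would observe that the assertion is the equality of two DA norms, which is unaffected by the affine normalization of \Cref{lem:normalize}: under a coordinate change preserving $(\ell,d)$ the DA norm $|XY|_{\mathcal P}=|\varphi(Y-X)|$ of \Cref{def:diff-angle-norm} is multiplied by a single global factor, so $|PA|_{\mathcal P}=|PB|_{\mathcal P}$ holds before the change iff it holds after. Hence it suffices to work on $\mathcal P\colon y=x^{2}$ with $A=(a,a^{2})$, $B=(b,b^{2})$, $a\neq b$, where $|XY|_{\mathcal P}=|x_Y-x_X|$.

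Next I would pin down the point $P$. A line through $P$ tangent to $\mathcal P$ at $A$ must be the tangent $\ell_A\colon y=2ax-a^{2}$, and a line through $P$ tangent at $B$ must be $\ell_B\colon y=2bx-b^{2}$; thus the external point $P$ from which both $PA$ and $PB$ are tangent is exactly $P=\ell_A\cap\ell_B$ (this is the vertex $T_C$ of the tangent triangle of \Cref{def:tangent-triangle}). Solving $2ax-a^{2}=2bx-b^{2}$ and dividing by $2(a-b)\neq 0$ gives $x_P=\tfrac{a+b}{2}$. Then
\[
|PA|_{\mathcal P}=\Bigl|\,a-\tfrac{a+b}{2}\,\Bigr|=\tfrac{|a-b|}{2}
=\Bigl|\,b-\tfrac{a+b}{2}\,\Bigr|=|PB|_{\mathcal P},
\]
which is the claim. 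This is the DA counterpart of the classical fact that the two tangent segments from an external point to a circle are equal.

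The only delicate points — and the closest thing to an obstacle here — are the degeneracy checks that justify the above. The tangents $\ell_A,\ell_B$ are parallel only when $2a=2b$, i.e.\ $a=b$, which is excluded; and a tangent to $y=x^{2}$ has finite slope $2a$, so it is never a singular line. Consequently $PA$ and $PB$ are genuine (non-singular) DA segments and the norm formula $|XY|_{\mathcal P}=|x_Y-x_X|$ applies to them without any boundary-policy subtlety. With these cases dispatched, the short computation above completes the proof.
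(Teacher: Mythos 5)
Your proof is correct and takes essentially the same route as the paper: both arguments reduce to the fact that the tangents at parameters $a$ and $b$ meet at $x_P=\tfrac{a+b}{2}$, so that $|PA|_{\mathcal P}=|PB|_{\mathcal P}=\tfrac{|a-b|}{2}$. The paper merely runs the computation in the opposite direction (starting from $P$ and using Vieta on $t^2-2x_Pt+y_P=0$ to get $t_1+t_2=2x_P$), while you intersect the two explicit tangent lines; the extra normalization and non-degeneracy remarks you include are harmless.
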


\begin{proof}
The tangent equation is $y=2tx-t^2$.
The contact parameters $t_1,t_2$ of the tangents from $P$ satisfy
$y_P=2t_i x_P-t_i^2$, hence $t_1+t_2=2x_P$.
Since the DA–norm of a $P$–tangent is $|t_i-x_P|$
(a special case of $|PQ|_{\mathcal P}=|x_Q-x_P|$),
we have $|t_1-x_P|=|t_2-x_P|$.
\end{proof}

\begin{proposition}[Arc Symmetry and Conparabolicity]\label{prop:arc-symmetric-conparabolic}
Let $\triangle_{\mathcal P}ABC$ be a DA triangle on $\mathcal P:\ y=x^2$,
and let $P$ be a point on the arc $BC$ ($a<b<p<c$).
Define $P'=(2c-p,(2c-p)^2)$ as the reflection of $P$ with respect to $x=c$,
and set $D=BC\cap AP$, $D'=CA\cap BP'$.
Then the four points $A,B,D,D'$ lie on the same parabola.
\end{proposition}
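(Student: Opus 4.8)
The plan is to recast the co-parabolicity of $A,B,D,D'$ as an equality of difference angles subtended by the chord $AB$. Concretely, I will show that $D$ and $D'$ see $AB$ under the \emph{same} difference angle $\theta$; since the iso-angle locus $\{X:\measuredangle_{\mathcal P}AXB=\theta\}$ of a segment $AB$ is a parabola with axis parallel to $d$ through $A$ and $B$ --- this is \Cref{mthm:parabola-locus} for a general chord $AB$, equivalently the isoptic principle invoked in the proof of \Cref{prop:parabolic-inscribed-angle} --- all four points then lie on that single parabola.

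Write $A=(a,a^2)$, $B=(b,b^2)$, $C=(c,c^2)$, $P=(p,p^2)$ with $a<b<p<c$ as given, and $P'=(q,q^2)$ where $q=2c-p>c$. The chord of $y=x^2$ through $(s,s^2)$ and $(t,t^2)$ has slope $s+t$, so $\Slp{BC}=b+c$, $\Slp{AP}=a+p$, $\Slp{CA}=c+a$, and $\Slp{BP'}=b+q=b+2c-p$. Now $D=BC\cap AP$ lies on both lines $BC$ and $AP$, so the line $DB$ is $BC$ and the line $DA$ is $AP$; hence
\[
\measuredangle_{\mathcal P}ADB=\Slp{DB}-\Slp{DA}=(b+c)-(a+p)=b+c-a-p .
\]
Likewise $D'=CA\cap BP'$ gives $D'A$ on the line $CA$ and $D'B$ on the line $BP'$, so
\[
\measuredangle_{\mathcal P}AD'B=\Slp{D'B}-\Slp{D'A}=(b+2c-p)-(c+a)=b+c-a-p .
\]
Therefore $\measuredangle_{\mathcal P}ADB=\measuredangle_{\mathcal P}AD'B=:\theta$, and since $\theta=(b-a)+(c-p)>0$ this iso-angle locus is a genuine parabola through $A$ and $B$; as $D$ and $D'$ both lie on it, so do all four points $A,B,D,D'$.

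Because the computation itself is immediate, the only real content is the structural observation behind it: $\measuredangle_{\mathcal P}ADB$ depends solely on the two lines meeting at $D$, namely $BC$ and $AP$, and the reflection $p\mapsto 2c-p$ defining $P'$ is precisely the substitution making $\Slp{BP'}-\Slp{CA}=\Slp{BC}-\Slp{AP}$ (equivalently $q-c=c-p$); this is where the mirror line $x=c$ is forced, and it is the point I would isolate first. The rest is routine: a short sign analysis gives $x_D\in(b,p)$ and $x_{D'}\in(b,c)$, so $D,D'\notin\{A,B\}$; none of the lines $BC,AP,CA,BP'$ is singular since $a<b<p<c$ and $\mathcal P$ is a circumparabola; and $\theta\neq0$, so the iso-angle locus is not the line $AB$ itself. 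If one prefers to avoid the general-chord form of \Cref{mthm:parabola-locus}, the same conclusion follows from \Cref{prop:parabolic-quadrilateral} applied to the quadrilateral with vertices $A,D,B,D'$: the difference angles at the opposite vertices $D$ and $D'$ are $\measuredangle_{\mathcal P}ADB$ and $\measuredangle_{\mathcal P}BD'A=-\measuredangle_{\mathcal P}AD'B$, whose sum is $0$.
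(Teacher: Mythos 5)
Your proof is correct and follows essentially the same route as the paper: the paper likewise reduces the claim to the equality of the DA angles subtended by $AB$ at $D$ and $D'$ (obtained there from the reflection, via $\measuredangle_{\mathcal P}APB=\measuredangle_{\mathcal P}AP'B$) and then applies \Cref{prop:parabolic-quadrilateral}, which is exactly the alternative you sketch in your last sentence. Your explicit slope computation $(b+c)-(a+p)=(b+2c-p)-(c+a)$ merely makes that key equality concrete, and your main route through the iso-angle locus is equivalent to the paper's inscribed-quadrilateral criterion.
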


\begin{proof}
By definition of reflection, $\angle(APB)=\angle(AP'B)$.
Thus, the sum of opposite DA–angles in quadrilateral $ABDD'$ is $0$.
By \Cref{prop:parabolic-quadrilateral}, the four points $A,B,D,D'$ are inscribed in the same parabola.
\end{proof}

\begin{maintheorem}[DA Version of Miquel's Triangle Theorem]\label{mthm:miquel-triangle}
Let $ABC$ be a DA triangle, and take points $D\in BC$, $E\in CA$, $F\in AB$.
Then the three parabolas passing through $A,E,F$;
through $B,F,D$; and through $C,D,E$
intersect at a common point $M$ (finite or at infinity if their axes are parallel).
\end{maintheorem}

\begin{figure}[htbp]
  \centering
  \begin{minipage}{0.35\textwidth}
    \centering
    \includegraphics[width=\linewidth]{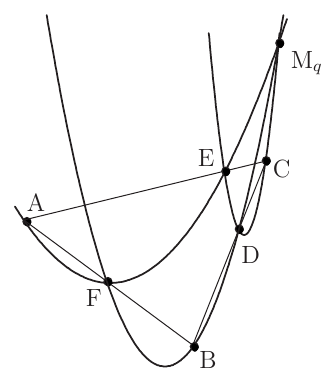}
    \label{fig:miquel-triangle}
  \end{minipage}
  \caption{The parabolic Miquel triangle.}
\end{figure}

\begin{proof}[Proof]
Let $M$ be the intersection of the parabolas through $A,E,F$ and $C,F,D$.
By \Cref{prop:parabolic-inscribed-angle},
\[
\angle_{\mathcal P}FAE=\angle_{\mathcal P}FME, \quad
\angle_{\mathcal P}ECD=\angle_{\mathcal P}EMD .
\]
Hence
\[
\angle_{\mathcal P}FMD
=\angle_{\mathcal P}FME+\angle_{\mathcal P}EMD
=\angle_{\mathcal P}FAE+\angle_{\mathcal P}ECD
=-\angle_{\mathcal P}ABC .
\]
By \Cref{prop:parabolic-quadrilateral}, the point $M$ also lies on the parabola through $F,B,D$.
Therefore, the three parabolas meet at a common point $M=M_q$.
\end{proof}


\section{Applications of DA Geometry II: Singular Lines and Miquel's Quadrilateral Theorem}
\begin{quote}
One of the striking features of DA geometry is that classical theorems are reconstructed in new forms through the presence of singular lines.
The DA version of Miquel's triangle theorem in the previous chapter was a prototypical instance; when one treats complete quadrilaterals, the constraints become even more manifest.
In this chapter, we define complete quadrilaterals intrinsically within DA geometry while avoiding singular lines, and we verify the Ceva–Menelaus theorems.
We then derive a DA version of Miquel's quadrilateral theorem, showing that DA geometry can reconstitute classical structures from a new vantage point.
Throughout, we adopt the convention of directed ratios, that is, ratios measured with respect to the oriented DA–norm along each side.
\end{quote}

\begin{figure}[htbp]
  \centering
  \begin{minipage}{0.32\textwidth}
    \centering
    \includegraphics[width=\linewidth]{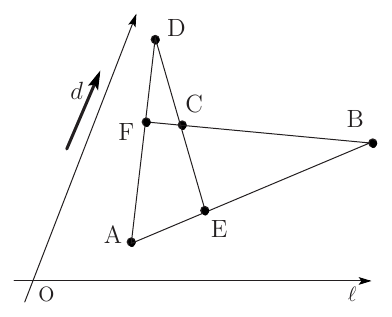}
    \subcaption{}
    \label{fig:nonsingular-quad}
  \end{minipage}
  \begin{minipage}{0.32\textwidth}
    \centering
    \includegraphics[width=\linewidth]{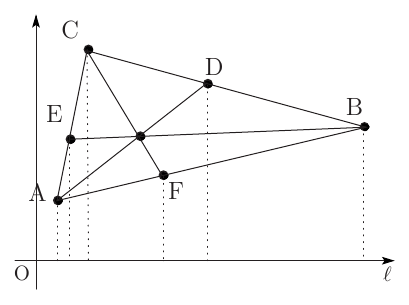}
    \subcaption{}
    \label{fig:ceva_diff_angle}
  \end{minipage}
  \begin{minipage}{0.32\textwidth}
    \centering
    \includegraphics[width=\linewidth]{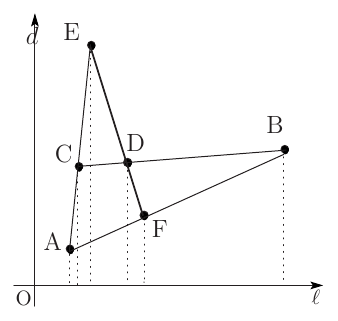}
    \subcaption{}
    \label{fig:menelaus_diff_angle}
  \end{minipage}
  \caption{Nonsingular complete quadrilateral and the Ceva–Menelaus configuration in the DA setting.}
\end{figure}

\begin{assumption}[Nonsingular configuration]\label{asp:nonsingular-quad}
Consider a complete quadrilateral formed by four lines in general position, avoiding any singular line (that is, any line parallel to the reference direction $d$).
Label the six intersection points by $A,B,C,D$ (in cyclic order of the quadrilateral) and $E=AB\cap CD$, $F=BC\cap AD$.
When considering DA circumparabolas associated with the constituent triangles, assume that no triple degenerates into a singular line
(this can be avoided, if necessary, by switching affine charts, i.e., by an appropriate gauge transformation; see \Cref{rem:singular}).
\end{assumption}

\begin{theorem}[DA Version of Ceva's Theorem]\label{thm:ceva_diff_angle}
Let $D,E,F$ lie on the sides of $\triangle_{\mathcal P}ABC$.
Then $AD,BE,CF$ are concurrent if and only if
\[
\frac{|BD|_{\mathcal P}}{|DC|_{\mathcal P}}
\cdot \frac{|CE|_{\mathcal P}}{|EA|_{\mathcal P}}
\cdot \frac{|AF|_{\mathcal P}}{|FB|_{\mathcal P}} \;=\; 1 .
\]
Here, segment ratios taken on a singular line are also measured by the DA norm.
\end{theorem}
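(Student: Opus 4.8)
The plan is to reduce this statement to the classical affine Ceva theorem, using the single observation that under the standard normalization the DA norm is merely the absolute difference of $x$-coordinates. First I would apply \Cref{lem:normalize} to fix coordinates in which $\ell$ is the $x$-axis, $d$ is the vertical direction, and $A=(a,a^{2})$, $B=(b,b^{2})$, $C=(c,c^{2})$ with $a<b<c$; then by \Cref{def:diff-angle-norm} one has \(|XY|_{\mathcal P}=|x_X-x_Y|\). Since a DA triangle has no singular side (\Cref{def:diff-angle-triangle}), the $x$-coordinate restricts to a genuine affine parameter along each of the lines $BC$, $CA$, $AB$. Consequently, in the directed-ratio convention adopted in this chapter, the directed DA ratio \(|BD|_{\mathcal P}:|DC|_{\mathcal P}\) equals \((x_D-x_B):(x_C-x_D)\), which is precisely the classical signed ratio \(\overline{BD}:\overline{DC}\) along the line $BC$; similarly for $E$ on $CA$ and $F$ on $AB$. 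Hence the DA Ceva product coincides termwise, signs included, with the classical affine Ceva product.

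Second, concurrency of the cevians \(AD,BE,CF\) is a purely incidence-theoretic property, invariant under the affine normalization performed above and unchanged if we pass to the projective closure. Therefore the asserted equivalence is exactly the classical Ceva theorem in its directed-ratio form, which already covers every sign pattern — including configurations where an even number of the points \(D,E,F\) lie on extensions of the sides — and which, stated projectively, remains valid when one of the cevians degenerates to a singular (vertical) line and the point of concurrence recedes to infinity. Invoking classical Ceva in both directions then completes the proof; the parenthetical remark in the statement about ratios on singular lines is accommodated automatically, since such ratios are again just differences of $x$-coordinates.

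If a self-contained argument is preferred, I would instead write \(D=(1-t)B+tC\), \(E=(1-s)C+sA\), \(F=(1-r)A+rB\) as affine combinations, so that the three directed DA ratios become \(\tfrac{t}{1-t}\), \(\tfrac{s}{1-s}\), \(\tfrac{r}{1-r}\); form the three cevian lines through \(A\), \(B\), \(C\); and impose concurrence as the vanishing of the corresponding \(3\times 3\) determinant, which factors, up to a nonzero constant, as \(tsr-(1-t)(1-s)(1-r)\). Setting this to zero yields exactly \(\tfrac{t}{1-t}\cdot\tfrac{s}{1-s}\cdot\tfrac{r}{1-r}=1\), the product-equals-one condition. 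I would relegate this computation to a remark, as the reduction above is shorter and more transparent.

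The main obstacle is not any hard computation but the careful bookkeeping of orientation: one must confirm that the directed-DA-norm convention of this chapter attaches to \(|BD|_{\mathcal P}:|DC|_{\mathcal P}\) precisely the sign of \((x_D-x_B)/(x_C-x_D)\), and that this agrees with the sign convention under which classical Ceva is customarily stated, so that the two products are literally identical rather than merely equal up to a global sign. A secondary point, needing one sentence of care, is the degenerate case of a singular cevian, which should be absorbed into the projective form of Ceva rather than handled by hand.
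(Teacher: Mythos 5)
Your proof is correct, and its core is the same observation the paper relies on: along any non-singular side the DA norm is just the difference of $x$-coordinates, so the directed DA ratios coincide termwise with the classical directed ratios and the statement reduces to the classical theorem. The packaging differs slightly: the paper actually writes its argument under the Menelaus statement (\Cref{thm:menelaus_diff_angle}), deducing it from the projective invariance of the cross-ratio and then disposing of Ceva with the one-line remark that Ceva and Menelaus are projectively dual, whereas you reduce Ceva directly to the affine Ceva theorem and, in addition, offer a self-contained determinant verification (parametrizing $D,E,F$ as affine combinations and factoring the concurrency determinant as $tsr-(1-t)(1-s)(1-r)$). Your version is more explicit than the paper's on exactly the points the paper leaves implicit --- the sign bookkeeping identifying $|BD|_{\mathcal P}:|DC|_{\mathcal P}$ with $(x_D-x_B):(x_C-x_D)$, and the degenerate case of a singular cevian absorbed into the projective form --- so it buys a cleaner, checkable argument at the cost of a little length; the paper's duality shortcut is terser but, as written, never spells out the Ceva direction at all.
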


\begin{theorem}[DA Version of Menelaus' Theorem]\label{thm:menelaus_diff_angle}
For $\triangle_{\mathcal P}ABC$, suppose points $D,E,F$ lie on the lines $BC,CA,AB$ respectively (possibly on their extensions).
Then $D,E,F$ are collinear if and only if
\[
\frac{|BD|_{\mathcal P}}{|DC|_{\mathcal P}}
\cdot \frac{|CE|_{\mathcal P}}{|EA|_{\mathcal P}}
\cdot \frac{|AF|_{\mathcal P}}{|FB|_{\mathcal P}} \;=\; -1 .
\]
\end{theorem}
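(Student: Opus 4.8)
The plan is to observe that Menelaus' configuration is affine in nature — it refers only to incidence and to ratios measured along lines — and that every ratio appearing in the statement is an affine invariant expressible through $x$–coordinates. Fixing the gauge so that $\ell$ is the $x$–axis and $d$ the $y$–direction, one has $|XY|_{\mathcal P}=|x_X-x_Y|$, and the projection along $d$ onto $\ell$ restricts to an affine isomorphism on each side line of $\triangle_{\mathcal P}ABC$ (no side being singular). Since affine maps preserve signed ratios, the oriented DA ratio $|BD|_{\mathcal P}/|DC|_{\mathcal P}$ under the directed-ratio convention equals $(x_D-x_B)/(x_C-x_D)$, and likewise for the other two factors; thus the quantity in the theorem is $\dfrac{x_D-x_B}{x_C-x_D}\cdot\dfrac{x_E-x_C}{x_A-x_E}\cdot\dfrac{x_F-x_A}{x_B-x_F}$.

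First I would parametrize the three transversal points barycentrically with respect to $A,B,C$, writing $D=(1-t)B+tC$, $E=(1-u)C+uA$, $F=(1-v)A+vB$, with $t,u,v\notin\{0,1\}$ (the standard non-degeneracy hypotheses $D\neq B,C$, etc.). Reading off $x$–coordinates — using $x_B\neq x_C$, $x_C\neq x_A$, $x_A\neq x_B$, which again hold because the sides are non-singular — gives $\dfrac{x_D-x_B}{x_C-x_D}=\dfrac{t}{1-t}$, $\dfrac{x_E-x_C}{x_A-x_E}=\dfrac{u}{1-u}$, $\dfrac{x_F-x_A}{x_B-x_F}=\dfrac{v}{1-v}$, so the left side of the claimed identity equals $\dfrac{t}{1-t}\cdot\dfrac{u}{1-u}\cdot\dfrac{v}{1-v}$.

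Next I would express the collinearity of $D,E,F$ as the vanishing of the determinant of their homogeneous barycentric coordinate vectors $(0,1-t,t)$, $(u,0,1-u)$, $(1-v,v,0)$; expanding it yields $(1-t)(1-u)(1-v)+tuv=0$, equivalently $\dfrac{t}{1-t}\cdot\dfrac{u}{1-u}\cdot\dfrac{v}{1-v}=-1$. Combining the three displays establishes the equivalence in both directions at once.

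The main obstacle is not computation but the careful matching of sign conventions: one must verify that the directed-ratio convention adopted for the oriented DA norm — positive when the point lies between the two endpoints, negative otherwise — is precisely what the quotient $(x_D-x_B)/(x_C-x_D)$ produces, and that this remains consistent for all three sides independently of the left-to-right order of $A,B,C$. Once this is pinned down, the barycentric identity closes the argument with no case analysis; alternatively, having reduced the DA ratios to ordinary signed ratios along the side lines, one could simply invoke the classical Menelaus theorem.
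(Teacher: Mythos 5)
Your proposal is correct. The crucial reduction is the same one the paper makes: because no side of $\triangle_{\mathcal P}ABC$ is singular, the DA norm along each side is just the difference of $x$–coordinates, so the directed DA ratios $\dfrac{|BD|_{\mathcal P}}{|DC|_{\mathcal P}}$, etc., coincide with the ordinary signed ratios of division along the side lines, and the statement becomes the classical affine Menelaus condition. Where you differ is in how the argument is closed: the paper simply appeals to the classical projective proof (invariance of ratios/cross–ratio under projection) and to Ceva–Menelaus duality, whereas you finish with a self-contained computation, parametrizing $D=(1-t)B+tC$, $E=(1-u)C+uA$, $F=(1-v)A+vB$, identifying the three factors with $\tfrac{t}{1-t},\tfrac{u}{1-u},\tfrac{v}{1-v}$, and expressing collinearity as the vanishing of the barycentric determinant $(1-t)(1-u)(1-v)+tuv$. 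Your route buys an explicit, sign-complete verification in both directions (including the case of a singular transversal, which your affine computation handles automatically since the ratios are measured on the nonsingular sides), while the paper's route is shorter by outsourcing everything after the reduction to the Euclidean theorem; your closing remark that one could instead invoke classical Menelaus is in fact exactly the paper's proof.
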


\begin{proof}
In Euclidean geometry, Menelaus' theorem follows from the projective invariance of the cross–ratio.
In DA geometry, each side length in the DA norm is represented simply by the difference of $x$–coordinates, which remains invariant under projective transformations.
Hence, the same projective argument carries over directly.
Therefore, the collinearity of $D,E,F$ is equivalent to the above product being $-1$.
Since Ceva and Menelaus are projectively dual, they hold as a pair in DA geometry as well.
\end{proof}

\begin{remark}
In both \Cref{thm:ceva_diff_angle} and \Cref{thm:menelaus_diff_angle}, special care is required when the configuration involves a singular line.
By the definition of the DA (projected) length — the difference of projected $x$–coordinates — 
external division ratios are consistently measured by the DA norm as well.
\end{remark}

\begin{quote}
These results confirm that the projective nature of Ceva and Menelaus remains intact in DA geometry,
forming the algebraic foundation for the forthcoming DA version of Miquel's quadrilateral theorem.
\end{quote}

\begin{figure}[htbp]
  \centering
  \begin{minipage}{0.48\textwidth}
    \centering
    \includegraphics[width=\linewidth]{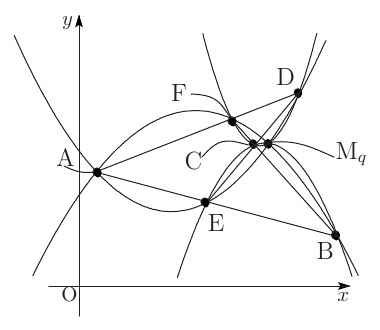}
    \label{fig:miquel-quadrilateral}
  \end{minipage}
  \caption{Parabolic Miquel quadrilateral: four circumparabolas meet at a common point.}
\end{figure}

One of the principal consequences of the nonsingular configuration
is the following theorem, which serves as the parabolic analogue of
Miquel's quadrilateral theorem.

\begin{maintheorem}[DA Version of Miquel's Quadrilateral Theorem]\label{mthm:miquel-quadrilateral}
Under the assumption of \Cref{asp:nonsingular-quad}, define
\begin{align*}
   &\mathcal C_{ABF}: \text{the circumparabola passing through } (A,B,F),\\
   &\mathcal C_{BCE}: \text{the circumparabola passing through } (B,C,E),\\
   &\mathcal C_{CDF}: \text{the circumparabola passing through } (C,D,F),\\
   &\mathcal C_{DAE}: \text{the circumparabola passing through } (D,A,E).
\end{align*}
Then the four parabolas $\mathcal C_{ABF},\ \mathcal C_{BCE},\ \mathcal C_{CDF},\ \mathcal C_{DAE}$
intersect at a common point $M$, which may lie in the finite plane or at infinity.
\end{maintheorem}

\begin{proof}
By the cyclic symmetry of the configuration, it suffices to show that any three of the four parabolas are concurrent.
We fix $M$ as one of the intersection points of $\mathcal C_{ABF}$ and $\mathcal C_{BCE}$,
and show that the remaining two also pass through $M$.

First, let $M$ be a common point of $\mathcal C_{ABF}$ and $\mathcal C_{BCE}$.
Since the four points $A,B,F,M$ lie on $\mathcal C_{ABF}$
and $B,C,E,M$ lie on $\mathcal C_{BCE}$,
the parabolic–cyclic property (\Cref{prop:parabolic-quadrilateral}, stating that the sum of opposite DA–angles in an inscribed quadrilateral is zero) gives
\begin{equation}\label{eq:paracyclic1}
\measuredangle_{\mathcal P} AFM=-\measuredangle_{\mathcal P} ABM,\qquad
\measuredangle_{\mathcal P} ECM=-\measuredangle_{\mathcal P} EBM.
\end{equation}
Using $E\in CD$ and $F\in AD$, together with the orientation convention for DA–angles,
which states that the slope $\Slp{}$ remains the same for opposite rays,
the equality of vertical angles (\Cref{prop:vertical-angle}),
and the fact that a straight angle equals zero (\Cref{lem:straight-angle-zero}),
we obtain
\begin{equation}\label{eq:line-replace}
\measuredangle_{\mathcal P} DCM = -\,\measuredangle_{\mathcal P} ECM,\qquad
\measuredangle_{\mathcal P} DFM = -\,\measuredangle_{\mathcal P} AFM.
\end{equation}
Combining \eqref{eq:paracyclic1} and \eqref{eq:line-replace}, we have
\[
\measuredangle_{\mathcal P} DCM= \measuredangle_{\mathcal P} DFM.
\]
By the constancy of the parabolic inscribed angle
(\Cref{prop:parabolic-inscribed-angle}),
it follows that $M$ lies on $\mathcal C_{CDF}$.
By exchanging the roles of $(C,D,E)$ and $(A,B,F)$, the same reasoning shows that $M\in\mathcal C_{DAE}$.
Hence the four circumparabolas
$\mathcal C_{ABF},\ \mathcal C_{BCE},\ \mathcal C_{CDF},\ \mathcal C_{DAE}$
concur at the common point $M$.
(If $M$ degenerates to a point at infinity, the result still holds under projective extension.)
\end{proof}

\begin{remark}[Treatment of configurations involving singular lines]\label{rem:singular}
If any of the triples of defining points includes a singular line
(i.e., a line parallel to the reference direction $d$),
the corresponding circumparabola degenerates into a straight line containing a point at infinity.
Except for the degenerate case where $M$ itself lies at infinity,
the nonsingular condition of \Cref{asp:nonsingular-quad}
can always be restored by a suitable change of affine chart,
that is, by a gauge transformation avoiding the direction $d$.
\end{remark}

\begin{remark}[Resolution of the Weiss--Odehnal Conjecture]\label{rem:weiss-odehnal-conj}
The conjectural statement of Weiss--Odehnal~\cite{WeissOdehnal2024}
suggests that a Miquel configuration should retain its concurrency
even in the parabolic (isotropic) limit.

Our main result, \Cref{mthm:miquel-quadrilateral},
provides the first explicit and rigorous construction of this
“parabolic Miquel point,” thereby giving a positive resolution of
the Weiss--Odehnal Conjecture.

Furthermore, we are also able to address the Brocard point as a topic related to 
the Miquel point, and in doing so we reinforce the discussion in~\cite{WeissOdehnal2024} by establishing the Brocard configuration in DA geometry together with its fundamental properties
(see Base2 for details).
\end{remark}

\begin{quote}
This parabolic version of Miquel's quadrilateral theorem completes the correspondence
between DA geometry and classical cyclic configurations,
and provides the foundation for the study of parabolic powers and inner products in the next chapter.
\end{quote}


\section{DA Perpendiculars and the DA Bisector Collinearity Theorem}
\begin{quote}
In DA geometry, singular lines play an essential role: they not only collapse distance to zero
but also seem, at first sight, to complicate perpendicularity.
Under the convention that right angles ($90^\circ$) and straight angles ($180^\circ$) are both
absorbed into the zero difference angle, it is necessary to justify that the definition
of a DA perpendicular is still valid.
In this chapter we first define the DA perpendicular and justify its consistency through a Simson-type theorem.
Then, via the structures of the circumcenter and orthocenter, we derive the DA Bisector Collinearity Theorem.
\end{quote}

\subsection{Foundation of the DA Perpendicular}

\begin{figure}[htbp]
  \centering
  \begin{minipage}{0.32\textwidth}
    \centering
    \includegraphics[width=\linewidth]{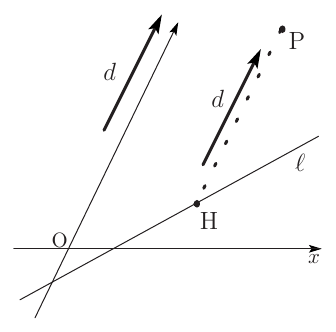}
    \subcaption{DA perpendicular: definition and projection.}
    \label{fig:diff-perp}
  \end{minipage}
  \begin{minipage}{0.32\textwidth}
    \centering
    \includegraphics[width=\linewidth]{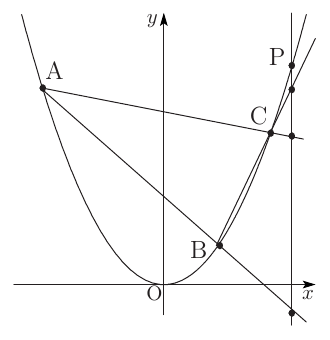}
    \subcaption{Naive Simson translation: collinearity only.}
    \label{fig:simson-diff-incorrect}
  \end{minipage}
  \begin{minipage}{0.32\textwidth}
    \centering
    \includegraphics[width=\linewidth]{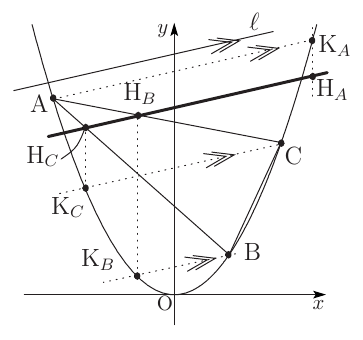}
    \subcaption{Full Simson configuration in the DA setting: concurrency at infinity and the Simson line.}
    \label{fig:simson-diff}
  \end{minipage}
  \caption{DA perpendiculars and Simson-type configurations.}
\end{figure}

\begin{definition}[Orthogonal projection]\label{def:orth-projection}
For a line $\ell$ and a point $P\notin \ell$, the foot of the perpendicular from $P$ to $\ell$
is called the \emph{orthogonal projection} of $P$ onto $\ell$.
\end{definition}

Using orthogonal projection, we define perpendiculars in DA geometry.

\begin{definition}[DA perpendicular]\label{def:diff-perp}
Let $\ell$ be a line with slope, and let $P\notin \ell$.
Denote by $H$ the orthogonal projection of $P$ onto $\ell$.
The segment $PH$ is called the DA–perpendicular (the singular line through $P$ perpendicular to $\ell$),
and $H$ is called the \emph{foot of the perpendicular}.
\end{definition}

The statement that validates this definition is Simson’s theorem.
However, if one naively translates the classical Simson theorem into DA geometry,
the result reduces to a mere collinearity statement, missing its essential geometric content.

\begin{proposition}[Naive Simson translation (collinearity only)]\label{prop:simson-diff-incorrect}
Let $\triangle_{\mathcal P}ABC$ lie on $y=\kappa x^2$, and let $P=(p,\kappa p^2)\notin\{A,B,C\}$ also lie on $\mathcal P$.
Then the feet of the DA–perpendiculars from $P$ to each side (or its extension) are collinear.
\end{proposition}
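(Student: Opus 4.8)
The plan is to unpack \Cref{def:diff-perp} and observe that, once this is done, the assertion is immediate; the real substance lies in recording \emph{why} it is so trivial, since that is precisely what a naive translation of Simson's theorem loses. Recall that under the absorptive boundary policy a right angle is absorbed to $0$ exactly as a straight angle is, so the ``perpendicular direction'' to a line cannot be recovered from the difference angle; DA geometry instead declares it to be the singular direction. Concretely, for a line $\ell$ with slope and a point $P\notin\ell$, the DA--perpendicular from $P$ to $\ell$ is, by \Cref{def:diff-perp}, the singular line through $P$, and its foot is the unique point where that singular line meets $\ell$.

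First I would fix the standing normalization (as in \Cref{lem:normalize}), so that $d$ is the positive $y$--direction and $\mathcal P\colon y=\kappa x^2$, with $A=(a,\kappa a^2)$, $B=(b,\kappa b^2)$, $C=(c,\kappa c^2)$, $a<b<c$, and $P=(p,\kappa p^2)$, $p\notin\{a,b,c\}$. In this chart a singular line is exactly a vertical line $x=\mathrm{const}$. Next I would check that the three feet are well defined: since $\triangle_{\mathcal P}ABC$ is a DA triangle, none of $BC,CA,AB$ is singular, so each has a slope; and since each side--line, being non-singular, meets $\mathcal P$ in exactly its two endpoints among $\{A,B,C\}$, the point $P$ (distinct from these and also on $\mathcal P$) lies on none of them. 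Hence \Cref{def:diff-perp} applies to each side, the common DA--perpendicular from $P$ to all three sides is the single vertical line $x=p$, and the feet $H_A,H_B,H_C$ on $BC,CA,AB$ respectively are its three intersections with those lines.

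The conclusion is then forced: $H_A$, $H_B$, $H_C$ all have abscissa $p$, hence lie on the one line $x=p$ and are collinear (and they are pairwise distinct, since two of them coinciding would force a vertex with abscissa $p$, which is excluded). There is no genuine obstacle here --- the only care needed is to read ``DA--perpendicular'' as the singular line through $P$ rather than as the Euclidean perpendicular dropped onto each side, and to verify the three feet exist, both routine. The point is instead the \emph{deficiency} thus exhibited: in the classical Simson theorem the three feet lie on a line depending on all four of $A,B,C,P$, whereas here the would-be ``Simson line'' is forced to be the singular line $x=x_P$, which records nothing beyond the abscissa of $P$. Recovering the genuine content --- simultaneously the concurrency of the three singular perpendicular lines at the singular ideal point and a nondegenerate Simson line --- requires the refined construction developed in the sequel, which \Cref{prop:simson-diff-incorrect} is set up precisely to contrast with.
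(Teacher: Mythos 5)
Your proof is correct and follows essentially the same route as the paper: identify the DA--perpendicular from $P$ as the singular (vertical) line $x=p$, so that all three feet share abscissa $p$ and are therefore collinear on that line. The paper merely makes the feet explicit as $\bigl(p,\kappa(a+b)p-\kappa ab\bigr)$ etc., which your argument renders unnecessary.
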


\begin{proof}
Let the equations of $AB,BC,CA$ be
$y=\kappa (a+b)x-\kappa ab$, $y=\kappa (b+c)x-\kappa bc$, $y=\kappa (c+a)x-\kappa ca$, respectively.
Since each DA–perpendicular is a vertical line $x=\mathrm{const}$, the feet are
\[
H_{AB}=\bigl(p,\,\kappa (a+b)p-\kappa ab\bigr),\quad
H_{BC}=\bigl(p,\,\kappa (b+c)p-\kappa bc\bigr),\quad
H_{CA}=\bigl(p,\,\kappa (c+a)p-\kappa ca\bigr),
\]
so all satisfy $x=p$ and hence are collinear on the vertical line $x=p$.
\end{proof}

Nevertheless, the genuine DA version of Simson’s theorem goes beyond collinearity:
via a prescribed \emph{Simson direction} it simultaneously asserts concurrency at a point at infinity
and collinearity on a finite line.
That is, the concurrency that held at a finite point in Euclidean geometry degenerates to infinity in DA geometry,
while collinearity remains finite.
This reveals that the Euclidean Simson theorem implicitly encodes two assertions—concurrency and collinearity—
which DA geometry makes explicit.

\begin{theorem}[DA Simson theorem]\label{prop:simson-diff}
Let $\triangle_{\mathcal P}ABC$ be a DA triangle inscribed in $y=x^2$,
and let $\ell$ be a line of slope $m$ (the \emph{Simson direction}).
Through each of $A,B,C$, draw a line parallel to $\ell$ meeting the parabola again at $K_A,K_B,K_C$.
From $K_A,K_B,K_C$, drop DA–perpendiculars to the opposite sides $BC,CA,AB$,
and denote their feet by $H_A,H_B,H_C$.
\begin{itemize}
\item[(a)] Each of $K_AH_A, K_BH_B, K_CH_C$ is parallel to the axis of the parabola;
hence they meet at the same point at infinity in the projective plane.
This point corresponds to the unique projective point shared by all tangent lines of the parabola.
\item[(b)] The points $H_A,H_B,H_C$ are collinear, and the direction of this line coincides with the Simson direction.
\end{itemize}
\end{theorem}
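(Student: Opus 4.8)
The plan is to argue entirely by coordinates in the normalized form already fixed by the hypothesis, $A=(a,a^2)$, $B=(b,b^2)$, $C=(c,c^2)$ with $a<b<c$ on $\mathcal P\colon y=x^2$, so no further appeal to \Cref{lem:normalize} is needed. First I would locate the auxiliary points $K_A,K_B,K_C$. A line of slope $m$ through $(t,t^2)$ meets $y=x^2$ at the roots of $x^2-mx+(mt-t^2)=0$, namely $x=t$ and $x=m-t$; this is just the Vieta relation underlying \Cref{thm:Parabolic-power-theorem}. Hence $K_A=(m-a,(m-a)^2)$, and cyclically $K_B=(m-b,(m-b)^2)$, $K_C=(m-c,(m-c)^2)$. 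Here one records the genericity implicit in the phrase ``meeting the parabola again'', namely $m\notin\{2a,2b,2c\}$, so that each $K_X$ is genuinely distinct from the corresponding vertex.

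Next I would compute the three feet. By \Cref{def:diff-perp} a DA--perpendicular is a singular line, i.e.\ a vertical line $x=\mathrm{const}$; thus the DA--perpendicular dropped from $K_A$ is $x=m-a$, and its foot on the side $BC\colon y=(b+c)x-bc$ (which is not singular, since $\triangle_{\mathcal P}ABC$ is a DA triangle) is $H_A=\bigl(m-a,\,(b+c)(m-a)-bc\bigr)$. Cyclically, $H_B=\bigl(m-b,\,(c+a)(m-b)-ca\bigr)$ lies on $CA$ and $H_C=\bigl(m-c,\,(a+b)(m-c)-ab\bigr)$ lies on $AB$. Part~(a) is then immediate: $K_A$ and $H_A$ share the $x$--coordinate $m-a$, so the segment $K_AH_A$ is vertical, i.e.\ parallel to the axis of $\mathcal P$ (the direction $d$); the same holds for $K_BH_B$ and $K_CH_C$. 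Three lines parallel to $d$ concur at the single ideal point in direction $d$, which is precisely the point where the circumparabola meets the line at infinity --- the common ideal point of all diameters of the parabola --- giving the asserted projective concurrency.

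For part~(b) I would simplify the ordinates. Expanding, $y_{H_A}=(b+c)m-(ab+bc+ca)$, and cyclically $y_{H_B}=(c+a)m-(ab+bc+ca)$, $y_{H_C}=(a+b)m-(ab+bc+ca)$; the decisive observation is that the constant term $ab+bc+ca$ is the same in all three. Since the abscissas $m-a,\,m-b,\,m-c$ are pairwise distinct, the slope of $H_AH_B$ is $\frac{(b+c)m-(c+a)m}{(m-a)-(m-b)}=\frac{(b-a)m}{b-a}=m$, and the identical computation gives slope $m$ for $H_BH_C$ as well. Therefore $H_A,H_B,H_C$ are collinear, and the line carrying them has slope $m$ --- the prescribed Simson direction --- which is statement~(b).

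Finally, on difficulty: the computation hides no real obstacle, and what needs attention is bookkeeping rather than depth --- checking that $BC,CA,AB$ are non-singular so the feet $H_X$ exist (guaranteed by the DA--triangle hypothesis), recording the genericity $m\notin\{2a,2b,2c\}$ so that $K_X$ is not a vertex, and, in part~(a), phrasing the ``concurrency at infinity'' correctly inside the projective--parabolic framework, that is, identifying the common ideal point with the axis direction rather than with any finite crossing. Everything else is the symmetric three-line expansion carried out above.
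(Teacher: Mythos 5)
Your proposal is correct and follows essentially the same route as the paper: normalize on $y=x^2$, use Vieta to get $x_{K_A}=m-a$ (etc.), note the DA--perpendiculars are vertical so $H_A=(m-a,(b+c)(m-a)-bc)$ (etc.), and then verify collinearity with slope $m$ --- the paper does this by exhibiting the common line $y=mx+m(a+b+c)-m^2-(ab+bc+ca)$, while you compute pairwise slopes, which is the same algebra. Your added remarks on genericity ($m\notin\{2a,2b,2c\}$) and on identifying the ideal point are harmless refinements, not a different method.
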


\begin{proof}
We argue algebraically to keep track of directions.
(a) follows directly from the definition of the DA–perpendicular: each $K_*H_*$ is axis-parallel, hence concurrent at infinity.

(b) Let $A=(a,a^2)$, $B=(b,b^2)$, $C=(c,c^2)$.
The line through $A$ parallel to $\ell$ is $y=m(x-a)+a^2$,
whose second intersection with $y=x^2$ has $x$–coordinate given by the quadratic $x^2-mx+ma-a^2=0$:
besides $x=a$, we have $x_{K_A}=m-a$. Similarly, $x_{K_B}=m-b$, $x_{K_C}=m-c$.

The sides are
\[
BC:\ y=(b+c)x-bc,\qquad
CA:\ y=(c+a)x-ca,\qquad
AB:\ y=(a+b)x-ab.
\]
Since DA–perpendiculars are vertical, we obtain
\[
\begin{aligned}
H_A&=\bigl(m-a,\ (b+c)(m-a)-bc\bigr),\\
H_B&=\bigl(m-b,\ (c+a)(m-b)-ca\bigr),\\
H_C&=\bigl(m-c,\ (a+b)(m-c)-ab\bigr).
\end{aligned}
\]
Each satisfies
\[
y - m x = m(a+b+c) - m^2 - (ab+bc+ca),
\]
so $H_A,H_B,H_C$ lie on the same line
$y=mx+\bigl[m(a+b+c)-m^2-(ab+bc+ca)\bigr]$, which is parallel to $\ell$.
\end{proof}

\begin{figure}[htbp]
  \centering
  \begin{minipage}{0.50\textwidth}
    \centering
    \includegraphics[width=\linewidth]{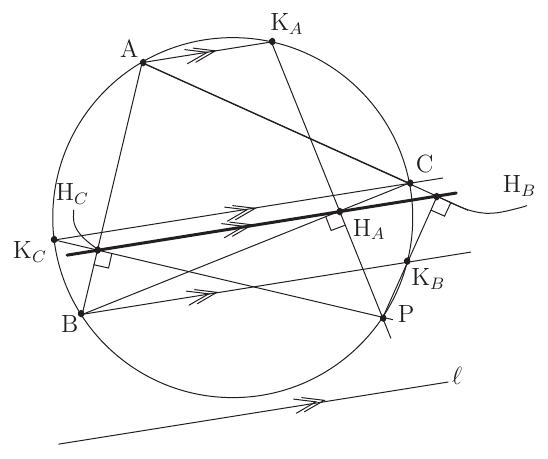}
    \label{fig:simson-euclid}
  \end{minipage}
  \caption{Simson’s theorem in Euclidean geometry traced from the DA setting.}
\end{figure}

\begin{theorem}[Simson’s theorem (Euclidean form)]
Let $\triangle ABC$ have circumcircle $\Gamma$, and fix a line $\ell$ (the \emph{Simson direction}).
Through $A,B,C$ draw lines parallel to $\ell$, meeting $\Gamma$ again at $K_A,K_B,K_C$.
From $K_A,K_B,K_C$, drop perpendiculars to $BC,CA,AB$, and denote their feet by $H_A,H_B,H_C$.
\begin{itemize}
\item[(a)] The three lines $K_AH_A, K_BH_B, K_CH_C$ are concurrent at a point $P\in\Gamma$.
\item[(b)] The points $H_A,H_B,H_C$ are collinear, and their line is parallel to $\ell$.
\end{itemize}
\end{theorem}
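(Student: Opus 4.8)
The plan is to reduce the statement to two ingredients, both of which flow from a single directed-angle computation: the ordinary Simson line theorem (the feet of the perpendiculars from one point of $\Gamma$ onto the three sidelines are collinear), and the following \emph{chord lemma}: for $P\in\Gamma$, if the perpendicular from $P$ to the line $BC$ meets $\Gamma$ again at $A'$ (with $A'\neq P$ in general position), then the chord $AA'$ is parallel to the Simson line of $P$; likewise the chords $BB'$, $CC'$ arising from the perpendiculars to $CA$, $AB$ are parallel to the same Simson line. Granting these, the argument proceeds as follows. The direction of the Simson line of a moving point depends continuously on it and, by a classical fact, turns through a half-turn as the point traverses $\Gamma$ once; hence there is a unique $P\in\Gamma$ whose Simson line is parallel to the prescribed direction $\ell$. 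Applying the chord lemma at this $P$, the chord $AA'$ is parallel to $\ell$, so $A'$ is precisely the second intersection with $\Gamma$ of the line through $A$ parallel to $\ell$, that is, $A'=K_A$. Therefore the perpendicular from $P$ to $BC$ is the unique line through $K_A$ perpendicular to $BC$, which is exactly the line $K_AH_A$; so $P\in K_AH_A$, and symmetrically $P\in K_BH_B$ and $P\in K_CH_C$. This is part (a). For part (b), now that $K_AH_A$, $K_BH_B$, $K_CH_C$ all pass through the single point $P\in\Gamma$, the points $H_A,H_B,H_C$ are the feet of the perpendiculars from $P$ onto $BC$, $CA$, $AB$; by the ordinary Simson theorem they are collinear, lying on the Simson line of $P$, which is parallel to $\ell$ by the choice of $P$.

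To prove the chord lemma I would run a directed-angle chase modulo $\pi$. Let $X$ and $Z$ be the feet of the perpendiculars from $P$ to $BC$ and $AB$. The quadrilateral $BXPZ$ is cyclic, since it has right angles at $X$ and $Z$, so the inscribed-angle relation in that circle expresses the direction of the Simson chord $XZ$ in terms of the direction of the chord $BP$ of $\Gamma$ and of the side $AB$. On the other hand, since $PA'\perp BC$, the inscribed-angle theorem in $\Gamma$ relates the direction of $AA'$ to the same data. Matching the two expressions yields $AA'\parallel XZ$, i.e.\ $AA'$ is parallel to the Simson line. The very same concyclicity observations, applied to all three pairs of feet, also furnish the ordinary Simson collinearity, so one web of angle identities supplies both classical inputs.

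The main obstacle will be the bookkeeping of this directed-angle computation — keeping the mod-$\pi$ ambiguities consistent and handling the degenerate positions (a vertex lying on one of the perpendiculars, a side parallel to $\ell$, or the perpendicular from $P$ tangent to $\Gamma$) — which I would dispatch either through the directed-angle formalism, which is indifferent to configuration, or by continuity from the generic case. I would also record the alternative that mirrors the algebraic proof of \Cref{prop:simson-diff}: realize $\Gamma$ as the unit circle, parametrize $A,B,C$ and $K_A,K_B,K_C$ by unit complex numbers, and verify (a) and (b) directly using the closed-form expression for the foot of a perpendicular from a circle point onto a chord. This is the ``trace from the DA setting,'' in which the point at infinity along the parabola's axis, where the DA-perpendiculars concurred, is replaced by the finite point $P$ on the circle.
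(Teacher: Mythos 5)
Your proposal is correct, but it is worth noting that the paper itself states this Euclidean form without proof: it appears only as the classical counterpart ``traced from the DA setting,'' the paper's actual argument being the coordinate computation for the DA version (\Cref{prop:simson-diff}) on the parabola $y=x^2$. Your route is therefore genuinely different in character: a synthetic Euclidean proof resting on two classical ingredients, the ordinary Simson collinearity and the chord lemma that the perpendicular from $P\in\Gamma$ to $BC$, extended to meet $\Gamma$ again at $A'$, gives $AA'$ parallel to the Simson line of $P$. Both ingredients follow from the concyclicity of $B$ with the two feet and $P$ (right angles over the diameter $BP$), so your directed-angle web does carry the whole load, and the identification $A'=K_A$, hence $P\in K_AH_A$, then $H_A,H_B,H_C$ being the feet from $P$, is sound; the degenerate positions you flag (perpendicular through a vertex, tangential $K_A=A$) are exactly the ones needing the mod-$\pi$ formalism or a continuity appeal. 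One small streamlining you may want to record: you do not need the half-turn rotation fact to produce $P$. Define $P$ directly as the second intersection of the line $K_AH_A$ with $\Gamma$; the chord lemma applied to this $P$ says its Simson line is parallel to $AK_A\parallel\ell$, and then the same lemma for the other two sides forces the perpendiculars from $P$ to $CA$ and $AB$ to pass through $K_B$ and $K_C$, i.e.\ to coincide with $K_BH_B$ and $K_CH_C$. This makes the existence of $P$ constructive and removes one classical input. Your closing alternative---unit-circle coordinates with the closed-form foot of a perpendicular---is the faithful Euclidean mirror of the paper's own algebraic treatment of the DA case, with the ideal point of concurrency on the parabola replaced by the finite point $P\in\Gamma$; either write-up would legitimately fill the gap the paper leaves open here.
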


\begin{remark}[Points at infinity and their double structure]
The DA version shows that Euclidean concurrency appears as a degeneration to a point at infinity:
$K_AH_A, K_BH_B, K_CH_C$ do not meet at a finite point but converge to the same ideal point in the DA sense.

Moreover, this “point at infinity’’ has a twofold behavior:
with respect to the projective direction it resembles the “circle at infinity’’ of hyperbolic geometry,
while with respect to the reference direction it behaves like the “line at infinity’’ of affine geometry.
In other words, DA geometry contains two kinds of infinity simultaneously, and the choice emerges as a boundary policy.
This duality stems from the fact that $\delta=0$ encodes two conceptually distinct cases—
parallelism at infinity (straight angle) and the singular direction itself—despite sharing the same numerical value.
A detailed discussion and its connection to calibration are deferred to \emph{Base~2}.
\end{remark}

Finally, we record two immediate (and essentially tautological) center results.

\begin{proposition}[DA circumcenter]\label{prop:diff-circumcenter}
The perpendicular bisectors of the sides of $\triangle_{\mathcal P}ABC$
intersect at the point at infinity corresponding to the direction $d$.
\end{proposition}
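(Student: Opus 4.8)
The plan is to unwind the definitions and observe that the statement is immediate once the DA--perpendicular is recognized as a singular line. By \Cref{def:diff-perp}, the DA--perpendicular dropped from any point $P$ onto a line with slope is the singular line through $P$, that is, the line through $P$ parallel to the projective direction $d$. The perpendicular bisector of a side $XY$ of $\triangle_{\mathcal P}ABC$ is then the DA--perpendicular erected at the midpoint of $XY$; by the argument of \Cref{cor:centroid} this midpoint coincides with the ordinary Euclidean midpoint, so it is a well-defined finite point and the corresponding bisector is a genuine singular line.

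First I would invoke \Cref{lem:normalize} to place the circumparabola in the standard form $y=x^2$, writing $A=(a,a^2)$, $B=(b,b^2)$, $C=(c,c^2)$ with $a<b<c$. The three perpendicular bisectors are then the vertical lines
\[
x=\tfrac{a+b}{2},\qquad x=\tfrac{b+c}{2},\qquad x=\tfrac{c+a}{2},
\]
which are pairwise distinct precisely because $a<b<c$. Being three distinct lines all parallel to $d$, they have no finite point in common; but in the projective completion every line parallel to $d$ passes through the single ideal point determined by that direction, so the three bisectors concur exactly at the point at infinity corresponding to $d$. Since normalization was by an affine transformation preserving the reference structure $(\ell,d)$, this conclusion transfers back to the original configuration unchanged.

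Finally I would note that there is no real obstacle here: the consistency of calling a singular line a ``perpendicular bisector'' under the absorptive boundary policy is exactly what is secured by the DA--perpendicular framework and the DA Simson theorem (\Cref{prop:simson-diff}), so no further justification is required. The only point demanding any care is the purely bookkeeping observation that the three vertical lines are genuinely distinct, which guarantees that their common point is the ideal point of direction $d$ rather than an accidental finite coincidence; this is immediate from $a<b<c$. The proposition is therefore a tautological corollary of \Cref{def:diff-perp}, recorded here for completeness alongside the orthocenter discussion.
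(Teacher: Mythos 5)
Your proof is correct and follows essentially the same route as the paper's: place the vertices on the (normalized) circumparabola, observe that each DA--perpendicular bisector is the vertical line through the midpoint's $x$-coordinate, and conclude that these mutually parallel lines concur at the ideal point of the direction $d$. The extra remarks about distinctness and the Simson theorem are harmless but not needed beyond what the paper's argument already contains.
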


\begin{proof}
Let $A=(a,\kappa a^2)$, $B=(b,\kappa b^2)$, $C=(c,\kappa c^2)$.
The midpoint of $BC$ has $x$–coordinate $(b+c)/2$.
Each DA–perpendicular is vertical (parallel to $d$), so the perpendicular bisectors are
$x=(b+c)/2$, $x=(c+a)/2$, $x=(a+b)/2$, hence mutually parallel and concurrent at the ideal point in direction $d$.
\end{proof}

\begin{proposition}[DA orthocenter]\label{prop:diff-orthocenter}
The DA–perpendiculars from the vertices of $\triangle_{\mathcal P}ABC$ to the opposite sides
meet at a single point at infinity.
\end{proposition}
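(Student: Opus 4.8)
The plan is to mirror the proof of \Cref{prop:diff-circumcenter} almost verbatim, since a DA--perpendicular is, by construction, a singular line and hence a line of constant $x$--coordinate, so the asserted concurrency is forced by projective completion rather than by any metric coincidence.

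First I would invoke the Normalization Lemma (\Cref{lem:normalize}) to assume the circumparabola of $\triangle_{\mathcal P}ABC$ is $y=x^2$, so that $A=(a,a^2)$, $B=(b,b^2)$, $C=(c,c^2)$ with $a<b<c$; one could equally keep $y=\kappa x^2$ without affecting the argument. Each side $BC$, $CA$, $AB$ has a genuine slope — it is non--singular by the definition of a DA triangle (\Cref{def:diff-angle-triangle}) — and each vertex lies off the opposite side, since $A,B,C$ are noncollinear; hence the three DA--perpendiculars are well defined in the sense of \Cref{def:diff-perp}. Unwinding that definition, the DA--perpendicular from $A$ to the line $BC$ is the singular line through $A$, i.e.\ the vertical line $x=a$; likewise the DA--perpendicular from $B$ to $CA$ is $x=b$, and that from $C$ to $AB$ is $x=c$. (Their feet, namely the intersections of these vertical lines with the respective opposite sides, are finite points because each side is non--singular, but the feet are irrelevant to the present claim.)

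It remains to observe that the three lines $x=a$, $x=b$, $x=c$ are pairwise distinct, since $a,b,c$ are distinct, and all three are parallel to the projective direction $d$. In the projective completion of $\mathbb R^2$, any two distinct lines parallel to $d$ meet at the single ideal point determined by the direction $d$ — the point $d_\infty$ of \Cref{def:proj-slope-with-dinf} — so $x=a$, $x=b$, and $x=c$ all pass through this one ideal point, which is therefore the common point at infinity claimed in the statement. The proposition carries no genuine obstacle: the only point worth emphasising is conceptual, namely that — exactly as with the circumcenter in \Cref{prop:diff-circumcenter}, and in parallel with the concurrency--at--infinity clause of the DA Simson theorem (\Cref{prop:simson-diff}(a)) — the Euclidean orthocenter, a finite concurrency of altitudes, degenerates in DA geometry to the ideal point in the direction $d$, precisely because every DA--perpendicular is constrained to be a singular line. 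I would close with a one--line remark recording this structural parallel with the circumcenter.
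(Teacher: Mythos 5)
Your proposal is correct and follows essentially the same route as the paper: the paper's (very terse) proof likewise observes that each DA--perpendicular from a vertex is the singular line through that vertex, so the three are parallel to $d$ and concur at the ideal point in that direction. Your version merely makes this explicit with the normalized coordinates $x=a$, $x=b$, $x=c$, mirroring the paper's computation for the circumcenter, which is a harmless elaboration rather than a different argument.
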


\begin{proof}
By \Cref{def:diff-perp} and the parallel postulate, for any side there exists a unique singular line through the vertex perpendicular to it. Hence the claim.
\end{proof}

\subsection{Examples of DA–Perpendicular Phenomena without Euclidean Counterparts}
\begin{quote}
As the foregoing suggests, the natural perpendicular in DA geometry is a singular line.
Observing that this singular line is the external bisector corresponding to a positive interior DA angle,
we obtain geometric properties that hold in DA geometry but cannot hold in Euclidean geometry.
This highlights DA geometry as an autonomous system beyond the classical framework.
\end{quote}

\begin{figure}[htbp]
  \centering
  \begin{minipage}{0.32\textwidth}
    \centering
    \includegraphics[width=\linewidth]{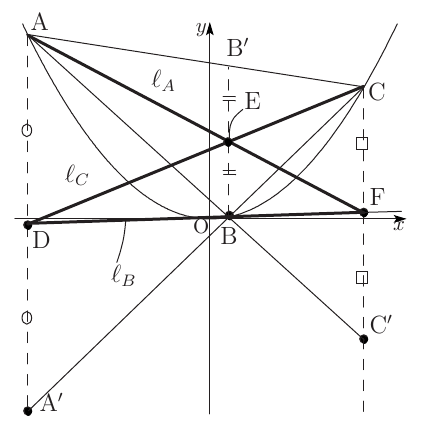}
    \subcaption{}
    \label{fig:bis-midpoint-safe}
  \end{minipage}
  \begin{minipage}{0.32\textwidth}
    \centering
    \includegraphics[width=\linewidth]{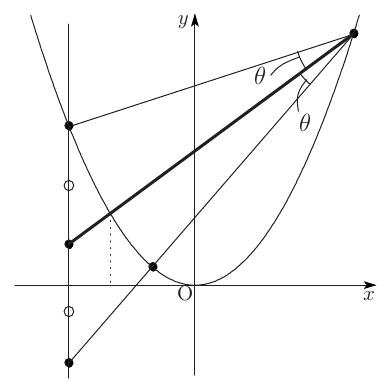}
    \subcaption{}
    \label{fig:singular-mid-point}
  \end{minipage}
  \caption{(a) The midpoint associated with the bisector.
           (b) The midpoint in the presence of a singular line.}
\end{figure}

We next give a lemma that fails in Euclidean geometry and serves as a foundation for isogonality in DA geometry.

\begin{lemma}[Bisectors and midpoints of the feet]\label{lem:bis-midpoint-safe}
The key point is that, in DA geometry, the positional relation between angle bisectors and the feet of DA–perpendiculars
reproduces, in a DA–geometric form, the Euclidean midpoint–connection phenomenon.
The proof is based on the algebraic description of DA–perpendiculars, directly showing that each intersection coincides with
the midpoint of the corresponding segment to the foot.\\
Let $\triangle_{\mathcal{P}}ABC$ lie on $\mathcal{P}\colon y=\kappa x^{2}$.
At each vertex, let $\ell_A,\ell_B,\ell_C$ denote the bisectors of the \emph{positive} DA angles, and set
\[
D = \ell_B\cap \ell_C,\quad
E = \ell_C\cap \ell_A,\quad
F = \ell_A\cap \ell_B .
\]
From $A,B,C$, drop DA–perpendiculars to the opposite sides (or their extensions), and denote the feet by
$A',B',C'$, respectively.
Then $D$ is the midpoint of $AA'$, $E$ is the midpoint of $BB'$, and $F$ is the midpoint of $CC'$.
\end{lemma}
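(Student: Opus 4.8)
The plan is to reduce to the normalized parabola and then run a short explicit coordinate computation, the key simplification being that every DA--perpendicular is a singular (vertical) line. First I would apply \Cref{lem:normalize} to place the vertices at $A=(a,\kappa a^{2})$, $B=(b,\kappa b^{2})$, $C=(c,\kappa c^{2})$ with $a<b<c$, so that the side slopes are $\Slp{AB}=\kappa(a+b)$, $\Slp{BC}=\kappa(b+c)$, $\Slp{CA}=\kappa(c+a)$. Since a DA is the difference of slopes, the bisector of a positive DA at a vertex is the line through that vertex whose slope is the arithmetic mean of the two side slopes; hence $\ell_A$ has slope $\tfrac{\kappa}{2}(2a+b+c)$, $\ell_B$ has slope $\tfrac{\kappa}{2}(a+2b+c)$, and $\ell_C$ has slope $\tfrac{\kappa}{2}(a+b+2c)$. (At $B$ the interior angle $\theta_B$ is negative, so $\ell_B$ is here the bisector of the positive angle $\measuredangle_{\mathcal P}ABC$; its slope is still the mean of the two side slopes, consistently with \Cref{def:minus-angle-bisector} and the external--bisector remark preceding the lemma.)

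Next I would record the feet of the DA--perpendiculars. By \Cref{def:diff-perp} the DA--perpendicular from $A$ to $BC$ is the vertical line $x=a$, so $A'$ is the point where $x=a$ meets the line $BC\colon y=\kappa(b+c)x-\kappa bc$, namely $A'=(a,\kappa(ab+ac-bc))$; similarly $B'=(b,\kappa(ab+bc-ca))$ and $C'=(c,\kappa(ac+bc-ab))$. The midpoint of $AA'$ is then $M_A=(a,\tfrac{\kappa}{2}(a^{2}+ab+ac-bc))$, and its analogues $M_B,M_C$ are obtained by cyclic substitution.

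The heart of the proof is to check that $D=\ell_B\cap\ell_C$ equals $M_A$. I would substitute $x=a$ into the equations of $\ell_B$ and $\ell_C$: evaluating $\ell_B$ at $x=a$ gives $\kappa b^{2}+\tfrac{\kappa}{2}(a+2b+c)(a-b)$, which expands and collapses to $\tfrac{\kappa}{2}(a^{2}+ab+ac-bc)$; likewise $\ell_C$ at $x=a$ gives $\kappa c^{2}+\tfrac{\kappa}{2}(a+b+2c)(a-c)=\tfrac{\kappa}{2}(a^{2}+ab+ac-bc)$. Both agree with the $y$--coordinate of $M_A$, and since $\ell_B$ and $\ell_C$ have distinct slopes (because $b\neq c$) they intersect in a single point; hence $D=M_A$ is the midpoint of $AA'$. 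The claims for $E=\ell_C\cap\ell_A$ and $F=\ell_A\cap\ell_B$ follow by the cyclic substitution $a\mapsto b\mapsto c\mapsto a$, under which the bisectors, the feet, and the intersection points permute in the required way; the ordering $a<b<c$ enters only in identifying the signs of the interior angles and plays no role in the displayed algebraic identities.

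I do not expect a real obstacle here: once the normalization is fixed, the computation is routine. The only points that need care are bookkeeping rather than substance --- namely, (i) using the DA--geometric notion of ``foot'', i.e.\ the intersection of the singular line through the vertex with the \emph{line} of the opposite side (possibly its extension), rather than the Euclidean orthogonal foot, so that $A'$ need not lie on the segment $BC$, which the statement already allows; and (ii) identifying, at the vertex $B$ with $\theta_B<0$, the bisector of the positive DA with the mean--of--slopes line. Both are settled by definitions already in place, so the argument reduces to a clean verification.
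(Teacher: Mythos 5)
Your proposal is correct and follows essentially the same route as the paper's own proof: normalize onto $y=\kappa x^2$, note that DA--perpendiculars are vertical so the feet are $A'=(a,\kappa(ab+ac-bc))$ etc., take the positive-angle bisectors as the mean-of-slopes lines through the vertices, and verify by direct computation that $\ell_B\cap\ell_C$ is the midpoint of $AA'$ (with the other two cases by symmetry). The only cosmetic difference is that the paper solves for the intersection point $D$ directly (finding $x_D=a$), whereas you verify that both bisectors pass through the candidate midpoint and invoke uniqueness of the intersection; the computation is the same.
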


\begin{proof}
Normalize $A=(a,\kappa a^{2})$, $B=(b,\kappa b^{2})$, $C=(c,\kappa c^{2})$ with $a<b<c$.
The sides are
$BC:\ y=\kappa (b+c)x-\kappa bc$, $CA:\ y=\kappa (c+a)x-\kappa ca$, $AB:\ y=\kappa (a+b)x-\kappa ab$.
The feet are
$A'=(a,\kappa (ab-bc+ca))$, $B'=(b,\kappa (ab+bc-ca))$, $C'=(c,\kappa (bc+ca-ab))$.

The bisectors of the \emph{positive} DA angles are
\begin{align*}
 &\ell_A:\ y=\kappa \!\left(a+\tfrac{b+c}{2}\right)x-\tfrac{\kappa\, a(b+c)}{2},\\
 &\ell_B:\ y=\kappa \!\left(b+\tfrac{c+a}{2}\right)x-\tfrac{\kappa\, b(c+a)}{2},\\
 &\ell_C:\ y=\kappa \!\left(c+\tfrac{a+b}{2}\right)x-\tfrac{\kappa\, c(a+b)}{2}.
\end{align*}
At a vertex with a negative DA angle, the bisector is a singular (vertical) line and the corresponding pairwise intersection degenerates to a point at infinity.
Thus, in the \emph{finite} intersection cases, solving for $D=\ell_B\cap\ell_C$ yields
$x_D=a$, $y_D=\tfrac{\kappa}{2}(a^{2}+ab-bc+ca)$.
The midpoint of $AA'$ is $\bigl(a,\tfrac{\kappa}{2}(a^{2}+ab-bc+ca)\bigr)$, hence $D$ coincides with it.
The cases of $E$ and $F$ are analogous.
\end{proof}

\begin{remark}[Singular lines and midpoints]\label{rem:singular-mid-point}
In the statement above we excluded cases where a side or a Ceva line lies on a singular line.
However, for singular triangles one obtains similar relations via the angle–bisector theorem,
and conversely this can be used to assign a unique midpoint even to segments lying on a singular line.
Thus the algebraic treatment is not merely formal: it underpins midpoint constructions on singular lines.
\end{remark}

\begin{proposition}[No Euclidean counterpart: midpoint alignment generally fails]\label{prop:no-euclid-trace-midpoint}
Let $ABC$ be a Euclidean triangle.
Let $\ell_A$ be the internal bisector of $\angle A$, $\ell_B$ the \emph{external} bisector of $\angle B$, and $\ell_C$ the internal bisector of $\angle C$.
Set
\[
D=\ell_B\cap\ell_C,\quad E=\ell_C\cap\ell_A,\quad F=\ell_A\cap\ell_B,
\]
and define
\[
A':=AD\cap BC,\qquad B':=BE\cap CA,\qquad C':=CF\cap AB.
\]
Then, except for special degenerate situations, there is no configuration in which
$D$ is the midpoint of $AA'$, $E$ is the midpoint of $BB'$, and $F$ is the midpoint of $CC'$ simultaneously.
\end{proposition}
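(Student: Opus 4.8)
The plan is to collapse the three simultaneous midpoint conditions to a single impossible metric identity, so that in fact the configuration fails even before all three are invoked. First I would observe that $\ell_A$ and $\ell_C$ are \emph{both internal} bisectors, hence $E=\ell_A\cap\ell_C$ is simply the incenter $I$ of $\triangle ABC$ — this is unconditional, with no parallelism to worry about. Therefore $BE$ is the line $BI$, i.e.\ the internal bisector of $\angle B$, and $B'=BE\cap CA$ is exactly the classical foot of the $B$-bisector on side $CA$. So the middle hypothesis ``$E$ is the midpoint of $BB'$'' says precisely that the incenter bisects the angle-bisector segment issued from $B$.

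Next I would bring in the standard division ratio of the incenter along an angle bisector: writing $a=BC$, $b=CA$, $c=AB$, one has $BI:IB'=(a+c):b$ (obtained, e.g., by applying the angle-bisector theorem to the cevian through $I$ in $\triangle ABB'$, or from $BI=r/\sin(B/2)$ together with the bisector-length formula). Hence ``$E$ is the midpoint of $BB'$'' forces $a+c=b$, which contradicts the strict triangle inequality. Since this single condition is already unattainable for a nondegenerate triangle, a fortiori $D,E,F$ cannot be the three midpoints simultaneously; the only exception is the collinear locus $a+c=b$, which is exactly the ``special degenerate situation'' alluded to in the statement.

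For context I would also record — though it is not logically needed — that $D=\ell_B\cap\ell_C$ is the excenter $I_C$ and $F=\ell_A\cap\ell_B$ the excenter $I_A$, so that $AA'$ and $CC'$ lie along the \emph{external} bisectors at $A$ and $C$ respectively, and the analogous ratio computations for these would independently force impossible relations. Finally I would contrast the outcome with \Cref{lem:bis-midpoint-safe}: in DA geometry the bisector playing the role of ``external bisector at the vertex with negative DA angle'' is the singular (vertical) line, whose algebra differs fundamentally from that of a Euclidean external bisector, and this is precisely why the midpoint alignment persists there but collapses here.

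Honestly, I do not expect a serious obstacle: once the reduction $E=I$ is noticed, the argument is a two-line computation. The only points needing care are getting the incenter division ratio with the correct side labels, and being precise about what ``degenerate'' excludes — the honest answer being that the sole obstruction is the boundary case $a+c=b$ (collinear vertices), so nothing substantive is hidden inside the exception clause.
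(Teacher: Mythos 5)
Your proposal is correct, and it takes a genuinely different (and in fact shorter) route than the paper. The paper's proof identifies $D=I_C$, $E=I$, $F=I_A$ exactly as you do, but then processes all three midpoint hypotheses: each one is argued (via directed ratios involving auxiliary feet) to force an isosceles relation, cyclically yielding $AB=BC=CA$, after which even the equilateral case is excluded because the prescribed external bisector at $B$ cannot play the required role. You instead isolate the middle hypothesis alone: since $E=I$ unconditionally, $B'$ is the classical foot of the internal bisector from $B$, and the standard incenter ratio $BI:IB'=(a+c):b$ (correct, as the equilateral check $2:1$ confirms) shows that ``$E$ is the midpoint of $BB'$'' already forces $a+c=b$, impossible by the strict triangle inequality. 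This buys you a strictly stronger conclusion — one of the three conditions is unattainable by itself, so the simultaneous alignment fails a fortiori — and it sidesteps the paper's more delicate sign/ratio bookkeeping (note that your computation gives the degenerate relation $a+c=b$ rather than the isosceles relation the paper attributes to the $E,B'$ condition, and yours is the standard, verifiable ratio). What the paper's symmetric treatment buys in exchange is a vertex-by-vertex contrast with the DA configuration of \Cref{lem:bis-midpoint-safe}, which your closing remark about the singular line at the negative-angle vertex captures adequately in any case.
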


\begin{proof}
Here $D=I_C$ (the $C$–excenter), $E=I$ (the incenter), and $F=I_A$ (the $A$–excenter).
Let $A':=AD\cap BC$. By the external bisector property,
\[
\frac{BA'}{A'C}=\frac{AB}{AC}=\frac{c}{b}
\]
in directed lengths.
For $D$ to be the midpoint of $AA'$, the projective ratio convention requires $AH_A=H_AA'$ for the relevant foot $H_A$,
and reconciling this with triangle coordinates gives
\[
\frac{c}{b}=-\,\frac{BH_A}{H_AC},
\]
forcing $AC=CB$.
Likewise, the midpoint conditions for $E,B'$ and for $F,C'$ force $BA=AC$ and $CB=BA$, respectively.
Cyclically, $AB=BC=CA$ follows; but even then, prescribing $\ell_B$ to be an external bisector is incompatible,
since in an equilateral triangle internal and external bisectors do not coincide.
Hence the assertion generally fails.
\end{proof}

\begin{remark}[Isogonality as an intrinsic structure in DA geometry]\label{rem:isogonal-structure}
While the collinearity phenomena above occur as projective invariants in other geometries,
the midpoint alignment linking angle bisectors and feet of perpendiculars is specific to DA geometry.
This structure underlies the isogonal concept (equal–angle conjugation) in DA geometry,
making explicit a new symmetry arising from treating angles as primary quantities.
\end{remark}

\subsection{Examples Reducible to Euclidean Geometry and the Main Theorem}
\begin{quote}
In the previous subsection we confirmed, as a property intrinsic to DA geometry, a surprising coincidence between midpoints and angle bisectors.
On the other hand, for related concurrency–collinearity configurations, computations in DA geometry reduce to Euclidean geometry.
A representative case is the following DA Bisector Collinearity Theorem, which bridges DA and classical geometry.
A striking feature is that an elementary computation internal to DA geometry—namely \Cref{mthm:DABCT}—translates directly into
a new collinearity theorem in Euclidean geometry (\Cref{thm:Angle-Bisector-Collinearity-Theorem}).
This “export” shows that DA geometry not only stands alone but also contributes genuinely new theorems to the classical theory.
\end{quote}

\begin{figure}[htbp]
  \centering
  \begin{minipage}{0.40\textwidth}
    \centering
    \includegraphics[width=\linewidth]{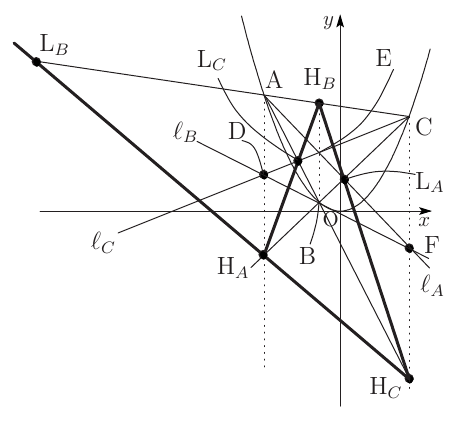}
    \subcaption{}
    \label{fig:bis-mthm-DABCT}
  \end{minipage}
  \begin{minipage}{0.40\textwidth}
    \centering
    \includegraphics[width=\linewidth]{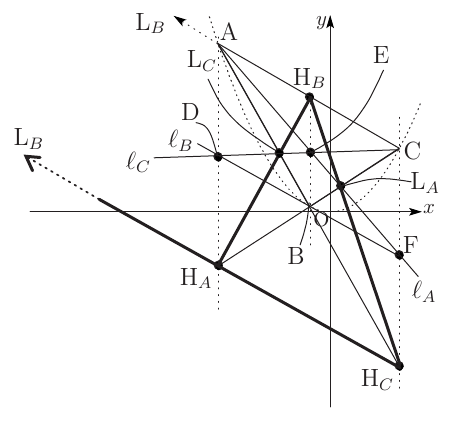}
    \subcaption{}
    \label{fig:singular-mid-point}
  \end{minipage}
  \caption{DA bisectors and concurrency/collinearity.}
\end{figure}

\begin{maintheorem}[DA Bisector Collinearity Theorem]\label{mthm:DABCT}
Consider a DA triangle $\Ptri ABC$ on $y=\kappa x^{2}$ with $\kappa>0$.
Let $\ell_A,\ell_B,\ell_C$ be the DA–angle bisectors at the vertices, and set
\[
D=\ell_B\cap\ell_C,\quad E=\ell_C\cap\ell_A,\quad F=\ell_A\cap\ell_B.
\]
Let $H_A,H_B,H_C$ be the feet of the DA–perpendiculars from $A,B,C$ to the opposite sides
(or their extensions), respectively.
Define also
\[
L_A:=BC\cap\ell_A,\qquad L_B:=CA\cap\ell_B,\qquad L_C:=AB\cap\ell_C.
\]
Then:
\begin{itemize}
\item[(a)] If $\Ptri ABC$ is not DA–isosceles, then
\begin{align*}
  &AB,\ \ell_C,\ H_AH_B \ \text{are concurrent at } L_C,\\
  &BC,\ \ell_A,\ H_BH_C \ \text{are concurrent at } L_A,\\
  &CA,\ \ell_B,\ H_CH_A \ \text{are concurrent at } L_B.
\end{align*}
\item[(b)] The three points $L_A,L_B,L_C$ are collinear.
\end{itemize}
\end{maintheorem}

\begin{proof}
We outline the strategy.
Non–isoscelesness ensures that $AH_AH_CC$ does not form a parallelogram and that
$L_B=BC\cap\ell_B$ appears as a finite point.
Assuming $L_A,L_B,L_C$ are finite,
we determine the DA–angle bisectors from the midpoint property of feet and deduce the required concurrencies.

\medskip\noindent
(a)\; By \Cref{lem:bis-midpoint-safe}, $D$ and $E$ are the midpoints of $AH_A$ and $BH_B$, respectively, and
\[
  |AL_C|_{\mathcal P}:|L_CH_A|_{\mathcal P}=1:1,\qquad
  |L_CH_B|_{\mathcal P}:|L_CB|_{\mathcal P}=1:1.
\]
Hence, by consistency with the DA–angle–bisector rule (\Cref{mthm:diff-angle-bisector}),
$L_CD$ and $L_CE$ bisect $\angle_{\mathcal P}AL_CH_A$ and $\angle_{\mathcal P}BL_CH_B$, respectively.
Applying \Cref{lem:bis-midpoint-safe} again yields that $AB,\ \ell_C,\ H_AH_B$ concur at $L_C$ (and cyclically for the others).

\medskip\noindent
(b)\; Using the projective invariance of DA–length ratios and applying \Cref{thm:ceva_diff_angle},
and observing that no singular line occurs, we get
\[
\frac{|CA|_{\mathcal P}}{|AL_B|_{\mathcal P}}\cdot
\frac{|L_BD|_{\mathcal P}}{|DB|_{\mathcal P}}\cdot
\frac{|BL_A|_{\mathcal P}}{|L_AC|_{\mathcal P}}
=1,
\]
which implies that $L_A,L_B,L_C$ are collinear.
\end{proof}

\begin{remark}
Each concurrency above arises from “bisecting the same DA angle,”
but there are two situations:
\begin{itemize}
  \item For $L_A$ and $L_C$: a singular line induces a folding, so the bisector theorem appears in the form of vertical angles, leading to concurrency.
  \item For $L_B$: the singular line occurs in a single direction, so both $L_BD$ and $L_BF$ bisect $\angle_{\mathcal P}CL_BH_C$, and concurrency follows.
\end{itemize}
\end{remark}

The theorem intertwines the nature of difference angles with collinearity in a particularly neat way.
As a consequence, we obtain the following result in Euclidean geometry.

\begin{figure}[htbp]
  \centering
  \begin{minipage}{0.55\textwidth}
    \centering
    \includegraphics[width=\linewidth]{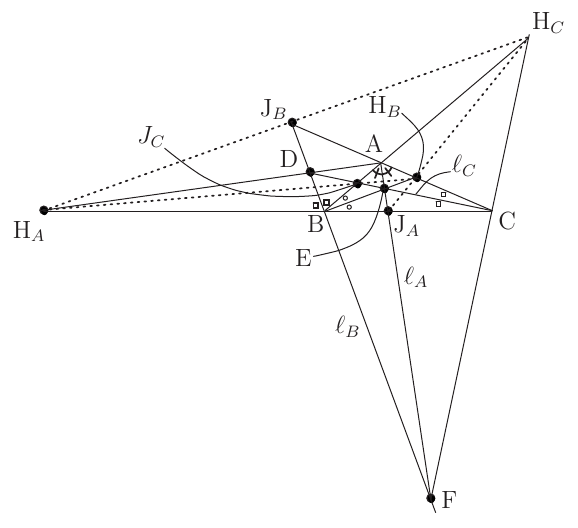}
    \label{fig:bis-ABC-Thm}
  \end{minipage}
  \caption{Angle–bisector collinearity (\Cref{thm:Angle-Bisector-Collinearity-Theorem}).}
\end{figure}

\begin{theorem}[Angle–Bisector Collinearity Theorem]\label{thm:Angle-Bisector-Collinearity-Theorem}
Let $ABC$ be a Euclidean triangle.
Let $\ell_A$ be the internal bisector of $\angle A$, $\ell_B$ the external bisector of $\angle B$, and $\ell_C$ the internal bisector of $\angle C$, and set
\[
\ell_B\cap\ell_C=D,\qquad \ell_C\cap\ell_A=E,\qquad \ell_A\cap\ell_B=F.
\]
Define
\[
H_A:=BC\cap AD,\qquad H_B:=CA\cap BE,\qquad H_C:=AB\cap CF,
\]
and
\[
J_C:=AB\cap\ell_C,\qquad
J_A:=BC\cap\ell_A,\qquad
J_B:=CA\cap\ell_B.
\]
Then:
\begin{itemize}
\item[(a)] (Concurrency)
\begin{align*}
    &AB,\ \ell_C,\ H_AH_B\ \text{are concurrent at }J_C,\\
    &BC,\ \ell_A,\ H_BH_C\ \text{are concurrent at }J_A,\\
    &CA,\ \ell_B,\ H_CH_A\ \text{are concurrent at }J_B.
\end{align*}
\item[(b)] The points $J_A,J_B,J_C$ are collinear.
\end{itemize}
\end{theorem}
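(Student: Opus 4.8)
The plan is to treat this statement as the Euclidean shadow of \Cref{mthm:DABCT}, and then to observe that, read through the right dictionary, it reduces to four applications of Menelaus's theorem with ratios supplied by the internal and external angle--bisector theorems. The dictionary is this: by \Cref{lem:bis-midpoint-safe} a DA--perpendicular from a vertex $A$ is precisely the line through $A$ and through $D=\ell_B\cap\ell_C$, so ``foot of the DA--perpendicular from $A$ on $BC$'' is literally ``$BC\cap AD$'', which is the Euclidean definition of $H_A$ here; and DA--length ratios along a side are ratios of signed one--dimensional coordinates, hence affine/projective invariants. Consequently nothing in the proof of \Cref{mthm:DABCT} is metric in the Euclidean sense, and I would simply re--run that argument with Euclidean directed ratios in place of DA ones.

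Concretely I would proceed as follows. First, as in the proof of \Cref{prop:no-euclid-trace-midpoint}, identify $D=\ell_B\cap\ell_C=I_C$, $E=\ell_C\cap\ell_A=I$, $F=\ell_A\cap\ell_B=I_A$; hence $AD=AI_C$ is the \emph{external} bisector at $A$, $BE=BI$ the \emph{internal} bisector at $B$, and $CF=CI_A$ the \emph{external} bisector at $C$. Writing $a=BC,\ b=CA,\ c=AB$ and using directed lengths, the internal and external angle--bisector theorems give
\[
\frac{\overline{BH_A}}{\overline{H_AC}}=-\frac{c}{b},\qquad
\frac{\overline{CH_B}}{\overline{H_BA}}=\frac{a}{c},\qquad
\frac{\overline{AH_C}}{\overline{H_CB}}=-\frac{b}{a},
\]
\[
\frac{\overline{BJ_A}}{\overline{J_AC}}=\frac{c}{b},\qquad
\frac{\overline{CJ_B}}{\overline{J_BA}}=-\frac{a}{c},\qquad
\frac{\overline{AJ_C}}{\overline{J_CB}}=\frac{b}{a}.
\]
Second, for part (a): since $J_C$ lies on both $AB$ and $\ell_C$ by construction, the concurrency ``$AB,\ \ell_C,\ H_AH_B$ at $J_C$'' is equivalent to $H_A,H_B,J_C$ being collinear, which by the converse of Menelaus follows from $\bigl(-\tfrac{c}{b}\bigr)\bigl(\tfrac{a}{c}\bigr)\bigl(\tfrac{b}{a}\bigr)=-1$; the two cyclic concurrencies follow the same way from $\bigl(\tfrac{c}{b}\bigr)\bigl(\tfrac{a}{c}\bigr)\bigl(-\tfrac{b}{a}\bigr)=-1$ and $\bigl(-\tfrac{c}{b}\bigr)\bigl(-\tfrac{a}{c}\bigr)\bigl(-\tfrac{b}{a}\bigr)=-1$. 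Third, for part (b): Menelaus applied to $J_A,J_B,J_C$ on the three sides gives $\bigl(\tfrac{c}{b}\bigr)\bigl(-\tfrac{a}{c}\bigr)\bigl(\tfrac{b}{a}\bigr)=-1$, so $J_A,J_B,J_C$ are collinear.

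The step I expect to be the real obstacle is not any computation but the sign bookkeeping: one must fix, once and for all, which foot of each bisector pencil is internal and which external, so that every Menelaus product lands on $-1$ and not $+1$ --- a single sign slip converts a collinearity into a Ceva concurrency and destroys the statement. A secondary, more routine point is degeneracy: when $AB=BC$ the external bisector at $B$ is parallel to $CA$ and $J_B$ escapes to infinity (exactly the ``not DA--isosceles'' hypothesis inherited from \Cref{mthm:DABCT}), and similarly $AB=AC$ or $CB=CA$ sends $H_A$ or $H_C$ to infinity; in each such case the statement remains correct when read in the projective plane, the relevant line being the one parallel to the vanishing direction. If one instead insisted on obtaining the theorem as a literal transfer of \Cref{mthm:DABCT} rather than re--deriving it, the burden would move to producing an explicit projectivity carrying $y=\kappa x^{2}$ with its singular direction onto the Euclidean picture so that the DA--bisectors at the two positive--angle vertices become internal Euclidean bisectors while the singular DA--bisector at the negative--angle vertex becomes the external one; establishing that correspondence is the only delicate ingredient, and the direct computation above sidesteps it entirely.
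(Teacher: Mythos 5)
Your proposal is correct and follows essentially the same route as the paper: identify $AD$, $BE$, $CF$ as the external, internal, external bisectors at $A$, $B$, $C$, take the directed ratios from the internal/external angle--bisector theorems, and conclude each concurrency in (a) via Menelaus, with exactly the same products $\bigl(-\tfrac{c}{b}\bigr)\bigl(\tfrac{a}{c}\bigr)\bigl(\tfrac{b}{a}\bigr)=-1$ etc. The only (inessential) difference is in (b), where the paper verifies collinearity of $J_A,J_B,J_C$ with trilinear coordinates $(0:1:1)$, $(1:0:-1)$, $(1:1:0)$ on $x-y+z=0$, while you reuse Menelaus with the ratios already in hand; both are one-line checks, and your explicit computation of all three products in (a) is if anything slightly more careful than the paper's ``the other cases follow cyclically.''
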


\begin{remark}
To the best of the author’s knowledge, this collinearity theorem does not appear in the classical triangle–geometry literature%
\footnote{See also Clark Kimberling’s \emph{Encyclopedia of Triangle Centers},
\url{http://faculty.evansville.edu/ck6/encyclopedia/ETC.html}, accessed Oct.\ 2025.},
although related configurations are discussed in sources such as the ETC.
\end{remark}

\begin{proof}
We prove (a); the other cases follow cyclically.
By the angle–bisector theorem,
\[
\frac{AJ_C}{J_CB}=\frac{AC}{CB}=\frac{b}{a}.
\]
Since $AD$ is the external bisector at $A$,
\[
\frac{BH_A}{H_AC}=-\frac{c}{b},
\]
and since $BE$ is the internal bisector at $B$,
\[
\frac{CH_B}{H_BA}=\frac{a}{c}.
\]
Therefore,
\[
\frac{BH_A}{H_AC}\cdot \frac{CH_B}{H_BA}\cdot \frac{AJ_C}{J_CB}
=\Bigl(-\frac{c}{b}\Bigr)\!\cdot\!\frac{a}{c}\!\cdot\!\frac{b}{a}=-1,
\]
so by Menelaus’ theorem the points $H_A,H_B,J_C$ are collinear.
Hence $AB,\ \ell_C,\ H_AH_B$ are concurrent at $J_C$.

For (b), in trilinear coordinates with respect to $\triangle ABC$ we have
\[
J_A=(0:1:1),\quad J_B=(1:0:-1),\quad J_C=(1:1:0),
\]
which satisfy $x-y+z=0$, proving collinearity.
\end{proof}

\begin{remark}[Summary: contrasting the presence/absence of Euclidean counterparts]
Based on the perpendicular structure in DA geometry,
we first established the “bisectors and midpoint of feet’’ theorem, which has no Euclidean counterpart.
In contrast, the DA Bisector Collinearity Theorem arises from elementary computations internal to DA geometry
and simultaneously reduces to Euclidean geometry, yielding a new concurrency/collinearity statement there.

This contrast shows that DA geometry is not a mere imitation of the classical theory:
it both produces genuinely new phenomena on its own and, at the same time,
exports new propositions back to classical geometry.
\end{remark}

\subsection{Isogonal conjugacy in the DA setting}

In this subsection, as an application of \Cref{lem:bis-midpoint-safe}, we show that
an $m\!:\!n$ division of an angle realizes the \emph{same} ratio $m\!:\!n$ on the singular line at each vertex,
and that the DA version of isogonality preserves concurrency.
(The ratios are independent of the parabola parameter $\kappa>0$.\footnote{Henceforth we take $y=x^2$.
All statements remain valid for $y=\kappa x^2$, since the relevant ratios are invariant in $\kappa$.})

We begin with a lemma directly related to \Cref{lem:bis-midpoint-safe}.

\begin{figure}[htbp]
  \centering
  \begin{minipage}{0.32\textwidth}
    \centering
    \includegraphics[width=\linewidth]{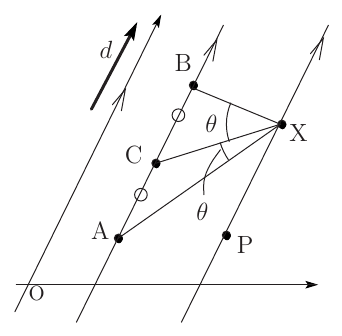}
    \subcaption{}
    \label{fig:singular-midpoint-angle-general}
  \end{minipage}
  \begin{minipage}{0.32\textwidth}
    \centering
    \includegraphics[width=\linewidth]{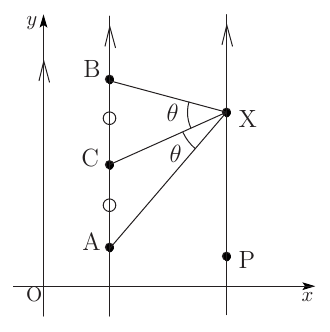}
    \subcaption{}
    \label{fig:singular-midpoint-angle-normalized}
  \end{minipage}
  \begin{minipage}{0.32\textwidth}
    \centering
    \includegraphics[width=\linewidth]{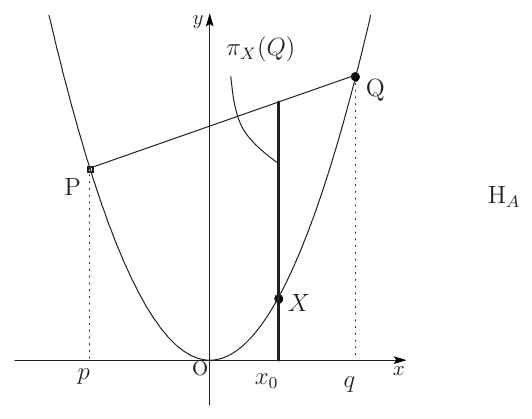}
    \subcaption{}
    \label{fig:singular-proj-length}
  \end{minipage}
  \caption{(a) General projective direction $d$.
           (b) Normalized case (taking $\ell$ as the vertical axis).
           (c) Definition of the singular projective length $\pi_X(Q)$.}
\end{figure}

\begin{lemma}[Angle symmetry via a midpoint on a singular line]\label{lem:singular-midpoint-angle}
Let $\ell$ be a singular line and take $A,B\in\ell$ with midpoint $C$.
For any point $P\notin\ell$, let $m$ be the singular line through $P$; then for every $X\in m$,
\[
  \angle_{\mathcal P}AXC \;=\; \angle_{\mathcal P}CXB .
\]
\end{lemma}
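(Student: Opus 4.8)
The plan is to pass to the normalized chart and then exploit the single fact that, along a fixed singular (vertical) line, the slope $\Slp{XQ}$ depends affinely on $Q$; the midpoint of $A,B$ is therefore carried to the midpoint of the two slopes, which is exactly the asserted bisection of the difference angle.

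First I would fix a Cartesian frame in which the projective reference line is the $x$-axis and $d$ is the $y$-direction; this is legitimate because the DA, and the notion of midpoint used here (the affine midpoint, equivalently the midpoint for the singular projective length $\pi_X$), are invariant under the affine changes of chart preserving $(\ell,d)$, so the statement is coordinate-free. In this frame the singular line of the statement is $\{x=x_0\}$, so write $A=(x_0,\alpha)$, $B=(x_0,\beta)$, $C=(x_0,\tfrac{\alpha+\beta}{2})$. Since $P\notin\ell$ we have $x_P\neq x_0$; the singular line $m$ through $P$ is $\{x=x_P\}$, which is parallel to, and hence disjoint from, $\{x=x_0\}$. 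Consequently every $X=(x_P,t)\in m$ is distinct from $A,B,C$ and lies off $\ell$, and the three lines $XA,XB,XC$ have finite slope (their endpoints differ by $x_0-x_P\neq0$ in the $x$-coordinate), so $\angle_{\mathcal P}AXC$ and $\angle_{\mathcal P}CXB$ are genuine, non-degenerate DA quantities; the case $X=P$ is included with no change.

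The computation is then immediate: for $X=(x_P,t)$ one has
\[
\Slp{XA}=\frac{\alpha-t}{x_0-x_P},\qquad
\Slp{XB}=\frac{\beta-t}{x_0-x_P},\qquad
\Slp{XC}=\frac{\tfrac{\alpha+\beta}{2}-t}{x_0-x_P},
\]
so that $\Slp{XA}+\Slp{XB}=2\,\Slp{XC}$, i.e.\ $\Slp{XC}-\Slp{XA}=\Slp{XB}-\Slp{XC}$; by the defining formula $\angle_{\mathcal P}UVW=\Slp{VW}-\Slp{VU}$ this is precisely $\angle_{\mathcal P}AXC=\angle_{\mathcal P}CXB$, as a signed identity, since it is an equality of real numbers and no orientation ambiguity arises.

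I do not expect a real obstacle here: the content is the linearity of $Q\mapsto\Slp{XQ}$ along a vertical line, the same mechanism already underlying \Cref{lem:bis-midpoint-safe}. The one point that deserves to be stated explicitly rather than glossed over is the meaning of ``midpoint'' on a singular line — whose DA norm vanishes — together with the remark that the identity above, being independent of the external vertex $X$, is in fact what makes the bisector-defined midpoint on a singular line well-defined and forces it to be the affine midpoint; this is also what legitimizes the reduction to the normalized frame in the first step.
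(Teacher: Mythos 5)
Your proposal is correct and follows essentially the same route as the paper: the paper's proof simply asserts that the claim is immediate from the definition of the difference angle (slope differences, independent of normalization), and your argument is precisely that definitional computation carried out explicitly, with the midpoint on the singular line translating into the average of the two slopes. The added explicitness about the meaning of ``midpoint'' on a singular line and the well-definedness remark is a harmless (and reasonable) elaboration, not a deviation.
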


\begin{proof}
This follows immediately from the definition of the difference angle:
the slope differences are independent of the projective direction, hence the claim is normalization–free.
\end{proof}

\begin{definition}[Singular projective length]\label{def:singular-proj-length}
Fix $P=(p,p^2)$ and let $X=(x_0,x_0^2)$ be a vertex point.
For $Q=(q,q^2)$ on the parabola, let $X_Q$ denote the intersection of the chord $PQ$ with the vertical line $x=x_0$.
Define the \emph{singular projective length} with respect to $X$ by
\[
\pi_X(Q):=(p+q)x_0-pq .
\]
\end{definition}

\begin{lemma}[Linearity]\label{lem:singular-linearity}
As a function of $q$, $\pi_X(Q)$ is affine linear; hence
\[
\pi_X(\lambda q_1+(1-\lambda)q_2)=\lambda\pi_X(q_1)+(1-\lambda)\pi_X(q_2).
\]
\end{lemma}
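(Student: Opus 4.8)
The plan is to read the claimed affine-linearity straight off the defining formula, since $\pi_X(Q)$ is by construction a polynomial of degree at most one in the free parameter $q$. Concretely, I would begin from
\[
\pi_X(Q) = (p+q)x_0 - pq = p x_0 + (x_0 - p)\,q,
\]
where the only manipulation is to collect the terms containing $q$. This exhibits $\pi_X$ as the map $q \mapsto (x_0 - p)\,q + p x_0$, i.e.\ an affine function of $q$ with slope $x_0 - p$ and constant term $p x_0$. (Here $p$ and $x_0$ are held fixed, being the parameter of the base point $P$ and of the chosen vertex $X$; only $q$ varies. Whether or not $x_0 = p$ is irrelevant for linearity — the degenerate slope-zero case is still affine.)

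It then remains to recall the elementary fact that any affine map $f(q) = \mu q + \nu$ respects affine combinations: for all $\lambda$,
\[
f\bigl(\lambda q_1 + (1-\lambda)q_2\bigr) = \mu\bigl(\lambda q_1 + (1-\lambda)q_2\bigr) + \nu = \lambda(\mu q_1 + \nu) + (1-\lambda)(\mu q_2 + \nu),
\]
where the last step uses $\lambda + (1-\lambda) = 1$ to redistribute the constant $\nu$. Applying this with $\mu = x_0 - p$ and $\nu = p x_0$ yields
\[
\pi_X\bigl(\lambda q_1 + (1-\lambda)q_2\bigr) = \lambda\,\pi_X(q_1) + (1-\lambda)\,\pi_X(q_2),
\]
which is the assertion. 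There is essentially no obstacle here; the lemma is isolated only because the affine dependence of $\pi_X$ on its parabola parameter will be invoked repeatedly — e.g.\ to transport an $m{:}n$ division of a chord onto the singular line through $X$ — and it is cleanest to record the fact once. The single point worth guarding against is the bookkeeping noted above: one must consistently treat the moving point's parameter $q$ as the variable and $p,x_0$ as fixed; interchanging these roles gives a different (though symmetric) linear relation.
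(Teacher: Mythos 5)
Your proposal is correct and matches the paper's (essentially implicit) argument: the paper itself simply reads the affine dependence off the same expansion, writing $\pi_A(q)=(p+q)a-pq=(a-p)q+pa$ in the proof of the subsequent theorem, and the preservation of affine combinations is exactly the elementary fact you invoke. Nothing is missing.
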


\begin{remark}
The only property we require is precisely the linearity
\[
\pi_X(\lambda q_1+(1-\lambda)q_2)
= \lambda \pi_X(q_1) + (1-\lambda)\pi_X(q_2),
\]
which permits a direct application to the DA–Ceva condition and enables concurrency arguments.
\end{remark}

\begin{figure}[htbp]
  \centering
  \begin{minipage}{0.45\textwidth}
    \centering
    \includegraphics[width=\linewidth]{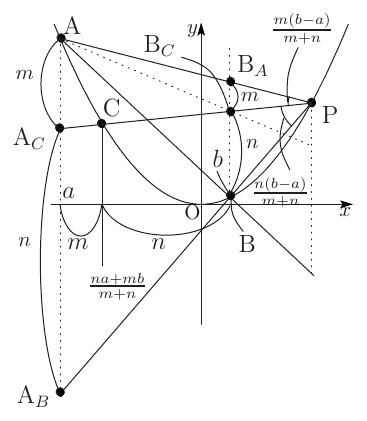}
    \label{fig:mn-bisector-singular-division}
  \end{minipage}
  \caption{Illustration of \Cref{thm:mn-bisector-singular-division}.}
\end{figure}

By \Cref{lem:singular-linearity}, an $m\!:\!n$ angle division can be encoded
as an $m\!:\!n$ internal division on the corresponding singular line.
In Euclidean geometry, only the angle bisector links naturally to edge ratios;
for a general $m\!:\!n$ division no such correspondence holds.
In DA geometry, however, this correspondence extends to arbitrary $m:n$.
We now state the theorem.
(Here, an “$m\!:\!n$ internal bisector of an angle’’ is a line that splits the angle in the ratio $m\!:\!n$.)

\begin{theorem}[Angle $m\!:\!n$ division and $m\!:\!n$ division on a singular line]\label{thm:mn-bisector-singular-division}
Let $A=(a,a^2)$, $B=(b,b^2)$, $P=(p,p^2)$ be distinct points on the parabola.
Let $PC$ be the $m\!:\!n$ internal bisector of $\angle APB$, and write
\[
c=\tfrac{na+mb}{m+n},\qquad C=(c,c^2)
\]
(so $C$ lies in the same connected component and satisfies $\angle APC:\angle CPB=m:n$).
For $X\in\{A,B\}$, let $X_Y$ denote the foot of the DA–perpendicular from $X$ to the line $PY$.
Then
\[
AA_C:A_CA_B=m:n,\qquad BB_C:B_CB_A=n:m.
\]
In particular, for $m=n$ one has: $A_C$ is the midpoint of $AA_B$, and $B_C$ is the midpoint of $BB_A$.
\end{theorem}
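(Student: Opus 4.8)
The plan is to reduce the whole statement to the affine linearity of the singular projective length recorded in \Cref{lem:singular-linearity}. First I would make the feet explicit: by \Cref{def:diff-perp} the DA--perpendicular dropped from a vertex point $X=(x_0,x_0^2)$ to any non--singular line is the singular (vertical) line $x=x_0$ (this is exactly the computation already carried out in the proof of \Cref{prop:simson-diff-incorrect}), so the foot $X_Y$ of the DA--perpendicular from $X$ to the chord $PY$ is the point $(x_0,\pi_X(Y))$ of \Cref{def:singular-proj-length}, where $\pi_X(q)=(p+q)x_0-pq=px_0+q(x_0-p)$. In particular $\pi_X(x_0)=x_0^2$, hence $X_X=X$: the ``segment $AA_C$'' is the singular segment $A_AA_C$ on the line $x=a$, and $BB_C$ is the singular segment $B_BB_C$ on the line $x=b$, all four feet $A_A,A_B,A_C$ (resp.\ $B_B,B_A,B_C$) being collinear on one singular line. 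On a singular line, ratios of collinear points are read off from the singular projective length (equivalently, from differences of $y$--coordinates); this is the ratio convention under which the statement is to be understood, and it is the same one underlying the midpoint--on--a--singular--line construction of \Cref{rem:singular-mid-point}.

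With this dictionary, the argument is a one--line application of linearity. Because $PC$ is the $m\!:\!n$ internal bisector of $\angle APB$ we have $c=\tfrac{na+mb}{m+n}$, i.e.\ $c=\tfrac{n}{m+n}\,a+\tfrac{m}{m+n}\,b$ is an affine combination of $a$ and $b$. Applying \Cref{lem:singular-linearity} with $q_1=a$, $q_2=b$, $\lambda=\tfrac{n}{m+n}$ gives, for each $X\in\{A,B\}$,
\[
\pi_X(c)=\tfrac{n}{m+n}\,\pi_X(a)+\tfrac{m}{m+n}\,\pi_X(b).
\]
For $X=A$, subtracting $\pi_A(a)$ from both sides yields $\pi_A(c)-\pi_A(a)=\tfrac{m}{m+n}\bigl(\pi_A(b)-\pi_A(a)\bigr)$, so $A_C$ divides the directed singular segment from $A_A=A$ to $A_B$ in the ratio $m:n$, i.e.\ $AA_C:A_CA_B=m:n$. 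For $X=B$ the same identity, now read relative to $\pi_B(b)=b^2$, becomes $\pi_B(c)-\pi_B(b)=\tfrac{n}{m+n}\bigl(\pi_B(a)-\pi_B(b)\bigr)$, giving $BB_C:B_CB_A=n:m$; the interchange of $m$ and $n$ is precisely the complementary weight carried by $b$ versus $a$ in the affine combination. Specializing $m=n$ collapses both to the midpoint statement, which is the case already used in \Cref{lem:bis-midpoint-safe}.

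The only point that genuinely requires care — and where I expect the main friction — is not a computation but the bookkeeping around the singular line: one must state clearly that the DA--perpendicular is the vertical line, that the three feet are truly collinear on $x=x_0$ with $X_X=X$, and that ``ratio along a singular line'' means the singular projective length and not the identically--vanishing DA--norm. Once these conventions are pinned down, $\kappa$--invariance is automatic, since $\pi_X$, the chord slopes, and hence all the ratios scale homogeneously in $\kappa$, so working on $y=x^2$ (as fixed at the start of the subsection) costs nothing; it then only remains to note that $c$ lies strictly between $a$ and $b$ because $C$ is the \emph{internal} $m\!:\!n$ divider in a single connected component, which together with the strict monotonicity of the affine map $\pi_X$ forces $X_C$ to lie strictly between $X_X$ and the other foot, so that each division is internal.
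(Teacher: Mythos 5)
Your proposal is correct and follows essentially the same route as the paper: the paper's own proof also writes $\pi_A(q)=(p+q)a-pq$ as an affine function of $q$, plugs in $c=\tfrac{na+mb}{m+n}$ via \Cref{lem:singular-linearity}, and reads off the two ratios on the vertical lines $x=a$ and $x=b$. Your extra bookkeeping (identifying each foot with $(x_0,\pi_X(q))$, noting $X_X=X$, and fixing the ratio convention along the singular line) makes explicit what the paper leaves implicit, but the underlying argument is the same.
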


\begin{proof}
For $X=A$, $\pi_A(q)=(p+q)a-pq=(a-p)q+pa$ is affine linear in $q$.
Since $c=\dfrac{na+mb}{m+n}$, we get
$\pi_A(c)=\dfrac{n}{m+n}\pi_A(a)+\dfrac{m}{m+n}\pi_A(b)$.
Hence on the vertical line $x=a$ we have $AA_C:A_CA_B=m:n$.
The case $X=B$ is analogous, yielding $BB_C:B_CB_A=n:m$.
\end{proof}

\begin{definition}[Side–based angle division and DA isogonal]\label{def:isog}
For a vertex angle $\theta_A$, we make explicit the side from which the angle division is measured:
denote by $\ell_A^{(m:n;BA)}$ the $m\!:\!n$ divider measured from side $BA$,
and by $\ell_A^{(m:n;CA)}$ the one measured from side $CA$.
Let $L_A$ be the DA bisector of $\theta_A$.
For any ray $\ell$ at $A$, define its \emph{DA isogonal} $\ell^\star$ as the ray symmetric to $\ell$ with respect to $L_A$ in the DA sense.
(When the angle is negative or the line is singular, we take the external bisector—i.e., the singular line—as the axis of symmetry;
that line is then fixed by isogonality.)
\end{definition}

\begin{figure}[htbp]
  \centering
  \begin{minipage}{0.45\textwidth}
    \centering
    \includegraphics[width=\linewidth]{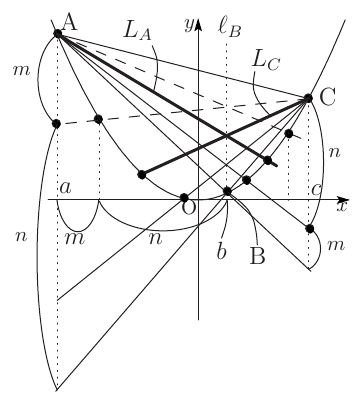}
    \label{fig:isog-mn-mixed}
  \end{minipage}
  \caption{Mixed $m\!:\!n$ angle division:
  $\ell_A,\ell_C$ are measured from sides $BA,CB$, and $\ell_B$ is the singular line at $B$.}
\end{figure}

\begin{proposition}[Mixed side–bases for $m\!:\!n$ divisions and preservation of concurrency]\label{prop:isog-mn-mixed}
Assume $\theta_B<0$.
Let $\ell_A=\ell_A^{(m:n;BA)}$, $\ell_C=\ell_C^{(m:n;CB)}$, and let $\ell_B$ be the singular line through $B$.
Then $\ell_A,\ell_B,\ell_C$ are concurrent.
Moreover, the isogonals satisfy $\ell_B^\star=\ell_B$, and
$\ell_A^\star,\ell_B^\star,\ell_C^\star$ are also concurrent.
\end{proposition}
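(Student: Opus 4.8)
The plan is to normalize via \Cref{lem:normalize} to the standard parabola $y=x^2$, writing $A=(a,a^2)$, $B=(b,b^2)$, $C=(c,c^2)$ with $a<b<c$. Then $\theta_A=\Slp{AC}-\Slp{AB}=c-b>0$, $\theta_C=\Slp{CB}-\Slp{CA}=b-a>0$, and $\theta_B=a-c<0$, so by \Cref{def:minus-angle-bisector} the bisector $\ell_B$ is exactly the singular line $x=b$. Applying \Cref{thm:mn-bisector-singular-division} at the vertex $A$ with the division measured from the side toward $B$, the divider $\ell_A=\ell_A^{(m:n;BA)}$ is the chord of $y=x^2$ joining $A$ to the parabola point of parameter $q=\dfrac{nb+mc}{m+n}$; symmetrically $\ell_C=\ell_C^{(m:n;CB)}$ is the chord joining $C$ to the parameter $r=\dfrac{nb+ma}{m+n}$. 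Since $m,n>0$ we have $q\in(b,c)$ and $r\in(a,b)$, so both $\ell_A,\ell_C$ are genuine (non-singular) lines.

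The heart of the argument is a single application of the linearity of the singular projective length (\Cref{def:singular-proj-length} and \Cref{lem:singular-linearity}), taken with the vertex point $X=B$, i.e.\ evaluated on the singular line $x=b$. For the pencil of chords through $A$, the value on $x=b$ is affine in the second parameter; it equals $b^2$ at parameter $b$ (chord $AB$ meets $x=b$ at $B$) and $(a+c)b-ac$ at parameter $c$ (chord $AC$ meets $x=b$ at the point $\widetilde L:=CA\cap\{x=b\}$). Because $q=\tfrac{n}{m+n}\,b+\tfrac{m}{m+n}\,c$, linearity gives that $\ell_A$ meets $x=b$ at the point dividing $B\widetilde L$ in ratio $m:n$ from $B$. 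Repeating for the pencil through $C$: the value on $x=b$ equals $b^2$ at parameter $b$ (chord $CB$) and $(a+c)b-ac$ at parameter $a$ (chord $CA$ is the same line, hence the same point $\widetilde L$), and $r=\tfrac{n}{m+n}\,b+\tfrac{m}{m+n}\,a$, so $\ell_C$ meets $x=b$ at that same point. Hence $\ell_A,\ell_B,\ell_C$ are concurrent. (This is the $m:n$ generalization of the incenter computation in \Cref{cor:exist_inner-point}, where $AA'$ and $CC'$ were seen to meet on $x=b$.)

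For the isogonal assertion, first note that since $\theta_B<0$ the axis of DA-isogonality at $B$ is the singular line $\ell_B$ itself, so $\ell_B^\star=\ell_B$ by \Cref{def:isog}. At $A$ the DA bisector $L_A$ has slope $a+\tfrac{b+c}{2}$, and DA-reflection across it sends a ray of slope $a+q$ to one of slope $a+(b+c-q)$; hence $\ell_A^\star$ is the chord from $A$ to parameter $q^\star=b+c-q=\dfrac{mb+nc}{m+n}$, that is, the $m:n$ division is replaced by the $n:m$ division measured from the same side. Likewise $\ell_C^\star$ is the chord from $C$ to $r^\star=a+b-r=\dfrac{mb+na}{m+n}$. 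Since $q^\star=\tfrac{m}{m+n}b+\tfrac{n}{m+n}c$ and $r^\star=\tfrac{m}{m+n}b+\tfrac{n}{m+n}a$, the linearity computation of the previous paragraph applies verbatim and shows that $\ell_A^\star$ and $\ell_C^\star$ both meet $x=b$ at the point dividing $B\widetilde L$ in ratio $n:m$ from $B$. Therefore $\ell_A^\star,\ell_B^\star,\ell_C^\star$ are concurrent as well.

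The one place that needs care is purely a matter of conventions: one must extract, from \Cref{def:isog} and \Cref{thm:mn-bisector-singular-division}, precisely which endpoint is selected by ``measured from side $BA$'' (resp.\ $CB$), so that the convex-combination weights of $q,r$ — and of their isogonal images $q^\star,r^\star$ — come out correctly; and one must verify that the DA-isogonal of a finite divider at a vertex with positive DA angle is again such a divider, with the weights $m,n$ interchanged. Both reduce to the slope-reflection identity above; once they are fixed, no computation beyond the single linearity step is needed, and the subcases $q=a$ or $r=c$ (which would degenerate a chord) are excluded automatically since $a<b<c$ and $m,n>0$.
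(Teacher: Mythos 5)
Your proof is correct, but it follows a genuinely different route from the paper's. The paper handles this proposition (together with \Cref{prop:isog-alpha}) only by a sketch that runs through the DA--Ceva condition of \Cref{thm:ceva_diff_angle}: the cevian-foot ratios on $BC, CA, AB$ are rational in $(m,n)$, the DA isogonal inverts these ratios so the Ceva product stays equal to $1$, and the bisector at $B$ is the singular line, fixed by isogonality. You bypass Ceva entirely: using the affine linearity of the singular projective length (\Cref{lem:singular-linearity}) evaluated on the singular line $x=b$, you show directly that $\ell_A$ and $\ell_C$ (and then $\ell_A^{\star}$ and $\ell_C^{\star}$) meet $x=b$ at one and the same point, namely the $m\!:\!n$ (resp.\ $n\!:\!m$) division point of the segment from $B$ to $CA\cap\{x=b\}$ --- exactly the $m\!:\!n$ generalization of the incenter computation in \Cref{cor:exist_inner-point}. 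You also make explicit, via the reflection of slopes $s\mapsto 2\sigma-s$ across the bisector at $A$ (resp.\ $C$), that the isogonal of the $(m\!:\!n)$ divider measured from $BA$ (resp.\ $CB$) is the corresponding $(n\!:\!m)$ divider ($q\mapsto b+c-q$, $r\mapsto a+b-r$), a step the paper only asserts as ``ratio inversion.'' What the paper's Ceva route buys is uniformity: the same argument also covers \Cref{prop:isog-alpha}, where no singular cevian is present and your coincidence-on-$x=b$ device is unavailable. What your route buys is completeness and concreteness for the mixed case at hand: every step is an explicit affine computation, the side-base convention (which angle portion carries the weight $m$) is isolated and resolved consistently at $A$ and $C$, and the actual point of concurrency on $x=b$ is exhibited. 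I verified your parameter values $q=\tfrac{nb+mc}{m+n}$, $r=\tfrac{nb+ma}{m+n}$ and the equality of the two values on $x=b$; the argument is sound.
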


\begin{proposition}[Common side–base ($\alpha,\beta,\gamma$) and preservation of concurrency]\label{prop:isog-alpha}
Suppose $\ell_A=\ell_A^{(\alpha:1-\alpha;BA)}$, $\ell_B=\ell_B^{(\beta:1-\beta;CB)}$,
and $\ell_C=\ell_C^{(\gamma:1-\gamma;CA)}$ are concurrent.
Then their DA isogonals $\ell_A^\star,\ell_B^\star,\ell_C^\star$ are also concurrent.
\end{proposition}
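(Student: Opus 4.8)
The plan is to encode each cevian by its second intersection with the circumparabola, turn the DA–Ceva condition into a single cubic identity in those three parameters, and then observe that this identity is \emph{self-conjugate} under the substitution that realizes DA isogonality. First I would normalize by \Cref{lem:normalize}, taking $\mathcal P\colon y=x^{2}$ and $A=(a,a^{2})$, $B=(b,b^{2})$, $C=(c,c^{2})$ with $a<b<c$, so that $\theta_B<0$ while $\theta_A,\theta_C>0$. Assume first that none of $\ell_A,\ell_B,\ell_C$ is a singular line; then each is an inner cevian and meets $\mathcal P$ again, say at $(t_A,t_A^{2})$, $(t_B,t_B^{2})$, $(t_C,t_C^{2})$. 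By \Cref{thm:mn-bisector-singular-division} these parameters are affine in the dividing ratios, namely $t_A=(1-\alpha)b+\alpha c$, $t_B=(1-\beta)c+\beta a$, $t_C=(1-\gamma)a+\gamma b$; the only thing used below is that the cevians are faithfully represented by the real parameters $t_A,t_B,t_C$.

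For Step 1 I would translate concurrency into the $t$–parameters. The cevian through $A$ and $(t_A,t_A^{2})$ has slope $a+t_A$, hence equation $y=(a+t_A)x-at_A$, and meeting $BC\colon y=(b+c)x-bc$ gives $L_A=\ell_A\cap BC$ with $x_{L_A}=\dfrac{at_A-bc}{\,a+t_A-b-c\,}$. A short simplification then yields the directed DA–ratio
\[
\frac{|BL_A|_{\mathcal P}}{|L_AC|_{\mathcal P}}=\frac{x_{L_A}-b}{\,c-x_{L_A}\,}=\frac{(a-b)(t_A-b)}{(a-c)(c-t_A)},
\]
and cyclically for $L_B$ on $CA$ and $L_C$ on $AB$. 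Feeding these three ratios into the DA Ceva criterion (\Cref{thm:ceva_diff_angle}): the constant factors $(a-b)(b-c)(c-a)/\bigl[(a-c)(b-a)(c-b)\bigr]$ multiply to $-1$, the $t$–factors multiply to $-\dfrac{(t_A-b)(t_B-c)(t_C-a)}{(t_A-c)(t_B-a)(t_C-b)}$, so $\ell_A,\ell_B,\ell_C$ are concurrent if and only if the cubic identity
\[
(t_A-b)(t_B-c)(t_C-a)=(t_A-c)(t_B-a)(t_C-b)
\]
holds.

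For Step 2 I would compute the isogonal parameters. By \Cref{def:isog}, $\ell_A^\star$ is the reflection of $\ell_A$ across the DA bisector of $\theta_A$; on $y=x^{2}$ the \emph{finite} bisector at each vertex — the internal one where the angle is positive, and the external one (which \Cref{def:isog} prescribes as the axis when the interior bisector is singular) at the negative vertex $B$ — meets $\mathcal P$ again at the midpoint parameter of the other two vertices, i.e.\ at $\tfrac{b+c}{2}$ for $A$, and similarly for $B,C$. Since the difference angle is the difference of slopes, reflection sends the slope $a+t_A$ to $2\bigl(a+\tfrac{b+c}{2}\bigr)-(a+t_A)=a+(b+c-t_A)$; equivalently $\ell_A^\star$ is the cevian making the same DA with each side of the angle as $\ell_A$ makes with the other side. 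In every case this gives $t_A^\star=b+c-t_A$, $t_B^\star=c+a-t_B$, $t_C^\star=a+b-t_C$. Now substituting: $t_A^\star-b=c-t_A$, $t_A^\star-c=b-t_A$, and cyclically, so the cubic identity written for $\ell_A^\star,\ell_B^\star,\ell_C^\star$ reads $(c-t_A)(a-t_B)(b-t_C)=(b-t_A)(c-t_B)(a-t_C)$, which after multiplying both sides by $(-1)^{3}$ is \emph{exactly} the original cubic identity. Hence the isogonal triple satisfies the DA Ceva criterion precisely when the given triple does, which is the assertion. The remaining case, in which one cevian is a singular line — necessarily $\ell_B$, at the vertex with negative interior angle, where $\ell_B^\star=\ell_B$ — follows by continuity in $(\alpha,\beta,\gamma)$, both hypothesis and conclusion being closed conditions, or equivalently by homogenizing the cubic identity to the projective completion of the parameter space (and, in the case of matching ratios, is \Cref{prop:isog-mn-mixed}).

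The step I expect to require real care is Step 2 at the vertex $B$ with negative interior angle: there the DA bisector of $\theta_B$ is the singular line, and the absorptive convention makes the naive "reflect across the singular line" empty, so one must justify that the DA isogonal is still the uniform map $t_B\mapsto(t_A\text{-vertex}+t_C\text{-vertex})-t_B = a+c-t_B$. This is precisely what the parenthetical clause of \Cref{def:isog} (select the finite external bisector as the axis of symmetry) delivers, once one checks that this external bisector also passes through the midpoint parameter of the other two vertices — a fact I would verify from the chord–slope description on $y=x^{2}$. Apart from that point, the argument is just disciplined sign bookkeeping together with the self-conjugacy of the cubic identity, which does all the work.
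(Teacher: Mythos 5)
Your proof is correct and follows essentially the same route as the paper's own (sketched) argument: translate concurrency into the DA--Ceva product and check that the DA isogonal substitution preserves it, with the singular direction at the negative vertex $B$ handled via the finite (external) bisector, exactly as the paper intends. Your parametrization by second intersections with the circumparabola and the self-conjugate cubic identity simply make precise the paper's looser claim that the cevian ratios ``invert'' under isogonality (they invert up to squared side factors whose product is $1$), so no genuinely different method is involved.
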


\begin{proof}[Sketch of proof]
The key is the DA–Ceva condition and the observation that the isogonal map in this setting inverts the relevant ratios.
Let $T_A=\ell_A\cap BC$ (and similarly $U_B,V_C$).
The DA–Ceva concurrency condition reads
\[
\frac{\overrightarrow{BT_A}}{\overrightarrow{T_AC}}\cdot
\frac{\overrightarrow{CU_B}}{\overrightarrow{U_BA}}\cdot
\frac{\overrightarrow{AV_C}}{\overrightarrow{V_CB}}=1 .
\]
By definition, the ratio $BT_A:T_AC$ is a rational function of $(m,n)$ (or $\alpha,\beta,\gamma$),
and under the DA isogonal it transforms by inversion $r\mapsto r^{-1}$.
Hence the product remains $1$, so concurrency is preserved.
When $\theta_B<0$, the bisector at $B$ is the singular line and is fixed by isogonality.
\end{proof}

\begin{remark}
All ratio formulas above follow from the mere linearity of the singular projective length.
If one introduces a DA area, an alternative derivation parallel to the usual Euclidean area–ratio arguments is also possible (details omitted).
\end{remark}


\section{Hierarchy of Similarity and Congruence}

\begin{quote}
In DA geometry a norm (the DA–length) is available, but it is not a priori clear how far this quantity should be built into a notion of “congruence.”
It is therefore natural to introduce a \emph{hierarchy} of strengths for “similarity” and “congruence.”
Below we define similarity in DA geometry in several tiers.\\
First, by \Cref{mthm:triangle-equation}, ratios of corresponding three sides suggest a notion of similarity; however, since this originates purely from the angle definition, it constitutes the \emph{weakest} form of similarity.
\end{quote}

\subsection{Definitions of Similarity and Basic Relations}

\begin{definition}[Norm–similarity (SSS)]\label{def:sim-SSS}
If
\[
\dnorm{AB}:\dnorm{BC}:\dnorm{CA}=\dnorm{DE}:\dnorm{EF}:\dnorm{FD},
\]
we write $\Ptri ABC \simSSS \Ptri DEF$.
\end{definition}

Since the sum of oriented DA angles of a triangle is $0$, two angles determine the third.
This motivates an angle–based similarity.

\begin{definition}[Similarity = DA–angle similarity (AA)]\label{def:sim-AA}
For DA triangles $\Ptri ABC$ and $\Ptri DEF$, if two pairs of corresponding oriented DA angles are equal, we write
$\Ptri ABC \simAA \Ptri DEF$ (also denoted $\sim_{\mathcal P}$).%
\footnote{According to Florian Cajori, \textit{A History of Mathematical Notations}, Vol.~II (1929), G.~W.~Leibniz used $a\sim b$ for similarity in his unpublished 1679 manuscript “Characteristica Geometrica.” While symbols close to $\mathrel{\backsim}$ also appear in his usage, the modern convention predominantly employs $\sim$. In Japanese literature, $\mathrel{\backsim}$ is still frequently used.}
\end{definition}

\begin{definition}[Signed SAS similarity]\label{def:Signed-SAS similarity}
If the ratios of two corresponding sides are equal and the included oriented DA angles (with sign) are equal, we say the triangles are in \emph{signed SAS similarity} (abbrev.\ SAS$\pm$).
\end{definition}

\begin{proposition}[Limitation of norm–similarity]\label{prop:SSS-not-AA}
There exist pairs of DA triangles that are norm–similar ($\simSSS$) but not DA–angle similar ($\simAA$).
\end{proposition}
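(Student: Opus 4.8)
The plan is to settle this by an explicit witness, using the normalization machinery already in place. By the Normalization Lemma (\Cref{lem:normalize}), every DA triangle may be taken with vertices $A=(a,a^{2})$, $B=(b,b^{2})$, $C=(c,c^{2})$ on $y=x^{2}$ with $a<b<c$. In this model the three side DA–norms are $|AB|_{\mathcal P}=b-a$, $|BC|_{\mathcal P}=c-b$, $|CA|_{\mathcal P}=c-a$ (so the Triangle Equation \Cref{mthm:triangle-equation} is visibly satisfied), and by \Cref{thm:sum-of-angles-zero} the oriented interior DA angles are $\theta_A=c-b$, $\theta_C=b-a$, $\theta_B=a-c$. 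Hence both the $\simSSS$–data and the $\simAA$–data of a DA triangle are encoded in the single pair of gaps $g_1:=b-a>0$ and $g_2:=c-b>0$: the ordered norm triple is $(g_1,g_2,g_1+g_2)$ and the oriented–angle multiset is $\{\,g_2,\ g_1,\ -(g_1+g_2)\,\}$.

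The conceptual point I would emphasize is that $\simSSS$ constrains only the ratio $g_1:g_2$, whereas $\simAA$ constrains the actual numbers. Indeed, if two DA triangles have two pairs of corresponding oriented DA angles equal, then—because the DA–angle sum is $0$—the third pair is equal too, and matching $(\theta_A,\theta_C)$ forces the gap vectors $(g_2,g_1)$ of the two triangles to coincide. This rigidity is exactly what one expects: the transformations under which a DA triangle is ``the same'' in the normalization (affine maps preserving $(\ell,d)$, followed by a vertical rescaling) act on $(g_1,g_2)$ by a common positive dilation, hence scale every DA norm \emph{and} every DA angle by the same factor; unlike in Euclidean geometry, the DA angle is not scale–invariant.

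Concretely, I would take $T_1$ with $(a,b,c)=(0,1,2)$ and $T_2$ with $(a,b,c)=(0,2,4)$. Both lie on $y=x^{2}$ with pairwise distinct $x$–coordinates, so neither has a singular side and both are bona fide DA triangles. Their norm triples are $(1,1,2)$ and $(2,2,4)$, which are proportional, so $T_1\simSSS T_2$. Their oriented–angle multisets are $\{1,1,-2\}$ and $\{2,2,-4\}$; these share no common value at all, so no vertex bijection makes even one pair of corresponding DA angles equal, let alone two. Therefore $T_1\not\simAA T_2$, which is the assertion. (If a ``generic'' witness is preferred, $(0,t,t(1+s))$ versus $(0,t',t'(1+s))$ for any $s>0$ and $t\neq t'$ works identically.)

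I do not expect a real obstacle. The only things to be careful about are trivial: confirming the chosen triples give valid DA triangles (automatic on $y=x^{2}$ once the $x$–coordinates are distinct), and being explicit that \Cref{def:sim-AA} demands \emph{equality}, not proportionality, of oriented DA angles—precisely the feature that lets a uniform dilation of the gaps destroy $\simAA$ while preserving $\simSSS$.
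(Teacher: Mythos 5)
Your proposal is correct and follows essentially the same route as the paper: both exhibit two triangles on $y=x^{2}$ whose $x$–coordinates differ by a uniform dilation $k>0$, observe that the side DA–norms scale proportionally (giving $\simSSS$) while the oriented DA angles scale by $k$ and hence fail to be equal (so $\simAA$ fails). Your concrete witness $(0,1,2)$ versus $(0,2,4)$ is just the instance $k=2$ of the paper's general construction.
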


\begin{proof}
Let the circumparabola be $y=x^2$. For DA triangles $\triangle_{\mathcal P}ABC$ and $\triangle_{\mathcal P}A'B'C'$, take
$x$–coordinates $a<b<c$ for $A,B,C$ and $ka<kb<kc$ for $A',B',C'$ with $k>0$.
Then the two triangles are norm–similar, but
\[
\measuredangle_{\mathcal P}B=a-c,\qquad
\measuredangle_{\mathcal P}B'=ka-kc=k(a-c),
\]
so the angles are not equal in general; hence not $\simAA$.
\end{proof}

\begin{proposition}[SAS$\pm$ implies SSS and AA]\label{prop:SAS-to-SSS-AA}
Let $\triangle ABC$ and $\triangle A'B'C'$ be DA triangles with
$x$–coordinates $A(a),B(b),C(c)$ ($a<b<c$) and $A'(a'),B'(b'),C'(c')$.
Assume the DA–norm ratios of two corresponding sides agree:
\[
\frac{b-a}{\,b'-a'\,}=\frac{c-a}{\,c'-a'\,}=\frac1k \qquad (k>0),
\]
and the included oriented DA angles are equal (SAS$\pm$).
Then (i) the remaining side ratio also agrees (hence SSS), and (ii) all angles agree (hence AA).
\end{proposition}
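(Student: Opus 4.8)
The plan is to put both triangles into normalized coordinates and then exploit the asymmetry between the two DA–invariants that appear in the hypothesis: the DA–norm of a side depends only on the difference of $x$–coordinates and is therefore insensitive to the leading coefficient of the circumparabola, whereas the DA–angle at a vertex scales linearly with that coefficient. First I would apply \Cref{lem:normalize} to $\triangle ABC$, writing $A=(a,a^{2})$, $B=(b,b^{2})$, $C=(c,c^{2})$ with $a<b<c$ on $y=x^{2}$. By \Cref{thm:unique-circum-parabola}, $\triangle A'B'C'$ has a unique circumparabola with axis parallel to $d$; since a horizontal and a vertical translation affect neither the DA–norm (\Cref{def:diff-angle-norm}) nor chord slopes, I may take this parabola to be $y=\kappa' x^{2}$ with $A'=(a',\kappa' a'^{2})$, $B'=(b',\kappa' b'^{2})$, $C'=(c',\kappa' c'^{2})$ for some $\kappa'>0$. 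The hypothesis $\tfrac{b-a}{b'-a'}=\tfrac{c-a}{c'-a'}=\tfrac1k$ then becomes $b'-a'=k(b-a)$ and $c'-a'=k(c-a)$; since the right–hand sides are positive this already forces $a'<b'<c'$, and subtraction gives $c'-b'=k(c-b)>0$, so the vertex correspondence $A\leftrightarrow A'$, $B\leftrightarrow B'$, $C\leftrightarrow C'$ is the correct one.

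Part (i) is then immediate and uses only the ratio hypotheses: the third ratio is $|BC|_{\mathcal P}/|B'C'|_{\mathcal P}=(c-b)/(c'-b')=1/k$, so all three corresponding side ratios coincide and $\triangle_{\mathcal P}ABC \simSSS \triangle_{\mathcal P}A'B'C'$. (Structurally this is just the DA triangle equation $|CA|_{\mathcal P}=|AB|_{\mathcal P}+|BC|_{\mathcal P}$ of \Cref{mthm:triangle-equation}, applied to both triangles, with the two equations scaled by $1/k$.) For part (ii) I would compute, exactly as in the proof of \Cref{thm:sum-of-angles-zero}, that the oriented interior DA–angles of $\triangle ABC$ are $\theta_A=c-b$, $\theta_B=a-c$, $\theta_C=b-a$, while those of $\triangle A'B'C'$ on $y=\kappa' x^{2}$ are $\kappa'(c'-b')$, $\kappa'(a'-c')$, $\kappa'(b'-a')$, i.e. exactly $\kappa'k$ times $\theta_A,\theta_B,\theta_C$ respectively. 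The SAS$\pm$ hypothesis that the \emph{included} oriented angles — those at $A$ and $A'$, between the two sides whose ratios were prescribed — are equal reads $c-b=\kappa'k\,(c-b)$, whence $\kappa'k=1$ since $c\neq b$. Substituting this back, every DA–angle of $\triangle A'B'C'$ equals the corresponding DA–angle of $\triangle ABC$, so in particular two pairs of corresponding oriented DA–angles coincide and $\triangle_{\mathcal P}ABC \simAA \triangle_{\mathcal P}A'B'C'$.

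No step here is computationally delicate: the only inputs are the slope formulas $\Slp{XY}=x_X+x_Y$ on $y=x^{2}$ and $\Slp{XY}=\kappa'(x_X+x_Y)$ on $y=\kappa'x^{2}$. The only point that genuinely needs care is the bookkeeping: one must resist normalizing $\triangle A'B'C'$ all the way down to $y=x^{2}$, because the extra vertical rescaling this requires would multiply its DA–angles by a nontrivial factor and break the included–angle hypothesis; and one must verify that positivity of the prescribed ratios, together with the sign of the included angle, really does pin down the ordering $a'<b'<c'$ so that $\triangle A'B'C'$ is a genuine DA triangle with the matching labelling. It is worth remarking — though not needed for the statement — that imposing $\kappa'=1$ (both triangles on the same standard parabola) collapses $\kappa'k=1$ to $k=1$, which is precisely the ``congruence from SAS$\pm$'' phenomenon that drives the rest of this section.
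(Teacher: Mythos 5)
Your proof is correct and takes essentially the same route as the paper: the third side ratio comes from subtracting the two given DA--norm relations, and AA comes from the fact that the primed triangle's oriented angles are the circumparabola coefficient times $x$--differences, so the included--angle equality forces $\kappa'k=1$. The paper compresses your explicit $\kappa'$ bookkeeping into an appeal to the ``DA--parabola scaling rule'' (and additionally records a variant case with a negative included angle), but the substance of the argument is identical.
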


\begin{proof}
\textbf{(i) SAS$+$ (included angles of the same sign).}
WLOG take the included angle at $A$ and write
\[
\measuredangle A=\measuredangle A' = c-b
\]
(by the definition of oriented DA angles).
From the side–ratio hypothesis,
\[
c'-a'=k(c-a),\qquad b'-a'=k(b-a).
\]
Hence
\[
c'-b'=(c'-a')-(b'-a')=k\{(c-a)-(b-a)\}=k(c-b),
\]
so
\[
\frac{c-b}{\,c'-b'\,}=\frac1k,
\]
which is the third side ratio—thus SSS holds.

For the angles: under scaling of the $x$–axis by $k$, the quadratic coefficient of the model parabola rescales reciprocally; the oriented DA angle values are preserved after matching the scale (cf.\ the DA–parabola scaling rule).
Thus
\[
\measuredangle C'=\frac{b'-a'}{k}=\frac{k(b-a)}{k}=b-a=\measuredangle C,
\]
and by the DA angle sum $=0$, the remaining angle also matches, giving AA.

\medskip
\textbf{(ii) SAS$-$ (included angles of opposite sign).}
Similarly, take the included (negative) angle at $B$:
\[
\measuredangle B=\measuredangle B' = a-c .
\]
With
\[
a'-c'=k(a-c),\qquad b'-c'=k(b-c),
\]
we obtain
\[
a'-b'=(a'-c')-(b'-c')=k\{(a-c)-(b-c)\}=k(a-b),
\]
so SSS holds. Matching the parabola scaling as above gives equality of the remaining angles, hence AA.
\end{proof}

\begin{proposition}[Equivalence of SAS$\pm$ and AA]\label{prop:SAS-AA-equiv}
For DA triangles, $\Ptri ABC \simSAS \Ptri A'B'C'$ if and only if $\Ptri ABC \simAA \Ptri A'B'C'$.
\end{proposition}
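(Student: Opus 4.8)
The plan is to treat the two implications separately and to reduce both to the normalized model $y=x^{2}$ supplied by \Cref{lem:normalize}. First I would record the two facts that make this reduction harmless: the normalizing transformation (an $(\ell,d)$-preserving affine map followed by a vertical rescaling) leaves every oriented DA angle unchanged, and it multiplies all DA norms of a single triangle by one common positive scalar — so within-triangle ratios of DA norms are invariant, while a ratio between the two triangles changes only by the quotient of their two scalars. Both are immediate from the DA norm being $|\varphi(\cdot)|$ with $\varphi(d)=0$ and from the difference angle depending only on slopes.

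For $\simSAS\Rightarrow\simAA$: after normalizing both triangles to $y=x^{2}$ with $x$-coordinates $a<b<c$ and $a'<b'<c'$, the equality of the two prescribed side ratios survives the normalization (the common scalars cancel) and becomes exactly $\tfrac{b-a}{b'-a'}=\tfrac{c-a}{c'-a'}$, while the included oriented DA angle is already normalization-invariant. This is precisely the hypothesis of \Cref{prop:SAS-to-SSS-AA}, whose conclusion~(ii) gives $\simAA$; so this direction follows at once.

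For the converse $\simAA\Rightarrow\simSAS$: normalize both triangles to $y=x^{2}$ as above and use the concrete formulas $\theta_{A}=c-b$, $\theta_{C}=b-a$ (hence $\theta_{B}=a-c$) from \Cref{thm:sum-of-angles-zero}. DA-angle similarity forces (relabelling if necessary) $\theta_{A}=\theta_{A'}$ and $\theta_{C}=\theta_{C'}$, the third equality being automatic from the angle sum $0$; that is,
\[
c-b=c'-b',\qquad b-a=b'-a',
\]
and adding these gives $c-a=c'-a'$. Thus the three corresponding side DA norms coincide in the normalized model, and undoing the two normalizations scales those of $\Ptri ABC$ by some $\mu>0$ and those of $\Ptri A'B'C'$ by some $\mu'>0$, so in the original plane
\[
\frac{|AB|_{\mathcal P}}{|A'B'|_{\mathcal P}}=\frac{|AC|_{\mathcal P}}{|A'C'|_{\mathcal P}}=\frac{\mu}{\mu'},
\]
with the included oriented DA angle at $A$ unchanged. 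That is exactly signed SAS similarity.

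The only delicate point is bookkeeping, not depth: one must justify the normalization invariances just used and keep the vertex correspondence $A\!\leftrightarrow\!A'$ consistent when selecting ``two sides and the included angle'' (any choice works by the symmetry of the hypotheses). I therefore expect no genuine obstacle. As a byproduct, the computation in the converse direction shows the sharper statement that $\simAA$ already implies $\simSSS$ with a common ratio $\mu/\mu'$, which places the equivalence $\simSAS\Leftrightarrow\simAA$ strictly above $\simSSS$ in the similarity hierarchy.
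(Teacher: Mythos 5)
Your forward direction is essentially the paper's (reduce to \Cref{prop:SAS-to-SSS-AA}), but the invariance claim on which your whole reduction rests is false, and in the converse direction this breaks the proof. The normalization of \Cref{lem:normalize} involves a vertical rescaling, and a rescaling $y\mapsto\lambda y$ multiplies every slope, hence every oriented difference angle, by $\lambda$; DA angles are \emph{not} normalization-invariant (this is precisely the content of \Cref{prop:SSS-not-AA} and \Cref{prop:norm-cong-limit}: the quadratic coefficient of the circumparabola carries angle information that normalization to $y=x^2$ discards). Consequently, if the two triangles lie on circumparabolas with coefficients $\kappa\neq\kappa'$, the hypothesis $\simAA$ gives $\kappa(c-b)=\kappa'(c'-b')$ and $\kappa(b-a)=\kappa'(b'-a')$, \emph{not} your normalized equalities $c-b=c'-b'$, $b-a=b'-a'$. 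Your argument would in fact prove that $\simAA$ forces equality of the side DA norms (note that neither the translations nor the vertical scaling in the normalization changes $x$-differences, so your $\mu$ and $\mu'$ are both $1$ once coordinates are standard), i.e.\ that AA implies norm congruence. That is false: take $\Ptri ABC$ on $y=x^2$ with $x$-coordinates $0,1,3$ and $\Ptri A'B'C'$ on $y=2x^2$ with $x$-coordinates $0,\tfrac12,\tfrac32$; all corresponding oriented DA angles agree ($2,1,-3$), yet the side norms are in ratio $2:1$. The correct common side ratio under $\simAA$ is $\kappa'/\kappa$, not $\mu/\mu'$.

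The repair is to keep the circumparabola coefficients in the bookkeeping rather than normalizing both triangles onto the same parabola: the angle at a vertex of a triangle inscribed in a parabola of coefficient $\kappa$ is $\kappa$ times the $x$-difference of the other two vertices, so the three angle equalities of $\simAA$ immediately give all three side ratios equal to $\kappa'/\kappa$ (this is \Cref{prop:AA-implies-SSS}), and together with the already-granted included-angle equality this is exactly SAS$\pm$ --- which is how the paper argues. Your forward direction also needs the same correction (do not claim the included angle survives normalization; just invoke \Cref{prop:SAS-to-SSS-AA} directly, whose proof uses the reciprocal scaling of the coefficient), but that direction is salvageable as stated in the paper; the converse as you wrote it contains a genuine error, not mere bookkeeping.
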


\begin{proof}
($\Rightarrow$) Under SAS$\pm$, two side ratios are fixed and the included angle agrees.
Using the DA angle sum $=0$ and the circumparabola scaling rule (coefficient rescales reciprocally to the similarity factor), the remaining angle also agrees, giving AA.
($\Leftarrow$) Under AA, the included angle equality is immediate; by \Cref{prop:AA-implies-SSS} all side ratios agree, hence SAS$\pm$.
\end{proof}

\begin{proposition}[AA implies SSS]\label{prop:AA-implies-SSS}
If $\Ptri ABC \simAA \Ptri A'B'C'$, then $\Ptri ABC \simSSS \Ptri A'B'C'$.
\end{proposition}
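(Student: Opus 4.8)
\subsection*{Proof plan for \texorpdfstring{\Cref{prop:AA-implies-SSS}}{AA implies SSS}}

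The plan is to reduce both hypotheses to statements about the three $x$–coordinates of the normalized vertices. By \Cref{lem:normalize} I would apply to each of $\Ptri ABC$ and $\Ptri A'B'C'$ its own normalizing affine map, putting the first on $y=x^2$ with $A=(a,a^2),B=(b,b^2),C=(c,c^2)$, $a<b<c$, and the second on $y=x^2$ with $A'=(a',a'^2),B'=(b',b'^2),C'=(c',c'^2)$, $a'<b'<c'$. The crucial observation is that although a DA angle is \emph{not} invariant under such a normalization (the vertical rescaling multiplies every slope, hence every DA angle, by a common nonzero factor), the \emph{ratio} of any two DA angles of a fixed triangle is invariant, and so is the ratio triple $|AB|_{\mathcal P}:|BC|_{\mathcal P}:|CA|_{\mathcal P}$ of DA norms (all three scale by a common factor under the horizontal rescaling). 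Hence $\simSSS$, being a statement purely about DA–norm ratios, may be checked in the two normalized models independently; and from $\simAA$ one should extract only the invariant information, namely the equality of the angle–ratio triples.

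First I would record, exactly as in the proof of \Cref{thm:sum-of-angles-zero}, that in the normalized model of $\Ptri ABC$ one has $\theta_A=c-b$, $\theta_C=b-a$, $\theta_B=a-c$, together with $|AB|_{\mathcal P}=b-a$, $|BC|_{\mathcal P}=c-b$, $|CA|_{\mathcal P}=c-a=(b-a)+(c-b)$; similarly for the primed triangle. By hypothesis two of the three equalities $\theta_A=\theta_{A'}$, $\theta_B=\theta_{B'}$, $\theta_C=\theta_{C'}$ hold, and \Cref{thm:sum-of-angles-zero} (angle sum $=0$ for both triangles) forces the third; in particular $\theta_A:\theta_B:\theta_C=\theta_{A'}:\theta_{B'}:\theta_{C'}$. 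Reading this invariant off in the two normalized models gives $(c-b):(b-a)=(c'-b'):(b'-a')$. Writing the common value as $1:t$, where $t=(c-b)/(b-a)>0$ is a legitimate positive ratio since $a<b<c$ and since $\theta_C=b-a\neq0$ for a genuine DA triangle (two sides through $C$ being parallel would force $A,B,C$ collinear), one obtains
\[
|AB|_{\mathcal P}:|BC|_{\mathcal P}:|CA|_{\mathcal P}=1:t:(1+t)=|A'B'|_{\mathcal P}:|B'C'|_{\mathcal P}:|C'A'|_{\mathcal P},
\]
which is precisely $\Ptri ABC\simSSS\Ptri A'B'C'$. (In fact the argument uses only that the angle–ratio triples agree, so the weaker hypothesis ``AAA–proportional'' would already suffice.)

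The only real subtlety — the step I would flag as a potential pitfall rather than a genuine obstacle — is the temptation to normalize both triangles and then compare \emph{absolute} DA–angle values: this is illegitimate, because the two normalizations involve independent vertical rescalings, so $\theta_A=\theta_{A'}$ in the ambient plane does \emph{not} survive as $c-b=c'-b'$ in the two separate normalized charts. The argument must therefore pass through scale–invariant quantities (angle ratios on one side, DA–norm ratios on the other), and it is exactly the affine invariance guaranteed by \Cref{lem:normalize} together with the angle sum $=0$ of \Cref{thm:sum-of-angles-zero} that makes the two sides speak the same language. A minor bookkeeping point is the vertex ordering: following the paper's convention I would assume $x_A<x_B<x_C$ and $x_{A'}<x_{B'}<x_{C'}$ compatibly with the correspondence $A\leftrightarrow A'$, $B\leftrightarrow B'$, $C\leftrightarrow C'$; if the two orderings turn out opposite, one first applies a horizontal reflection, which preserves all DA norms and reverses the common sign of all DA angles, leaving both ratio triples unchanged.
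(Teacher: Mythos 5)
Your proof is correct and follows essentially the same route as the paper's: reduce everything to $x$–coordinates on the circumparabolas, where DA angles are the quadratic coefficient times $x$–differences and DA norms are $x$–differences, so AA (together with angle sum $=0$) forces proportional $x$–differences and hence equal DA–norm ratios. You are somewhat more careful than the paper's two–line argument about what survives normalization (the paper normalizes only one coefficient, keeps the other as $k\neq0$, and asserts the coordinate matching directly), but the underlying computation and key facts are the same.
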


\begin{proof}
WLOG take the circumparabola coefficient for $\Ptri ABC$ as $1$ and for $\Ptri A'B'C'$ as $k\neq0$.
With the coordinate matching $a'=ka$, $b'=kb$, $c'=kc$, each side's DA–norm ratio coincides, so SSS follows.
\end{proof}

\begin{figure}[htbp]
  \centering
  \begin{minipage}{0.32\textwidth}
    \centering
    \includegraphics[width=\linewidth]{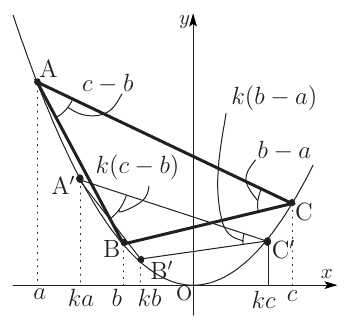}
    \subcaption{}
    \label{fig:sss-not-to-aa}
  \end{minipage}
  \begin{minipage}{0.32\textwidth}
    \centering
    \includegraphics[width=\linewidth]{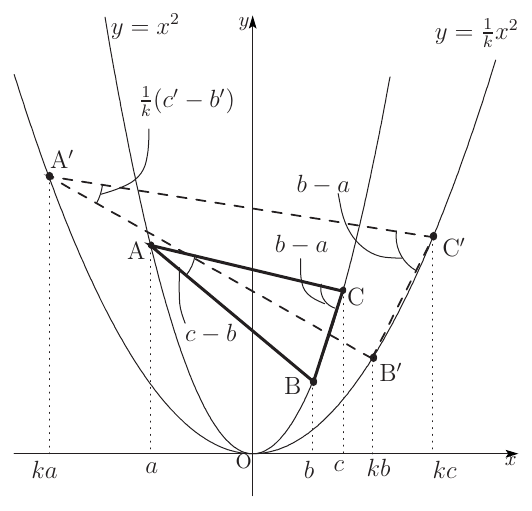}
    \subcaption{}
    \label{fig:sas-plus-to-aa}
  \end{minipage}
  \begin{minipage}{0.32\textwidth}
    \centering
    \includegraphics[width=\linewidth]{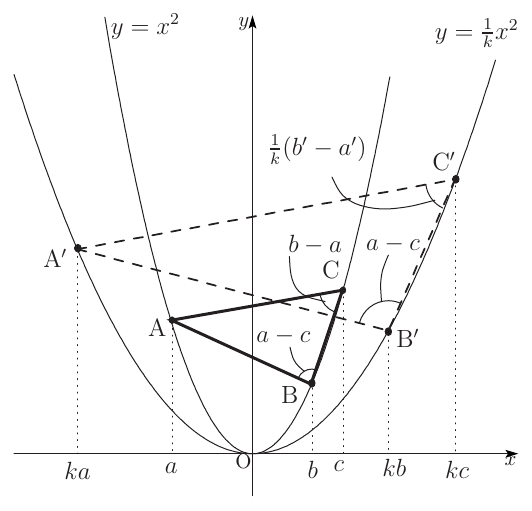}
    \subcaption{}
    \label{fig:sas-minus-to-aa}
  \end{minipage}
  \caption{(a) An example of $\simSSS$ without $\simAA$.
           (b) Positive SAS: $\simSAS$ implies both $\simSSS$ and $\simAA$.
           (c) Negative SAS: the same implication holds with a negative included angle.}
\end{figure}

\begin{corollary}[Chain of strengths]\label{cor:sim-chain}
\[
\boxed{\ \Ptri ABC \simSAS \Ptri A'B'C' \ \Longleftrightarrow\ \Ptri ABC \simAA \Ptri A'B'C' \ \subset\ \Ptri ABC \simSSS \Ptri A'B'C'\ }.
\]
The inclusion follows from \Cref{prop:AA-implies-SSS}, and the strictness from \Cref{prop:SSS-not-AA}.
\end{corollary}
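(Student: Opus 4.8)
The plan is to obtain the chain by stringing together the three propositions that immediately precede it, so that no new computation is needed. The backbone consists of: the biconditional $\Ptri ABC \simSAS \Ptri A'B'C' \Longleftrightarrow \Ptri ABC \simAA \Ptri A'B'C'$ from \Cref{prop:SAS-AA-equiv}; the inclusion of the AA--class inside the SSS--class from \Cref{prop:AA-implies-SSS}; and the strictness of that inclusion from \Cref{prop:SSS-not-AA}. First I would restate these three facts in the set--theoretic language of the corollary: viewing $\simSAS$, $\simAA$, $\simSSS$ as relations on the collection of DA triangles, \Cref{prop:SAS-AA-equiv} says the first two coincide, \Cref{prop:AA-implies-SSS} says $\simAA\subseteq\simSSS$, and \Cref{prop:SSS-not-AA} produces a pair lying in $\simSSS\setminus\simAA$.

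Next, for completeness, I would briefly unwind why the biconditional holds, since it carries the main content. In the direction $\simSAS\Rightarrow\simAA$: two corresponding side--ratios are prescribed and the included oriented DA angle agrees; after normalizing the two circumparabolas to $y=x^2$ and $y=\kappa x^2$ and matching the $x$--scale by the common ratio, the quadratic coefficient rescales reciprocally, so the oriented DA angle values transfer, and since the DA angle sum is $0$ (\Cref{thm:sum-of-angles-zero}) the third angle matches as well. In the direction $\simAA\Rightarrow\simSAS$: \Cref{prop:AA-implies-SSS} already gives all three side--ratios equal, and the included angle equality is part of the AA hypothesis, so SAS$\pm$ follows. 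This is exactly the argument of \Cref{prop:SAS-AA-equiv}, which I would simply cite.

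For the \emph{proper} inclusion I would recall the explicit witness from \Cref{prop:SSS-not-AA}: take $A,B,C$ with $x$--coordinates $a<b<c$ on $y=x^2$ and $A',B',C'$ with $x$--coordinates $ka<kb<kc$ for some $k>0$ with $k\neq 1$. By \Cref{def:sim-SSS} the two DA triangles are norm--similar, yet $\measuredangle_{\mathcal P}B = a-c$ while $\measuredangle_{\mathcal P}B' = k(a-c)$, so the oriented DA angles differ and the pair is not $\simAA$. I would only add that, for this to be a legitimate pair of DA triangles, the chosen points must determine nondegenerate DA triangles with no singular side, which holds for $a<b<c$ on the standard parabola.

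The main obstacle is bookkeeping rather than mathematics: one must be consistent about the DA--parabola scaling rule (the quadratic coefficient rescaling reciprocally to the similarity factor) wherever the AA$\leftrightarrow$SAS$\pm$ passage invokes it, and one must confirm that $\simSSS$ is genuinely larger, i.e.\ that the scaling witness with $k\neq 1$ really fails AA. Both points are already settled in \Cref{prop:SAS-AA-equiv} and \Cref{prop:SSS-not-AA}, so the corollary follows by assembly and citation; the only conceptual subtlety is that the displayed ``$\subset$'' is a strict set inclusion and not a fourth equivalence, which is precisely what the counterexample records.
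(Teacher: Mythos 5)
Your proposal is correct and follows essentially the same route as the paper: the corollary is assembled exactly as you describe, citing \Cref{prop:SAS-AA-equiv} for the biconditional, \Cref{prop:AA-implies-SSS} for the inclusion, and the scaling witness of \Cref{prop:SSS-not-AA} for strictness. Your added remarks (checking nondegeneracy of the witness and reading ``$\subset$'' as a strict inclusion of relations) are harmless elaborations of what the paper leaves implicit.
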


\begin{proposition}[Norm–similarity of the tangent triangle]\label{prop:tangent-similarity}
Let $\Ptri ABC$ be a DA triangle and $\Ptri A'B'C'$ its tangent triangle.
Then $\Ptri A'B'C' \simSSS \Ptri ABC$.
\end{proposition}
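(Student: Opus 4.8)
The plan is to reduce to the normalized model and then carry out a one–line computation of DA norms. First I would invoke the Normalization Lemma (\Cref{lem:normalize}) to put the circumparabola $\mathcal P$ of $\triangle_{\mathcal P}ABC$ in standard position $y=x^{2}$, with $A=(a,a^{2})$, $B=(b,b^{2})$, $C=(c,c^{2})$ and $a<b<c$. This reduction is harmless for an SSS statement: $\simSSS$ depends only on \emph{ratios} of DA norms, and the normalizing transformation is an affine map preserving $(\ell,d)$ followed by a vertical scaling, under which every DA norm $|XY|_{\mathcal P}=|\varphi(Y-X)|$ (with $\varphi(d)=0$) is multiplied by one and the same nonzero factor, so all ratios are preserved.

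Next I would record the tangent triangle explicitly. The tangent to $y=x^{2}$ at $(t,t^{2})$ is $y=2tx-t^{2}$; intersecting the tangents at $B$ and $C$ gives $T_A=\bigl(\tfrac{b+c}{2},\,bc\bigr)$, and cyclically $T_B=\bigl(\tfrac{c+a}{2},\,ca\bigr)$, $T_C=\bigl(\tfrac{a+b}{2},\,ab\bigr)$ — precisely the points already computed in the proof of \Cref{prop:bisector-centroid}. I should also note, so that the statement is not vacuous, that $\triangle_{\mathcal P}T_AT_BT_C$ is a genuine DA triangle: each of its sides is a tangent line of finite slope ($2a$, $2b$, $2c$), hence never singular; and the three tangents are never concurrent, since a tangent at $c$ through $T_A$ would force $(a-c)(b-c)=0$, contradicting distinctness — so $T_A,T_B,T_C$ are non-collinear.

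The conclusion is then immediate from $|XY|_{\mathcal P}=|x_Y-x_X|$: the side lengths of the tangent triangle are
\[
|T_AT_B|_{\mathcal P}=\tfrac{b-a}{2},\qquad
|T_BT_C|_{\mathcal P}=\tfrac{c-b}{2},\qquad
|T_CT_A|_{\mathcal P}=\tfrac{c-a}{2},
\]
i.e.\ exactly half of $|AB|_{\mathcal P}=b-a$, $|BC|_{\mathcal P}=c-b$, $|CA|_{\mathcal P}=c-a$. Hence
\[
|T_AT_B|_{\mathcal P}:|T_BT_C|_{\mathcal P}:|T_CT_A|_{\mathcal P}
=|AB|_{\mathcal P}:|BC|_{\mathcal P}:|CA|_{\mathcal P},
\]
which is exactly $\triangle_{\mathcal P}A'B'C'\simSSS\triangle_{\mathcal P}ABC$ under the correspondence $A'=T_A$, $B'=T_B$, $C'=T_C$ (in fact the tangent triangle is uniformly scaled by the factor $\tfrac12$ in the DA norm). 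There is essentially no analytic obstacle: the only points that require a word of care are the two bookkeeping items flagged above, namely the scale-invariance that legitimizes the normalization and the non-degeneracy of the tangent triangle; once these are dispatched, the proposition follows from the half-scaling identity.
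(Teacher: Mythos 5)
Your proof is correct and takes essentially the same route as the paper's: normalize to $y=x^2$, read off the tangent-triangle vertices $\bigl(\tfrac{b+c}{2},bc\bigr)$, $\bigl(\tfrac{c+a}{2},ca\bigr)$, $\bigl(\tfrac{a+b}{2},ab\bigr)$, and compare DA norms as $x$-differences to obtain the uniform factor $\tfrac12$ and hence the equal ratios. (One cosmetic slip in your non-degeneracy aside: the tangent at $C$ passes through $T_A=\ell_B\cap\ell_C$ by definition, so the intended check is that the tangent at $C$ through $T_C=\ell_A\cap\ell_B$ would force $(a-c)(b-c)=0$; this does not affect the main argument, which matches the paper's.)
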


\begin{proof}
Normalize $\mathcal P: y=x^2$ and write
$A(a,a^2), B(b,b^2), C(c,c^2)$ with $a<b<c$.
As computed in \Cref{prop:bisector-centroid}, the vertices of the tangent triangle are
\[
A'\Bigl(\tfrac{b+c}{2},\, bc\Bigr),\quad
B'\Bigl(\tfrac{c+a}{2},\, ca\Bigr),\quad
C'\Bigl(\tfrac{a+b}{2},\, ab\Bigr).
\]
Since DA–norm side lengths are proportional to $x$–differences,
\[
|A'B'|_{\mathcal P}:|B'C'|_{\mathcal P}:|C'A'|_{\mathcal P}
=\tfrac{b-a}{2}:\tfrac{c-b}{2}:\tfrac{c-a}{2}
=|AB|_{\mathcal P}:|BC|_{\mathcal P}:|CA|_{\mathcal P}.
\]
Hence $\Ptri A'B'C' \simSSS \Ptri ABC$.
\end{proof}

\begin{remark}
In general the DA angles do not match (AA fails).
For instance, the angle at $A'=\ell_B\cap\ell_C$ equals the slope difference of the tangents:
\[
\measuredangle_{\mathcal P} B'A'C'= \Slp(\ell_C)-\Slp(\ell_B)=2c-2b,
\]
whereas the corresponding angle of $\Ptri ABC$ is $c-b$, so the factors disagree (except in special configurations).
\end{remark}

\begin{proposition}[DA–angle similarity via diagonal section]\label{prop:diag-AA}
Let $A,B,C,D$ lie in order on the parabola $y=x^2$, forming an inscribed quadrilateral.
If $X=AC\cap BD$, then
\[
\triangle XAB \simAA \triangle XCD,\qquad
\triangle XBC \simAA \triangle XAD.
\]
\end{proposition}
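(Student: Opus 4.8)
The plan is to read the statement off from two results already in hand, the constancy of the parabolic inscribed angle (\Cref{prop:parabolic-inscribed-angle}) and the equality of DA vertical angles (\Cref{prop:vertical-angle}): no genuinely new computation is needed, only a short slope check together with some care about orientations. First I would put everything in coordinates on $y=x^2$ (the four points are already assumed on it), writing $A=(a,a^2)$, $B=(b,b^2)$, $C=(c,c^2)$, $D=(d,d^2)$ with $a<b<c<d$, and use repeatedly that the slope of the chord joining $(s,s^2)$ and $(t,t^2)$ is $s+t$, so that for any three of these points the DA angle at the apex depends only on the other two — which is exactly the inscribed-angle constancy.

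Next I would deal with the three angles of each sub-triangle in turn. For the apex at $X$: since $X=AC\cap BD$, the rays $XA,XC$ lie on the single line $AC$ and the rays $XB,XD$ on $BD$, so $\Slp{XA}=\Slp{XC}=\Slp{AC}$ and $\Slp{XB}=\Slp{XD}=\Slp{BD}$; hence $\measuredangle_{\mathcal P}AXB=\Slp{XB}-\Slp{XA}=\Slp{XD}-\Slp{XC}=\measuredangle_{\mathcal P}CXD$, which is precisely \Cref{prop:vertical-angle}. For the remaining two vertices: because $X$ lies on segment $AC$, the ray $AX$ is the ray $AC$, so the angle of $\triangle XAB$ at $A$ equals $\measuredangle_{\mathcal P}BAC$; likewise, since $X$ lies on segment $BD$, the angle of $\triangle XCD$ at $D$ equals $\measuredangle_{\mathcal P}BDC$, and $\measuredangle_{\mathcal P}BAC=\measuredangle_{\mathcal P}BDC$ (both equal $c-b$) by \Cref{prop:parabolic-inscribed-angle}. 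Symmetrically, the angle of $\triangle XAB$ at $B$ equals $\measuredangle_{\mathcal P}ABD$ and that of $\triangle XCD$ at $C$ equals $\measuredangle_{\mathcal P}ACD$, and these agree (both equal $d-a$), again by the constancy of the inscribed angle applied to the chord $AD$. Thus two pairs of corresponding angles coincide under the vertex pairing $X\!\leftrightarrow\!X$, $A\!\leftrightarrow\!D$, $B\!\leftrightarrow\!C$, and since the DA–angle sum of a triangle is $0$ (\Cref{thm:sum-of-angles-zero}) the third pair coincides automatically; this is the asserted AA–similarity. The companion relation $\triangle XBC\simAA\triangle XAD$ follows by running the same argument on the other pair of opposite sub-triangles, i.e.\ by the cyclic relabelling $A\to B\to C\to D\to A$.

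The one delicate point is the sign bookkeeping, because \Cref{def:sim-AA} demands equality of \emph{oriented} DA angles, not merely of their magnitudes. Each of the four sub-triangles has exactly one negative interior angle (\Cref{thm:angle-sign-uniqueness}), and one must check that under the chosen correspondence the negative angles fall on corresponding vertices — here at $B$ of $\triangle XAB$ and at $C$ of $\triangle XCD$. Concretely this amounts to locating $X$ among the four vertices by $x$–coordinate, namely $x_B<x_X<x_C$; and this is immediate from the hypothesis, since $X$ lies on chord $AC$ between $A$ and $C$ (so $a<x_X<c$) and on chord $BD$ between $B$ and $D$ (so $b<x_X<d$), whence $x_B<x_X<x_C$ because $a<b<c<d$. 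Verifying this alignment is the only step that has to be carried out by hand rather than quoted; everything else reduces to the two cited propositions and a one-line slope identity.
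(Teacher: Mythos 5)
Your proposal is correct and follows essentially the same route as the paper: the angle equality at $X$ via the vertical-angle property (\Cref{prop:vertical-angle}) plus the parabolic inscribed-angle constancy (\Cref{prop:parabolic-inscribed-angle}) at the other vertices, with the second similarity obtained by the analogous/cyclic argument. Your additional bookkeeping (explicit slopes $s+t$, the ordering $x_B<x_X<x_C$, and the alignment of the unique negative angles at $B$ and $C$) only makes explicit the orientation details the paper leaves implicit.
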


\begin{proof}
By the constancy of parabolic inscribed angles,
\[
\measuredangle BAX = \measuredangle BDC = \measuredangle XDC.
\]
Moreover, $\angle AXB=\angle CXD$ are vertical angles.
Thus $\triangle XAB$ and $\triangle XCD$ have two equal angles (AA).
The second claim follows analogously.
\end{proof}

\subsection{A Hierarchy of Congruence in DA Geometry}
\begin{quote}
We now turn to congruence of DA triangles. As with similarity, it is natural to stratify congruence into tiers.
As a DA–specific, more permissive notion (a “quasi–congruence”), we first introduce \emph{DA–norm congruence}.
When the context is clear, we simply say “norm congruent.”
\end{quote}

\begin{definition}[DA–norm congruence]
Two DA triangles $\triangle_{\mathcal P} ABC$ and $\triangle_{\mathcal P} DEF$ are \emph{DA–norm congruent}
if all corresponding DA–norms of sides agree. We write
\[
\triangle_{\mathcal P} ABC \equiv^{\mathrm{norm}}_{\mathcal P} \triangle_{\mathcal P} DEF .
\]
\end{definition}

Finally, we define the strongest relation between two triangles, namely \emph{DA congruence}.
When the context is clear, we simply say “congruent.”

\begin{definition}[DA congruence]
Two DA triangles $\triangle_{\mathcal P} ABC$ and $\triangle_{\mathcal P} DEF$ are \emph{DA congruent}
if all corresponding DA–norms of sides agree and all corresponding oriented DA angles agree.
In this case we write
\[
\triangle_{\mathcal P} ABC \equiv_{\mathcal P} \triangle_{\mathcal P} DEF .
\]
\end{definition}

\begin{proposition}[Congruence and the circumparabola coefficients]
If $\triangle_{\mathcal P} ABC \equiv_{\mathcal P} \triangle_{\mathcal P} DEF$, then the absolute values of the quadratic coefficients
of the two circumparabolas are equal.
\end{proposition}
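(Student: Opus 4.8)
The plan is to extract the leading coefficient of each circumparabola from a single pair of data — one interior DA angle together with one side DA–norm of the triangle — and then compare. Since ``the quadratic coefficient of the circumparabola'' is meaningful only once a chart is fixed, I work throughout in the fixed Cartesian $(x,y)$–chart of the paper, in which $\ell$ is the $x$–axis and $d$ the $y$–direction, so that the circumparabola of a DA triangle is a genuine parabola $y=\kappa x^{2}+px+q$ with a well–defined real number $\kappa$.

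First I would record the slope identity on a circumparabola. By \Cref{thm:unique-circum-parabola}, $\triangle_{\mathcal P}ABC$ lies on a unique parabola $y=\kappa_{1}x^{2}+p_{1}x+q_{1}$ with axis parallel to $d$, and for any two of its vertices $X,Y$ one has
\[
\Slp{XY}=\frac{\kappa_{1}(x_{X}^{2}-x_{Y}^{2})+p_{1}(x_{X}-x_{Y})}{x_{X}-x_{Y}}=\kappa_{1}(x_{X}+x_{Y})+p_{1}.
\]
Hence the oriented DA angle at $A$ is
\[
\measuredangle_{\mathcal P}BAC=\Slp{AC}-\Slp{AB}=\kappa_{1}(x_{C}-x_{B}),
\]
the lower–order data $p_{1},q_{1}$ cancelling; this is the formula through which $\kappa_{1}$ is ``read off'' from the triangle. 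The identical computation on the circumparabola $y=\kappa_{2}x^{2}+p_{2}x+q_{2}$ of $\triangle_{\mathcal P}DEF$ gives $\measuredangle_{\mathcal P}EDF=\kappa_{2}(x_{F}-x_{E})$.

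Next I would feed in the congruence. A DA congruence $\triangle_{\mathcal P}ABC\equiv_{\mathcal P}\triangle_{\mathcal P}DEF$ carries with it the vertex correspondence $A\leftrightarrow D$, $B\leftrightarrow E$, $C\leftrightarrow F$; equality of the oriented DA angles at $A$ and $D$ gives $\kappa_{1}(x_{C}-x_{B})=\pm\,\kappa_{2}(x_{F}-x_{E})$ (the sign reflecting only whether the correspondence preserves or reverses the cyclic order), while equality of the DA–norms of the corresponding sides $BC$ and $EF$ gives $|x_{C}-x_{B}|=|x_{F}-x_{E}|$. Taking absolute values in the first relation and substituting the second yields $|\kappa_{1}|\,|x_{C}-x_{B}|=|\kappa_{2}|\,|x_{C}-x_{B}|$. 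A DA triangle has no singular side (\Cref{def:diff-angle-triangle}), so $x_{B}\neq x_{C}$, and cancelling gives $|\kappa_{1}|=|\kappa_{2}|$.

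There is no genuine obstacle in the argument; the one point worth flagging is why the conclusion is phrased with absolute values rather than as $\kappa_{1}=\kappa_{2}$, and I would close with a short remark making this sharpness explicit. The half–turn $(x,y)\mapsto(-x,-y)$ fixes every slope and changes every $x$–difference only by sign, hence is a DA congruence; but it sends the parabola $y=\kappa x^{2}$ to $y=-\kappa x^{2}$. Thus a DA triangle and its point–reflection are DA congruent while their circumparabolas have opposite leading coefficients, so $|\kappa_{1}|=|\kappa_{2}|$ is the best possible conclusion.
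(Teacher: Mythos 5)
Your argument is essentially the paper's own proof: both extract the leading coefficient via the identity (DA angle at a vertex) $=\kappa\times$ ($x$--difference of the opposite side's endpoints), then combine equality of the corresponding oriented angles with equality of the corresponding side norms and cancel the nonzero $x$--difference to get $|\kappa_{1}|=|\kappa_{2}|$ (the paper does this at vertex $B$, you at vertex $A$). Your closing remark with the half--turn example, showing that only the absolute values can be compared, is a nice addition but does not change the substance.
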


\begin{proof}
Suppose $\triangle_{\mathcal P} ABC \equiv_{\mathcal P} \triangle_{\mathcal P} DEF$ and let the quadratic coefficients of the circumparabolas
for $\triangle_{\mathcal P} ABC$ and $\triangle_{\mathcal P} DEF$ be $\kappa$ and $\kappa'$ respectively.
Write the $x$–coordinates of $A,B,C,D,E,F$ as $a,b,c,d,e,f$.
The magnitude of the oriented DA angle at $B$ is $|\kappa(c-a)|$ for $ABC$ and $|\kappa'(f-d)|$ for $DEF$.
Under DA congruence, the corresponding side norms satisfy $|c-a|=|f-d|$ and the angles are equal, hence
$|\kappa(c-a)|=|\kappa'(f-d)|$, which forces $|\kappa|=|\kappa'|$.
\end{proof}

\begin{proposition}[Limitation of norm congruence]\label{prop:norm-cong-limit}
Even if two triangles are DA–norm congruent, their corresponding angles need not agree.
Hence norm congruence is strictly weaker than DA congruence.
\end{proposition}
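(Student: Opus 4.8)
The plan is to settle the claim by an explicit counterexample: a pair of DA triangles that are DA–norm congruent yet fail to be DA congruent because one vertex angle differs. The structural reason this is possible — and the single point that must be stated carefully rather than glossed over — is that, by \Cref{def:diff-angle-norm}, the DA norm $|XY|_{\mathcal P}=|\varphi(Y-X)|$ with $\varphi(d)=0$ depends only on the displacement transverse to the projective direction (in the normalized chart, on the difference of $x$–coordinates) and is therefore completely insensitive to which circumparabola the two endpoints lie on; whereas the oriented DA angles at the vertices do see the circumparabola coefficient. Making this decoupling explicit is the whole content of the proof, so I would foreground it.

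Concretely, I would fix reals $a<b<c$ and set $T_1:=\triangle_{\mathcal P}ABC$ inscribed in $y=x^{2}$ with $A=(a,a^{2})$, $B=(b,b^{2})$, $C=(c,c^{2})$, together with $T_2:=\triangle_{\mathcal P}A'B'C'$ inscribed in $y=\kappa x^{2}$ for a fixed $\kappa>0$ with $\kappa\neq 1$, taking $A'=(a,\kappa a^{2})$, $B'=(b,\kappa b^{2})$, $C'=(c,\kappa c^{2})$. Since the three $x$–coordinates are distinct, no side of either configuration is parallel to $d$, so both are genuine DA triangles in the sense of \Cref{def:diff-angle-triangle}. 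Their side norms are
\[
|AB|_{\mathcal P}=|A'B'|_{\mathcal P}=b-a,\qquad |BC|_{\mathcal P}=|B'C'|_{\mathcal P}=c-b,\qquad |CA|_{\mathcal P}=|C'A'|_{\mathcal P}=c-a,
\]
so $T_1\equiv^{\mathrm{norm}}_{\mathcal P}T_2$.

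It then remains to compare angles. On $y=\kappa x^{2}$ the chord through $(s,\kappa s^{2})$ and $(t,\kappa t^{2})$ has slope $\kappa(s+t)$, so the interior DA angle at $B'$ equals $\kappa(a-c)$, while for $T_1$ the computation on the standard parabola in \Cref{thm:sum-of-angles-zero} gives $\theta_B=a-c$. Since $a\neq c$ and $\kappa\neq 1$, these are unequal, hence $T_1$ and $T_2$ are not DA congruent; this is also consistent with the earlier proposition that DA congruence forces $|\kappa|=|\kappa'|$. Therefore DA–norm congruence does not imply DA congruence, and since DA congruence trivially implies DA–norm congruence, the former is strictly stronger. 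I do not anticipate a real obstacle; the only care needed is the explicit verification that the DA norm is unchanged under rescaling of the circumparabola coefficient, which is exactly the ingredient that makes the hierarchy strict.
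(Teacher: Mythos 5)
Your proposal is correct and follows essentially the same route as the paper: a single counterexample in which the two triangles share side norms but lie on circumparabolas with different quadratic coefficients, so the angle at $B$ changes from $a-c$ to $\kappa(a-c)$. In fact your parametrization (same $x$–coordinates $a,b,c$ on $y=x^2$ and $y=\kappa x^2$) is the cleaner way to state the paper's intended example, which as written scales the $x$–coordinates instead and only implicitly relies on the same decoupling of norm and coefficient that you make explicit.
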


\begin{proof}
Consider $\triangle_{\mathcal P}ABC$ on $y=x^2$ and $\triangle_{\mathcal P}A'B'C'$ whose $x$–coordinates are $\kappa a,\kappa b,\kappa c$ with $\kappa>0$.
Then all side DA–norms agree, but
\[
\measuredangle_{\mathcal P} B=a-c,\qquad
\measuredangle_{\mathcal P} B'=\kappa(a-c),
\]
so the angles differ unless $\kappa=1$.
\end{proof}

Thus, even when DA–norms match, the DA angles may differ; this occurs precisely when the absolute values of the circumparabola coefficients differ.
Equivalently, we have:

\begin{proposition}[When norm congruence equals DA congruence]\label{prop:norm-to-cong}
Let $\Ptri ABC$ and $\Ptri DEF$ be DA–norm congruent.
Then the following are equivalent:
\begin{enumerate}
  \item $\Ptri ABC \equiv_{\mathcal P} \Ptri DEF$ (DA–congruent).
  \item The circumparabolas of $\Ptri ABC$ and $\Ptri DEF$ have equal absolute values of their quadratic coefficients.
\end{enumerate}
\end{proposition}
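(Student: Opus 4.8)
The plan is to prove the two implications separately; (1)$\Rightarrow$(2) is essentially the preceding proposition relating DA congruence to the circumparabola coefficients, so the content lies in (2)$\Rightarrow$(1). For (1)$\Rightarrow$(2): if $\Ptri ABC\equiv_{\mathcal P}\Ptri DEF$, then corresponding oriented DA angles are equal, and since the magnitude of $\measuredangle_{\mathcal P}B$ is $|\kappa|\cdot|CA|_{\mathcal P}$ while that of $\measuredangle_{\mathcal P}E$ is $|\kappa'|\cdot|FD|_{\mathcal P}$, the assumed DA--norm congruence $|CA|_{\mathcal P}=|FD|_{\mathcal P}$ forces $|\kappa|=|\kappa'|$.

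For (2)$\Rightarrow$(1), I would first bring both triangles to a convenient form by translations only, which preserve all DA--norms and all oriented DA angles (slopes and $x$--differences being translation invariant): by \Cref{thm:unique-circum-parabola} each has a unique circumparabola with axis along $d$, and a translation puts it in the form $y=\kappa x^{2}$, resp.\ $y=\kappa' x^{2}$, with $\kappa,\kappa'$ the leading coefficients; hypothesis (2) then reads $|\kappa|=|\kappa'|$. Order the vertices of $\Ptri ABC$ by $x$--coordinate as $a<b<c$. The structural point is that DA--norm congruence rigidifies the correspondence: by the Triangle Equation \Cref{mthm:triangle-equation} the largest side DA--norm strictly exceeds the other two, so the longest side of each triangle is unique, and under the given vertex correspondence the longest side $CA$ of $\Ptri ABC$ (of norm $c-a$) must correspond to the longest side of $\Ptri DEF$; equivalently, the vertex $E$ corresponding to $B$ is the middle one among $d,e,f$, i.e.\ the vertex opposite the longest side. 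By \Cref{thm:angle-sign-uniqueness} together with the $d$--projection argument (the unique negative interior angle lies at the vertex whose projection onto $\ell$ is interior to the projection of the opposite side, that is, at the middle vertex), the negative interior angle of $\Ptri ABC$ is at $B$ and that of $\Ptri DEF$ is at $E$, while the other two interior angles of each triangle are positive.

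It then remains to compare magnitudes. From \Cref{thm:sum-of-angles-zero} (and scaling to $y=\kappa x^{2}$), the magnitude of the interior DA angle at any vertex equals $|\kappa|$ times the DA--norm of the opposite side, and likewise with $\kappa'$ for $\Ptri DEF$; combining the DA--norm equalities $|BC|_{\mathcal P}=|EF|_{\mathcal P}$, $|CA|_{\mathcal P}=|FD|_{\mathcal P}$, $|AB|_{\mathcal P}=|DE|_{\mathcal P}$ with $|\kappa|=|\kappa'|$ gives $|\measuredangle_{\mathcal P}A|=|\measuredangle_{\mathcal P}D|$, $|\measuredangle_{\mathcal P}B|=|\measuredangle_{\mathcal P}E|$, $|\measuredangle_{\mathcal P}C|=|\measuredangle_{\mathcal P}F|$; together with the sign analysis above, all three oriented DA angles coincide. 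Since the side DA--norms already agree by hypothesis, this yields $\Ptri ABC\equiv_{\mathcal P}\Ptri DEF$, completing the equivalence. I expect the only genuine obstacle to be the orientation bookkeeping of the second paragraph --- guaranteeing that the vertex correspondence carries the middle vertex (equivalently, the vertex opposite the unique longest side, equivalently the site of the negative angle) of one triangle to that of the other; this is exactly the step that defeats a naive ``equal magnitudes, hence congruent'' argument and is precisely why \Cref{prop:norm-cong-limit} already exhibits DA--norm congruence as strictly weaker. Everything else reduces to one-line verifications.
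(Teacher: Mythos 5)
Your proposal is correct and follows essentially the same route as the paper's proof: both directions rest on the relation $|\text{interior angle}| = |\kappa|\cdot(\text{DA--norm of the opposite side})$, so that, with all corresponding side norms already equal, equality of the oriented angles is equivalent to $|\kappa|=|\kappa'|$. The only difference is that you make explicit the orientation bookkeeping (the unique longest side, hence the middle vertex carrying the unique negative angle, must correspond), which the paper's terse proof leaves implicit; this is a useful but not divergent refinement.
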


\begin{proof}
($1 \Rightarrow 2$) Under DA congruence, all corresponding DA angles agree.
Since the angle magnitudes scale with the circumparabola coefficient, the absolute values of the coefficients must be equal.

($2 \Rightarrow 1$) DA–norm congruence gives equality of all side DA–norms.
If, in addition, the absolute values of the quadratic coefficients coincide, the angle–scaling agrees as well; hence all corresponding DA angles are equal.
Therefore the triangles are DA–congruent.
\end{proof}

\begin{theorem}[Parabolic perimetric congruence]\label{thm:parabolic-perimetric-cong}
Let $\Ptri ABC$ be a triangle on a circumparabola $\Gamma$.
Let $A',B',C'$ be the points obtained by shifting each vertex along $\Gamma$ by the same difference angle $\theta$.
Then
\[
   \Ptri ABC \equiv_{\mathcal P} \Ptri A'B'C'.
\]
\end{theorem}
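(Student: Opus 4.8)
The plan is to realize the operation ``shifting every vertex along $\Gamma$ by the same difference angle $\theta$'' as a single affine transformation of the plane that fixes $\Gamma$ setwise, carry the three vertices by that transformation, and then verify directly that it leaves each side DA norm and each oriented DA angle unchanged; DA congruence will then follow at once from its definition. First I would normalize: by \Cref{thm:unique-circum-parabola} the circumparabola $\Gamma$ is unique, and by \Cref{lem:normalize} (or just an affine change of chart preserving $(\ell,d)$) I may take $\Gamma\colon y=\kappa x^2$ with $A=(a,\kappa a^2)$, $B=(b,\kappa b^2)$, $C=(c,\kappa c^2)$ and $a<b<c$. On $\Gamma$ the DA norm between the points of parameters $s,t$ is $|s-t|$, so the $x$-coordinate is itself the natural ``arc'' parameter; accordingly ``shifting along $\Gamma$ by the difference angle $\theta$'' means sending the point of parameter $t$ to that of parameter $t+h$, where $h$ is the fixed quantity determined by $\theta$ under the calibration in force (e.g.\ $h=\theta/2\kappa$ when $\theta$ measures the rotation of the tangent direction, or $h=\theta$ when $\theta$ is read as a DA-norm displacement). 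The only property of $h$ used below is that it is the \emph{same} for all three vertices, so $A'=(a+h,\kappa(a+h)^2)$ and likewise $B',C'$.

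The key step is the observation that this reparametrization is induced by the parabolic shear
\[
\Phi_h\colon (x,y)\longmapsto\bigl(x+h,\ y+2\kappa h\,x+\kappa h^2\bigr).
\]
Substituting $y=\kappa x^2$ yields $y'=\kappa x^2+2\kappa hx+\kappa h^2=\kappa(x+h)^2=\kappa(x')^2$, so $\Phi_h(\Gamma)=\Gamma$, and plainly $\Phi_h(A)=A'$, $\Phi_h(B)=B'$, $\Phi_h(C)=C'$. Since $\Phi_h$ translates the $x$-coordinate by the constant $h$, for any two points $X,Y$ one has $\varphi(\Phi_h(Y)-\Phi_h(X))=\varphi(Y-X)$, hence $|X'Y'|_{\mathcal P}=|XY|_{\mathcal P}$ for every side: all three side DA norms are preserved. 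Moreover a direction vector $v=(v_x,v_y)$ is carried to $(v_x,\,v_y+2\kappa h\,v_x)$, so every slope is increased by the \emph{same} constant $2\kappa h$, whence every difference of slopes is unchanged. Concretely $\Slp{A'B'}=\kappa(a+b)+2\kappa h$, $\Slp{B'C'}=\kappa(b+c)+2\kappa h$, $\Slp{C'A'}=\kappa(c+a)+2\kappa h$, so each interior DA angle — including the middle-vertex angle with its exterior-orientation sign exactly as in the computation of \Cref{thm:sum-of-angles-zero} — equals the corresponding angle of $\triangle_{\mathcal P}ABC$.

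Finally I would tidy the bookkeeping: adding $h$ to all three parameters preserves the cyclic order $a+h<b+h<c+h$, hence the orientation; no image side is singular, since a singular side would force two equal $x$-coordinates among $a+h,b+h,c+h$; and the three image points remain distinct and non-collinear (distinct points on a parabola are never collinear). Thus $\triangle_{\mathcal P}A'B'C'$ is a genuine DA triangle whose triple of side DA norms and triple of oriented DA angles coincide with those of $\triangle_{\mathcal P}ABC$, i.e.\ $\triangle_{\mathcal P}ABC\equiv_{\mathcal P}\triangle_{\mathcal P}A'B'C'$. I do not anticipate a real obstacle here; the only point deserving a line of care is pinning down the meaning of ``shift by the difference angle $\theta$'' as the uniform parameter shift $h$, after which the argument collapses to the one-line fact that $\Phi_h$ changes every slope by the same constant and fixes every $x$-difference.
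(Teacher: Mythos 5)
Your proposal is correct, and its skeleton is the same as the paper's: normalize to the standard parabola, observe that the shift acts on the $x$-coordinates by a common constant, and conclude that all side DA norms are unchanged. The difference is in how the angles are handled: the paper stops after establishing norm congruence and then invokes \Cref{prop:norm-to-cong} (norm congruence plus equal circumparabola coefficients implies full DA congruence, which is automatic here since both triangles lie on the same $\Gamma$), whereas you verify angle preservation directly by realizing the shift as the parabolic shear $\Phi_h$, which adds the same constant $2\kappa h$ to every slope and hence fixes every difference angle. Your route is slightly more self-contained (it does not need the norm-to-congruence criterion) and, by exhibiting the shift as an explicit affine map preserving $\Gamma$, it effectively proves in passing the content of \Cref{cor:parabolic-shift-group}; the paper's route is shorter because it leans on the already-established hierarchy of congruence notions. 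Your brief hedge about whether the parameter shift $h$ equals $\theta$ or $\theta/(2\kappa)$ is harmless (only the commonality of $h$ matters), and the paper itself resolves it by reading the shift as the $x$-translation $x\mapsto x+\theta$ in the normalized chart.
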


\begin{proof}
WLOG take $\Gamma:y=x^2$ and write $A(a,a^2)$, $B(b,b^2)$, $C(c,c^2)$ with $a<b<c$.
By hypothesis, the $x$–coordinates of $A',B',C'$ are $a+\theta$, $b+\theta$, $c+\theta$.
Hence
\[
 |A'B'|_{\mathcal P} = (b+\theta)-(a+\theta) = b-a = |AB|_{\mathcal P},
\]
and similarly $|B'C'|_{\mathcal P}=|BC|_{\mathcal P}$ and $|C'A'|_{\mathcal P}=|CA|_{\mathcal P}$.
Thus
\[
   \Ptri ABC \equiv^{\mathrm{norm}}_{\mathcal P} \Ptri A'B'C'.
\]
Both triangles lie on the same parabola $\Gamma$, so the quadratic coefficients coincide.
Therefore the corresponding DA angles also agree, and
\[
   \Ptri ABC \equiv_{\mathcal P} \Ptri A'B'C'
\]
follows.
\end{proof}

\begin{corollary}[Parabolic shift group]\label{cor:parabolic-shift-group}
For a fixed circumparabola $\Gamma$, consider the DA–shift
\[
   T_\theta : (x,y) \mapsto (x+\theta,\ (x+\theta)^2).
\]
Then:
\begin{enumerate}
  \item $T_\theta$ maps $\Gamma$ to itself.
  \item $T_\theta$ preserves parabolic perimetric (DA) congruence of triangles on $\Gamma$.
  \item $T_{\theta_1}\circ T_{\theta_2}=T_{\theta_1+\theta_2}$, hence $\{T_\theta \mid \theta\in\mathbb R\}$ is isomorphic to the additive group $(\mathbb R,+)$.
\end{enumerate}
\end{corollary}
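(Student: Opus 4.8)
The plan is to dispatch the three assertions in turn, each reducing to the defining equation of the normalized circumparabola and to the congruence theorem just proved. First I would normalize, as throughout, to $\Gamma\colon y=x^{2}$; this is the convention implicit in the formula $T_\theta(x,y)=(x+\theta,(x+\theta)^{2})$, and under it the difference--angle parameter along $\Gamma$ is simply the $x$--coordinate, in agreement with $|PQ|_{\mathcal P}=x_Q-x_P$. For (1): if $(x,y)\in\Gamma$ then $y=x^{2}$, so the image point $(x+\theta,(x+\theta)^{2})$ has second coordinate equal to the square of its first and hence lies on $\Gamma$; conversely every point $(u,u^{2})\in\Gamma$ equals $T_\theta$ of $(u-\theta,(u-\theta)^{2})\in\Gamma$, whence $T_\theta(\Gamma)=\Gamma$.

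For (2), I would observe that $T_\theta$ is precisely the ``shift every vertex along $\Gamma$ by the difference angle $\theta$'' map appearing in \Cref{thm:parabolic-perimetric-cong}. Writing $A=(a,a^{2})$, $B=(b,b^{2})$, $C=(c,c^{2})$ on $\Gamma$, the points $A'=T_\theta A=(a+\theta,(a+\theta)^{2})$, $B'=T_\theta B$, $C'=T_\theta C$ are exactly the points denoted $A',B',C'$ in that theorem; hence $\triangle_{\mathcal P}ABC\equiv_{\mathcal P}\triangle_{\mathcal P}A'B'C'$, which is the assertion. The one point deserving a word here --- and the only thing resembling an obstacle in an otherwise formal corollary --- is the identification just used, namely that translating the $x$--coordinate by $\theta$ really is the ``same difference angle'' shift of \Cref{thm:parabolic-perimetric-cong}; this is immediate once the normalization $\Gamma\colon y=x^{2}$ is fixed, because on that parabola the difference--angle arclength parameter coincides with $x$ (as already used in the proof of that theorem, where the shifted $x$--coordinates are $a+\theta,b+\theta,c+\theta$).

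For (3), a direct computation gives
\[
(T_{\theta_1}\circ T_{\theta_2})(x,y)=T_{\theta_1}\bigl(x+\theta_2,(x+\theta_2)^{2}\bigr)=\bigl(x+\theta_1+\theta_2,(x+\theta_1+\theta_2)^{2}\bigr)=T_{\theta_1+\theta_2}(x,y),
\]
so $\theta\mapsto T_\theta$ is a homomorphism from $(\mathbb R,+)$; in particular $T_0=\mathrm{id}$ and $T_{-\theta}=T_\theta^{-1}$, so $\{T_\theta\mid\theta\in\mathbb R\}$ is a group. This homomorphism is injective, since $T_\theta=\mathrm{id}$ evaluated at any point forces $\theta=0$, and it is surjective onto $\{T_\theta\mid\theta\in\mathbb R\}$ by definition; hence it is a group isomorphism $(\mathbb R,+)\cong\{T_\theta\mid\theta\in\mathbb R\}$. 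Thus the corollary follows formally from \Cref{thm:parabolic-perimetric-cong} together with this elementary composition law.
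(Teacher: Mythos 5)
Your proof is correct and matches the intended argument: the paper states this corollary without a separate proof, treating it as an immediate consequence of \Cref{thm:parabolic-perimetric-cong}, and your verifications of (1) and (3) plus the identification of $T_\theta$ with the difference--angle shift of that theorem are exactly the routine steps being left implicit.
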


Finally, we present a theorem that symbolically crowns the theory of congruence in DA geometry.  
This result captures the geometric beauty and depth of the system and serves as a fitting conclusion to the main body of this paper.

\begin{maintheorem}\label{mthm:parabolic-final-theorem}
Let $\gamma$ and $\delta$ be two parabolas whose axes are parallel to the same
projective direction~$d$ and which have the same quadratic coefficient~$\kappa$.
Let $\Ptri ABC$ be a triangle inscribed in $\gamma$, and
$\Ptri A'B'C'$ a triangle inscribed in~$\delta$,
satisfying
\[
\Ptri ABC \equiv_{\mathcal P} \Ptri C'B'A'
\qquad
\text{(orientation-reversing $\mathcal P$--congruence).}
\]
By drawing the $\mathcal P$--perpendiculars from
$A,B,C$ to $B'C',\,C'A',\,A'B'$, respectively,
let $H_A,H_B,H_C$ denote their feet.
Then the points $H_A,H_B,H_C$ are collinear on a line~$L$.
\end{maintheorem}

\begin{figure}[H]
  \centering
  \includegraphics[width=0.45\linewidth]{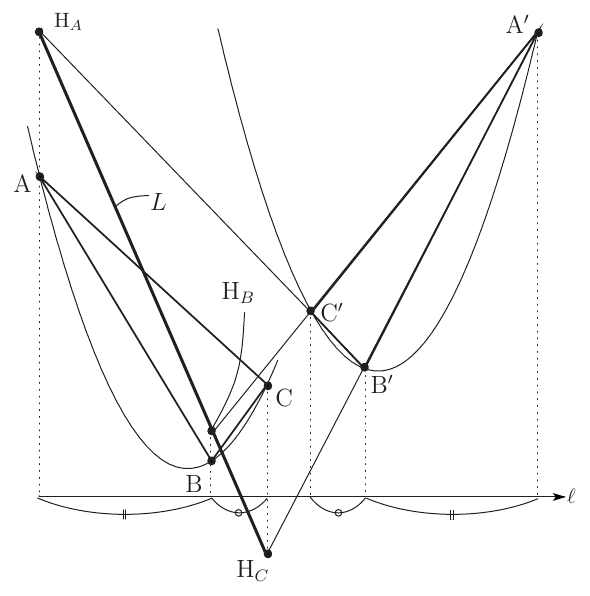}
\caption{The three feet $H_A,H_B,H_C$ of the difference--angle perpendiculars 
from $A,B,C$ to $B'C',C'A',A'B'$ lie on the same line $L$.}
\label{fig:mthm-diffangle_congruence_up}
\end{figure}

\begin{proof}
Assume $\triangle ABC \equiv_{\mathcal P} \triangle C'B'A'$ (on the same parabola, $\mathrm{norm\text{-}congruence}\Leftrightarrow \mathrm{congruence}$ for a common $k>0$).
Let
\[
AB=B'A'=a,\quad BC=C'B'=b,\quad CC'=\lambda.
\]
Take DA–norms as directed quantities, assigning a negative sign to those corresponding to external division.
Then
\[
\frac{\overrightarrow{AB}}{\overrightarrow{BH_C}}
\cdot
\frac{\overrightarrow{H_CH_A}}{\overrightarrow{H_AH_B}}
\cdot
\frac{\overrightarrow{H_BC}}{\overrightarrow{CA}}
=
(-1)\cdot\frac{a}{b+\lambda}\cdot\frac{a+b}{a}\cdot\frac{b+\lambda}{a+b}
= -1.
\]
By Menelaus' theorem, $H_A,H_B,H_C$ are collinear.
\end{proof}

\begin{remark}
The statement of Main Theorem~\ref{mthm:parabolic-final-theorem} remains valid 
even when the opening direction of the circumparabola is reversed (upward or downward). 
That is, the $\mathcal P$–congruence depends only on the projective direction~$d$ 
and on the magnitude of the quadratic coefficient~$\kappa$, 
not on its sign.
\end{remark}

\begin{figure}[H]
  \centering
\includegraphics[width=0.45\linewidth]{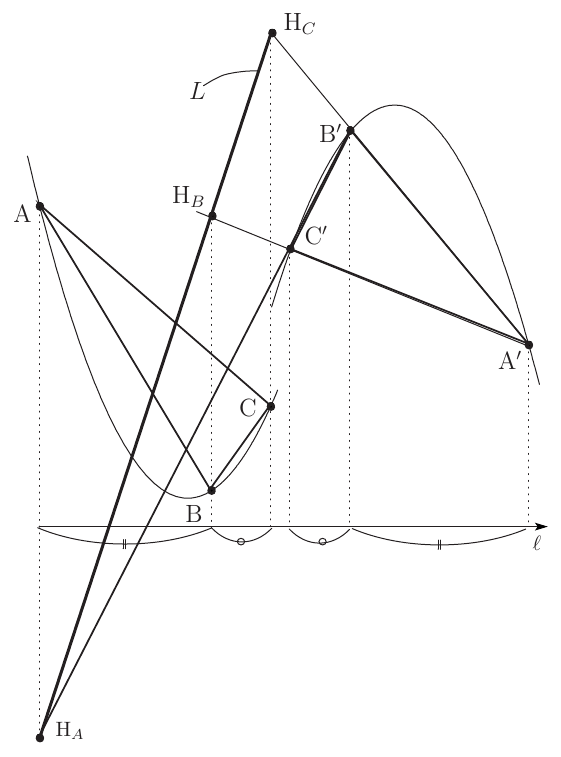}
\caption{ congruence for a downward-opening circumparabola. 
The same collinearity persists under the reversal of the parabola's orientation.}
\end{figure}

\begin{remark}
Although this theorem is presented as an application of 
$\mathcal P$–congruence in  geometry,
its Euclidean counterpart reveals a comparable geometric elegance
and a deep connection with the classical Simson theorem.
A detailed discussion of this relationship is omitted here,
as it would lead us away from the main line of the paper.
\end{remark}

\begin{remark}[Summary of this chapter]
In this chapter, we formalized the hierarchy of congruence in DA geometry and clarified the relationship between norm congruence and full congruence.  
Through parabolic perimetric congruence and its group structure, we revealed the rich diversity of the congruence theory and culminated with
\Cref{mthm:parabolic-final-theorem}, which concretely illustrates its depth.  
This result marks only the beginning—its generalization to polygons and to other conic curves remains an open and promising direction.
\end{remark}

\subsection{Equivalence of Figures in DA Geometry}

The notion of geometric equivalence in DA geometry exhibits a hierarchy distinct from that in Euclidean geometry.  
Between similarity and congruence naturally appears an intermediate notion called \emph{norm congruence}.
This stems from the independence between the DA norm and the DA angle, and from the role of the quadratic coefficient of the circumparabola
as a new invariant.

\Cref{tab:DA-equivalence} summarizes the three fundamental levels of equivalence in DA geometry.
DA similarity preserves the ratio of DA–norms and angles but changes the area and the coefficient of the circumparabola.
Norm congruence matches both norms and angles but may differ in the coefficient.
Full DA congruence requires equality of all four: norm, angles, area, and circumparabola coefficient.

\begin{table}[htbp]
\centering
\small
\begin{tabularx}{\linewidth}{@{}%
  >{\raggedright\arraybackslash}m{2.3cm}%
  >{\centering\arraybackslash}m{2.1cm}%
  >{\centering\arraybackslash}m{1.8cm}%
  >{\centering\arraybackslash}m{1.9cm}%
  >{\raggedright\arraybackslash}m{3.0cm}%
  >{\raggedright\arraybackslash}X@{}}
\toprule
Type & DA Norm & DA Angles & Area & Quadratic Coefficient of Circumparabola & Characteristic \\ 
\midrule
Similarity
  & Ratio fixed ($a\!:\!b$)
  & Preserved
  & $a^2\!:\!b^2$
  & $1\!:\!k^2$
  & Weakest equivalence \\ 
Norm congruence
  & Equal ($1\!:\!1$)
  & Preserved
  & May differ
  & Distinct
  & Reflects density difference \\ 
Full congruence
  & Equal ($1\!:\!1$)
  & Preserved
  & Equal
  & Equal in absolute value
  & Complete equivalence \\ 
\bottomrule
\end{tabularx}
\caption{Comparison of equivalence relations in DA geometry}
\label{tab:DA-equivalence}
\end{table}

This stratification demonstrates that DA geometry is not merely an extension of Euclidean geometry
but a framework with its own intrinsic invariants and refined hierarchy of equivalence.
It also consolidates the structural essence of the preceding theory
and provides a natural bridge toward the concluding reflections of the paper.


\section{Conclusion and Outlook}

In this paper, we have clarified the hierarchical structure of similarity and congruence in DA geometry.  
In contrast to Euclidean geometry, it was shown that SSS similarity does not necessarily imply AA similarity,  
and that this phenomenon arises as a structural consequence of the quadratic coefficient of the circumparabola.  
This fact demonstrates that DA geometry is not merely a variant of distance geometry,  
but rather an independent system grounded on the axioms of angles.

The significance of this observation can be summarized in two points.  
First, it shows that angles and norms can be mathematically separated in a consistent manner,  
providing a new geometric perspective centered on the axioms of angles.  
Second, it situates the framework of DA geometry naturally within the context of Hilbert’s Fourth Problem.  
In particular, the DA norm can be interpreted as a pre--Finsler structure,  
and thus DA geometry can be understood as an intermediate system  
between Euclidean and Finsler geometries.

As future work, we intend to systematically organize the intrinsic properties of DA geometry,  
such as the area formula, the median theorem, and the inner--product structure.  
Through these investigations, we aim to establish DA geometry as a consistent axiomatic system  
and to present a new geometric vision from the standpoint of first--order angular geometry.


\appendix
\section{Vertical Angles and Boundary Policies}\label{app:vertical-angle-boundary}

This appendix supplements the logical framework of the angle axioms \cref{ax:A1}–\cref{ax:A5},
clarifying the treatment of singular directions and the consistency
of angle definitions under the absorption-type boundary policy.
It ensures that the axiomatic structure of DA geometry
remains logically closed even across singular boundaries.

In the proof of \Cref{prop:vertical-angle} (equality of DA vertical angles),
the ambiguity arising from crossing a singular direction $\Cut$
was implicitly handled by adopting the absorption-type boundary policy.
Although the computation appears to involve only simple slope differences,
the definition of the difference angle can become ambiguous
when the intersection point $P$ lies on or crosses $\Cut$.
Here we give a formal treatment of this issue.

\begin{definition}[Two sides of the cut and boundary--zero pairing]\label{def:cut-pairing}
Let $\Cut$ denote the singular direction at a point $P$,
and let $D^+, D^-$ be the two connected components of $\Dir{P}\setminus\{\Cut\}$.
Assign boundary tags $\epsilon^+$ and $\epsilon^-$ to each side,
and declare a local equivalence $\epsilon^+ \sim_D \epsilon^-$.
For directions along $PX$ and $PY$, define
\[
\measuredangle_{\mathcal P}(PX,PY)
:=(\phi_*(\overrightarrow{PY})+\epsilon^*)
-(\phi_\star(\overrightarrow{PX})+\epsilon^\star)
\quad (*,\star\in\{+,-\}),
\]
and finally reduce modulo~$\sim_D$.
The equivalence $\sim_D$ corresponds to modding out the relation
$\epsilon^+ - \epsilon^- = 0$, forming a quotient structure.
\end{definition}

\begin{remark}
Geometrically, this pairing identifies the two infinitesimal sides
of the singular direction as a single ``absorbed'' zero boundary.
It is analogous to treating the two sides of a folded tangent at infinity
as one boundary point.
\end{remark}

\begin{lemma}[Independence of cutting side]\label{lem:cut-indep}
By \Cref{def:cut-pairing}, the ambiguity of whether or not one crosses $\Cut$
is eliminated, and the angle is well-defined.
\end{lemma}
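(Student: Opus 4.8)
The plan is to unwind \Cref{def:cut-pairing} and verify that, after reduction modulo $\sim_D$, the expression defining $\measuredangle_{\mathcal P}(PX,PY)$ collapses in every case to the single real number $\Slp{PY}-\Slp{PX}$, a quantity that depends only on the two lines $PX$ and $PY$ and on nothing else — in particular not on the choice of measuring arc that does or does not cross $\Cut$. First I would fix $P$ and recall that each of the two components $D^{+},D^{-}$ of $\Dir(P)\setminus\{\Cut\}$ is an arc on which the slope chart of \Cref{def:proj-slope-with-dinf} restricts to a continuous monotone bijection onto $\mathbb{R}$, and that this chart sends an oriented ray simply to the slope of the line carrying it; hence $\phi_{*}(\overrightarrow{PX})=\Slp{PX}$ and $\phi_{\star}(\overrightarrow{PY})=\Slp{PY}$ \emph{regardless} of which components $*,\star\in\{+,-\}$ contain the chosen representative rays, the side labels recording only that bookkeeping.

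Next I would run the case split over $(*,\star)$. If $*=\star$, the boundary tags in $(\phi_{*}(\overrightarrow{PY})+\epsilon^{*})-(\phi_{\star}(\overrightarrow{PX})+\epsilon^{\star})$ cancel identically, and the value is $\Slp{PY}-\Slp{PX}$, which is the difference angle of \Cref{def:differece_angle}. If $*\neq\star$, the tag contribution is $\pm(\epsilon^{+}-\epsilon^{-})$, which the quotient $\sim_D$ — the absorptive identification $\epsilon^{+}=\epsilon^{-}$, i.e.\ trivial period in the covering group of \Cref{conv:boundary} — sends to $0$; the value again reduces to $\Slp{PY}-\Slp{PX}$. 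Thus the two \emph{a priori} distinct openings, one crossing $\Cut$ and one not, represent the same class, so the ``crossing or not'' alternative is immaterial. It then remains to treat the boundary case in which one of $PX,PY$ is the singular ray $\Cut$ itself: by the absorptive rule its slope tag is the common class $\epsilon^{+}=\epsilon^{-}$, so $\measuredangle_{\mathcal P}$ collapses to $0$, consistently with the identification of the two one-sided slope limits $\pm\infty$ at $\dinf$; and independence of the representative point chosen on each ray is immediate from scaling invariance (\cref{ax:A4}). Together these verifications establish that $\measuredangle_{\mathcal P}(PX,PY)$ is well-defined.

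I expect the only genuine subtlety — a bookkeeping point rather than a computation — to be making rigorous that the formal symbol $\epsilon^{+}-\epsilon^{-}$ is exactly the generator of the period lattice $\Lambda$ attached to the covering map $p\colon\mathbb{R}\to V$ of \Cref{conv:boundary}, and that the absorptive calibration is precisely the choice $\Lambda=\{0\}$; granted this, everything above is a short check. Under a lift-type policy the same argument would instead yield well-definedness only modulo $\Lambda$, which is exactly why \Cref{lem:cut-indep} belongs to the absorptive framework, and why the appeal to it in the proof of \Cref{prop:vertical-angle} is legitimate.
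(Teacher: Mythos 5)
Your proposal is correct and follows essentially the paper's own route: the paper offers no separate argument, treating the lemma as an immediate consequence of \Cref{def:cut-pairing}, and your case split over the side labels $(*,\star)$ — tags cancelling when $*=\star$, and the contribution $\pm(\epsilon^{+}-\epsilon^{-})$ being killed by $\sim_D$ when $*\neq\star$, plus the absorbed case where a ray is $\Cut$ itself — is exactly that reasoning made explicit. The only caveat is cosmetic and not load-bearing: the paper treats the absorptive policy as its own calibration (boundary tags identified, angle $\to 0$ at the singular direction) rather than literally as the lift type with $\Lambda=\{0\}$; indeed the difference angle also admits a lift reading with $\Lambda=\tfrac{\pi}{2}\mathbb{Z}$, so your identification should be read as an analogy rather than an equality.
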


\begin{proposition}[Sign inversion of negative difference angles (no A' required)]\label{prop:neg-flip}
Let $P\in AA'$, $B\notin AA'$, and $PB\not\parallel d$.
If $\angle APB=\theta$ and $PB\in D^+$, $PA\in D^-$,
then $\angle BPA=-\theta$.
Hence the definition of the bisector of a negative difference angle
is well-defined, independent of the cutting side.
\end{proposition}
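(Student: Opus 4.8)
The plan is to read the statement as the boundary instance of \Cref{ax:A1} (antisymmetry) in which the two rays $\overrightarrow{PB}$ and $\overrightarrow{PA}$ lie on opposite sides of the singular direction $\Cut$ at $P$, and to prove it directly from the boundary pairing of \Cref{def:cut-pairing}. First I would fix notation: let $\Cut$ be the singular direction at $P$, let $D^{+},D^{-}$ be the two connected components of the oriented-ray space at $P$ obtained by removing $\Cut$, carrying the boundary tags $\epsilon^{+},\epsilon^{-}$ together with the declared local equivalence $\epsilon^{+}\sim_{D}\epsilon^{-}$. The hypothesis $P\in AA'$ guarantees that $\overrightarrow{PA}$ is an honest ray of a line through $P$, so that the slope chart is defined along it (this is the only role played by $A'$), while $PB\not\parallel d$ guarantees that the slope of $\overrightarrow{PB}$ is finite; by assumption $\overrightarrow{PB}\in D^{+}$ and $\overrightarrow{PA}\in D^{-}$.

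Next I would apply \Cref{def:cut-pairing} with $*=+$ attached to $PB$ and $\star=-$ attached to $PA$ to write
\[
\measuredangle_{\mathcal P}APB
=\bigl(\phi_{+}(\overrightarrow{PB})+\epsilon^{+}\bigr)-\bigl(\phi_{-}(\overrightarrow{PA})+\epsilon^{-}\bigr)\pmod{\sim_{D}},
\]
and, interchanging the two rays,
\[
\measuredangle_{\mathcal P}BPA
=\bigl(\phi_{-}(\overrightarrow{PA})+\epsilon^{-}\bigr)-\bigl(\phi_{+}(\overrightarrow{PB})+\epsilon^{+}\bigr)\pmod{\sim_{D}}.
\]
These two representatives are exact additive negatives of one another before reduction; and on reducing modulo $\sim_{D}$---which sets $\epsilon^{+}-\epsilon^{-}=0$---the tag contributions drop out of both, leaving the plain slope difference $\phi_{+}(\overrightarrow{PB})-\phi_{-}(\overrightarrow{PA})$ and its negative, respectively. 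Hence $\measuredangle_{\mathcal P}BPA=-\measuredangle_{\mathcal P}APB$, and with $\measuredangle_{\mathcal P}APB=\theta$ this is precisely $\measuredangle_{\mathcal P}BPA=-\theta$. I would then cite \Cref{lem:cut-indep} to record that the value is independent of which of $D^{\pm}$ is regarded as the ``outgoing'' side of the cut, which is the content of ``(no $A'$ required)'': one never needs to replace $B$ by the opposite point $A'$ or compare against $\measuredangle_{\mathcal P}A'PB$.

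For the concluding assertion on bisectors I would specialize to a vertex $B$ of a DA triangle with $\measuredangle_{\mathcal P}B=\theta_{B}<0$. By \Cref{def:inner-arc-slope} the inner arc $\arcintV{B}$ is the unique open arc containing $\dinf$, and a direction $t$ bisects $\theta_{B}$ exactly when $\measuredangle_{\mathcal P}(\overrightarrow{BA},t)=\measuredangle_{\mathcal P}(t,\overrightarrow{BC})=\tfrac12\theta_{B}$. The sign-inversion identity just established shows that sweeping across the cut from $\overrightarrow{BA}$ to $\overrightarrow{BC}$ yields the opposite values $\pm\theta_{B}$ according to the direction of traversal, so the only self-consistent bisecting direction is the fixed direction of the cut, namely the singular line $\Cut$ through $B$; this is exactly the prescription of \Cref{def:minus-angle-bisector}, so that definition is well posed and independent of the cutting side. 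I expect the main obstacle to be not a computation but the bookkeeping around the boundary: confirming that the configuration in the hypothesis (both rays off the cut, on opposite components, $\theta$ finite) is exactly the case governed by \Cref{def:cut-pairing}, that the tag arithmetic is consistent with the absorptive boundary policy adopted for $\measuredangle_{\mathcal P}$, and that passing to the quotient by $\sim_{D}$ commutes with the sign change and not merely with addition. Once these points are pinned down, both the antisymmetry and the well-posedness of the negative-angle bisector follow formally.
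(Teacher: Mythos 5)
Your proof is correct and takes essentially the same route as the paper: you apply the boundary pairing of \Cref{def:cut-pairing} to both orderings of the rays, observe that the tagged representatives are exact additive negatives, and reduce modulo $\sim_D$ so the tag contributions cancel, which is precisely the paper's argument. Your extra paragraph spelling out why the singular line is the only self-consistent bisector of a negative angle is detail the paper leaves implicit, but it is consistent with its framework and does not change the approach.
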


\begin{proof}
By definition,
\[
\angle APB=(\phi_+(\overrightarrow{PB})+\epsilon^+)
-(\phi_-(\overrightarrow{PA})+\epsilon^-),\qquad
\angle BPA=(\phi_-(\overrightarrow{PA})+\epsilon^-)
-(\phi_+(\overrightarrow{PB})+\epsilon^+).
\]
Reducing under $\epsilon^+\sim_D\epsilon^-$
gives the sign-reversed relation between the two angles.
\footnote{
Identifying the zero on the singular direction independently of $d^+$ and $d^-$
corresponds to the absorption-type calibration.
If one adopts the lift-type calibration,
the ``origin'' of~$0$ can be retained as an infinitesimal quantity
in a nonstandard analytic model,
and taking the standard part yields the same result.
For brevity, the absorption-type is adopted in this paper.
}
\end{proof}

\begin{remark}
Consequently, the results of \Cref{prop:vertical-angle}
and \Cref{lem:straight-angle-zero}
remain well-defined even within this formal framework
including boundary policies.
\end{remark}

\section{Logical Equivalence between the Vertical--Angle Law and the Flat--Angle Axiom}\label{app:vertical-flat-equiv}

\begin{lemma}[Equivalence of the vertical--angle law and the flat--angle axiom under the absorption policy]\label{lem:vertical-flat-equiv}
Fix the absorption-type boundary policy in A6 (continuity and boundary conditions),
and assume A1 (antisymmetry) and A2 (additivity).
Then the following two statements are equivalent:
\begin{enumerate}
  \item[(V±)] (\emph{Oriented vertical--angle law})  
  If the lines $AB$ and $CD$ intersect at $P$, then $\angle APC = -\angle DPB$.
  \item[(F)] (\emph{Flat--angle axiom})  
  If $A, P, B$ are collinear, then $\angle APB = 0$.
\end{enumerate}
\end{lemma}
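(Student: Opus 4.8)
The plan is to prove the two implications separately, working throughout in the abstract setting with \cref{ax:A1} and \cref{ax:A2}, the absorption-type boundary policy of \cref{conv:boundary}, and the boundary pairing $\epsilon^{+}\sim_{D}\epsilon^{-}$ of \cref{def:cut-pairing}. Here additivity is used in the oriented ``vertex'' form $\angle(r,s)+\angle(s,t)=\angle(r,t)$ for rays $r,s,t$ issuing from a common point, which is the form legitimized by \cref{conv:boundary} and \cref{lem:cut-indep} and already exploited in \Cref{prop:subtractive_additivity}. For two lines through $P$ I will write $PA,PB$ for the opposite rays of one and $PC,PD$ for the opposite rays of the other.

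First I would treat $\text{(F)}\Rightarrow\text{(V}\pm\text{)}$. Let $AB$ and $CD$ meet at $P$. Since $A,P,B$ are collinear, (F) gives $\angle APB=0$, and likewise $\angle CPD=0$. Vertex additivity at $P$ then yields $\angle APC+\angle CPB=\angle APB=0$, so $\angle APC=-\angle CPB$, and $\angle CPB+\angle BPD=\angle CPD=0$, so $\angle BPD=-\angle CPB=\angle APC$. Applying \cref{ax:A1} to the last pair of rays, $\angle DPB=-\angle BPD=-\angle APC$, which is exactly the oriented vertical-angle law. If one of the two lines is singular, all the angles involved are absorbed to $0$ and the identity is trivial.

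Next I would treat $\text{(V}\pm\text{)}\Rightarrow\text{(F)}$. Let $A,P,B$ be collinear. When $P$ is an endpoint in the stated order the rays $PA$ and $PB$ coincide, so $\angle APB=-\angle APB$ by \cref{ax:A1} and hence $\angle APB=0$; and when $AB$ is singular the pairing of \cref{def:cut-pairing} gives $\angle APB=0$ directly. So assume $P$ lies strictly between $A$ and $B$ on a non-singular line, and choose a non-singular auxiliary line $CD\neq AB$ through $P$. Applying (V$\pm$) to the pairs $(AB,CD)$ and $(AB,DC)$ gives
\[
\angle APC=-\angle DPB,\qquad \angle APD=-\angle CPB .
\]
Vertex additivity at $P$ gives $\angle APC+\angle CPB=\angle APB$ and $\angle APD+\angle DPB=\angle APB$, so that $\angle CPB=\angle APB-\angle APC$ and $\angle DPB=\angle APB-\angle APD$. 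Substituting these into the sum of the two displayed relations,
\[
\angle APC+\angle APD=-\,(\angle APB-\angle APD)-(\angle APB-\angle APC)=-2\,\angle APB+\angle APC+\angle APD ,
\]
and cancelling $\angle APC+\angle APD$ forces $2\,\angle APB=0$, i.e.\ $\angle APB=0$.

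The step I expect to be the main obstacle is precisely the legitimacy of the vertex-additivity identities $\angle APC+\angle CPB=\angle APB$ (and their variants) when the arc from $PA$ to $PB$ through the auxiliary ray unavoidably passes the singular direction $\Cut$: \cref{ax:A2} as literally stated applies only to a point lying between two others on a line seen from an external vertex, so one must invoke the boundary pairing $\epsilon^{+}\sim_{D}\epsilon^{-}$ of \cref{def:cut-pairing} together with the independence-of-cutting-side statement \cref{lem:cut-indep} to conclude that, after reduction modulo $\sim_{D}$, these additivity relations hold unambiguously. Granting that bookkeeping, the lemma re-derives at the purely axiomatic level the relationship that \Cref{prop:vertical-angle} and \Cref{lem:straight-angle-zero} exhibit in the concrete parabolic model.
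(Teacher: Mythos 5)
Your proof is correct, and while the direction $(\mathrm F)\Rightarrow(\mathrm V\pm)$ coincides with the paper's (additivity at $P$ applied to both lines, with $\angle APB=\angle CPD=0$), your converse is a genuinely different argument. The paper proves $(\mathrm V\pm)\Rightarrow(\mathrm F)$ by letting the auxiliary line degenerate, $C\to B$ and $D\to A$ along $AB$, and invoking the absorption policy to kill each term in the limit; you instead fix one non-singular auxiliary line, apply the oriented law to both labelings $(AB,CD)$ and $(AB,DC)$, and cancel to get $2\,\angle APB=0$, hence $\angle APB=0$. This buys you a purely algebraic proof with no limiting process — a real advantage, since the absorption convention of \Cref{conv:boundary} literally governs the behavior as a ray approaches the singular set $S_P$, not as $PC$ approaches the ray $PB$, so the paper's limit step needs an extra continuity appeal that your route avoids; the only structural input you need is that angle values lie in $\mathbb{R}$ (no $2$-torsion), which the absorption policy guarantees but a lift-type policy with period lattice would not. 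You also treat the degenerate collinear orders and the singular-line case explicitly, and your computation lands exactly the signed statement $\angle APC=-\angle DPB$, whereas the paper's own proof text ends with $\angle APC=\angle DPB$ (a sign slip relative to the lemma as stated). Your caveat about vertex-form additivity is well placed, but note you are no worse off than the paper: its proof invokes A2 in the same ray form $\angle APC+\angle CPB=\angle APB$ with $A,C,B$ non-collinear, so both arguments rest equally on \Cref{def:cut-pairing} and \Cref{lem:cut-indep} to license additivity across the cut.
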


\begin{proof}
\textbf{(F $\Rightarrow$ V)}\;
By A2 (additivity),
$\angle APC+\angle CPB=\angle APB=0$,
so $\angle APC=-\angle CPB$.
Similarly, $\angle DPB=-\angle APD$,
hence $\angle APC=\angle DPB$.

\smallskip
\textbf{(V $\Rightarrow$ F)}\;
Take collinear points $A,P,B$ and any line $CD$ through $P$.
Let $C\to B$ and $D\to A$ along the line $AB$.
By additivity,
$\angle APB=\angle APC+\angle CPB=\angle DPB+\angle APD$ (using~V).
Under the absorption policy,
as $C\to B$ and $D\to A$, each term is absorbed into zero,
hence $\angle APB=0$.
\end{proof}

\begin{remark}[Choice of primitive depends on the system]
In DA geometry, the oriented vertical–angle law (V)
arises directly from the definition via slope difference.
In classical Euclidean axiomatics,
the flat–angle constancy (F) is usually adopted as a normalization,
from which (V) follows as a theorem.
Within the calibration framework, however,
under the absorption policy the two statements are mutually derivable
by \Cref{lem:vertical-flat-equiv},
and are therefore essentially equivalent.
\end{remark}


\section{Pseudo--Metric Nature of the DA  norm}\label{app:pseudo-metric}

In DA geometry, the difference--angle norm $|AB|_{\mathcal P}$ defined in
\Cref{def:diff-angle-norm}
satisfies two of the axioms of distance by definition:
nonnegativity and symmetry.
Moreover, by \Cref{mthm:triangle-equation},
the triangle inequality always holds as an equality.
On the other hand, because of the presence of the projective direction
(the singular line),
the norm does not satisfy positive definiteness.
Hence the vector space endowed with the DA  norm
is an extremely singular pseudo--metric space.

A simple example of such a singular space can be given
by a one--dimensional injective map $f$
through
\[
d(x,y)=|f(x)-f(y)|.
\]
This satisfies the same set of properties—nonnegativity, symmetry,
and equality in the triangle inequality—but fails to be positive definite
if $f$ is constant along some direction.

In other words, the DA norm can be regarded
as an extension of this low--dimensional pseudo--metric behavior
to a higher--dimensional setting by introducing a singular direction.
Through this viewpoint,
the study of geodesics in DA geometry
becomes entirely natural:
since the geometry itself admits any continuous function graph as a geodesic,
one may in particular select all linear functions---that is,
straight lines---as geodesics of the system.

This observation provides the foundation for interpreting DA geometry as a degenerate Finsler-type structure, whose geodesics coincide with affine functions.


\section{Choice of Axioms and the Parabolic Extension of Hilbert's System}\label{app:choice-of-axioms}

\begin{quote}
In this paper, while several possible axiom systems are taken into consideration,  
special attention is given to the axioms of congruence, which are formulated  
so that the angle itself becomes a \textit{primary geometric quantity}.  
Whereas the Hilbert system treats the angle as subordinate to the metric structure,  
the present framework emancipates the angle as a primary quantity,  
thereby removing Axiom~III$_4$ (angle transfer) and adopting instead  
the ASA-type congruence principle as a new foundation.  
Following Hilbert's standpoint in his \textit{Foundations of Geometry},  
the clarification of the notion of congruence is ultimately grounded  
in the axioms of motion in Euclidean geometry  
and is reduced to the congruence of triangles.

From this viewpoint, geometry as a discipline  
admits multiple coherent systems depending on the chosen set of axioms.  
Among them, the following three are the most fundamental in determining  
the nature of a geometric framework:
\[
\begin{array}{lll}
\text{(I)} & \text{Axiom of Congruence},\\
\text{(II)} & \text{Axiom of Parallel Lines},\\
\text{(III)} & \text{Axiom of Continuity (or Divisibility)}.
\end{array}
\]

In this appendix, we focus on the first two---the axioms of congruence and of parallels---  
and summarize how existing geometric systems can be classified  
according to their respective choices.  
The discussion on the \textit{Axiom of Continuity} will be addressed separately in another paper.
\end{quote}

\subsection{Fundamental Congruence Axioms (SAS, ASA, and SSS)}

\begin{quote}
This appendix reformulates the Hilbert group~III (axioms of congruence)  
in a parabolic---that is, Difference--Angle---extension,
and organizes the three fundamental forms of triangle congruence:  
SAS, ASA, and SSS.  
Each form reflects which quantity---angle or distance---is regarded as primary,  
and each generates a distinct ``language'' of geometry.

Hilbert himself did not postulate triangle congruence directly,  
but derived the familiar SAS-type congruence as a theorem  
by combining the axioms of angle transfer (III$_4$)  
and the relational axiom (III$_5$).  
In this paper, we refer to this standpoint as the ``SAS type'' for convenience,  
and contrast it with the ``ASA type,''  
in which angles are treated as primary quantities,  
and the ``SSS type,'' in which only distances are primary.  
This comparison clarifies that the choice of axioms  
determines not only the structure of geometry but also its fundamental language.
\end{quote}

\begin{axiombox}{CONG--SAS(Side--Angle--Side : Hilbert Type)}{CONG-SAS}
{}For any two triangles $\triangle ABC$ and $\triangle DEF$,  
if
\[
AB = DE, \quad AC = DF, \quad \angle BAC = \angle EDF,
\]
then
\[
\triangle ABC \equiv \triangle DEF.
\]
This form characterizes a rigid geometry that preserves  
the symmetry between distances and angles,  
corresponding to Hilbert's axiomatic geometry.
\end{axiombox}

\begin{axiombox}{CONG--ASA(Angle--Side--Angle : Angle--Primary Type)}{CONG-ASA}
{}For any two triangles $\triangle ABC$ and $\triangle DEF$,  
if
\[
AB = DE, \quad 
\angle CAB = \angle FDE, \quad 
\angle CBA = \angle FED,
\]
then
\[
\triangle ABC \equiv \triangle DEF.
\]
This form characterizes a geometry in which the angle is adopted as the sole primary quantity,  
and all secondary quantities such as distance and area are reconstructed from angular relations.  
It corresponds to the difference--angle geometry and its parabolic extensions.
\end{axiombox}

\begin{axiombox}{CONG--SSS(Side--Side--Side : Distance--Primary Type)}{CONG-SSS}
{}For any two triangles $\triangle ABC$ and $\triangle DEF$,  
if
\[
AB = DE, \quad BC = EF, \quad CA = FD,
\]
then
\[
\triangle ABC \equiv \triangle DEF.
\]
This form characterizes a distance geometry  
in which only distances are primary quantities,  
and angles are reconstructed secondarily from metric relations.
\end{axiombox}

\begin{remark}[Adopted Framework]\label{rem:adoption}
In this paper, we adopt \Cref{ax:CONG-ASA} (ASA type)  
as the foundational axiom,  
establishing a system in which the angle is the unique primary geometric quantity  
from which all other quantities are derived.  
The SAS and SSS forms are presented here for comparison and reference.
\end{remark}


\subsection{Axiom of Angle Transfer and Its Necessity}

\begin{quote}
The preceding discussion has focused primarily on CONG5 (\Cref{ax:CONG5}),  
but in systems based on SSS congruence, or in axiomatic frameworks that do not introduce angles as primitive quantities,  
the adoption of CONG4 becomes relatively important.
\end{quote}

\begin{remark}[Omission and Significance of CONG4]\label{rem:cong4-role}
In Hilbert's system, angles were subordinate to the distance structure,  
and therefore their transfer (Axiom~III$_4$) was required as an explicit axiom.  
In the present framework, however, angles are independent first-order quantities,  
defined by Axioms~A1--A5.  
Hence, the transfer of angles is no longer necessary as an independent axiom.  
Accordingly, CONG4 is left vacant,  
and the distributive relation connecting angles and lengths (the ASA congruence) is provided by CONG5.
\end{remark}

\begin{table}[h]
\centering
\caption{Correspondence between Congruence Bases and the Necessity of CONG4}
\label{tab:cong-bases-summary}
\begin{tabularx}{\textwidth}{
  >{\raggedright\arraybackslash}p{2.8cm}
  >{\raggedright\arraybackslash}p{3.0cm}
  >{\centering\arraybackslash}p{2.8cm}
  >{\raggedright\arraybackslash}X
}
\toprule
Basis type & Primary quantities & Need for CONG4/5 & Corresponding geometry \\
\midrule
SAS (\Cref{ax:CONG-SAS}) & Distance and angle (symmetric) & Required (including III$_4$) &
Hilbert / Cayley--Klein / Euclidean \\[1ex]
ASA (\Cref{ax:CONG-ASA}) & Angle and scale & Not required (angle autonomous) &
Difference--Angle geometry (parabolic limit) \\[1ex]
SSS (\Cref{ax:CONG-SSS}) & Distance only &
\textbf{Not required in principle\footnote{Unless angles are introduced as primary quantities.}} &
Distance geometry / CAT(0)--type structure \\
\bottomrule
\end{tabularx}
\end{table}

\begin{quote}
Through this classification, the ASA-type axiomatic system developed in this paper  
can be positioned as a parabolic and angle-first extension of Hilbert's system.  
Even without the angle-transfer axiom,  
it retains a consistent principle of congruence,  
representing the essence of parabolic angular quantity in its purest form.  
Therefore, this axiomatic structure is not merely a subsystem of Hilbert's geometry,  
but rather the \emph{canonical form of first-order angular geometry} that emerges only in the parabolic limit.
\end{quote}

\subsection{Axiom of Parallel Lines and the Branching of Geometry}

Among the following three forms, exactly one shall be adopted.  
They are mutually exclusive, and the choice determines the curvature type of the geometry.

\begin{axiom}[PAR--Euc (Euclidean type)]
For any line $\ell$ and any point $P$ not lying on $\ell$,  
there exists exactly one line through $P$ that is parallel to $\ell$.
\end{axiom}

\begin{axiom}[PAR--Hyp (Hyperbolic type)]
For any line $\ell$ and any point $P$ not lying on $\ell$,  
there exist at least two distinct lines through $P$ that are parallel to $\ell$.
\end{axiom}

\begin{axiom}[PAR--Elp (Elliptic type)]
For any line $\ell$ and any point $P$ not lying on $\ell$,  
there exists no line through $P$ that is parallel to $\ell$.
\end{axiom}

\begin{table}[H]
\centering
\caption{Axiom of Parallel Lines and the Classification of Geometries}
\begin{tabular}{lcl}
\toprule
Adopted form & Curvature & Corresponding geometry \\
\midrule
PAR--Elp (no parallels) & positive & Elliptic geometry \\
PAR--Euc (one parallel) & zero & Euclidean geometry, DA geometry \\
PAR--Hyp (multiple parallels) & negative & Hyperbolic geometry \\
\bottomrule
\end{tabular}
\end{table}

\begin{remark}[Summary of the Appendix]
The results of this appendix supplement the consistency of angles under the boundary policy,
the equivalence between the constancy of vertical and straight angles,
and the selection principles of congruence and parallelism.
These confirm that the axiomatic system of DA geometry
is self-contained, including its behavior on the boundary.
\end{remark}

\bibliographystyle{plain}
\bibliography{diff_angle_ref}


\section*{Acknowledgments}

The author expresses his deepest gratitude to his wife, Junko, for her constant support throughout the preparation of this work.
He is also indebted to his friend from his graduate days at Tohoku University, Kanato Takeshita, whose frank opinions and constructive advice helped clarify the direction of the research.

I am deeply indebted to Prof.\ Satoshi Ishiwata (Yamagata University),
who kindly arranged an opportunity to discuss the work with me after the
submission of v1 and offered many valuable pieces of advice.
I express my profound gratitude here.

As a high-school teacher with limited time for research, the author gratefully acknowledges the assistance of AI-based tools, which supported the comparative study of existing geometrical frameworks, the construction of the TeX environment, and editorial organization.
All conceptual developments and mathematical results presented in this paper are entirely due to the author, yet without this assistance the completion of the manuscript within about three months would not have been possible.

The author further thanks Professors Athanase Papadopoulos and Charalampos Charitos for their kind advice and encouragement during the arXiv submission process, and Professor Jean-Marc Schlenker, whose endorsement made the public release of this work possible.
Their warm support gave the author the courage to bring independent research into the world.

Finally, the author expresses deep respect and gratitude to the great mathematicians who inspired many of the ideas developed here—Felix Klein, David Hilbert, and all those who offered thoughtful advice during the endorsement process.
It is the author’s hope that this geometry of difference angles may, in some quiet way, reach those great minds who left us a century ago.

Although still far from a complete understanding of geometry itself, the author has been taught through this journey the importance of viewing mathematics from many perspectives.

\end{document}